\documentclass{article}
\usepackage {amscd}
\usepackage {amsmath}
\usepackage {amssymb}
\usepackage [backend=biber,style=alphabetic,maxalphanames=4,sorting=nyt]{biblatex}
\usepackage {geometry}
\usepackage {setspace}
\usepackage {tikz}
\usepackage {tikz-cd}
\usetikzlibrary{decorations.pathreplacing, calc}
\usepackage {float}
\usepackage {indentfirst}
\usepackage {enumitem}
\usepackage [amsmath,amsthm,thmmarks,hyperref,thref]{ntheorem}
\usepackage {mathabx}
\usepackage {diagbox}
\usepackage {graphicx}
\usepackage{longtable}
\usepackage{etoolbox}
\usepackage {hyperref}
\preto\tabular{\setcounter{magicrownumbers}{0}}
\preto\longtable{\setcounter{magicrownumbers}{0}}
\newcounter{magicrownumbers}
\newcommand\rownumber{\stepcounter{magicrownumbers}\arabic{magicrownumbers}}
\graphicspath{ {./images/} }

\theoremstyle{plain}
\newtheorem{theorem}{Theorem}[section]
\newtheorem{proposition}[theorem]{Proposition}
\newtheorem{corollary}[theorem]{Corollary}
\newtheorem{lemma}[theorem]{Lemma}

\theoremstyle{definition}
\newtheorem{definition}[theorem]{Definition}
\newtheorem{proposition-definition}[theorem]{Proposition-Definition}

\newtheorem{convention}[theorem]{Convention}

\theoremstyle{remark}
\newtheorem{example}[theorem]{Example}
\newtheorem{construction}[theorem]{Construction}
\newtheorem{algorithm}[theorem]{Algorithm}
\newtheorem{statement}[theorem]{}
\newtheorem*{remark}{Remark}

\newcolumntype{C}{>{$}c<{$}} % math-mode version of "l" column type

\usepackage[tickmarkheight=.5em,textwidth=\marginparwidth,textsize=small]{todonotes}

\author {Yang He \and Vyacheslav V. Shokurov}
\title {On the gap of thresholds}

\begin {document}
\begin {spacing}{1.3}
\maketitle

\begin{abstract}
    We propose an algorithm to compute the gap numbers of log canonical thresholds, anticanonical thresholds, and $\mathbb{R}$-complementary thresholds for surface pairs with a given rational hyperstandard coefficient set. As an example, we compute these gap numbers for a special hyperstandard set that appears in the effective boundedness of $n$-complements for surface pairs.
\end{abstract}

\section{Introduction}

\subsection{Background}

    The study of thresholds plays an important role in modern algebraic geometry. One important result in this direction is the ascending chain condition (or ACC) for log canonical thresholds, which was first conjectured in \cite{Shokurov93}:
    
\begin{theorem}[cf. \cite{HMX}, Theorem 1.1]\label{thm: ACC for lct}
    Fix a positive integer $d$ and a set $\Gamma \subseteq [0,1]$. Let $J \subset \mathbb{R}_{>0}$ be another set of real numbers. We define
  $$
    \mathrm{LCT}_{d}(\Gamma,J) = \{\mathrm{lct}(X/Z,\Delta;M) \mid \mathrm{dim}X=d, \Delta \in \Gamma, M \in J \}.
  $$
  If $\Gamma$ and $J$ satisfy DCC, then $\mathrm{LCT}_{d}(\Gamma,J)$ satisfies ACC.
\end{theorem}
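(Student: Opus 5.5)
The plan is to follow the strategy of Hacon--McKernan--Xu: argue by contradiction and reduce, by induction on dimension, to the global ACC for numerically trivial log canonical pairs. Suppose there is a strictly increasing sequence $t_i=\mathrm{lct}(X_i/Z_i,\Delta_i;M_i)$ with $\dim X_i=d$, $\Delta_i\in\Gamma$, $M_i\in J$. Since $\Gamma\subseteq[0,1]$ and $J\subset\mathbb{R}_{>0}$ are DCC, and the $t_i$ lie in a bounded interval (each $t_i\le 1/\min J$ as soon as $M_i$ passes through a prime divisor, or after the usual normalization), the boundaries $\Delta_i+t_iM_i$ have coefficients in the DCC set $\Gamma'=\Gamma\cup\{a+tm\}$. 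The first step is to extract a log canonical place. Let $E_i$ be a divisor computing the threshold, so $(X_i,\Delta_i+t_iM_i)$ is lc but not klt near the centre of $E_i$. Using a dlt modification (or, if $E_i$ is exceptional, extracting it alone by MMP), we replace $X_i$ by a $\mathbb{Q}$-factorial model $Y_i$ on which $E_i$ appears with coefficient $1$ and $K_{Y_i}+\Gamma_{Y_i}$ is the pullback of $K_{X_i}+\Delta_i+t_iM_i$.

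The second step is adjunction. Restricting to $E_i$ (or to a suitable lc centre) gives $(K_{Y_i}+\Gamma_{Y_i})|_{E_i}=K_{E_i}+\Theta_i$. By the standard computation of different coefficients, the coefficients of $\Theta_i$ lie in the DCC set
$$D(\Gamma')=\Bigl\{\tfrac{m-1+f}{m}\ \Bigm|\ m\in\mathbb{N},\ f=\textstyle\sum r_jc_j\le 1,\ c_j\in\Gamma'\Bigr\}.$$
Moreover, since $t_i$ is strictly increasing, at least one coefficient of $\Theta_i$ varies strictly with $t_i$, namely the one involving $t_iM_i$. Passing to the general fibre of $E_i\to$ (image in $Z_i$), and running an MMP over the base so that the pair becomes numerically trivial, we obtain projective lc pairs $(F_i,\Theta_i)$ of dimension $\le d-1$ with $K_{F_i}+\Theta_i\equiv 0$, coefficients in a DCC set, and not all coefficients eventually constant.

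The final step, which I expect to be the main obstacle, is the global ACC theorem: for fixed dimension and DCC coefficients, the coefficients of numerically trivial projective lc pairs lie in a finite set. This is proved jointly with ACC for lct by induction on $d$. In the inductive step one reduces to $\rho=1$ Fano type with $\Theta_i$ big and uses boundedness of log Fanos (via effective birationality of $|{-m(K+\Theta)}|$ and bounded log birationality) to place the pairs in a bounded family. There, a strictly increasing coefficient contradicts numerical triviality on a fixed family, after decreasing coefficients to make the pair klt and using lower dimensional ACC for lct. Once this is established, the sequence $(F_i,\Theta_i)$ violates global ACC, which proves the theorem. The delicate points are ensuring that the increasing coefficient survives adjunction and restriction to the fibre, and the boundedness input, which really requires the full HMX machinery.
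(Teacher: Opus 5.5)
The paper gives no proof of this statement; it is quoted from \cite{HMX}, so the only real comparison is with the Hacon--McKernan--Xu argument. Your architecture matches theirs: global ACC in dimension $d-1$ gives ACC for lct in dimension $d$ by adjunction to an lc place, and global ACC in dimension $d$ is proved afterwards. However, two steps as you state them would not go through.

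\emph{The restriction step.} $K_{Y_i}+\Gamma_{Y_i}$ is pulled back from $X_i$, so it is already numerically trivial over $X_i$. No MMP is needed for that, and $Z_i$ plays no role since the threshold is local on $X_i$. What is needed, and what you flag but leave open, is that $M_{Y_i}$ meets the general fibre of the lc place over its centre. Otherwise no coefficient of the restricted pair involves $t_i$. The standard remedy is as follows.
\begin{itemize}
\item Choose the lc place with centre in $\mathrm{Supp}\,M_i$. If it is a divisor on $X_i$, then $t_i=(1-a)/\mu$ with $a\in\Gamma$ and $\mu\in J$, and ACC is immediate.
\item Otherwise write $g_i^*M_i=M_{Y_i}+\sum_j c_jE_j$ and run the $(K_{Y_i}+\Gamma_{Y_i}-t_iM_{Y_i})$-MMP over $X_i$. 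This divisor is $\equiv_{X_i}t_i\sum_j c_jE_j$, which is effective and exceptional. So the MMP terminates after contracting every $E_j$ with $c_j>0$.
\item A step contracting such a divisor $S$ is $M_{Y_i}$-positive on its fibres. A general fibre $F_i$ of $S$ over its image under that contraction then carries a projective lc pair with $K_{F_i}+\Theta_{F_i}\equiv 0$. It has a coefficient $\frac{m-1+f+kt_i\mu}{m}$ with $k\geq 1$ and $\mu\in J$.
\item Once global ACC puts this coefficient in a finite set, a short arithmetic argument puts $t_i$ in an ACC set. Your ``not all coefficients eventually constant'' is not enough by itself.
\end{itemize}

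\emph{The global ACC step, which is the more serious problem.}
\begin{itemize}
\item Klt log Fano varieties with DCC coefficients are not bounded: already $\mathbb{P}(1,1,n)$ with zero boundary gives an unbounded family.
\item Since $K+\Theta_i\equiv 0$, the system $|-m(K+\Theta_i)|$ carries no information.
\item Boundedness of the $\rho=1$ varieties supporting such klt log Calabi--Yau pairs is known in general only through Birkar's work, which itself relies on \cite{HMX}. Using it here would be circular.
\end{itemize}
The boundedness HMX actually use is for pairs of log \emph{general type}. Effective birationality of $|m(K_X+\Theta)|$ for $K_X+\Theta$ big with DCC coefficients gives DCC of volumes. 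From this one gets a uniform $\delta>0$ such that $K_X+(1-\delta)\Theta$ is big whenever $K_X+\Theta$ is.

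With $\rho(X_i)=1$ and the coefficient $a_i$ of $D_i$ increasing to $a$, one \emph{raises} $a_i$ to $a$:
\begin{itemize}
\item the new pair $(X_i,\Theta_i)$ is lc for $i\gg 0$ by ACC for lct in the \emph{same} dimension $d$;
\item $K_{X_i}+\Theta_i\equiv(a-a_i)D_i$ is ample;
\item $(1-\delta)\Theta_i\le\Delta_i$ for $i\gg 0$, so $K_{X_i}+(1-\delta)\Theta_i\equiv-(\Delta_i-(1-\delta)\Theta_i)$ cannot be big, a contradiction.
\end{itemize}
You propose lowering coefficients and invoking only lower-dimensional ACC for lct. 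That gives no control over the lc property of the perturbed pair in dimension $d$. This is exactly why the induction must run: global ACC in dimension $d-1$, then ACC for lct in dimension $d$, then global ACC in dimension $d$.
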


    Another important threshold that appeared recently is the $\mathbb{R}$-complementary threshold. The set of $\mathbb{R}$-complementary thresholds also satisfies ACC according to the following theorem:

    \begin{theorem}[ACC for $\mathbb{R}$-complementary threshold, cf. \cite{Shokurov20}, Theorem 21]
    Fix a positive integer $d$ and a set $\Gamma \subseteq [0,1]$. Let $J \subset \mathbb{R}_{>0}$ be another set of real numbers. We define
    $$
    \mathbb{R}\mathrm{-CT}_{d}(\Gamma,J) = \{\mathbb{R}\mathrm{-ct}(X/Z,\Delta;M) \mid X \text{ is of Fano type},\, \mathrm{dim}X=d,\, \Delta \in \Gamma, M \in J \}.
    $$
    If $\Gamma$ and $J$ satisfy DCC, then $\mathbb{R}\mathrm{-CT}_{d}(\Gamma,J)$ satisfies ACC.
\end{theorem}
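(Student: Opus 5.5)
The plan is to argue by contradiction, reducing the statement to the ACC for log canonical thresholds (Theorem~\ref{thm: ACC for lct}) together with boundedness of complements and the global ACC. Recall that
$$\mathbb{R}\mathrm{-ct}(X/Z,\Delta;M)=\sup\{t\ge 0 \mid (X/Z,\Delta+tM) \text{ has an } \mathbb{R}\text{-complement}\}.$$
Suppose the thresholds $t_i=\mathbb{R}\mathrm{-ct}(X_i/Z_i,\Delta_i;M_i)$ form a strictly increasing sequence, with $X_i$ of Fano type, $\dim X_i=d$, $\Delta_i\in\Gamma$ and $M_i\in J$. Replacing $\Gamma$ by its closure under the relevant operations, which is still DCC, we may assume $\Gamma$ is closed. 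The argument then has three steps.

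\textbf{Step 1: the threshold is attained by a complement.} Since $X_i$ is of Fano type, the set of $t$ for which an $\mathbb{R}$-complement exists is closed. Limits of complements are complements, because the relevant cone over $Z_i$ is finitely generated for Fano type varieties. So $(X_i/Z_i,\Delta_i+t_iM_i)$ has an $\mathbb{R}$-complement $(X_i,B_i^+)$ with $B_i^+\ge \Delta_i+t_iM_i$. The coefficients of $\Delta_i+t_iM_i$ lie in the DCC set $\Gamma+(\lim t_i)^{-}J$, which is DCC since $t_i$ increases.

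\textbf{Step 2: boundedness of complements.} By the theory of $n$-complements for Fano type pairs with DCC (hyperstandard-approximated) coefficients, we obtain a finite set $N$ with the following property: each pair $(X_i/Z_i,\Delta_i+t_iM_i)$, possibly after slightly lowering the coefficients to a hyperstandard approximation, has an $n$-complement for some $n\in N$. The key point is maximality. For $t>t_i$ there is no $\mathbb{R}$-complement, so some obstruction appears at $t_i$: either a global obstruction or a local one.
\begin{itemize}
\item In the global case, $-(K+\Delta_i+t_iM_i)$ is not effective beyond $t_i$ in the relevant direction. We pass to a model where $K+\Delta_i+t_iM_i\equiv 0$ over a relative base of positive dimension, or over a point.
\item In the local case, $(X_i,\Delta_i+t_iM_i)$ becomes lc but not klt with a new lc center created by $M_i$.
\end{itemize}

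\textbf{Step 3: contradiction.} In the local case, $t_i$ is an lct, so we apply Theorem~\ref{thm: ACC for lct} with the DCC set of coefficients. In the global case, after running an MMP (available for Fano type varieties) we reduce to a numerically trivial lc pair $K_{Y_i}+\Delta_{Y_i}+t_iM_{Y_i}\equiv 0/Z_i$. If $\dim Z_i<\dim X_i$, we restrict to a general fiber and use the global ACC of \cite{HMX}: the coefficients of $\Delta_{Y_i}+t_iM_{Y_i}$ lie in a DCC set, so they belong to a finite set. Since $M_{Y_i}\neq 0$ on the fiber (otherwise $t$ could be increased), $t_i$ is determined by finitely many linear equations, which contradicts strict increase. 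When $Z_i$ is positive-dimensional and the obstruction is vertical, we use adjunction or the canonical bundle formula to pass to the base. This keeps the coefficients DCC and lowers the dimension, so we can use induction on $d$.

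The main obstacle is Step 2–3's global case: showing that maximality of $t_i$ forces a numerically trivial lc structure to which the global ACC applies, with $M_i$ nonzero on it. The tool I would try first is an MMP on $-(K+\Delta_i+t_iM_i)$ for $t$ slightly above $t_i$, ending in a Mori fibre space on which the divisor becomes nonpseudoeffective.
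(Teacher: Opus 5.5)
Your skeleton is the right one, and it matches the route the paper points to. The paper does not reprove this theorem. It quotes \cite[Lemma 14]{Shokurov20} (with the surface analogue \thref{prop: reduce complementary threshold}), which turns an $\mathbb{R}$-complementary threshold into either an lct or an act on an lc $0$-pair, and then uses ACC for lct and the global ACC of \cite{HMX}. The gap is that this reduction is exactly the step you leave open, and the dichotomy you posit in Step 2 is not exhaustive.

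\textbf{A counterexample to your dichotomy.} Let $X=\mathbb{F}_1\to\mathbb{P}^2$ be the blow-up of a point $p$, with exceptional curve $C$. Let $L_1',L_2',L_3'$ be the strict transforms of three lines through $p$, and put $\Delta=\frac12L_1'+\frac23L_2'$ and $M=L_3'$.
\begin{itemize}
\item The $L_j'$ are disjoint fibres, so $(X,\Delta+tM)$ is klt for every $t<1$.
\item $-(K_X+\Delta+tM)=(3-\frac76-t)H+(\frac16+t)C$ is big for $t$ near $\frac56$.
\item Nevertheless $\mathbb{R}\mathrm{-ct}(X,\Delta;M)=\frac56$. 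Any complement $B^+$ is the crepant pullback of its pushforward to $\mathbb{P}^2$, whose multiplicity at $p$ is at least $\frac76+t$. So the coefficient of $C$ in $B^+$ exceeds $1$ once $t>\frac56$.
\end{itemize}
There is no new lc centre on $X$ and no failure of pseudo-effectivity. Instead $t=\frac56$ is an lct on a \emph{different} model: the outcome $\mathbb{P}^2$ of the $-(K_X+\Delta+tM)$-MMP, which contracts $C$ because $(K_X+\Delta+tM)\cdot C=\frac16+t>0$.

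\textbf{How to close the gap.} Your suggested MMP does the job once the case split is made on its outcome rather than on $X_i$.
\begin{itemize}
\item Replace $X_i$ by a small $\mathbb{Q}$-factorialization. For $t\downarrow t_i$, run a $-(K+\Delta_i+tM_i)$-MMP over $Z_i$.
\item Since $X_i/Z_i$ is of Fano type, there are finitely many possible outcomes. So along a sequence $t_k\downarrow t_i$ the outcome $Y_i$ and its final contraction are fixed.
\item Push forward complements for $t<t_i$; this is crepant because $K+B^+\equiv 0/Z_i$. By closedness, $(Y_i,\Delta_{Y_i}+t_iM_{Y_i})$ is lc.
\item \emph{Minimal model case.} If $Y_i$ is a minimal model, then $(Y_i,\Delta_{Y_i}+t_kM_{Y_i})$ is not lc. Otherwise semi-ampleness gives a complement, which lifts to $X_i$ by the negativity lemma. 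Hence $t_i=\mathrm{lct}(Y_i,\Delta_{Y_i};M_{Y_i})$ with the same coefficients, and ACC for lct applies.
\item \emph{Mori fibre space case.} If $Y_i\to W_i$ is a Mori fibre space, then $K+\Delta+t_kM$ is ample over $W_i$. On the other hand, $\rho(Y_i/W_i)=1$ and the pushed-forward complements make $K+\Delta+tM$ anti-nef over $W_i$ for $t<t_i$. Hence $K_{Y_i}+\Delta_{Y_i}+t_iM_{Y_i}\equiv 0/W_i$, and $M_{Y_i}$ is ample over $W_i$. A general fibre is then a projective lc numerically trivial pair of dimension at most $d$, with DCC coefficients and $M$ nonzero, so the global ACC applies.
\end{itemize}

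\textbf{Parts of your outline that should go.}
\begin{itemize}
\item Your ``vertical obstruction'' branch never occurs, so no canonical bundle formula or induction on $d$ is needed. You also conflate the given base $Z_i$ with the base $W_i$ of the Mori fibre space.
\item The attainment claim in Step 1 is unnecessary: lc and nefness are closed conditions on the fixed model $Y_i$.
\item Delete Step 2. Bounded $n$-complements are never used in Step 3. Moreover, lowering coefficients to a hyperstandard approximation changes the pair, so those complements carry no information about the maximality of $t_i$.
\end{itemize}
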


    The ACC property implies the existence of gap numbers. More explicitly, for any ACC set $S$ and any element $s \in S$, the subset $\{s' \in S \mid s' < s \}$ has a maximal number. For log canonical thresholds and $\mathbb{R}$-complementary thresholds, the gap number below 1 provides effective boundedness for many birational invariants. The first and second gap numbers of the $\mathbb{R}$-complementary thresholds for surfaces without boundary were computed in \cite{LiuSho}:

\begin{theorem}[cf. \cite{LiuSho}, Theorem 1.3]\label{thm: gap number for trivial coefficients}
    Let $X$ be an lc surface with an $\mathbb{R}$-complement and $F$ be a nonzero effective Weil divisor on $X$. Then we have either $\mathbb{R}\mathrm{-ct}(X,0;F) = 1$, or $\mathbb{R}\mathrm{-ct}(X,0;F) = \frac{12}{13}$, or $\mathbb{R}\mathrm{-ct}(X,0;F) \leq \frac{10}{11}$.
\end{theorem}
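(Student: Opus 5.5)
The plan is to reduce the computation of $t=\mathbb{R}\mathrm{-ct}(X,0;F)$ to a finite combinatorial problem on a minimal resolution, or on a dlt model, of a pair that is log canonical but not klt.

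\textbf{Reduction to an lc threshold configuration.} Assume $t<1$. The pair $(X,tF)$ then has an $\mathbb{R}$-complement $(X,tF+B^+)$ that is lc. Maximality of $t$ means that no $\mathbb{R}$-complement exists for any larger multiple. By the usual argument (cf.\ the proof of ACC for $\mathbb{R}$-complementary thresholds), this maximality is witnessed in one of two ways:
\begin{enumerate}
\item[(a)] $(X,tF)$ itself acquires a log canonical centre, so $t$ is an lc threshold $\mathrm{lct}(X,0;F)$ in a neighbourhood of a point;
\item[(b)] $-(K_X+tF)$ stops being numerically ``complementable'' on some extremal contraction. After running a $-(K_X+tF)$-MMP, this reduces to a Mori fibre space $X'\to Z$ with $\dim Z\in\{0,1\}$, on which $K_{X'}+tF'\equiv 0$ over $Z$ and some lc place is present.
\end{enumerate}

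\textbf{Local and curve cases.} In case (a), $t$ is the lc threshold of a nonzero integral Weil divisor at a surface lc singularity. Using the classification of surface lc singularities (chains and dual graphs of the minimal resolution), $t$ is determined by the equations $a(E_i,X,tF)=-1$ along the exceptional chain. The possible values are of the form $1-\frac{1}{m}$-type numbers coming from discrepancies. I would check directly that they are either $1$ or at most $\frac{5}{6}<\frac{10}{11}$. The smooth case and cyclic quotient cases give $\frac{1}{2},\frac{2}{3},\frac{5}{6},\dots$ for cusps, and so on.

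For $\dim Z=1$, restricting to a general fibre $\mathbb{P}^1$ gives $t\deg F|_{\text{fibre}}\le 2$ with an integrality constraint, so $t\le\frac{2}{3}$ or $t=1$. Otherwise adjunction to a horizontal lc centre reduces to coefficients of the form $1-\frac{1}{n}$ on a curve, where the gaps are explicit.

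\textbf{Main case: $\dim Z=0$.} Here $X'$ is a rank-one del Pezzo surface with lc singularities and $K_{X'}+tF'\equiv 0$, so $t=\frac{-K_{X'}\cdot H}{F'\cdot H}$. The hardest step is classifying these when $t$ is close to $1$, namely $t\in(\frac{10}{11},1)$. My approach would be as follows. Take $\Gamma\subset\mathbb{Q}$ and pass to the minimal resolution. Write $K_Y+tF_Y+\sum(1-b_i)E_i\equiv 0$, where $b_i$ are determined linearly by $t$ through the dual graph. Then use that the lc places force some coefficient to equal $1$. Solving these linear equations over the finitely many admissible graph types, using Noether-type bounds on $\rho(Y)$ and the Bogomolov-type inequality for rank-one log del Pezzos, should show that the only value in $(\frac{10}{11},1)$ is $\frac{12}{13}$. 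This value should be realized by an explicit example, e.g.\ a weighted-projective-type surface with a specific $F$. I expect this enumeration to be the main obstacle, since bounding the dual graphs uniformly requires the ACC argument made effective.
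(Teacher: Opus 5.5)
The paper does not prove this statement. It quotes it from \cite{LiuSho} and uses it as an input (e.g.\ in \thref{statement: bound genus of B1}), so your proposal has to stand on its own.

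\textbf{What is fine.} Your reduction to an lct germ or a Mori fibre space is the standard one; it is \thref{prop: reduce complementary threshold}. The local bound $t=1$ or $t\le\frac56$ is \thref{thm:lct gap} with $\Gamma=\{0,1\}$. The relative-dimension-one computation is also fine.

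\textbf{The gap.} The genuine gap is the case $\dim Z=0$, which carries the whole content of the theorem, and there your mechanism cannot work.

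First, in that case $t=\mathrm{act}(X',B';F')<\mathrm{lct}(X',B';F')$, and no lc centre of $(X',B'+tF')$ lies in $\mathrm{Supp}F'$. If this pair is not klt, adjunction to an lc centre (which meets $F'$ because $\rho(X')=1$) quickly gives $t\le\frac56$, as in the proof of \thref{thm: Rct gap}. Note that $B'$ may contain reduced components created by the dlt modification, which you dropped. The hard subcase is the klt one, where $B'=0$ and $K_{X'}+tF'\equiv 0$. There is no lc place and no coefficient equal to $1$. The linear equations for the crepant coefficients on the minimal resolution are solvable for every $t$, and $t$ is fixed only globally, by $t=-K_{X'}\cdot F'/F'^{2}$.

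Second, the finiteness of ``admissible graph types'' is false. Rank-one klt del Pezzo surfaces are unbounded, and so is the Picard number of their minimal resolutions: $\mathbb{P}(1,n,n+1)$ has an $A_n$ point. Noether's formula $K_Y^2+\rho(Y)=10$ bounds nothing, since $K_Y^2$ is unbounded below. BMY-type inequalities bound the number of singular points, not their indices. What you call the main obstacle is the theorem itself.

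\textbf{The missing idea.} You need an equation that sees only $F'$ and the singular points on it. Since $t>\frac{10}{11}>\frac56$, \thref{thm:lct gap} forces $(X',F')$ to be lc. Adjunction (for $F'$ prime, say) gives
\[
(1-t)F'^{2}=(K_{X'}+F')\cdot F'=2g(\widetilde{F'})-2+\deg\mathrm{Diff}_{\widetilde{F'}}(0).
\]
Here the different has coefficients $1$ or $1-\frac{1}{m_i}$, where $m_i$ are the indices of the points of $X'$ on $F'$. The left side is positive, so the numerator in
\[
t=1-\frac{2g-2+\deg\mathrm{Diff}}{F'^{2}}
\]
is at least $\frac{1}{42}$, attained by $\frac12+\frac23+\frac67$. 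Singularities away from $F'$ do not enter this equation.

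The problem thus becomes bounding $F'^{2}$ against the data along $F'$. That step needs genuine geometry, for instance as in the paper's treatment of \thref{thm: act gap}:
\begin{itemize}
\item make the strict transform Cartier on a partial resolution and write $F'^{2}=d+\sum q_i/m_i$;
\item map by its complete linear system to $\mathbb{P}^1$ or to a surface of minimal degree;
\item analyse the resulting dual graphs.
\end{itemize}
Here $\deg\mathrm{Diff}>2$, the opposite of what the paper arranges in \thref{statement: bound genus of B1} using this very theorem. So that case analysis does not transfer verbatim.

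The extremal value arises from exactly this equation in a klt configuration. A smooth rational $F'$ through points of indices $2,3,7$ with $F'^{2}=\frac{13}{42}$ gives $(1-t)\cdot\frac{13}{42}=\frac{1}{42}$, i.e.\ $t=\frac{12}{13}$. No lc-place equation detects this.
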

    
\subsection{Main results}
    The main goal of this article is to provide a computer-assisted algorithm to compute the explicit gap numbers for any rational hyperstandard coefficient set. As an application, we compute the gap numbers for a specific coefficient set $\Gamma(\mathcal{N}_{1})$ (see \thref{statement: setup}). This coefficient set is used to obtain the effective boundedness of surface $n$-complements.
    
    The first result concerns a gap number of local type:

\begin{theorem}\label{thm:lct gap}
    Let $\Gamma \subset [0,1]$ be a DCC set and assume $0,1 \in \Gamma$. Let
    $$
    \widetilde{\Gamma} = \{\frac{m - 1 + \sum n_{i}b_{i}}{m} \mid m \in \mathbb{Z}_{>0}, n_{i} \in \mathbb{Z}_{\geq 0}, b_{i} \in \Gamma \} \cap [0,1].
    $$
    be the set of adjunction coefficients of $\Gamma$. Let $(X,B)$ be an lc surface pair such that $B\in \Gamma$. Let $F$ be a nonzero effective $\mathbb{Q}$-Cartier Weil divisor on $X$ and write
    $$
    t=\mathrm{lct}(X,B;F).
    $$
    Then either $t=1$ or $t \leq 1 - \min\{ \frac{1}{3}, \frac{\alpha_{0}(\widetilde{\Gamma})}{2}, \alpha_{1}(\widetilde{\Gamma}) - 1 \}$. Here (cf. \thref{def: gap numbers})
    $$
    \alpha_{u}(\widetilde{\Gamma}) =\min \{ \sum\limits_{i=1}^{m} n_{i}b_{i} \mid m \in \mathbb{Z}_{>0}, n_{i} \in \mathbb{Z}_{\geq 0} , b_{i} \in \widetilde{\Gamma}, \sum\limits_{i=1}^{m} n_{i}b_{i} > u \}.
    $$
    Moreover, the same is true if we replace the lc surface pair $(X,B)$ by a local lc surface pair $(x \in X,B)$, and replace the log canonical threshold by its local version.
\end{theorem}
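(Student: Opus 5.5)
We reduce to a local computation at a closed point. Assume $t<1$. Then $(X,B+tF)$ is lc but not klt along some lc center, and every lc center lies in $\mathrm{Supp}\,F$. Two cases arise.

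\emph{Case 1: the lc center is a curve.} Then it is a prime component $C$ of $B+tF$ with coefficient $1$, so $b+tf=1$ with $b\in\Gamma$ and $f\ge 1$ an integer. Since $b<1$ (otherwise $t=0$, which is excluded by the definition of $t$ as the threshold of a nonzero divisor on an lc pair; if $b=1$ then $t=0\le$ the bound anyway), we get $t=(1-b)/f\le 1-b$. We then need $1-b \le 1-\min\{\dots\}$, i.e.\ $b\ge \min\{\dots\}$. This is not automatic, so we may have to reorganize: the cleaner route is to take a log canonical place at a point instead. Concretely, we reduce to the case where $t$ is computed at a closed point $x\in X$. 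Indeed, if $t$ is computed only by a curve $C$, we localize at a general point of $C$, where $X$ is smooth. Then $t$ is determined by $\mathrm{mult}_C$, and the formula $t=(1-b)/f$ with $b\in\Gamma\subseteq\widetilde\Gamma$ must be compared directly. Here $1-t=b+(f-1)t\ge b$, and if $b>0$ then $b\ge\alpha_0(\widetilde\Gamma)$ (since $b\in\widetilde\Gamma$ is a positive sum). Hence $1-t\ge\alpha_0\ge\alpha_0/2$. If $b=0$ then $t=1/f\le 1/2$, which gives $1-t\ge 1/2\ge 1/3$. Either way the bound holds.

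\emph{Case 2: the minimal lc center is a closed point $x$.} Pass to a dlt modification, or more concretely take the minimal resolution (or a plt blow-up) extracting a single divisor $E$ over $x$ with $a(E,X,B+tF)=0$, so that $(Y,E+B_Y+tF_Y)$ is plt near $E$ with $-E$ relatively ample. We then apply adjunction to $E\cong\mathbb P^1$:
\[
K_E+\mathrm{Diff}_E(B_Y+tF_Y)\sim_{\mathbb R}0,\qquad \deg \mathrm{Diff}_E=2 .
\]
The coefficients of $\mathrm{Diff}_E(B_Y)$ lie in $\widetilde\Gamma$ by the standard adjunction formula, giving coefficients of the form $\frac{m-1+\sum n_ib_i}{m}$. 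Those of $\mathrm{Diff}_E(B_Y+tF_Y)$ have the form $\frac{m-1+\sum n_ib_i+tk}{m}$ with $k\in\mathbb Z_{\ge0}$, and since $F$ meets $x$, some $k>0$. Comparing $\deg\mathrm{Diff}_E(B_Y)=2-t\,(F_Y\cdot E)$ with the degree $2$ at $t$ yields the key inequality. The set of degrees $\sum d_j$ of boundaries on $\mathbb P^1$ with coefficients $d_j\in\widetilde\Gamma$ has $2$ as a value, and such a sum is either $\le 2-\min\{\text{gap}\}$ or equals $2$. The possible gaps below $2$ are controlled by $\alpha_1(\widetilde\Gamma)-1$ (three or more points, using that three coefficients of the form $1-1/m$ summing near $2$ leave a gap of size at least $1/3$ type, e.g.\ the $(2,3,m)$ analysis) and by $\alpha_0/2$ (the two-point or toric case, where the $F$-contribution splits over two points). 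We then deduce $t\,(F_Y\cdot E)\ge\min$, and since increasing $t$ to $1$ must violate lc, a rescaling argument converts this into $1-t\ge\min\{1/3,\alpha_0/2,\alpha_1-1\}$.

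\textbf{Main obstacle.} The hardest step is the final one. Turning the degree gap on $E$ into a bound on $1-t$ itself, rather than on $t(F_Y\cdot E)$, requires a case split on the number of points of $\mathrm{Diff}$ that meet $F_Y$. We would handle this by noting that the coefficient at a point $p$ where $F_Y$ passes is $c_p(t)=\frac{m-1+\sum n_ib_i+tk}{m}$, while $c_p(1)-c_p(t)=\frac{(1-t)k}{m}$. Summing, $2-\deg\mathrm{Diff}_E(B_Y+F_Y)\cdot\ldots$ is bounded using the cases $m=1$ (which contributes $\alpha_0$, with a factor $2$ lost to two points) and $m\ge2$ (which is bounded by $1/3$ after reducing to the $(2,3,\cdot)$ and $(2,2,\cdot)$ platonic configurations).
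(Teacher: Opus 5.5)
Your Case 1 is correct and is exactly the paper's first step. Extracting one divisor $E$ with $a(E;X,B+tF)=0$ and doing adjunction on $E$ is also the paper's framework. The gap is in Case 2, at the step you yourself flag: the mechanism you propose for turning the adjunction equation into a bound on $1-t$ does not work.

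\textbf{Where your Case 2 argument fails.} Your claim that a sum of coefficients from $\widetilde\Gamma$ is either $2$ or at most $2$ minus a fixed gap is false. For instance $\frac12+\frac12+\frac{m-1}{m}=2-\frac1m$, with all three terms in $\widetilde\Gamma$ (use $0\in\Gamma$), so such sums accumulate at $2$ from below. The numbers $\alpha_u(\widetilde\Gamma)$ are gaps \emph{above} $u$, coming from DCC; they say nothing about gaps below $2$. Concretely, suppose $\mathrm{Diff}_E(0)=\frac12p_1+\frac12p_2+\frac{m-1}{m}p_3$ and $F_Y$ meets $E$ once, transversally, at $p_3$. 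Then $t=1$ but $t(F_Y\cdot E)=\frac1m$, so your intermediate inequality $t(F_Y\cdot E)\ge\min\{\cdots\}$ fails for large $m$. Independently of this, no ``rescaling'' turns a bound on $t(F_Y\cdot E)$ into a bound on $1-t$; your sum $\sum_p(1-t)k_p/m_p$ has the same uncontrolled factors $1/m_p$. You have also misattributed the three terms of the minimum: $\frac13$ does not come from $(2,3,\cdot)$ or $(2,2,\cdot)$ configurations, and $\frac{\alpha_0}{2}$ is not an $m=1$ phenomenon.

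\textbf{The missing idea: count the points of $E$ met by $F_Y$.} Put $\delta=\min\{\frac13,\frac{\alpha_0(\widetilde\Gamma)}{2},\alpha_1(\widetilde\Gamma)-1\}$ and suppose $1-\delta<t<1$.

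Take a point $p\in E$ of index $m_p$ where $F_Y$ meets $E$ (with $m_p=1$ in the smooth locus). The coefficient $\frac{m_p-1+k_pt+\sum_j r_{pj}b_j}{m_p}$ of $\mathrm{Diff}_E(B_Y+tF_Y)$ at $p$ is at most $1$. Since $t>\frac12$, this forces $k_p=1$. Any positive $\sum_j r_{pj}b_j$ is at least $\alpha_0(\widetilde\Gamma)\ge 2\delta>1-t$, so also $\sum_j r_{pj}b_j=0$. Hence the coefficient at $p$ equals $\frac{m_p-1+t}{m_p}\in[t,1)$.

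Now write $\deg\mathrm{Diff}=T+A$. Here $T$ is the sum of these $t$-terms; $T>0$ because $\mathrm{mult}_EF>0$ gives $F_Y\cdot E>0$. The remainder $A$ is a nonnegative integral combination of elements of $\widetilde\Gamma$; non-plt points contribute $1\in\widetilde\Gamma$.

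The adjunction equation $2g(\widetilde E)-2+\deg\mathrm{Diff}=0$ with $T>0$ already forces $g=0$. So you do not need to assume a plt blow-up or $E\cong\mathbb{P}^1$, and $A+T=2$.

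Each $t$-term exceeds $1-\delta\ge\frac23$, so there are at most two of them; this is where $\frac13$ enters.
\begin{itemize}
\item With one $t$-term, $T<1$, so $A=2-T>1$. Hence $A\ge\alpha_1(\widetilde\Gamma)$ and $t\le T\le 2-\alpha_1(\widetilde\Gamma)$.
\item With two $t$-terms, $T<2$ forces $A>0$. Hence $A\ge\alpha_0(\widetilde\Gamma)$ and $2t\le T\le 2-\alpha_0(\widetilde\Gamma)$.
\end{itemize}
Either way $t\le 1-\delta$, a contradiction. This is the paper's argument; it treats the case of one non-plt point separately, but that case fits the same scheme.

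\textbf{A minor point.} Your statement that every lc center lies in $\mathrm{Supp}\,F$ is false when $B$ has coefficient-one components. You only need one lc place $E$ with $\mathrm{mult}_EF>0$.
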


    For our specific coefficient set, we have

\begin{corollary}\label{corollary: lct gap special coefficients}
    Let $(X,B)$ be an lc surface pair such that $B\in \Gamma(\mathcal{N}_{1})$. Let $F$ be a nonzero effective $\mathbb{Q}$-Cartier Weil divisor on $X$ and write
    $$
    t=\mathrm{lct}(X,B;F).
    $$
    Then either $t=1$ or $t \leq 1 - \frac{1}{252}$. Moreover, the same is true if we replace the lc surface pair $(X,B)$ by a local lc surface pair $(x \in X,B)$, and replace the log canonical threshold by its local version.
\end{corollary}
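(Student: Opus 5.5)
The plan is to apply Theorem \ref{thm:lct gap} with $\Gamma=\Gamma(\mathcal{N}_1)$. Then the corollary reduces to the numerical claim
$$\min\Bigl\{\tfrac13,\ \tfrac{\alpha_0(\widetilde{\Gamma})}{2},\ \alpha_1(\widetilde{\Gamma})-1\Bigr\}\ \ge\ \tfrac1{252},$$
where $\widetilde{\Gamma}$ is the adjunction set of $\Gamma(\mathcal{N}_1)$. The local version then follows verbatim from the local statement in Theorem \ref{thm:lct gap}. First I would check the hypotheses of Theorem \ref{thm:lct gap}. From the setup in \thref{statement: setup}, $\Gamma(\mathcal{N}_1)$ is a rational hyperstandard set, i.e. of the form $\{\frac{m-1+r}{m}\}$ with $r$ ranging over a finite set $\mathcal{N}_1\subset[0,1]$ of rationals. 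Such a set is DCC and contains $0$ (take $m=1,r=0$) and $1$ (take $r=1$ or $m\to$ the limit point $1$, which is included by convention).

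Next I would describe $\widetilde{\Gamma}$ concretely. Since $\Gamma$ is hyperstandard, a standard computation gives a closed form for $\widetilde{\Gamma}$. An element $\frac{m-1+\sum n_ib_i}{m}$ with $b_i=\frac{k_i-1+r_i}{k_i}$ is again of hyperstandard shape $\frac{m'-1+r'}{m'}$, where $r'$ lies in the set $\mathcal{R}$ of sums $\sum n_ir_i$ of elements of $\mathcal{N}_1\cup\{1\}$ that lie in $[0,1]$. Because $\mathcal{N}_1$ is finite and rational with bounded denominators, $\mathcal{R}$ is finite. Consequently $\widetilde{\Gamma}\setminus\{0\}$ has a positive minimum, and the set of finite sums of elements of $\widetilde{\Gamma}$ is discrete away from its accumulation points.

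The two invariants then reduce to finite optimisations.
\begin{enumerate}
\item $\alpha_0(\widetilde{\Gamma})$ is the smallest positive element of $\widetilde{\Gamma}$. This is $\min\{r'/1,\ \frac{m-1+r'}{m}\}$ over $r'\in\mathcal{R}$, which is just the minimal positive value, obtained at $m=1$ or $m=2$.
\item $\alpha_1(\widetilde{\Gamma})$ is the smallest sum $\sum n_ib_i>1$. Such a sum uses at most finitely many terms, since every element is at least $\alpha_0$. Moreover, any term close to $1$ forces the remaining part to be a small positive element. So one bounds $m$ in each summand and enumerates the candidates. The typical minimizers are $\frac{m-1+r}{m}+\varepsilon$ with $\varepsilon$ small, or two or three terms near $\frac12,\frac23$.
\end{enumerate}
I would carry out this enumeration explicitly, or via the algorithm the paper proposes, and expect $\alpha_1-1=\frac1{252}$ while $\alpha_0/2$ is much larger. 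A value like $\frac1{252}$ plausibly comes from a combination such as $\frac12+\frac{n}{m}+\dots$ with denominators whose lcm is $252=4\cdot 7\cdot 9$.

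The main obstacle is the $\alpha_1$ computation. One must prove that no sum exceeds $1$ by less than $\frac1{252}$, and the difficulty is that $m$ is unbounded a priori. The first thing I would try is the following case split. If some summand is $\ge 1-\frac1{252}$, then the rest of the sum, being positive, is at least $\alpha_0\gg\frac1{252}$. Otherwise every summand has bounded $m$, since $\frac{m-1+r'}{m}<1-\frac1{252}$ forces $m\le 252$. This leaves a finite check over rationals with bounded denominators, which I would organise by number of summands, at most $\lfloor 1/\alpha_0\rfloor+1$.
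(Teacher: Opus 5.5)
Your proposal is the paper's argument: the corollary is Theorem~\ref{thm:lct gap} with $\Gamma=\Gamma(\mathcal{N}_1)$, combined with the values $\alpha_0(\widetilde{\Gamma(\mathcal{N}_1)})=\frac17$ and $\alpha_1(\widetilde{\Gamma(\mathcal{N}_1)})=1+\frac1{252}$ recorded in Example~\ref{exam: gap numbers}. Those values are computed by Algorithm~\ref{algorithm:gap number}, and the upper bound on $\alpha_1$ is realized by $\frac17+\frac{31}{36}=\frac{253}{252}$, so $\min\{\frac13,\frac1{14},\frac1{252}\}=\frac1{252}$. Your cutoff-at-$1-\frac1{252}$ enumeration is a valid substitute for that algorithm; just note that the finite set of $r$'s in $\frac{m-1+r}{m}$ consists of the numbers $1-\rho$ with $\rho$ in the explicit list defining $\Gamma(\mathcal{N}_1)$, not $\mathcal{N}_1=\{1,2,3,4,6\}$ itself.
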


    The next two results concern the gap number of global thresholds:

\begin{theorem}\label{thm: act gap}
    Let $X$ be a $\mathbb{Q}$-factorial surface such that $\rho(X)=1$. Let $(X,B)$ be a klt 0-pair. Write
    $$
    B = b_{1}F + \sum\limits_{i=2}^{n} b_{i}D_{i},
    $$
    where $F$ is a nonzero effective Weil divisor and $D_{i}$ are distinct prime divisors on $X$. Assume that $b_{i} \in \Gamma(\mathcal{N}_{1})$ for $i = 2, \cdots, n$. Then $b_{1} \leq 1 - \frac{1}{127260}$. Moreover, equality holds if and only if $(X,B)$ is one of the pairs in \thref{example: optimal bound for act} and \thref{example: optimal bound for act 2}, with $F=L_4$ and $F=D_1$, respectively..
\end{theorem}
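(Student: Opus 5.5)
We bound $b_{1}$ through the geometry of the $\rho=1$ surface. Since $(X,B)$ is a klt $0$-pair, $K_X+B\equiv 0$ and $X$ is a klt del Pezzo surface of Picard number one. The plan has four steps: reduce to a bounded class of surfaces and boundaries, turn the problem into a finite optimization, solve it by computer, and read off the equality case.

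\textbf{Step 1: reduction.} Suppose $b_{1}$ is close to $1$. Then $(X,B)$ is close to a non-klt lc $0$-pair obtained by raising $b_{1}$ to $1$ (after a small perturbation if necessary). Raising $b_1$ to $1$ while keeping numerical triviality is impossible when $\rho=1$ and $F$ is ample. So instead we consider the $\mathbb{R}$-complementary threshold of $F$ with respect to $(X,\sum_{i\ge2}b_iD_i)$, which lies in an ACC set; this gives a gap below $1$.

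To make the gap explicit, we use that $F$ is a nonzero effective Weil divisor on a surface with $\rho=1$. Numerically we have
$$-K_X\equiv b_1F+\sum_{i\ge 2} b_iD_i.$$
Intersecting with a curve, or comparing in $N^1(X)\cong\mathbb{R}$, gives $b_1=(\deg(-K_X)-\sum b_i\deg D_i)/\deg F$. We then pass to the minimal resolution. There the discrepancies of the singularities are determined by the chains and forks of exceptional curves, and the klt condition bounds the local types.

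\textbf{Step 2: finite optimization.} The key claim is that when $1-b_1$ is small, both the singularities of $X$ and the coefficients of $B$ lie in a finite list. The argument: $b_{i}\in\Gamma(\mathcal{N}_{1})$, a DCC hyperstandard set. By Theorem \ref{thm:lct gap} and Corollary \ref{corollary: lct gap special coefficients}, near each point of $F$ the local lc thresholds have a gap of size $1/252$. This forces the local singularity types along $F$ to be bounded, since otherwise the pair $(X,B+\epsilon F)$ would become lc at a smaller threshold. Then $b_1$ equals a rational expression in the self-intersection numbers of the minimal-resolution graph and the values $b_i$. Bounding the graph (the number of singular points is at most $4$ for Picard number one with klt boundary near $1$) reduces the problem to a finite search.

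\textbf{Step 3: computer search.} We run the paper's algorithm. We enumerate the admissible singularity configurations of log del Pezzo surfaces of rank one, via the Keel--McKernan style classification of the boundary near $1$, together with coefficient choices from $\Gamma(\mathcal{N}_1)$ up to a bounded denominator. For each configuration we solve the linear numerical condition for $b_1$ and maximize over those with $b_1<1$. The maximum should be $1-\frac{1}{127260}$, attained by the configurations of \thref{example: optimal bound for act} and \thref{example: optimal bound for act 2}.

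\textbf{Step 4: equality.} The uniqueness statement follows by checking that only those two entries of the finite list achieve the bound.

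The main obstacle is Step 2: proving that a near-$1$ value of $b_1$ forces bounded combinatorial data. This is the part we expect to require the most new argument. We would first try a covering-family or tiger argument: if the singularities were unbounded, a long chain would produce a non-klt center for $(X,B+\delta F)$ with $\delta$ small, contradicting the lct gap. Making this effective, so that the resulting search is actually finite, is where the difficulty lies.
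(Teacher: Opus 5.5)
There is a genuine gap, and it is the step you flag yourself. Step 2 is not a technical point; it is the entire content of the theorem, and the mechanism you suggest for it cannot work.

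Once $F=D_1$ is prime and $b_1>\frac{251}{252}$, the lct gap (Theorem~\ref{thm:lct gap}) yields only that $(X,D_1+\sum_{i\geq 2}b_iD_i)$ is lc. This still allows cyclic quotient points on $D_1$ of arbitrarily large index $m$, with $(X,D_1)$ plt there. It also says nothing about singularities off $D_1$. The surfaces in question form an unbounded family: already $\mathbb{P}(1,1,d)$ occurs for all $d$. Moreover, the crepant coefficients of exceptional curves over points of $D_1$ are close to $b_1$, so no uniform $\epsilon$-lc boundedness applies. Hence no enumeration of rank-one log del Pezzo configurations is finite.

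Your formula $b_1=(-K_X-\sum b_iD_i)\cdot H/(F\cdot H)$ does not help either, since $-K_X\cdot F$ is not controlled. The appeal in Step 1 to ACC for $\mathbb{R}$-complementary thresholds gives only the existence of some gap, not its value. In the paper, the explicit version (Theorem~\ref{thm: Rct gap}) is deduced from the very statement you are proving.

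The missing idea is to eliminate $K_X$ by adjunction to $F$. First reduce to $F=D_1$ prime: multiplicities $\geq 2$ force $b_1<\frac12$ by klt, and a reducible reduced $F$ is handled by interpolating coefficients using the lct gap. Then reduce to $g(\widetilde{D_1})\leq 1$, via the Liu--Shokurov $\frac{12}{13}$ gap, which gives $(K_X+D_1)\cdot D_1\leq 0$. One then gets
\[
(1-b_1)D_1^2=2g(\widetilde{D_1})-2+\deg\mathrm{Diff}_{\widetilde{D_1}}\Bigl(\sum_{i\geq 2}b_iD_i\Bigr).
\]
For $g=0$ the right-hand side is a positive number of the form $\beta-2$, where $\beta$ is a nonnegative integral combination of elements of $\widetilde{\Gamma(\mathcal{N}_1)}$. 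The gap numbers $\alpha_1=1+\frac{1}{252}$ and $\alpha_2=2+\frac{1}{127260}$ bound it from below. They also bound the indices on $D_1$ arithmetically: $m\geq 254$ is excluded because $\frac{253}{254}+\alpha_1=2+\frac{1}{32004}$. So the only unbounded quantity left is $D_1^2$.

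Bounding $D_1^2$ is the second missing ingredient:
\begin{enumerate}
\item Extract one divisor over each singular point of $X$ on $D_1$, to get $X'$ smooth along $D_1'$.
\item Use vanishing to show that $|D_1'|$ is free and maps $X'$ birationally onto a surface of minimal degree.
\item Use $\rho(X)=1$, together with the bound $c\geq 1-\frac{2}{m}$ on the coefficient of the negative section in a klt $0$-pair on $\mathbb{F}_m$, to reduce (up to easily handled cases) to $(D_1')^2\leq 1$ or $X_{d'}\cong\mathbb{P}(1,1,d')$ with $d'\leq 14$.
\end{enumerate}
Only after this is the search finite.

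Even then, maximizing the numerical relaxation, as in your Steps 3--4, overshoots. Table~\ref{table: potential numerical cases for 1 singularity} contains rows with $a=127260$ (for example $m=2$ or $m=3$). Since $q\geq 1$, these would give $b_1=1-\frac{1}{127260(1+q/m)}>1-\frac{1}{127260}$. They are excluded only by realizability arguments on the dual graph of $X^{min}$. For instance, three lines of $\mathbb{P}^2$ through one point with total coefficient $3-b_1>2$ violate lc. The same kind of analysis is needed to pin down $q$ and to identify the equality case $\mathbb{P}(1,2,3)$, so Step 4 cannot be a mere reading-off from a numerical list.
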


\begin{theorem}\label{thm: Rct gap}
    Let $(X,B)$ be a surface pair such that $B\in \Gamma(\mathcal{N}_{1})$ and $(X,B)$ has an $\mathbb{R}$-complement. Let $F$ be a nonzero effective Weil divisor on $X$. Write
    $$
    t=\mathbb{R}\mathrm{-ct}(X,B ;F).
    $$
    Then either $t=1$ or $t \leq 1 - \frac{1}{127260}$.
\end{theorem}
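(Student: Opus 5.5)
The plan is to reduce the global statement to the two previous results: the local gap of \thref{corollary: lct gap special coefficients} and the anticanonical gap of \thref{thm: act gap}. Assume $t<1$; by the ACC for $\mathbb{R}$-complementary thresholds the supremum defining $t$ is attained, so $(X,B+tF)$ has an $\mathbb{R}$-complement $(X,B^{+})$ with $B^{+}\geq B+tF$. For any $\epsilon>0$, $(X,B+(t+\epsilon)F)$ has no $\mathbb{R}$-complement. There are then two reasons the threshold can be $t$: a local one (log canonicity of $(X,B+(t+\epsilon)F)$ fails) or a global one (positivity fails).

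In the local case some point $x$ satisfies $\mathrm{lct}_x(X,B;F)=t$. Since $t<1$, \thref{corollary: lct gap special coefficients} gives $t\le 1-\frac{1}{252}\le 1-\frac{1}{127260}$. If $F$ is not $\mathbb{Q}$-Cartier at $x$, I would first pass to a small $\mathbb{Q}$-factorialization or dlt modification, which exists because $(X,B^{+})$ is lc. This does not change the local lc threshold, so the same bound holds.

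In the global case $(X,B+tF)$ is lc near every point but has no complement beyond $t$. I will run a $-(K_X+B+tF)$-MMP, or rather an MMP reducing to a Mori fibre space, as in the proof of the ACC for $\mathbb{R}$-ct in \cite{Shokurov20}. The aim is a model $X'$ with a contraction $X'\to Z$ and relative Picard number one, on which the image pair $(X',B'+tF')$ is numerically trivial over $Z$ and $F'$ is not contracted.

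\begin{itemize}[leftmargin=*]
\item If $\dim Z=1$, restrict to a general fibre $\mathbb{P}^1$. The equation becomes $\sum (\text{coefficients}) = 2$, with horizontal coefficients lying in $\Gamma(\mathcal{N}_1)$ apart from $t\cdot(\text{multiplicity of } F)$. The one-dimensional gap $\alpha$-numbers computed from $\Gamma(\mathcal{N}_1)$ then bound $t$ much more strongly than required.
\item If $Z$ is a point, then $X'$ is $\mathbb{Q}$-factorial with $\rho=1$ and $K_{X'}+B'+tF'\equiv 0$.
\end{itemize}

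If $(X',B'+tF')$ is klt, \thref{thm: act gap} applies directly with $b_1=t$ and gives $t\le 1-\frac{1}{127260}$. If it is lc but not klt, an lc centre appears. I would then perform adjunction to a non-klt centre, or to the exceptional curve of a plt blowup, which leads to a lower-dimensional equation with coefficients in $\widetilde{\Gamma}$. This again gives a gap far below $1-\frac1{127260}$, as in the proof of \thref{thm:lct gap}.

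The main obstacle is the global reduction. I must ensure that the MMP preserves the coefficient form, namely that $B'\in\Gamma(\mathcal{N}_1)$ and $F'\neq 0$ with $t$ as the coefficient. I must also ensure that the threshold on $X'$ is still exactly $t$, so that strict inequality beyond $t$ fails there. Since $F$ may be a sum of several components with multiplicities, $tF'$ corresponds to a coefficient $t\cdot m_i$ on each component. In \thref{thm: act gap}, $F$ is an arbitrary effective Weil divisor with coefficient $b_1$, so this matches, provided $F'$ is a nonzero Weil divisor. I would handle this by choosing the MMP as a $(K+B+tF)$-trivial sequence of contractions of curves on which $F$ is positive, so that the components of $F$ survive.
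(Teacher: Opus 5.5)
Your strategy is, in substance, the paper's. The reduction you attribute to \cite{Shokurov20} is exactly \thref{prop: reduce complementary threshold}. After it, the paper uses \thref{corollary: lct gap special coefficients} in the lct case, the fibre equation on $\mathbb{P}^1$ (giving $t\le\frac{251}{252}$), \thref{thm: act gap} when the $\rho=1$ $0$-pair is klt, and otherwise adjunction to a divisorial lc centre or to an extracted lc place. The one structural difference is in how $F$ is handled. The paper first disposes of non-reduced $F$ ($t\le\frac12$), then inducts on the number of components of $F$, passing from prime to reduced $F$ by a convex combination of two $\mathbb{R}$-complements. You instead feed a general $F$ directly into \thref{thm:lct gap} and \thref{thm: act gap}. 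That is legitimate, since both are stated for an arbitrary nonzero effective Weil divisor $F$. You should, however, first discard the case where $F$ and $B$ share a component with $b_i>0$ (then $t\le 1-b_i\le\frac67$), so that $B'+tF'$ really has the shape $b_1F+\sum b_iD_i$ required there.

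Your global reduction, however, is described inaccurately, and one outcome is missing.
\begin{itemize}
\item \textbf{Attainment.} Attainment of $t$ does not follow from ACC, which concerns the set of thresholds rather than a single supremum. The paper uses \cite[Theorem 5.20(7)]{HLS} together with good minimal models of surfaces.
\item \textbf{Purpose of the dlt modification.} It is not needed for $\mathbb{Q}$-Cartierness. Surfaces admit no nontrivial small modifications, and $X$ is already $\mathbb{Q}$-factorial near $\mathrm{Supp}\,F$ when $t>0$, since $(X,0)$ is numerically klt there. It is needed to make $(X,B^{+}+\epsilon E)$ lc for $E=B^{+}-(B+tF)$, so that the $-(K_X+B+tF)$-MMP is a genuine log MMP. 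The reduced components it adds are harmless because $1\in\Gamma(\mathcal{N}_1)$.
\item \textbf{The missing outcome.} That first MMP only makes $E$ semiample, with ample model $Z'$ over $Z$. One then runs a $(K_X+B+(t-\delta)F)$-MMP over $Z'$, which is a $-\delta F$-MMP since $K_X+B+tF\sim_{\mathbb{R},Z'}0$. This can end with $-F'$ semiample over $Z'$ rather than with a Mori fibre space. Then $E'-\epsilon F'$ is semiample over $Z$, so the only obstruction to a complement above $t$ is log canonicity, and $t=\mathrm{lct}(X',B';F')$ on the \emph{new} model. This happens even though $t<\mathrm{lct}(X,B;F)$ in your global case, for instance when a coefficient-$1$ component of $B$ meeting $F$ is contracted onto a point of $F'$. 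Your local/global dichotomy on the original $X$ does not see this outcome; you must apply \thref{corollary: lct gap special coefficients} on $X'$ as well.
\item \textbf{Survival of $F$.} Contracting curves with $F\cdot C>0$ does not preserve all components of $F$. What you need is only $F'\neq 0$. This holds because otherwise a complement of $(X',B')$ would lift, by the negativity lemma, to a complement of $(X,B+(t+\epsilon)F)$.
\end{itemize}
With these repairs your argument coincides with the paper's.
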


\section{Preliminaries}

    In this section, we recall some definitions and results in birational geometry.

\begin{convention}
    Throughout the article, we work over an algebraically closed field of characteristic 0.
\end{convention}

\subsection{Log discrepancies of surface pairs}

    In this subsection, we recall some definitions in birational geometry and some results about surface pairs.

\begin{definition}
    Let $\pi: X \rightarrow U$ be a proper morphism between normal algebraic spaces.
\begin{enumerate}
    \item An \emph{$\mathbb{R}$-Weil divisor} (frequently abbreviated to \emph{$\mathbb{R}$-divisor}) $D$ on $X$ is an $\mathbb{R}$-linear combination $D = \sum d_{i}D_{i}$ of prime divisors. If further we have $0 \leq d_{i} \leq 1$ for all $i$, we say that $D$ is an \emph{$\mathbb{R}$-boundary} (frequently abbreviated to \emph{boundary}).
    
    \item An \emph{$\mathbb{R}$-Cartier divisor} $D$ is an $\mathbb{R}$-linear combination of Cartier divisors.
    
    \item Two $\mathbb{R}$-divisors $D$ and $D'$ are \emph{$\mathbb{R}$-linearly equivalent over $U$}, denoted
    \[
    D \sim_{\mathbb{R},U} D',
    \]
    if their difference is an $\mathbb{R}$-linear combination of principal divisors and an $\mathbb{R}$-Cartier divisor pulled back from $U$.
    
    \item Two $\mathbb{R}$-divisors $D$ and $D'$ are \emph{numerically equivalent over $U$}, denoted
    \[
    D \equiv_U D',
    \]
    if their difference is an $\mathbb{R}$-Cartier divisor such that
    \[
    (D-D')\cdot C = 0
    \]
    for any curve $C$ contained in a fibre of $\pi$.
    
    \item An $\mathbb{R}$-Cartier divisor $D$ is \emph{ample over $U$} (or \emph{$\pi$-ample}) if it is $\mathbb{R}$-linearly equivalent to a positive linear combination of ample (in the usual sense) Cartier divisors over $U$.
    
    \item An $\mathbb{R}$-Cartier divisor $D$ on $X$ is \emph{nef over $U$} (or \emph{$\pi$-nef}) if
    \[
    D \cdot C \geq 0
    \]
    for any curve $C \subset X$, contracted by $\pi$.
    
    \item An $\mathbb{R}$-Cartier divisor $D$ is \emph{semi-ample over $U$} (or \emph{$\pi$-semi-ample}) if there is a morphism
    \[
    f \colon X \to Y
    \]
    over $U$ such that $D$ is $\mathbb{R}$-linearly equivalent to the pullback of an ample $\mathbb{R}$-divisor over $U$.
    
    \item An $\mathbb{R}$-divisor $D$ is \emph{pseudo-effective} if the numerical class of $D$ over $U$ is a limit of divisors $D_i \geq 0$.
\end{enumerate}
    Similarly, if we replace the $\mathbb{R}$-linear combinations by $\mathbb{Q}$-linear combinations, we have the $\mathbb{Q}$-version of the above definitions, e.g., $\mathbb{Q}$-divisors.
\end{definition}

\begin{definition}[Contractions]
    A \emph{contraction}, or an \emph{algebraic fibre space}, is a proper, surjective morphism $f: X \rightarrow Y$ between normal algebraic varieties, such that $f_{*}\mathcal{O}_{X} = \mathcal{O}_{Y}$.
\end{definition}

\begin{definition}[Log pairs]
    A \emph{log pair} $(X/Z,B)$ is a projective contraction $X/Z$ together with an $\mathbb{R}$-boundary $B$.
\end{definition}

    We use the standard notions of lc, klt, and plt pairs. In particular, we assume that $K_{X} + B$ is $\mathbb{R}$-Cartier when we say that $(X,B)$ is lc, klt, or plt. Next, we recall some results for log discrepancies in dimension 2. For details, see \cite{Alexeev92_lc_surface}.

\begin{definition}[Dual graph]
    Let $Y \rightarrow X$ be the minimal resolution of the surface $X$. The \emph{weighted dual graph}, or simply \emph{dual graph} of $X$, is the following:
    \begin{enumerate}
        \item each exceptional curve $E_{i}$ corresponds to a vertex $v_{i}$;
        \item two vertices $v_{i}$ and $v_{j}$ are connected by an edge of weight $m$ if the corresponding curves intersect with $E_{i} \cdot E_{j} = m$;
        \item each vertex $v_{i}$ has a positive weight $n_{i} = - E_{i}^2$.
    \end{enumerate}
    Denote by $\Gamma$ the dual graph of $X$. For any subgraph $\Gamma' \subseteq \Gamma$, we denote by $\Delta(\Gamma')$ the absolute value of the determinant of the submatrix $(E_{i}\cdot E_{j})$, made up of columns and rows corresponding to the vertices of $\Gamma'$. We use the convention $\Delta(\varnothing)=1$.
\end{definition}

\begin{proposition}[A formula for surface log discrepancy]\label{prop:formula for ld}
    Let $(X,B)$ be a surface pair and $f:Y \rightarrow X$ be the minimal resolution of $X$. Let $P$ be a singular point of $X$ and $E_{1},\cdots , E_{n}$ be the divisors on $Y$ centered at $P$. Let $\Gamma$ be the dual graph of $P \in X$. Assume that every exceptional curve $E_{i}$ is rational and $\Gamma$ is a tree. Then we have
    $$
    a(E_{i};X,B) = \frac{\sum\limits_{j=1}^{n} \Delta(\Gamma - \mathrm{Path}(v_{i},v_{j}))c_{j}}{\Delta(\Gamma)}
    $$
    where $a(E_{i};X,B)$ is the log discrepancy of $(X,B)$ at $E_{i}$, $\mathrm{Path}(v_{i},v_{j})$ is the path between vertices $v_{i}$ and $v_{j}$ (such path is unique since $\Gamma$ is a tree), and
    $$
    c_{j} = 2 - (f^{-1}_{*}B + \sum\limits_{k\neq j}E_{k})\cdot E_{j}.
    $$
\end{proposition}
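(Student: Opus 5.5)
We would prove this by writing down the linear system that determines the log discrepancies on the minimal resolution and inverting it with the cofactor formula for the intersection matrix. Let $f\colon Y\to X$ be the minimal resolution near $P$, with exceptional curves $E_1,\dots,E_n$, and write
$$
K_Y + f^{-1}_*B + \sum_k (1-a_k)E_k = f^*(K_X+B), \qquad a_k = a(E_k;X,B).
$$
Intersecting both sides with $E_j$ kills the right-hand side. Since $E_j\cong\mathbb{P}^1$, adjunction gives $K_Y\cdot E_j = -2 - E_j^2$. Substituting yields, for each $j$,
$$
-2 - E_j^2 + f^{-1}_*B\cdot E_j + \sum_{k\neq j}E_k\cdot E_j + (E_j^2 - E_j^2) - \sum_k a_k E_k\cdot E_j = 0 .
$$
More carefully, $(1-a_j)E_j^2 + \sum_{k\ne j}(1-a_k)E_kE_j$ combines with $-2-E_j^2$ to give
$$
\sum_k a_k (E_k\cdot E_j) \;=\; -c_j ,\qquad c_j = 2 - \Bigl(f^{-1}_*B + \sum_{k\ne j}E_k\Bigr)\cdot E_j .
$$
So, with $M=(E_i\cdot E_j)$ and $a$, $c$ viewed as vectors, we get $M a = -c$, that is, $a = (-M)^{-1} c$.

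The matrix $-M$ is positive definite, because the intersection form on exceptional curves is negative definite (Mumford/Grauert). Hence $\det(-M) = \Delta(\Gamma) > 0$. By Cramer's rule,
$$
a_i = \sum_j \frac{\operatorname{cof}_{ij}(-M)}{\Delta(\Gamma)}\, c_j .
$$
It therefore suffices to prove the combinatorial identity
$$
\operatorname{cof}_{ij}(-M) = \Delta(\Gamma - \mathrm{Path}(v_i,v_j))
$$
for a tree. Here $-M$ has diagonal entries $n_k$ and off-diagonal entries $-(E_k\cdot E_l)$, with $E_k\cdot E_l=1$ along edges; edges of a tree must have weight $1$ for this to hold, which we will note.

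For the cofactor identity, first take $i=j$. The cofactor is the principal minor obtained by deleting $v_i$, and since $\mathrm{Path}(v_i,v_i)=\{v_i\}$ this is exactly $\Delta(\Gamma - v_i)$. For $i\neq j$, expand the determinant of $-M$ with row $i$ and column $j$ removed, using the permutation expansion. In a tree, a nonzero term must use off-diagonal entries that trace a chain of edges from $v_j$ to $v_i$. Because the graph is a tree, any such chain combined with cycles on the remaining vertices forces the chain to be exactly the unique path, and there are no other cycles beyond transpositions along edges. Each term thus factors as the product of the path edge entries, which is $(-1)^{\ell}$ for a path with $\ell$ edges, times a term in the expansion of $\det(-M)$ restricted to $\Gamma - \mathrm{Path}(v_i,v_j)$. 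The sign $(-1)^{\ell}$ cancels against the sign of the cofactor and the permutation sign, leaving $+\Delta(\Gamma - \mathrm{Path})$. This uses that the complement's minor is a positive determinant, since a principal minor of a positive definite matrix is positive.

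The main obstacle is this sign bookkeeping. As a cleaner alternative, we would induct on the path length: we would use the standard tree recursion $\det(-M_\Gamma) = n_v\,\Delta(\Gamma-v) - \sum_{w\sim v}\Delta(\Gamma - v - w)$ and verify the resulting adjugate identity $(-M)\cdot\mathrm{adj} = \Delta(\Gamma)\, I$ entrywise. Off-diagonal entries vanish by splitting the path at its neighbor of $v_i$, and diagonal entries reproduce the recursion.
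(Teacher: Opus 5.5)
Your proposal is correct and is essentially the standard argument behind the result, which the paper does not prove but simply cites from Alexeev: adjunction on each $E_j$ gives $(-M)a=c$, and Cramer's rule together with the tree identity $\operatorname{cof}_{ij}(-M)=\Delta(\Gamma-\mathrm{Path}(v_i,v_j))$ gives the formula. The sign bookkeeping is cleanest if you write $\operatorname{cof}_{ij}=\sum_{\sigma(i)=j}\operatorname{sgn}(\sigma)\prod_{k\neq i}(-M)_{k\sigma(k)}$; then the $(\ell+1)$-cycle along the path contributes $(-1)^{\ell}\cdot(-1)^{\ell}=1$.

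State explicitly that you read ``rational'' as $E_j\cong\mathbb{P}^1$ and ``tree'' as all $E_k\cdot E_l\in\{0,1\}$, since the formula genuinely needs both. Also, your first displayed equation has a stray $-E_j^2$ inside the parentheses, which your ``more carefully'' line already corrects.
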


\subsection{The theory of complements}

\begin{definition}
    Let $(X/Z,D)$ be a log pair. Then it is said to be
    \begin{enumerate}
        \item \emph{log Fano} if $(X/Z,D)$ is lc and $-(K_{X} + D)$ is ample over $Z$; if $D = 0$ we just say $X$ is Fano over $Z$;
        \item \emph{weak log Fano (WLF)} if $(X/Z,D)$ is lc and $-(K_{X} + D)$ is nef and big over $Z$; if $D = 0$ we just say $X$ is weak Fano over $Z$.
        \item a \emph{0-pair} if $(X/Z,D)$ is lc and $K_{X} + D$ is numerically trivial over $Z$.
    \end{enumerate}
\end{definition}

\begin{definition}[Fano type contractions]\label{def: Fano type contraction}
    Let $X/Z$ be a projective contraction. We say that $X$ is \emph{of Fano type} over $Z$ if $(X,D)$ is klt weak log Fano over $Z$ for some choice of $D$; it is easy to see this is equivalent to the existence of a big/$Z$ $\mathbb{Q}$-boundary (resp. $\mathbb{R}$-boundary) $\Gamma$ so that $(X, \Gamma)$ is klt and $K_{X} + \Gamma \sim_{\mathbb{Q}} 0/Z$ (resp. $\sim_{\mathbb{R}}$ instead of $\sim_{\mathbb{Q}}$).
\end{definition}

\begin{remark}
    In \cite[\S 1.1]{Shokurov20}, a proper contraction $X/Z$ is said to have weak Fano type if there exists a big/$Z$ boundary $\Gamma$ such that $(X,\Gamma)$ is klt and $K_X+\Gamma\sim_{\mathbb{R},Z}0$. In that terminology, Fano type is equivalent to weak Fano type together with projectivity. Since the contractions considered here are projective, these two notions coincide in our setting. We therefore use “Fano type” throughout, including when applying results of \cite{Shokurov20} stated for weak Fano type.
\end{remark}

\begin{definition}[$\mathbb{Q}$-complements and $\mathbb{R}$-complements]
    Let $(X/Z,D)$ be a log pair. An \emph{$\mathbb{R}$-complement} (resp. \emph{$\mathbb{Q}$-complement}) of $(X/Z,D)$ is an $\mathbb{R}$-boundary (resp. $\mathbb{Q}$-boundary) $D'$ satisfying
    $$
    D' \ge D, \qquad (X,D') \text{ is lc}, \qquad K_{X}+D' \sim_{\mathbb{R}(\text{resp. }\mathbb{Q}), Z} 0.
    $$

    These definitions can be done in the more general situation: when $D$ is an \emph{$\mathbb{R}$-subboundary} (i.e., an $\mathbb{R}$-divisor $D=\sum d_i D_i$ such that $d_i \le 1$ for all $i$).
\end{definition}

\subsection{Thresholds}

    In this subsection, we recall some basic definitions about several thresholds and their ACC (ascending chain condition) property.

\begin{definition}[log canonical threshold]
    Let $(X,\Delta)$ be an lc pair. Let $M > 0$ be an $\mathbb{R}$-Cartier divisor. We define the \emph{log canonical threshold of $(X,\Delta)$ with respect to $M$} to be
    $$
    \mathrm{lct}(X/Z,\Delta;M) = \sup\{t \in \mathbb{R}_{\geq 0} \mid (X/Z,\Delta + tM) \text{ is log canonical} \}.
    $$
\end{definition}

\begin{definition}[anticanonical threshold]
    Let $(X/Z,\Delta)$ be an lc pair. Let $H$ be an $\mathbb{R}$-divisor ample over $Z$. We define the \emph{anticanonical threshold of $(X/Z,\Delta)$ with respect to $H$} to be
    $$
    \mathrm{act}(X/Z,\Delta;H) = \inf\{t \in \mathbb{R}_{\geq 0} \mid K_{X} + \Delta + tH \text{ is nef over }Z \}.
    $$
\end{definition}

\begin{definition}[$\mathbb{R}$-complementary threshold]
    Let $(X/Z,\Delta)$ be a pair admitting an $\mathbb{R}$-complement. Let $M > 0$ be a nonzero effective $\mathbb{R}$-divisor. We define the \emph{$\mathbb{R}$-complementary threshold of $(X/Z,\Delta)$ with respect to $M$} to be
    $$
    \mathbb{R}\mathrm{-ct}(X/Z,\Delta;M) = \sup\{t \in \mathbb{R}_{\geq 0} \mid (X/Z,\Delta + tM) \text{ has an } \mathbb{R}\text{-complement} \}.
    $$
\end{definition}

    We recall the ACC properties for these thresholds.

\begin{theorem}[ACC for $\mathbb{R}$-complementary threshold, cf. \cite{Shokurov20}, Theorem 21]
    Fix a positive integer $d$ and a set $\Gamma \subseteq [0,1]$. Let $J \subset \mathbb{R}_{>0}$ be another set of real numbers. We define
    $$
    \mathbb{R}\mathrm{-CT}_{d}(\Gamma,J) = \{\mathbb{R}\mathrm{-ct}(X/Z,\Delta;M) \mid X \text{ is of Fano type},\, \mathrm{dim}X=d,\, \Delta \in \Gamma, M \in J \}.
    $$
    If $\Gamma$ and $J$ satisfy DCC, then $\mathbb{R}\mathrm{-CT}_{d}(\Gamma,J)$ satisfies ACC.
\end{theorem}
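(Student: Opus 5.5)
The plan is to argue by contradiction with a strictly increasing sequence. Suppose $t_k=\mathbb{R}\mathrm{-ct}(X_k/Z_k,\Delta_k;M_k)$ is strictly increasing, with $X_k$ of Fano type over $Z_k$, $\dim X_k=d$, $\Delta_k\in\Gamma$ and $M_k\in J$, and set $t=\lim t_k$. Since $t_k<t$ for every $k$, the coefficients of the boundaries $B_k:=\Delta_k+t_kM_k$ lie in the set $\Gamma+\{t_k\}\cdot J$ (truncated to $[0,1]$), which is again DCC. The goal is to turn the failure of $\mathbb{R}$-complements just above $t_k$ into a threshold of one of two known types, namely a log canonical threshold or a numerically trivial global pair, and then invoke the known ACC statements.

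\textbf{Step 1: reduce to a model where $-(K+B_k)$ is nef.} Because $X_k$ is of Fano type over $Z_k$, we may run a $-(K_{X_k}+B_k)$-MMP over $Z_k$. Existence of an $\mathbb{R}$-complement of $(X_k,B_k)$ implies that $-(K_{X_k}+B_k)$ is pseudo-effective over $Z_k$, so the MMP terminates with a model $Y_k$ on which $-(K_{Y_k}+B_{Y_k})$ is nef, hence semi-ample by the Fano type property. An $\mathbb{R}$-complement of $(X_k,B_k)$ corresponds to one on $Y_k$, since the MMP is $-(K+B)$-nonpositive and preserves complements in both directions. On $Y_k$, having an $\mathbb{R}$-complement is then equivalent to $(Y_k,B_{Y_k})$ being lc, and this condition is tested on the crepant model. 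This step is where the Fano type hypothesis is used essentially.

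\textbf{Step 2: identify why $t_k$ is a threshold.} Maximality of $t_k$ means that for every $\varepsilon>0$ the pair with $B_k+\varepsilon M_k$ has no $\mathbb{R}$-complement. Using Step 1 applied to $B_k+\varepsilon M_k$ and the finiteness of models in the Fano type setting, I pass to a subsequence and fix a single model. There are then two possibilities.
\begin{enumerate}
    \item $(Y_k,B_{Y_k}+\varepsilon M_{Y_k})$ fails to be lc for every $\varepsilon>0$. Then $t_k$ is an lct of $(Y_k,\Delta_{Y_k})$ with respect to $M_{Y_k}$. The coefficients of $\Delta_{Y_k}$ lie in $\Gamma$, since birational transforms keep the coefficients, so \thref{thm: ACC for lct} gives a contradiction.
    \item $-(K+B_k+\varepsilon M_k)$ stops being pseudo-effective over $Z_k$ on the relevant model. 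Then, after contracting to the ample model, $K+B_k\equiv 0$ over some base with $M_k$ horizontal. Restricting to a general fibre gives a projective lc $0$-pair whose coefficients lie in the DCC set $\Gamma\cup(\Gamma+t_kJ)$. The global ACC of Hacon--M\textsuperscript{c}Kernan--Xu then forces the coefficients, and hence $t_k$, to lie in a finite set, which is again a contradiction.
\end{enumerate}

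\textbf{Main obstacle.} The delicate point is the case analysis in Step 2. A priori the obstruction may mix both phenomena: the complement could be forced to acquire lc places while positivity degenerates at the same time. I would try first to handle this with boundedness of $n$-complements for Fano type pairs with DCC (hyperstandard-approximated) coefficients. These give, for $t'$ slightly below $t_k$, an $n$-complement with $n$ in a finite set. Passing to limits, one obtains an $\mathbb{R}$-complement $B_k^+$ of $B_k$ whose coefficients vary in a finite set, uniformly in $k$. With this uniform control, a dlt modification and adjunction to an lc centre of $(X_k,B_k^+)$ on which $M_k$ is nonzero would reduce the mixed case cleanly to one of the two pure cases above.
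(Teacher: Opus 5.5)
Your Steps 1--2 are essentially the reduction behind the cited theorem. The paper records it as Shokurov's Lemma 14 and proves it for surfaces in \thref{prop: reduce complementary threshold}: one case is handled by ACC for lct, the other, numerically trivial, case by global ACC. So your strategy is right, but the ``main obstacle'' does not exist.

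Organise the split by an MMP. Let $Y_k\to W_k$ be the contraction defined by the semi-ample divisor $-(K_{Y_k}+B_{Y_k})$, and run a $-M_k$-MMP over $W_k$; this is possible because $Y_k$ is of Fano type over $W_k$.
\begin{itemize}
\item This MMP is $(K+B_k)$-trivial. Pushing forward an $\mathbb{R}$-complement of $B_k$ keeps the pair with $B_k$ lc on every model.
\item Your first MMP has finitely many steps. Hence the whole map from $X_k$ is $-(K+B_k+\varepsilon M_k)$-nonpositive for all small $\varepsilon>0$, and complements lift back exactly as in your Step~1.
\item If the MMP ends with $-M_k$ nef over $W_k$, then on that single model $-(K+B_k+\varepsilon M_k)$ is semi-ample over $Z_k$ for all small $\varepsilon$. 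Indeed, it is the pullback of a divisor ample over $Z_k$ plus $\varepsilon$ times a divisor semi-ample over $W_k$. The absence of complements therefore forces the pair to be non-lc there, so $t_k$ is exactly an lct. No finiteness of models is needed.
\item Otherwise the MMP ends with a Mori fibre space on which $M_k$ is relatively ample and $K+B_k\equiv 0$ over the base. A general fibre is then your lc $0$-pair with $M_k$-components present.
\end{itemize}
Every case is one of these two, and no lc places ever need to be analysed.

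The remedy you propose should be dropped. A bounded $n$-complement only satisfies $nB^+\ge n\lfloor B\rfloor+\lfloor (n+1)\{B\}\rfloor$, not $B^+\ge B$. For example, suppose $Z_k$ is a point, $\rho(X_k)=1$ and $K+B_k\equiv 0$. Then $B_k$ is its own unique $\mathbb{R}$-complement, and its coefficients $\delta+t_kj$ may be irrational. So you cannot, in general, obtain $\mathbb{R}$-complements of $B_k$ with coefficients in a finite set uniform in $k$ this way. Moreover, passing from hyperstandard to DCC coefficients in boundedness of complements relies on ACC statements of this same kind, so you would risk circularity.

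One small correction in your case (2): global ACC does not put $t_k$ in a finite set. It puts the coefficients $\delta_k+t_kj_k$ of the $M_k$-components in a finite set. The DCC of $\Gamma$ and $J$ then gives a subsequence along which $t_k$ is non-increasing, which contradicts $t_k$ being strictly increasing.
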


\begin{theorem}[ACC for log canonical threshold, cf. \cite{HMX}, Theorem 1.1]
    Fix a positive integer $d$ and a set $\Gamma \subseteq [0,1]$. Let $J \subset \mathbb{R}_{>0}$ be another set of real numbers. We define
  $$
    \mathrm{LCT}_{d}(\Gamma,J) = \{\mathrm{lct}(X/Z,\Delta;M) \mid \mathrm{dim}X=d, \Delta \in \Gamma, M \in J \}.
  $$
  If $\Gamma$ and $J$ satisfy DCC, then $\mathrm{LCT}_{d}(\Gamma,J)$ satisfies ACC.
\end{theorem}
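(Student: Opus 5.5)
This is the theorem of Hacon--M\textsuperscript{c}Kernan--Xu, so the paper cites it rather than proves it. If one had to prove it, I would follow their strategy: derive the ACC for log canonical thresholds from a global statement, and prove both by induction on $d$. The global statement is the \emph{ACC for numerically trivial pairs}:
\begin{quote}
if $(X,\Delta)$ is lc, projective of dimension $d$, with $K_X+\Delta\equiv 0$ and the coefficients of $\Delta$ in a DCC set $\Gamma$, then the coefficients of $\Delta$ lie in a finite subset $\Gamma_0\subseteq\Gamma$.
\end{quote}
I would also use the standard enlargement of $\Gamma$ and $J$ to $\Gamma\cup\{\text{numbers } \delta+tm \mid \delta\in\Gamma,\ m\in J\}$ with $t$ restricted appropriately. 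This lets one treat $\Delta+tM$ as a single boundary whose coefficients come from a set that is DCC whenever $t$ ranges over an increasing sequence.

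\textbf{Step 1 (reduction from lct to the global statement in dimension $d-1$).} Suppose $t_k=\mathrm{lct}(X_k,\Delta_k;M_k)$ is strictly increasing. If $t_k$ is computed by a divisor of $X_k$, the bound is elementary, since the thresholds then lie in the ACC set $\{(1-\delta)/m\}$. Otherwise I would pass to a dlt modification $Y_k\to X_k$ of $(X_k,\Delta_k+t_kM_k)$ that extracts an lc place $S_k$ over the generic point of an lc centre. Then I would run an MMP over $X_k$ to reach a model where $S_k$ is a component of a Mori fibre space structure over the centre. Adjunction $(K_{Y}+S+\Delta_Y)|_S=K_S+\mathrm{Diff}_S$ produces a pair on a general fibre of $S\to(\text{centre})$ which is lc, numerically trivial, of dimension $\le d-1$. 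Its coefficients lie in the set of hyperstandard-type values $\frac{m-1+\sum n_i b_i}{m}$, which is DCC when the $b_i$ are. One of these coefficients involves $t_k$ nontrivially. Applying the induction hypothesis (the global ACC in dimension $\le d-1$), those coefficients lie in a finite set, which forces $t_k$ to stabilise.

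\textbf{Step 2 (global ACC in dimension $d$ from lct-ACC in dimension $d$ and boundedness).} Given a sequence of counterexamples to the global statement, I would first run an MMP to reduce to the case where $X$ is Fano type with $\rho=1$, or is a Mori fibre space with numerically trivial restriction. Mori fibre spaces with positive-dimensional base are handled by induction via restriction to a general fibre. I would then perturb so that $(X,\Delta)$ becomes $\epsilon$-lc, or pass to a dlt model and use adjunction to a component of $\lfloor\Delta\rfloor$ (again induction) when $\epsilon$-lc fails uniformly. In the $\epsilon$-lc log Fano case the key input is \emph{boundedness}: $\epsilon$-lc Fano varieties with DCC coefficients, or at least the relevant log birational models, form a bounded family. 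This is the HMX ``log birational boundedness'' combined with the DCC of volumes. Once the family is bounded, numerical triviality $K_X+\Delta\equiv0$ with $\Delta$ varying in a bounded family gives finitely many linear relations, and the DCC assumption then forces the coefficients into a finite set.

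\textbf{Main obstacle.} The hard part is Step 2's boundedness. One must show that for DCC coefficients the volumes $\mathrm{vol}(K_X+\Delta)$ of log general type pairs satisfy DCC, and that such pairs are log birationally bounded. This requires the effective birationality of $|m(K_X+\Delta)|$ for a uniform $m$, proved via a careful inductive construction of lc centres and lifting sections using the lct-ACC in lower dimensions. The two ACC statements and the DCC of volumes must all be interlocked in a single simultaneous induction. I would attack it exactly this way: prove the triple (lct-ACC, global ACC, DCC of volumes) together in dimension $d$, assuming all three in dimension $d-1$.
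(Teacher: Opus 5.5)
Your observation that the paper proves nothing here and only cites Hacon--McKernan--Xu is correct. Your Step~1 is also the right reduction: lct-ACC in dimension $d$ follows from global ACC in dimensions $\le d-1$ by extracting an lc place $S_k$ and restricting to a general fibre of $S_k\to V_k$ over the centre. You do still need to justify that $t_k$ actually enters the restricted coefficients. This is automatic if $S_k$ is the only exceptional divisor and $-S_k$ is relatively ample, since then the strict transform of $M_k$ is ample on the positive-dimensional fibres of $S_k\to V_k$.

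The genuine gap is in Step~2, in the Picard-number-one case. There are three problems with the mechanism you propose.
\begin{itemize}
\item Boundedness of $\epsilon$-lc Fano varieties is not an admissible input. It is Birkar's theorem, and its proof uses the ACC for lct and the global ACC of Hacon--McKernan--Xu, so invoking it is circular.
\item A sequence of klt counterexamples need not be uniformly $\epsilon$-lc: singularities of type $\frac{1}{n}(1,1)$ already have minimal log discrepancy $\frac{2}{n}\to 0$. A klt pair also has no component of $\lfloor\Delta\rfloor$ on which to do adjunction, so your fallback does not apply.
\item Log birational boundedness, which is what Hacon--McKernan--Xu actually prove, does not give your ``finitely many linear relations''. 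The condition $K_X+\Delta\equiv 0$ does not pass to a birational model without control of the discrepancies of the exceptional divisors, and that control is exactly what is unknown.
\end{itemize}

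The missing idea is a perturbation that turns the question into one about volumes.
\begin{enumerate}
\item By an MMP that does not contract $D_{k,1}$, arrange that $X_k$ is $\mathbb{Q}$-factorial with $\rho(X_k)=1$ and $b_{k,1}\nearrow b$.
\item Raise the coefficient of $D_{k,1}$ to $b$ to obtain $\Theta_k$. By lct-ACC in dimension $d$ (which your Step~1 already supplies from the dimension $d-1$ case), $(X_k,\Theta_k)$ is lc for $k\gg 0$. Its coefficients lie in the DCC set $\Gamma\cup\{b\}$.
\item Now $K_{X_k}+\Theta_k\equiv (b-b_{k,1})D_{k,1}$ is ample. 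But for every fixed $\delta>0$, the divisor $K_{X_k}+(1-\delta)\Theta_k$ is anti-ample for $k\gg 0$, because $\Theta_k\geq bD_{k,1}$.
\item This contradicts the existence of a uniform $\delta>0$ such that $K_X+(1-\delta)\Theta$ is big whenever $(X,\Theta)$ is lc of dimension $d$, has coefficients in a fixed DCC set, and $K_X+\Theta$ is big. That uniform statement follows from DCC of volumes.
\end{enumerate}
So the statements to interlock are indeed lct-ACC, global ACC and DCC of volumes, as you say. But the Fano varieties themselves are never bounded in this argument.
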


\begin{theorem}[ACC for anticanonical threshold, \cite{Shokurov20}, Corollary 24]
    Fix a positive integer $d$ and sets $\Gamma\subseteq[0,1]$ and $J\subseteq\mathbb{R}_{>0}$. We define
    $$
    \begin{aligned}
    \mathrm{ACT}_{d}(\Gamma,J)
    =\Bigl\{
    t=\mathrm{act}(X/Z,\Delta;H)
    \ \Bigm|\
    &\dim X=d>\dim Z,\\
    &X/Z\text{ is of Fano type},\\
    &(X,\Delta)\text{ is lc},\quad
      \Delta\in\Gamma,\quad H\in J,\\
    &K_{X}+\Delta+tH\equiv_{Z}0
    \Bigr\}.
    \end{aligned}
    $$
    Here $X\rightarrow Z$ runs through projective contractions, and $H>0$ is an effective $\mathbb{R}$-divisor ample over $Z$ whose prime components are locally free over $Z$. If $\Gamma$ and $J$ satisfy DCC, then $\mathrm{ACT}_{d}(\Gamma,J)$ satisfies ACC.
\end{theorem}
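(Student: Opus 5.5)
The plan is to reduce the statement to the ACC for $\mathbb{R}$-complementary thresholds stated just above (\cite{Shokurov20}, Theorem 21). Specifically, for every $t=\mathrm{act}(X/Z,\Delta;H)\in \mathrm{ACT}_{d}(\Gamma,J)$ I would show that
$$
t=\mathbb{R}\mathrm{-ct}(X/Z,\Delta;H).
$$
Then $\mathrm{ACT}_{d}(\Gamma,J)\subseteq \mathbb{R}\mathrm{-CT}_{d}(\Gamma,J)$, where $J$ is read as the set of coefficients of $H$. Since $\Gamma$ and $J$ are DCC, the ACC follows immediately.

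\textbf{Step 1: $t\le \mathbb{R}\mathrm{-ct}$.} We know that $(X,\Delta+tH)$ is lc and $K_X+\Delta+tH\equiv_Z 0$. Because $X/Z$ is of Fano type, numerical triviality over $Z$ should upgrade to $K_X+\Delta+tH\sim_{\mathbb{R},Z}0$. I would argue this as follows.
\begin{itemize}
\item By relative Kawamata--Viehweg vanishing, $R^1f_*\mathcal{O}_X=0$, so relative numerical and $\mathbb{R}$-linear equivalence coincide for $\mathbb{R}$-Cartier divisors.
\item Alternatively, run a relative argument with the big klt boundary from \thref{def: Fano type contraction}.
\end{itemize}
Granting this, $\Delta+tH$ is itself an $\mathbb{R}$-complement of $(X/Z,\Delta+tH)$. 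In particular $(X/Z,\Delta)$ admits an $\mathbb{R}$-complement, so the threshold is defined, and $t\le \mathbb{R}\mathrm{-ct}(X/Z,\Delta;H)$.

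\textbf{Step 2: $\mathbb{R}\mathrm{-ct}\le t$.} Suppose $s>t$ and $(X/Z,\Delta+sH)$ has an $\mathbb{R}$-complement $\Delta'\ge\Delta+sH$. Subtracting the two relations gives
$$
(s-t)H+(\Delta'-\Delta-sH)\equiv_Z 0,
$$
where $H$ is ample over $Z$ and the second summand is effective. Since $\dim X>\dim Z$, a general fibre $X_z$ has positive dimension. Take a curve $C$ in $X_z$ obtained by intersecting $\dim X_z-1$ general very ample divisors, so that $C$ is not contained in the support of the effective part. Then the left-hand side has strictly positive degree on $C$, which is a contradiction. Hence $\mathbb{R}\mathrm{-ct}\le t$, and combined with Step 1 we get equality.

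\textbf{Main obstacle.} I expect the delicate point to be the upgrade $\equiv_Z\Rightarrow\sim_{\mathbb{R},Z}$ in Step 1, because the definition of complements requires $\mathbb{R}$-linear equivalence. I would first try the vanishing argument: for Fano type $X/Z$ the relative Picard group is finitely generated and $\equiv_Z$ agrees with $\sim_{\mathbb{Q},Z}$ on Cartier divisors, which extends to $\mathbb{R}$-Cartier divisors by linearity. A secondary point is the hypothesis that the prime components of $H$ are locally free over $Z$. It ensures that $H$, with coefficients in $J$, is admissible as the divisor $M$ in the $\mathbb{R}$-complementary threshold setup, and it is used in Step 2 to choose $C$ in a general fibre meeting $H$ properly.
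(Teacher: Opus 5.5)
\textbf{There is a genuine gap.}

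Your Step 1 begins with ``we know that $(X,\Delta+tH)$ is lc'', but this is not among the hypotheses. The theorem only assumes that $(X,\Delta)$ is lc and that $K_X+\Delta+tH\equiv_Z 0$. Nothing prevents $t$ from exceeding $\mathrm{lct}(X,\Delta;H)$, or even $1/h$ for some coefficient $h$ of $H$. In that case $(X/Z,\Delta+tH)$ has no $\mathbb{R}$-complement. Since an $\mathbb{R}$-complement of $\Delta+sH$ forces $(X,\Delta+sH)$ to be lc, one gets $\mathbb{R}\mathrm{-ct}(X/Z,\Delta;H)\le\mathrm{lct}(X,\Delta;H)<t$. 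Already for $X=\mathbb{P}^2$, $Z$ a point, $\Delta=0$, $H=L$ a line and $J=\{1\}$, you have $\mathrm{act}=3$ but $\mathbb{R}\mathrm{-ct}(\mathbb{P}^2,0;L)=1$. So the identity $t=\mathbb{R}\mathrm{-ct}$ fails in general, and with it the inclusion $\mathrm{ACT}_d(\Gamma,J)\subseteq\mathbb{R}\mathrm{-CT}_d(\Gamma,J)$. Your Step 2 is correct, but it only gives $\mathbb{R}\mathrm{-ct}\le\mathrm{act}$, and an inequality does not transfer ACC.

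A telling symptom is that your argument never genuinely uses the hypothesis that the prime components of $H$ are locally free over $Z$. Step 2 only needs $H$ to be ample over $Z$, and any effective $M>0$ is admissible in the definition of $\mathbb{R}\mathrm{-ct}$. Yet some condition of this kind is indispensable. On $X=\mathbb{P}(1,1,n)$ with $\Delta=0$ and $H$ a ruling through the vertex (coefficient $1$, not Cartier at the vertex), one has $K_X+(n+2)H\sim 0$. These thresholds $n+2$ form an unbounded increasing sequence.

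What is missing is a treatment of thresholds lying above the lc threshold, and this is where the hypothesis on $H$ must enter. For example, use the freeness to replace each component by several general members of its linear system over $Z$, with proportionally smaller coefficients. This restores the lc condition by Bertini without changing $t$. You then still have to control the new coefficients, which requires an a priori bound on the products $th_i$.

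For comparison, the paper does not prove this theorem; it quotes it from \cite{Shokurov20}. The only anticanonical thresholds it actually handles are those in case (2) of \thref{prop: reduce complementary threshold} and in \thref{thm: act gap}, where the pair with $tF$ added is an lc $0$-pair. Under that extra lc hypothesis your two steps are sound, including the upgrade from $\equiv_Z$ to $\sim_{\mathbb{R},Z}$ on Fano type $X/Z$. They give $\mathrm{act}=\mathbb{R}\mathrm{-ct}$, as the paper records there.
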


We will use the following lemma to reduce the computation of gap numbers for $\mathbb{R}$-complementary thresholds to the computation of gap numbers for log canonical thresholds and anticanonical thresholds:

\begin{lemma}[cf. {\cite[Lemma 14]{Shokurov20}}]
    Let $(X/Z,B)$ be a pair such that $X/Z$ has Fano type, $(X/Z,B)$ has an $\mathbb{R}$-complement, and $F>0$ be an effective divisor on $X$. Put
    $$
    t = \mathbb{R}\mathrm{-ct}(X/Z,B;F).
    $$
    Then one of the following holds:
    \begin{enumerate}[label=(\arabic*)]
        \item \textbf{The log canonical threshold case.} There exists a $\mathbb{Q}$-factorial closed point germ $o' \in X'$ of Fano type of dimension $\leq \mathrm{dim}X$, and effective divisors $B'$ and $F'$ on $X'$, such that 
        $$
        t = \mathbb{R}\mathrm{-ct}(X' \ni o', B';F') = \mathrm{lct}(X' \ni o', B';F').
        $$
        Moreover, if $F$ is a prime divisor and $\mathrm{dim}X'=\mathrm{dim}X$, then $F'$ is also a prime divisor.
        \item \textbf{The anticanonical threshold case.} There exists a projective contraction $X'' \rightarrow o''$, where $o''$ is a closed point, and effective divisors $B''$ and $F''$ on $X''$, such that $X''$ is $\mathbb{Q}$-factorial of Fano type of dimension $\leq \mathrm{dim}X$, $\rho(X'')=1$, and
        $$
        t = \mathbb{R}\mathrm{-ct}(X''/o'',B'';F'') = \mathrm{act}(X''/o'',B'';F'').
        $$
        Moreover, $(X'',B''+tF'')$ is an lc $0$-pair, and no lc center of $(X'',B''+tF'')$ is contained in $\mathrm{Supp}F''$. If $F$ is a prime divisor and $\dim X''=\mathrm{dim}X$, then $F''$ is also a prime divisor.
    \end{enumerate}
    In both cases, the multiplicities of $B', B''$ and of $F', F''$ are respectively the multiplicities of $B$ and of $F$.
\end{lemma}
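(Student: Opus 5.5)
We follow the strategy of \cite[Lemma 14]{Shokurov20}: pass to a model on which the obstruction to extending the $\mathbb{R}$-complement past $t$ becomes either a local lc obstruction or a global numerical one. By definition of $t$ and by closedness of the set of $s$ for which $(X/Z,B+sF)$ has an $\mathbb{R}$-complement (a compactness argument on the bounded family of complements over a Fano type base), the pair $(X/Z,B+tF)$ has an $\mathbb{R}$-complement $B^{+}$. For every $\varepsilon>0$, the pair $(X/Z,B+(t+\varepsilon)F)$ has none. If $t<\mathrm{lct}$ fails locally somewhere, i.e.\ $(X,B+tF)$ is not klt along some lc place whose center meets $\operatorname{Supp}F$, we expect case (1). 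Otherwise the obstruction is global and we expect case (2).

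\textbf{Step 1 (setup and dlt modification).} Since $X/Z$ is of Fano type, we may run MMPs freely. Take a $\mathbb{Q}$-factorial dlt modification of $(X,B^{+})$. Using the Fano type property, we can also arrange that $-(K_X+B+tF)$ is semiample over $Z$ after passing to a model. Concretely, we run a $-(K_X+B+tF)$-MMP over $Z$. This is possible because $-(K_X+B+tF)\equiv B^{+}-B-tF\ge 0$ modulo $K_X+B^+\equiv 0$. It terminates with a model $X_1/Z$ on which $-(K_{X_1}+B_1+tF_1)$ is nef, hence semiample, over $Z$. None of these steps changes the $\mathbb{R}$-ct, because complements are preserved under the MMP for $-(K+\cdot)$. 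If $F$ gets contracted, the threshold is unchanged as well, but care is needed; we handle this by working with the fibration in Step 2.

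\textbf{Step 2 (the dichotomy).} Let $X_1\to W$ be the contraction defined by $-(K_{X_1}+B_1+tF_1)$ over $Z$. A complement of $B_1+sF_1$ exists if and only if, over the generic point and over every point of $W$, we can add an effective divisor $\equiv -(K+B_1+sF_1)$ while staying lc. For $s$ slightly above $t$ there are two possible failures.
\begin{enumerate}
\item The pair $(X_1,B_1+sF_1)$ fails to be lc near some point. Then $t=\mathrm{lct}$ locally at a closed point $o'$. Taking $X'$ to be the germ at $o'$ (or a plt blow-up and adjunction if the lc center is not a point; this is why the dimension may drop) gives case (1). Locally the $\mathbb{R}$-ct equals the lct, since $K+B'+tF'$ is trivially complemented on a germ.
\item The pair $(X_1,B_1+sF_1)$ stays lc, but $-(K+B_1+sF_1)$ stops being pseudo-effective over $W$. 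Then $K+B_1+tF_1\equiv 0$ over $W$ on some general fibre or over a closed point. Restricting to a general fibre of $X_1\to W$ and running a further $K+B_1+sF_1$-MMP, we reach a Mori fibre space $X''\to Y$ with $F''$ relatively ample. Taking a general fibre over a closed point of $Y$, or using adjunction to a fibre, we may assume that the base is the point $o''$ and $\rho(X'')=1$. This gives $t=\mathrm{act}$ and the $0$-pair condition. We remove lc centers inside $\operatorname{Supp}F''$ by noting that otherwise we would be in case (1).
\end{enumerate}

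\textbf{Step 3 (coefficients and primality).} Multiplicities are preserved because we only use birational contractions, which keep the coefficients of non-contracted components, and restriction to general fibres. If $F$ is prime and the dimension is unchanged, then only birational steps and a Mori fibre space structure over a point were used, so $F$ maps to a prime divisor or is contracted. The latter would put us in the other case.

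\textbf{Main obstacle.} The main difficulty is the case split in Step 2 with the dimension reduction. We must ensure that passing to a general fibre, or to an lc center via adjunction, keeps both the threshold equality and the exact multiplicities of $B$ and $F$. Adjunction would change the coefficients, so we should avoid it and use only general fibres and germs. Ensuring that no lc center lies in $\operatorname{Supp}F''$ in case (2) is also delicate. We will cite the corresponding arguments from \cite{Shokurov20} for these points.
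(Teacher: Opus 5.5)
Your Step 1 is essentially the paper's first MMP: a $-(K+B+tF)$-MMP over $Z$, i.e.\ an $E$-MMP for $E=B^{+}-B-tF$, ending with $E$ semiample with ample model $W$. (Preserving the threshold there needs each step to be $-(K+B+rF)$-negative also for $r$ slightly above $t$.)

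The genuine gap is in Step 2: your dichotomy is not exhaustive. For $s>t$, a complement $B^{+}=B_1+sF_1+P$ needs $P\ge 0$ with $P\equiv_W -(s-t)F_1$. So $P\cdot C<0$ for every curve $C$ contracted over $W$ with $F_1\cdot C>0$, and $P$ must contain all such curves. If they sweep out a component of $B_1$ with coefficient $1$, no complement exists. This happens even though $(X_1,B_1+sF_1)$ is lc and $-(K_{X_1}+B_1+sF_1)$ is pseudo-effective over $W$ (automatic when $X_1\to W$ is birational).

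Concretely, let $X=\mathbb{F}_1\to\mathbb{P}^2$ be the blow-up of a point $p$ with exceptional curve $C$, and work over a point. Let $G_1,G_2,G_3,F$ be the strict transforms of four distinct lines through $p$, and put $B=C+\frac12(G_1+G_2+G_3)$.
\begin{itemize}
\item $K_X+B+\frac12F\equiv -H$, with $H$ the pullback of a line, so $t\ge\frac12$.
\item $(K_X+B+sF)\cdot C=s-\frac12>0$, which forces every complement of $B+sF$ to have coefficient $>1$ along $C$. Hence $t=\frac12$.
\item Here $X_1=X$ and $W=\mathbb{P}^2$. The pair $(X,B+sF)$ is snc and lc for all $s\le 1$, so $\mathrm{lct}(X,B;F)=1$, and $X\to W$ is birational.
\end{itemize}
Neither of your failure modes occurs, and $t$ is not an lct on $X_1$.

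What is missing is the paper's second MMP. For $0<\delta<t$, run a $(K+B+(t-\delta)F)$-MMP over $W$. Since $K+B+tF\sim_{\mathbb{R},W}0$, this is a $(-F)$-MMP over $W$, and it preserves the threshold for the same reason as the first one.
\begin{itemize}
\item If it ends with a minimal model $X'$, then $-F'$ is semiample over $W$. Hence $-(K_{X'}+B'+(t+\epsilon)F')\sim_{\mathbb{R},Z}E'-\epsilon F'$ is semiample over $Z$ for small $\epsilon>0$. So $t<\mathrm{lct}(X',B';F')$ would produce a complement above $t$, and therefore $t=\mathrm{lct}(X',B';F')$ on the new model $X'$, not on $X_1$. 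In the example the MMP contracts $C$, and $\mathrm{lct}(\mathbb{P}^2\ni p,\frac12\sum G_i';F')=\frac12$.
\item If it ends with a Mori fibre space $X'\to Z''$, then $F'$ is ample over $Z''$ and $(X'/Z'',B'+tF')$ is an lc $0$-pair. So either $t=\mathrm{lct}$, or $t=\mathrm{act}<\mathrm{lct}$. In the latter case a general fibre gives case (2), and $t<\mathrm{lct}$ is exactly what excludes lc centers in $\mathrm{Supp}F''$.
\end{itemize}

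Your Step 2(2) also has the sign reversed. For $s>t$, a $(K+B_1+sF_1)$-MMP over $W$ is an $F_1$-MMP, whose contracted curves lie in $\mathrm{Supp}F_1$. It therefore never ends in a Mori fibre space, let alone one with $F''$ relatively ample. Finally, to pass from a non-closed lc center to a closed-point germ, cut by general hyperplane sections through its generic point, as the paper does. Do not use adjunction, which, as you note yourself, changes the coefficients.
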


    Although the above lemma is only stated for varieties of Fano type, it is expected for arbitrary variety. In low dimensions, we have the required instruments: the termination and semi-ampleness.

\begin{proposition}\label{prop: reduce complementary threshold}
    Let $(X/Z,B)$ be a projective surface pair admitting an $\mathbb{R}$-complement, and $F>0$ be an effective divisor on $X$. Put
    $$
    t = \mathbb{R}\mathrm{-ct}(X/Z,B;F).
    $$
    Then one of the following holds:
    \begin{enumerate}[label=(\arabic*)]
        \item \textbf{The log canonical threshold case.} There exists a $\mathbb{Q}$-factorial closed point germ $o' \in X'$ of Fano type of dimension $\leq 2$, and effective divisors $B'$ and $F'$ on $X'$, such that 
        $$
        t = \mathbb{R}\mathrm{-ct}(X' \ni o', B';F') = \mathrm{lct}(X' \ni o', B';F').
        $$
        Moreover, if $F$ is a prime divisor and $\mathrm{dim}X'=2$, then $F'$ is also a prime divisor.
        \item \textbf{The anticanonical threshold case.} There exists a projective contraction $X'' \rightarrow o''$, where $o''$ is a closed point, and effective divisors $B''$ and $F''$ on $X''$, such that $X''$ is $\mathbb{Q}$-factorial of Fano type of dimension $\leq 2$, $\rho(X'')=1$, and
        $$
        t = \mathbb{R}\mathrm{-ct}(X''/o'',B'';F'') = \mathrm{act}(X''/o'',B'';F'').
        $$
        Moreover, $(X'',B''+tF'')$ is an lc $0$-pair, and no lc center of $(X'',B''+tF'')$ is contained in $\mathrm{Supp}F''$. If $F$ is a prime divisor and $\dim X''=2$, then $F''$ is also a prime divisor.
    \end{enumerate}
    In both cases, the multiplicities of $B', B''$ and of $F', F''$ are respectively the multiplicities of $B$ and of $F$.
\end{proposition}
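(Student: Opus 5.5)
The plan is to follow the proof of \cite[Lemma 14]{Shokurov20}. The only place where Fano type enters there is to guarantee that the relevant MMPs terminate and that the resulting nef divisors are semi-ample. For surfaces I would substitute the two-dimensional log MMP for lc pairs, which always terminates, and abundance/semi-ampleness for lc surface pairs (Fujino, Tanaka).

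First, fix an $\mathbb{R}$-complement $B^{+}$ of $(X/Z,B+tF)$. Such a complement exists because the set of $t$ for which an $\mathbb{R}$-complement exists is closed; this follows from a limiting argument using the finiteness of lc models in dimension 2. Pass to a $\mathbb{Q}$-factorial dlt modification $(Y,B_Y^{+})$ of $(X,B^{+})$, and write $B_Y, F_Y$ for the strict transforms plus exceptional divisors with coefficient $1$ in $B_Y$. This keeps the multiplicities of $B$ and $F$ and preserves the threshold $t$. Then perturb to $B_Y+(t+\epsilon)F_Y$ and run a $-(K_Y+B_Y+(t+\epsilon)F_Y)$-MMP over $Z$, or equivalently a $(K_Y+B_Y+(t+\epsilon)F_Y)$-MMP, since $K_Y+B^{+}_Y\equiv 0$. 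Because $t$ is maximal, this MMP cannot end with a complement. One of two obstructions must therefore occur:
\begin{enumerate}
\item an lc condition fails, so some lc center of $(Y,B_Y+tF_Y)$ lies on $\operatorname{Supp}F_Y$;
\item the MMP reaches a Mori fibre space $Y'\to Z'$ on which $K+B+(t+\epsilon)F$ is relatively ample, i.e.\ $F$ is relatively positive.
\end{enumerate}

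In case (1) I localize at the lc center. If it is a point $o'$, take the germ $o'\in X'$ obtained after contracting and taking a $\mathbb{Q}$-factorialization. The germ is of Fano type because it is klt-type locally: the complement $B^{+}$ gives a boundary, and any surface germ carrying a klt boundary is of Fano type over itself. On this germ $\mathbb{R}$-ct equals lct. If the lc center is a curve $C$ meeting $F$, apply adjunction to $C$. This gives a one-dimensional germ, using the dimension $\le 2$ allowance, and the coefficients pass through the same formulas as in Shokurov's lemma.

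In case (2), if $\dim Z'=0$, then $Y'$ is a $\mathbb{Q}$-factorial surface with $\rho=1$. It is of Fano type because $-(K+B)$ is ample and a klt perturbation exists, and we obtain $t=\mathrm{act}$ with $(Y',B+tF)$ an lc $0$-pair. If $\dim Z'=1$, restrict to a general fibre $\mathbb{P}^1$ to get the one-dimensional act case. Finally, to ensure that no lc center lies in $\operatorname{Supp}F''$, I would first exclude case (1) by choosing the minimal obstruction: if some lc center met $F$, we would already be in case (1).

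The statement about primality of $F$ comes from the fact that divisorial contractions only contract curves and never split components. Hence if $F$ is prime and the dimension is unchanged, its image is either prime or zero, and it cannot be zero because $F$ carries the positivity. The main obstacle is the closedness/existence of a complement exactly at $t$ together with the bookkeeping showing that the MMP lands in one of the two cases without the Fano type assumption. Here I would use that for surfaces the cone and contraction theorems, and semi-ampleness of nef $K+\Delta$ with $\Delta$ lc, hold over any base.
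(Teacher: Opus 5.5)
\textbf{The central step does not work as stated.} You run a single MMP for $-(K_Y+B_Y+(t+\epsilon)F_Y)\equiv E-\epsilon F_Y$ with one fixed $\epsilon$, where $E=B^{+}_Y-B_Y-tF_Y$, and then read off exact equalities. But the output model depends on $\epsilon$ and only carries information at level $t+\epsilon$.

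\emph{The $\rho=1$ outcome.} If the output $Y'$ has $\rho(Y')=1$, you know only that $\epsilon F'-E'$ is ample. If $E'\neq 0$, then $E'\equiv\lambda F'$ with $0<\lambda<\epsilon$. Hence $K_{Y'}+B'+tF'\equiv -E'$ is not numerically trivial, $(Y',B'+tF')$ is not a $0$-pair, and $\mathrm{act}(Y',B';F')=t+\lambda\neq t$. The same defect occurs on a general fibre of a relative-dimension-one Mori fibre space.

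\emph{The lc-failure outcome.} Here failure of lc at $t+\epsilon$ on $Y'$ gives only $t\le \mathrm{lct}(Y',B';F')<t+\epsilon$, on a model that varies with $\epsilon$. It does not produce an lc centre of $(Y,B_Y+tF_Y)$ inside $\mathrm{Supp}F_Y$, as you claim.

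\emph{The missing idea} is the paper's two-stage MMP.
First run an $E$-MMP over $Z$, realized as a $(K+B^{+}+\mu E)$-MMP for small $\mu>0$. This MMP involves no perturbation of $t$. Since it has finitely many steps, it is also a $-(K+B+rF)$-MMP for all $r$ close to $t$, so the threshold is preserved. It makes $E$ semi-ample with ample model $Z'$, whence $K+B+tF\sim_{\mathbb{R},Z'}0$.
Then run a $-F$-MMP over $Z'$.
\begin{itemize}
\item On a minimal model, $-F'$ is semi-ample over $Z'$, so $E'-\epsilon F'$ is semi-ample over $Z$ for \emph{every} small $\epsilon$. This forces $t=\mathrm{lct}(X',B';F')$.
\item On a Mori fibre space, $K+B'+tF'$ is numerically trivial over the new base, and $E=0$ when that base is a point. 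This is what gives the lc $0$-pair and the dichotomy $t=\mathrm{lct}$ or $t=\mathrm{act}<\mathrm{lct}$. The strict inequality is what yields that no lc centre lies in $\mathrm{Supp}F''$.
\end{itemize}

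\textbf{Several smaller points also need fixing.}
\begin{itemize}
\item ``Or equivalently a $(K_Y+B_Y+(t+\epsilon)F_Y)$-MMP'' is a sign error: these are MMPs for opposite divisors.
\item The $-(K+\cdots)$-MMP is an lc MMP only because it is a positive multiple of $K_Y+B^{+}_Y+\mu(E-\epsilon F_Y)$. That requires $(Y,B^{+}_Y+\mu E)$ to be lc, i.e.\ no component of $E$ is an lc centre.
\item The paper arranges this in two steps. It first disposes of lc centres of $(X,B+tF)$ contained in $\mathrm{Supp}F$, which gives the lct case directly. It then puts all of $S=\lfloor\widetilde{B^{+}}\rfloor$, not only the exceptional part, into $\widetilde B$ with coefficient one, and deletes those components from $\widetilde F$. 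The first step is also what shows that $t$ is unchanged by the dlt modification.
\item The klt type of the point germ cannot come from $B^{+}$, which is only lc.
\item The relevant condition is an lc centre \emph{contained in} $\mathrm{Supp}F$, not one meeting $F$. A curve lc centre crossing $F$ is compatible with the act case. A curve lc centre inside $\mathrm{Supp}F$ is handled by a general transversal slice, not by adjunction to $C$.
\end{itemize}
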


\begin{proof}
    The proof follows the same structure as the proof of \cite[Lemma 14]{Shokurov20}. First, if $t=0$, then we can take $o' \in X'$ to be a curve germ, $B'=o'$ and $F' = c o'$, where $c$ is a coefficient of $F$. Then $\mathrm{lct}(X' \ni o',B';F') = 0 = t$. Hence we can assume that $t>0$. 
    
    Since $\mathrm{dim} X=2$, the threshold $t$ is attained by \cite[Theorem 5.20(7)]{HLS} and the existence of good minimal models for surfaces (cf. \cite[proof of Theorem 7.2]{LiuSho}). If there is an lc center $C$ of $(X/Z,B+tF)$ contained in $\mathrm{Supp}F$, then we have $t = \mathrm{lct}(X \ni p, B; F)$, where $p$ is the generic point of $C$. After taking hyperplane sections, we reduce to the log canonical threshold case. Now we can assume that the lc centers of $(X/Z,B+tF)$ are not in $\mathrm{Supp}F$.
    
    Choose an $\mathbb{R}$-complement $(X/Z,B^{+})$ of $(X/Z,B+tF)$. Take a dlt modification $f: \widetilde{X} \rightarrow X$ and write
    $$
    K_{\widetilde{X}} + \widetilde{B^{+}} = f^{*}(K_{X}+B^{+}).
    $$
    Put $S = \lfloor \widetilde{B^{+}} \rfloor$ and $\widetilde{B} = S + \sum\limits_{D \nsubset S}\mathrm{mult}_{D}(B_{\widetilde{X}})D$, where $B_{\widetilde{X}}$ is the strict birational transformation of $B$ on $\widetilde{X}$. Let $\widetilde{F}$ be the strict birational transformation of $F$ on $\widetilde{X}$ with its components in $S$ removed. Then for every positive real number $r$ such that $(X,B+rF)$ is log canonical, we have
    $$
    f_{*}(K_{\widetilde{X}} + \widetilde{B} + r\widetilde{F}) \geq K_{X} + B + rF.
    $$
    Since the lc centers of $(X/Z,B+tF)$ are not in $\mathrm{Supp}F$, the pair $(X,B+rF)$ is log canonical for sufficiently close $r>t$. If $(\widetilde{X}/Z, \widetilde{B} + r\widetilde{F})$ has an $\mathbb{R}$-complement, then $(X/Z,B+rF)$ has an $\mathbb{R}$-complement, a contradiction. Hence we have
    $$
    t = \mathbb{R}\mathrm{-ct}(\widetilde{X}/Z, \widetilde{B}; \widetilde{F}).
    $$
    Hence, after replacing $(X/Z,B)$ and $F$ by $(\widetilde{X}/Z, \widetilde{B})$ and $\widetilde{F}$ respectively, we can assume that $(X/Z,B^{+})$ is dlt, and any prime component of $E = B^{+} - (B+tF)$ is not an lc center of $(X/Z,B^{+})$. In particular, for sufficiently small positive real number $\epsilon$, the pair $(X,B^{+}+\epsilon E)$ is log canonical.

    First, we run a $(K_{X}+B^{+}+\epsilon E)$-MMP over $Z$. Notice that
    $$
    K_{X}+B^{+}+\epsilon E \sim_{\mathbb{R},Z} \epsilon E
    $$
    and
    $$
    K_{X}+B + tF \sim_{\mathbb{R},Z} - E.
    $$
    Hence this MMP is also a $-(K_{X} + B + tF)$-MMP over $Z$. Because there are finitely many surface contractions, this MMP is also a $-(K_{X} + B + rF)$-MMP over $Z$ for a sufficiently close $r>t$. Consequently, a complement above $t$ on the resulting model lifts to a complement above $t$ on the preceding model by the negativity lemma. Hence the $\mathbb{R}$-complementary threshold is preserved in this MMP. After replacing $X$ be the outcome of this MMP, we can assume that $E$ is semi-ample over $Z$. Let $X \rightarrow Z'$ be the ample model of $E$ over $Z$. Then we have
    $$
    K_{X} + B + tF \sim_{\mathbb{R},Z'} K_{X}+B^{+} \sim_{\mathbb{R},Z} 0.
    $$
    Take a positive real number $\delta < t$ and run a $(K_{X} + B + (t - \delta)F)$-MMP on $X$ over $Z'$. Notice that 
    $$
    K_{X} + B + (t - \delta)F \sim_{\mathbb{R},Z'} -\delta F.
    $$
    Let $X'$ be the outcome of this MMP, and $B'$ (resp. $F'$, $E'$) be the strict birational transformation of $B$ (resp. $F$, $E$) on $X'$. Similar to the argument on the first MMP, the $\mathbb{R}$-complementary threshold is also preserved in this second MMP.
    
    If $X'$ is a log minimal model, then $-F'$ is semi-ample over $Z'$. If $t < \mathrm{lct}(X',B';F')$, then we can choose a sufficiently small positive real number $\epsilon$ such that $(X',B'+ (t + \epsilon)F')$ is log canonical. Moreover, for sufficiently small positive real number $\epsilon$, the divisor
    $$
    -(K_{X'}+B'+(t+\epsilon)F') \sim_{\mathbb{R}, Z} E' - \epsilon F'
    $$
    is semi-ample over $Z$. Hence $(X',B'+ (t + \epsilon)F')$ has an $\mathbb{R}$-complement, a contradiction. Hence we must have $t = \mathrm{lct}(X',B';F')$. After taking an lc center and hyperplane sections, we reduce to the log canonical threshold case.
    
    If $X'/Z''$ is a Mori fibre space of relative dimension $1$ over $Z'$, then $(X'/Z'',B'+tF')$ is a log canonical 0-pair. Moreover, $F'$ is ample over $Z''$. Hence either $t = \mathrm{lct}(X',B';F')$, or $t = \mathrm{act}(X'/Z'',B';F') < \mathrm{lct}(X',B';F')$. If $t = \mathrm{lct}(X',B';F')$, then after taking an lc center and hyperplane sections, we reduce to the log canonical threshold case. If $t = \mathrm{act}(X'/Z'',B';F') < \mathrm{lct}(X',B';F')$, then after taking a general fibre, we reduce to the anticanonical threshold case.
    
    If $X'/Z''$ is a Mori fibre space of relative dimension $2$ over $Z'$, then $Z''$ is a closed point and $\rho(X')=1$. Moreover, we have $E=0$ and $B^{+} = B + tF$. In particular, $(X',B'+tF')$ is a log canonical 0-pair. Hence either $t = \mathrm{lct}(X',B';F')$, or $t = \mathrm{act}(X',B';F') < \mathrm{lct}(X',B';F')$. If $t = \mathrm{lct}(X',B';F')$, then after taking an lc center and hyperplane sections, we reduce to the log canonical threshold case. If $t = \mathrm{act}(X',B';F') < \mathrm{lct}(X',B';F')$, then it is the anticanonical threshold case.
\end{proof}

\subsection{Classification of surfaces}

    In this subsection, we recall some results on the classification of surfaces. First, we recall the classification of log canonical surface pairs.

\begin{proposition}[cf. \cite{Alexeev92_lc_surface}]\label{prop: surface lc singularity}
    Let $(S,C+B)$ be a log canonical surface pair, where $C > 0$ is a reduced divisor on $S$, and $B$ is a boundary $\mathbb{R}$-divisor. Let $P \in C$ be a closed point. Then one of the following is true:
    \begin{enumerate}
        \item $C$ has a node at $P$, and $S$ is nonsingular at $P$.
        \item $C$ has a node at $P$, and $S$ has a singularity of type \textbf{A} on $P$. More precisely, the dual graph is of the following form:
    \begin{figure}[H]
    \centering
    \begin{tikzpicture}
        \node[inner sep=0] (0) {$\cdots$};
        \node[inner sep=0, draw, circle] (1) [right of=0] {$E_{r}$};
        \draw (0) -- (1);
        \node[draw, circle] (a) [above of=0]{$C$};
        \draw (1) -- (a);
        \node[inner sep=0, draw, circle] (-1) [left of=0] {$E_{1}$};
        \draw (0) -- (-1);
        \draw (-1) -- (a);
    \end{tikzpicture}
    \caption{Dual graph 1}\label{figure: dual graph 1}
    \end{figure}
    Notice that when $r=1$, we have $C \cdot E_{1} = 2$.
    \item $C$ is nonsingular at $P$, and $S$ has a singularity of type \textbf{D} on $P$. More precisely, the dual graph is of the following form:
    \begin{figure}[H]
        \centering
        \begin{tikzpicture}
            \node[draw, circle] (0) {$C$};
            \node[inner sep=0, draw, circle] (1) [right of=0] {$E_{1}$};
            \node[inner sep=0] (2) [right of=1] {$\cdots$};
            \node[inner sep=0, draw, circle] (3) [right of=2] {$E_{r}$};
            \node[inner sep=0, draw, circle] (3') [above of=3] {$-2$};
            \node[inner sep=0, draw, circle] (3'') [below of=3] {$-2$};
            \draw (0) -- (1);
            \draw (1) -- (2);
            \draw (2) -- (3);
            \draw (3) -- (3');
            \draw (3) -- (3'');
        \end{tikzpicture}
        \caption{Dual graph 2}\label{figure: dual graph 2}
    \end{figure}
    \item $C$ is nonsingular at $P$, and $S$ has a singularity of type \textbf{A} on $P$. More precisely, the dual graph is of the following form:
    \begin{figure}[H]
        \centering
        \begin{tikzpicture}
            \node[draw, circle] (0) {$C$};
            \node[inner sep=0, draw, circle] (1) [right of=0] {$E_{1}$};
            \node[inner sep=0] (2) [right of=1] {$\cdots$};
            \node[inner sep=0, draw, circle] (3) [right of=2] {$E_{r}$};
            \draw (0) -- (1);
            \draw (1) -- (2);
            \draw (2) -- (3);
        \end{tikzpicture}
        \caption{Dual graph 3}\label{figure: dual graph 3}
    \end{figure}
    \end{enumerate}
\end{proposition}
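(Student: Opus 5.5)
The plan is to reduce to the minimal resolution and analyze the dual graph of $P$ together with the strict transform of $C$, following Alexeev's approach. Let $f\colon Y \to S$ be the minimal resolution near $P$, and write
$$
K_Y + C_Y + B_Y + \sum (1-a_i)E_i = f^*(K_S+C+B),
$$
where $a_i = a(E_i;S,C+B) \ge 0$. If $P$ is a smooth point of $S$, we check directly that $C$ is either smooth or nodal at $P$. A multiplicity $\ge 2$ point that is not a node forces a blow-up with log discrepancy $\le 2 - \mathrm{mult}_P C < 0$, or $=0$ with a non-normal-crossing configuration that produces a negative discrepancy after a further blow-up. This gives case (1) or the smooth instance of case (4) with $r=0$.

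If $P$ is singular, we use the standard facts about log canonical surface singularities. The exceptional curves are smooth rational, the graph is a tree (or a cycle in the strictly lc case), and $K_Y+\sum E_i$ is controlled by the adjunction inequality. The key step is to intersect the pulled-back divisor with each $E_j$: since $f^*(K_S+C+B)\cdot E_j = 0$,
$$
0 = (K_Y+E_j)\cdot E_j + \Big(C_Y + B_Y + \sum_{i\ne j}(1-a_i)E_i\Big)\cdot E_j - a_j E_j^2 .
$$
With $(K_Y+E_j)\cdot E_j = -2$, $-E_j^2\ge 2$ and $a_j\ge 0$, this gives
$$
\Big(C_Y + \sum_{i\ne j}(1-a_i)E_i\Big)\cdot E_j \le 2 .
$$
The coefficient $1$ of $C_Y$ is what makes the condition restrictive. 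Summing suitable combinations, or using \thref{prop:formula for ld} to compute the $a_i$ in terms of the $c_j$, one shows that $\sum_i a_i(\dots)\ge 0$ forces $C_Y\cdot \sum E_i \le 2$ overall. More precisely, the argument is the "no room" argument: $f^*C$ has coefficients along the $E_i$ given by the inverse intersection matrix, and lc-ness requires them to be at most $1-a_i$.

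We then split according to $C_Y\cdot\sum E_i$, which is $1$ or $2$. If it is $2$, then either $C$ has two branches through $P$ or $C_Y$ meets the graph with multiplicity two. Applying the displayed inequality at the meeting curves shows that all other coefficients $1-a_i$ must equal $1$ along the path joining them, that $B_Y$ does not meet the graph, and that no branching occurs. The graph is therefore a chain whose end curves meet $C_Y$, with $C\cdot E_1=2$ when $r=1$. This is case (2), and $C$ has a node at $P$ because its two branches are separated on $Y$. If it is $1$, then $C$ is smooth at $P$ and $C_Y$ meets a single curve, which must be an end curve $E_1$. The classification of lc graphs (chains, or forks with one branch point) restricts the possibilities further. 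A fork with three arbitrary arms is excluded, because attaching $C_Y$ at an arm end pushes the coefficient at the branch vertex above what the inequality allows. The only survivors are a chain (case (4)) or the $D$-type graph with two $(-2)$-arms at the far end (case (3)).

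The main obstacle is this exclusion step for branched graphs and cycles. It requires explicit control of the coefficients of $f^*C$ along the $E_i$, which I would obtain from \thref{prop:formula for ld}, or equivalently from the determinants $\Delta(\Gamma-\mathrm{Path})$. I would first try to show that a coefficient equal to $1$ propagates along the path from the curve meeting $C_Y$ to the branch vertex. At the branch vertex, the inequality then allows only two further arms, each consisting of a single $(-2)$-curve.
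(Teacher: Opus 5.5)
\textbf{Verdict: genuine gap.} The paper gives no proof of this proposition; it quotes Alexeev, whose arithmetic proof runs through the determinant formula for log discrepancies recalled in the preliminaries. Your proposal therefore has to stand on its own. Your smooth-point case and your two-branch case are fine in outline. For the latter, put $e_i=1-a_i$. Your identity gives $\sum_{k\sim i}e_k+C_Y\cdot E_i+B_Y\cdot E_i=2-a_i(-E_i^2)\le 2e_i$. Along the path joining two points of $C_Y\cap f^{-1}(P)$, extended by the value $1$ that $C_Y$ contributes at both ends, discrete concavity and $e_i\le 1$ force $e_i=1$ on the path. Equality then forces $B_Y\cdot E_i=0$ and rules out any further neighbour, because every $e_i>0$ (as $\mathrm{mult}_{E_i}f^*C>0$). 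This is what actually proves $C_Y\cdot\sum E_i\le 2$, which you only asserted. Three smaller points need fixing:
\begin{itemize}
\item Lc-ness bounds $\mathrm{mult}_{E_i}f^*C$ by $a(E_i;S,0)$, not by $1-a_i$.
\item You should first show that $S$ is klt at $P$. If $a(E;S,0)=0$ for some $E$ over $P$, then $a(E;S,C)<0$. This is what excludes elliptic curves and cycles.
\item ``Two branches separated on $Y$'' gives two branches, not a node. Likewise $C_Y\cdot\sum E_i=1$ gives a unibranch curve, not a smooth one. You need an extra input, e.g.\ normality of $C$ when $(S,C)$ is plt, or seminormality of the reduced boundary of an lc pair.
\end{itemize}

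The genuine gap is the unibranch case $C_Y\cdot\sum E_i=1$, which is where the content of the proposition lies. First, your claim that $C_Y$ must meet an end curve is false. Take the chain $E_0-E_1-E_2$ with $E_0^2=E_2^2=-2$, $E_1^2=-w$ ($w\ge 2$), and $C_Y$ meeting $E_1$ once. Then $f^*C=C_Y+\frac{1}{2(w-1)}(E_0+E_2)+\frac{1}{w-1}E_1$, and the log discrepancies of $(S,C)$ are $\frac12,0,\frac12$. This is case (3) with $r=1$, the $(2,w,2)$ configuration the paper itself uses later.

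Second, the fork exclusion cannot come from your local inequality. It only bounds the neighbouring $e_i$ from above, and with a single boundary value $1$ (from $C_Y$) concavity forces nothing. For $B=0$ in case (4), every $e_i<1$, so there is no coefficient $1$ at the curve meeting $C_Y$ to ``propagate''.

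What is missing is the global solution of $f^*(K_S+C)\cdot E_j=0$ (after reducing to $B=0$). For instance, use the determinant formula with $c_j=2-(C_Y+\sum_{k\ne j}E_k)\cdot E_j$:
\begin{itemize}
\item If $C_Y$ meets an interior vertex $E_c$ of a chain whose two side chains have determinants $\delta,\delta'$, it gives $a(E_c;S,C)\Delta(\Gamma)=\delta+\delta'-\delta\delta'$.
\item For a fork with branch vertex $E_b$, arms of determinants $\delta_1,\delta_2,\delta_3$, and $C_Y$ at the far end of the first arm, it gives $a(E_b;S,C)\Delta(\Gamma)=\delta_1(\delta_2+\delta_3-\delta_2\delta_3)$.
\item If $C_Y$ meets $E_b$ or an interior vertex of an arm, the same computation gives $a(E_b;S,C)<0$.
\end{itemize}
A nonempty chain has determinant $\ge 2$, with equality only for a single $(-2)$-curve. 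So $a\ge 0$ leaves exactly case (4) ($C_Y$ at an end of a chain) and case (3): $r=1$ from the chain computation, $r\ge 2$ from the fork computation.
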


\begin{proposition}[cf. \protect{\cite{Shokurov93}[Section 3], \cite{Alexeev92_lc_surface}}]\label{prop: surface adjunction}
    Let $(S,C+B)$ be a log canonical surface pair, where $C > 0$ is a reduced divisor on $S$, and $B = \sum\limits_{i=1}^{n}b_{i}D_{i}$ is a boundary $\mathbb{R}$-divisor. Let $\nu: \widetilde{C} \rightarrow C$ be a normalization of $C$. Then we have
    $$
    \nu^{*}(K_{S} + C + B) = K_{\widetilde{C}} + \mathrm{Diff}_{\widetilde{C}}(B).
    $$
    Moreover, for any point $P \in \widetilde{C}$, one of the following holds:
    \begin{enumerate}[label=(\arabic*)]
        \item The curve $C$ has a node at $\nu(P)$. In this case $(S,C)$ is not plt in $\nu(P)$ and $\mathrm{mult}_{P}(\mathrm{Diff}_{\widetilde{C}}(B)) = 1$, $\mathrm{mult}_{P}(B) = 0$.
        \item The curve $C$ is nonsingular at $\nu(P)$ and $(S,C)$ is non-plt at $\nu(P)$. In this case $\nu$ is an isomorphism at $\nu(P)$ and $\mathrm{mult}_{P}(\mathrm{Diff}_{\widetilde{C}}(B)) = 1$.
        \item The pair $(S,C)$ is plt at $\nu(P)$. In this case, $C$ is nonsingular at $\nu(P)$ and $\nu$ is an isomorphism at $\nu(P)$. We have $$
        \mathrm{mult}_{P}(\mathrm{Diff}_{\widetilde{C}}(B)) = \frac{m - 1 + \sum\limits_{i=1}^{n}r_{i}b_{i}}{m}
        $$
        for some positive integer $m$ and nonnegative integers $r_{i}$. Moreover, we have:
        \begin{enumerate}
            \item The number $m$ is the Cartier index of $C$ at $\nu(P)$. It is also the Cartier index of the divisor $K_{S}+C$ at $\nu(P)$.
            \item The number $r_{i}$ is given by $\frac{r_{i}}{m} = (C \cdot D_{i})_{\nu(P)}$.
            \item The multiplicity $\mathrm{mult}_{P}(\mathrm{Diff}_{\widetilde{C}}(B)) > 0$ if and only if one of the following happens:
            \begin{enumerate}
                \item The integer $r_{i} > 0$ for some $i$. This case happens if and only if $\nu(P) \in C \cap \mathrm{Supp}B$;
                \item The integer $m > 1$. This case happens if and only if $S$ is singular at $\nu(P)$.
            \end{enumerate}
            There are only finitely many points satisfying the above condition.
        \end{enumerate}
    \end{enumerate}
    In particular, after taking the degree of both sides, we have
    $$
    (K_{S}+C+B) \cdot C = 2g(\widetilde{C}) - 2 + l + \sum\limits_{i=1}^{n}w_{i}b_{i} + \sum\limits_{i=1}^{u} \frac{m_{i}-1 + \sum\limits_{j=1}^{n}r_{ij}b_{j}}{m_{i}}
    $$
    where the integer $l$ denotes the number of points $P \in \widetilde{C}$ in Case (1) or Case (2), the integer $w_{i}$ is the intersection multiplicity of $C$ and $D_{i}$ in the smooth locus of $S$, and $P_{1},P_{2},\cdots , P_{u}$ are the points in Case (3) such that $S$ is singular at $\nu(P_{i})$.
\end{proposition}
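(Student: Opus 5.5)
The plan is to compute $\mathrm{Diff}_{\widetilde{C}}(B)$ locally on the minimal resolution and sort the points of $\widetilde{C}$ by the local classification of \thref{prop: surface lc singularity}. Let $f\colon Y\to S$ be the minimal resolution and write
$$
K_Y+C_Y+B_Y+\sum_j(1-a_j)E_j=f^{*}(K_S+C+B),
$$
where $C_Y,B_Y$ are strict transforms and $a_j=a(E_j;S,C+B)$. Since $C_Y$ is smooth near the points of interest in cases (2),(3), or the normalization of $C$ factors through $C_Y$ in case (1), the usual adjunction on the smooth surface $Y$ gives $(K_Y+C_Y)|_{C_Y}=K_{C_Y}$. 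Restricting and pushing to $\widetilde{C}$ then yields $\nu^{*}(K_S+C+B)=K_{\widetilde{C}}+\mathrm{Diff}_{\widetilde{C}}(B)$. The local multiplicity at $P$ is
$$
\mathrm{mult}_P\mathrm{Diff}_{\widetilde{C}}(B)=\sum_j(1-a_j)(E_j\cdot C_Y)_P+(B_Y\cdot C_Y)_P .
$$
This reduces everything to computing the $a_j$ for the exceptional curves meeting $C_Y$.

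Next I split by the cases of \thref{prop: surface lc singularity}.

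\emph{Node, cases 1 and 2 of that proposition.} Here $(S,C)$ is lc but not plt at $\nu(P)$. The chain $E_1,\dots,E_r$ together with the two branches of $C$ forms a cycle, and lc forces every $E_j$ to have log discrepancy $0$. Log discrepancy $0$ forces $B$ to be absent at $\nu(P)$, so $\mathrm{mult}_P B=0$. Each branch then meets an exceptional curve with coefficient $1$, or the other branch directly in the smooth case, so the multiplicity is $1$.

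\emph{Smooth branch, non-plt.} This is the $D$-type graph of case 3. The same argument applies, with the lc-but-not-plt condition forcing $a(E_1)=0$, so again the multiplicity is $1$. Here $C$ has a single smooth branch, so $\nu$ is an isomorphism.

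\emph{Plt.} The singularity is a cyclic quotient with chain $E_1,\dots,E_r$, and $C_Y$ meets only the end curve $E_1$, transversally. I apply \thref{prop:formula for ld} to the chain, with $c_1=1-B_Y\cdot E_1$ and $c_j=-B_Y\cdot E_j$ for $j\ge 2$, using that all $E_j^2\le-2$ and the chain has determinant $m=\Delta(\Gamma)$. This gives
$$
1-a_1=\frac{m-1+\sum_i r_i b_i}{m}-(B_Y\cdot C_Y)_P,
$$
with nonnegative integers $r_i$. Adding back $(B_Y\cdot C_Y)_P$ gives the stated form. The identification of $m$ with the Cartier index of $C$ and of $K_S+C$ follows because $\Delta(\Gamma)$ is the order of the local class group of the cyclic quotient, and $C$ corresponds to a generator, since $C_Y$ meets an end of the chain. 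The identity $r_i/m=(C\cdot D_i)_{\nu(P)}$ follows from computing $f^{*}C\cdot D_{i,Y}$ with the same determinant formula. Positivity of the multiplicity occurs exactly when $m>1$, meaning $S$ is singular, or some $r_i>0$, meaning $\nu(P)\in\mathrm{Supp}B$. Only finitely many points qualify, since $\mathrm{Sing}S$ and $C\cap\mathrm{Supp}B$ are finite.

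Finally, taking degrees uses $\deg K_{\widetilde{C}}=2g(\widetilde{C})-2$, together with $\deg\nu^{*}(K_S+C+B)=(K_S+C+B)\cdot C$. Summing the local contributions gives $1$ for each of the $l$ points of cases (1) and (2), $w_ib_i$ from smooth points of $S$, where $m=1$, and the fractional terms at the singular plt points.

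I expect the main obstacle to be the plt computation: checking via the determinant formula that the numerator has exactly the form $m-1+\sum r_ib_i$ with integer $r_i$. This requires the tree determinants $\Delta(\Gamma-\mathrm{Path}(v_1,v_j))$ to combine with the intersection numbers $E_j\cdot D_{i,Y}$ into integers. I would use the standard continued-fraction identities for chains to do this.
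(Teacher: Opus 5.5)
The paper gives no proof of this proposition; it only cites the references in its header. Your outline (minimal resolution, adjunction on $Y$, case split by \thref{prop: surface lc singularity}) is the standard route. The node and non-plt cases are essentially fine, with one caveat. When $S$ is smooth at a node, $Y=S$ near that point and $C_Y$ is still nodal. Your local formula for $\mathrm{mult}_P\mathrm{Diff}_{\widetilde C}(B)$ must then include the conductor coming from $\omega_{C_Y}$. The argument for $\mathrm{mult}_P(B)=0$ also needs the blow-up of the node, whose log discrepancy for $(S,C)$ is $0$.

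The genuine problem is the plt computation, which is set up incorrectly. For the boundary $C+B$, the determinant formula of \thref{prop:formula for ld} uses $c_j=2-(C_Y+B_Y+\sum_{k\ne j}E_k)\cdot E_j$. If $r\ge 2$, the curve $E_1$ meets both $C_Y$ and $E_2$, so $c_1=-B_Y\cdot E_1$. Interior vertices give $c_j=-B_Y\cdot E_j$. The constant $1$ sits at the far end: $c_r=1-B_Y\cdot E_r$. With your assignment ($c_1=1-B_Y\cdot E_1$ and $c_j=-B_Y\cdot E_j$ for all $j\ge2$), the formula returns $1-a_1=\bigl(m-\Delta(\Gamma-v_1)+\cdots\bigr)/m$. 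This is already wrong for $B=0$: for a chain of two $(-2)$-curves you get $\frac13$ instead of the correct $\frac23=1-\frac1m$. Compare the paper's own \thref{statement: local classification}, where $a(E;X,B)=\frac{q(1-b_{1})+\alpha}{m}$ with $q=\Delta(\Gamma-v_E)$. There $q$ multiplies the term from the end meeting $D_1^{min}$, and that term vanishes once this curve has coefficient $1$.

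With the $c_j$ placed correctly, $\Delta(\Gamma-\mathrm{Path}(v_1,v_r))=\Delta(\varnothing)=1$ gives $1-a_1=\frac{m-1+\sum_j\Delta(\Gamma-\mathrm{Path}(v_1,v_j))\,(B_Y\cdot E_j)}{m}$. Integrality of the $r_i$ is then automatic, since each term is an integer determinant times a nonnegative intersection number. So the continued-fraction step you flag as the main obstacle is not where the difficulty lies.

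What does need an argument is (3)(a)--(b). The cleanest route is linearity in $B$ plus the projection formula:
$\mathrm{mult}_P\mathrm{Diff}_{\widetilde C}(B)=\bigl(1-\frac1m\bigr)+\sum_i b_i\,(f^{*}D_i\cdot C_Y)=\bigl(1-\frac1m\bigr)+\sum_i b_i\,(C\cdot D_i)_{\nu(P)}$, with the middle intersection computed over $\nu(P)$. Moreover $m(C\cdot D_i)_{\nu(P)}=(mC\cdot D_i)_{\nu(P)}\in\mathbb{Z}_{\ge0}$ because $mC$ is Cartier.

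Finally, your class-group argument only shows that $C$ has index $m$. For $K_S+C$ you should add two facts. Its index divides $|\mathrm{Cl}(S,\nu(P))|=m$. And if $k(K_S+C)$ is Cartier at $\nu(P)$, then adjunction forces $k\bigl(1-\frac1m\bigr)\in\mathbb{Z}$, hence $m\mid k$.
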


    To prove the main theorems, we need the classification of nondegenerate surfaces of small degree.

\begin{theorem}[cf. \cite{DelPezzo1885MinimalDegreeSurfaces}]\label{thm: surface of minimal degree}
    If a projective surface $S$ of degree $d$ is in a projective space $\mathbb{P}^{d+1}$ but not in any hyperplane $\mathbb{P}^{d}$, then $S$ is isomorphic to one of the following:
    \begin{enumerate}
        \item A rational normal scroll $\mathbb{F}_{d-2l}$, embedded by the linear system $|E + (d-l)F|$.
        \item A Veronese embedding of $\mathbb{P}^2$ in $\mathbb{P}^5$.
        \item The projective plane $\mathbb{P}^2$.
        \item The rational cone $\mathbb{P}(1,1,d)$.
    \end{enumerate}
\end{theorem}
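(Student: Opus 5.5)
The plan is the classical argument via a generic projection and the Castelnuovo-type bound on hyperplane sections, reducing to curves of minimal degree. First I would reduce to the case where $S$ is irreducible and reduced; this is implicit in "surface of degree $d$". Let $H\subset\mathbb{P}^{d+1}$ be a general hyperplane. By Bertini, $C=S\cap H$ is an irreducible reduced curve of degree $d$ in $H\cong\mathbb{P}^{d}$. Since $S$ spans $\mathbb{P}^{d+1}$, the curve $C$ spans $H$: otherwise the restriction map $H^0(\mathcal{O}_{\mathbb{P}^{d+1}}(1))\to H^0(\mathcal{O}_C(1))$ would have a two-dimensional kernel, and one checks from the sequence $0\to \mathcal{O}_S\to\mathcal{O}_S(1)\to\mathcal{O}_C(1)\to 0$ that this forces $S$ into a hyperplane. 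A nondegenerate irreducible curve of degree $d$ in $\mathbb{P}^d$ is a rational normal curve; I would prove this by projecting from $d-1$ general points on it, which lands in a line. Hence $C\cong\mathbb{P}^1$, $\mathcal{O}_C(1)$ has degree $d$, and $h^0(\mathcal{O}_S(1))=d+2$ with $h^1(\mathcal{O}_S)=0$. In particular $S$ has rational, hence projectively normal, hyperplane sections, and $S$ is a rational surface, being swept by rational curves moving in a family whose members pass through two general points.

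Next I would divide according to whether $S$ is a cone. If $S$ is a cone with vertex $v$, then projection from $v$ maps $S$ onto a nondegenerate curve of degree $d$ in $\mathbb{P}^{d}$, i.e. a rational normal curve. This gives the cone over a rational normal curve of degree $d$, which is $\mathbb{P}(1,1,d)$ (case 4); for $d=1$ this is just $\mathbb{P}^2$ (case 3). If $S$ is not a cone, I would take the minimal resolution $\widetilde S\to S$ and pull back the hyperplane class to get a nef and big class $L$. It satisfies $L^2=d$, $L\cdot K_{\widetilde S}=-d-2$ by adjunction on $C\cong\mathbb{P}^1$, and $h^0(L)=d+2$. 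Then I would run the standard classification of such linear systems ("adjoint" argument): consider $|K_{\widetilde S}+L|$, which restricts to $K_C$ on $C$ and hence is empty, so $\widetilde S$ is a ruled-type rational surface. Concretely I would apply Riemann--Roch and Hodge index to $K+L$ and $K+2L$, and conclude that either $(\widetilde S,L)\cong(\mathbb{P}^2,\mathcal{O}(1))$ or $(\mathbb{P}^2,\mathcal{O}(2))$, or that $\widetilde S=\mathbb{F}_e$ with $L=E+aF$ restricting to degree one on fibres.

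Finally I would pin down the scroll case. Write $L\cdot F=1$ on $\mathbb{F}_e$ and $L=E+aF$ with $E^2=-e$. Then $L^2=2a-e=d$, and very ampleness (the map is an embedding because $S$ is not a cone, so there is no contracted curve) forces $a>e$. Setting $e=d-2l$ gives $a=d-l$, which recovers case 1. The Veronese surface (case 2) arises for $d=4$ with $L=\mathcal{O}(2)$.

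The main obstacle is the adjoint step: showing rigorously that the pair $(\widetilde S,L)$ must be $\mathbb{P}^2$ with $\mathcal{O}(1)$ or $\mathcal{O}(2)$, or a scroll. This includes ruling out the possibility that the resolution is not minimal in a way compatible with $L$. For this I would first try the "conic bundle via $|L-C|$-type" argument: choose a line or a conic on $S$ through general points by projecting from a general point of $S$. This drops to a degree $d-1$ surface in $\mathbb{P}^{d}$ and allows induction on $d$, with base cases $d=1$ (plane) and $d=2$ (quadric, i.e. $\mathbb{F}_0$ or $\mathbb{P}(1,1,2)$). The induction must track that the inverse of the projection is the blow-up at the point. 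That inverse blow-up converts $\mathbb{F}_e$ and the Veronese surface into the listed models of one higher degree, which is where the care is needed.
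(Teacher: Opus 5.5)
The paper gives no proof of this statement; it is quoted from del Pezzo, so your argument has to stand on its own.

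Your setup is sound. The general hyperplane section $C$ is a rational normal curve, and $h^0(\mathcal{O}_S(1))=d+2$. You should also state that $\operatorname{Sing} S$ is finite. This holds because a hyperplane section through a singular point of $S$ is singular there, while every hyperplane meets every curve. Only then is the general member of $|L|$ on the minimal resolution $\widetilde S$ of the normalization isomorphic to $C$, which is what gives $L\cdot K_{\widetilde S}=-d-2$.

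\textbf{The gap.} The genuine gap is the step you yourself flag as the main obstacle, and it is the whole content of the theorem: identifying $(\widetilde S,L)$.
\begin{enumerate}
\item Riemann--Roch with vanishing yields only $h^0(K+L)=0$ and $h^0(K+2L)=d-1$. Hodge index for $K+2L$ against $L$ yields only $K_{\widetilde S}^2\le d+4+4/d$. None of this determines the surface.
\item Your fallback induction misdescribes the projection. Projection from a general $p\in S$ is resolved on $\mathrm{Bl}_pS$, but the morphism given by $H-E_p$ also contracts the strict transforms of the lines of $S$ through $p$. For a scroll, $(H-E_p)\cdot(F_p-E_p)=0$ for the ruling $F_p$ through $p$. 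So $S'$ is an elementary transform of $S$, not its blow-up; for instance $\mathbb{F}_e$ becomes $\mathbb{F}_{e-1}$, or $\mathbb{F}_1$ if $e=0$.
\item Running the induction backwards would require deciding which points of the line $\phi(E_p)\subset S'$ are blown up and which curve is contracted. That analysis is exactly what is left undone.
\end{enumerate}

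\textbf{How to close it.} The missing idea can be supplied inside your own setup by the cone theorem on $\widetilde S$.
\begin{enumerate}
\item Since $L$ is nef and $(K+L)\cdot L=-2<0$, there is a $K$-negative extremal ray $R$ with $(K+L)\cdot R<0$.
\item If $R$ is spanned by a $(-1)$-curve $\Gamma$, then $L\cdot\Gamma=0$. So $\Gamma$ is contracted by $\widetilde S\to S$, contradicting minimality of the resolution.
\item If $\widetilde S\cong\mathbb{P}^2$, then $L=\mathcal{O}(k)$ with $k^2-3k=-2$, so $k\in\{1,2\}$. This gives cases 3 and 2.
\item Otherwise $\widetilde S\to B$ is a $\mathbb{P}^1$-bundle, and $(K+L)\cdot F<0$ forces $L\cdot F=1$; bigness excludes $L\cdot F=0$. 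Then $C$ is a section, so $B\cong\mathbb{P}^1$ and $\widetilde S\cong\mathbb{F}_e$. Write $L\sim E+aF$; nefness gives $a\ge e$.
\item For $a=e$, bigness and minimality exclude $e=0,1$. Then $|L|$ contracts $E$ to the vertex of the cone $\mathbb{P}(1,1,d)$ with $d=e$, which is case 4.
\item For $a>e$, $L$ is very ample and gives case 1 with $l=a-e$.
\end{enumerate}
Because $h^0(L)=d+2$, the map $\widetilde S\to S\subset\mathbb{P}^{d+1}$ is $\phi_{|L|}$, so $S$ is the corresponding model. This route also makes your separate treatment of cones unnecessary.
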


\subsection{Linear systems on Fano type surfaces}

    In this subsection, we study certain linear systems on a surface of Fano type. First, to prove the main results in this subsection, we need the following general Kodaira vanishing theorem.

\begin{theorem}[General Kodaira vanishing, cf. {\cite[Theorem 2.70]{KM}}, {\cite[Remark 2.1]{Fujino2024GeneralKodairaVanishing}}]\label{thm: generalized Kodaira vanishing}
    Let $(X,\Delta)$ be a proper klt pair. Let $N$ be a $\mathbb{Q}$-Cartier Weil divisor on $X$ such that $N \equiv M + \Delta$, where $M$ is a nef and big $\mathbb{R}$-Cartier $\mathbb{R}$-divisor. Then $H^{i}(X,\mathcal{O}_{X}(-N))=0$ for $i < \mathrm{dim}X$.
\end{theorem}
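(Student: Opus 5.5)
The plan is to deduce the statement from the Kawamata--Viehweg vanishing theorem for klt pairs via Serre duality, following the argument of \cite[Theorem 2.70]{KM}, and to use \cite[Remark 2.1]{Fujino2024GeneralKodairaVanishing} to pass from $\mathbb{Q}$-coefficients to $\mathbb{R}$-coefficients. Set $n=\mathrm{dim}X$.

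\textbf{Step 1: reduction to $\mathbb{Q}$-coefficients.} Since $(X,\Delta)$ is klt and $N$ is $\mathbb{Q}$-Cartier, $K_X$ is $\mathbb{R}$-Cartier, hence $\mathbb{Q}$-Cartier. Therefore $\Delta$ is an $\mathbb{R}$-Cartier combination of finitely many prime divisors. The conditions ``klt'' and ``$N-\Delta$ nef and big'' are open conditions in the finite-dimensional space of $\mathbb{R}$-Cartier divisors supported on $\mathrm{Supp}\Delta$, modulo numerical equivalence. Here nef and big means lying in the interior of the nef cone intersected with the big cone. This is the point of \cite[Remark 2.1]{Fujino2024GeneralKodairaVanishing}. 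So I would choose a $\mathbb{Q}$-divisor $\Delta'$ close to $\Delta$ with the same support and $(X,\Delta')$ klt. After a small perturbation of $M$ inside the ample-plus-nef cone, $M'=N-\Delta'$ is still nef and big. This reduces the statement to the case where $\Delta$ and $M$ are $\mathbb{Q}$-divisors.

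\textbf{Step 2: Serre duality.} Klt singularities are rational, hence Cohen--Macaulay. Moreover, for a $\mathbb{Q}$-Cartier Weil divisor $D$ on a klt variety, the reflexive sheaf $\mathcal{O}_X(D)$ is Cohen--Macaulay, by \cite[Corollary 5.25]{KM}; I would apply this to $D=-N$. Serre duality on the Cohen--Macaulay variety $X$ then gives
$$H^i(X,\mathcal{O}_X(-N)) \cong H^{n-i}(X,\mathcal{H}om(\mathcal{O}_X(-N),\omega_X))^{\vee} = H^{n-i}(X,\mathcal{O}_X(K_X+N))^{\vee}.$$
Hence it suffices to show $H^j(X,\mathcal{O}_X(K_X+N))=0$ for $j>0$.

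\textbf{Step 3: Kawamata--Viehweg vanishing for Weil divisors.} We have $(K_X+N)-(K_X+\Delta)\equiv M$, which is nef and big. I would take a log resolution $f\colon Y\to X$ and write $K_Y+\Delta_Y=f^*(K_X+\Delta)$, where all coefficients of $\Delta_Y$ are $<1$. Set $L=K_Y+\lceil f^*N-\Delta_Y\rceil$, noting that $f^*N$ is a $\mathbb{Q}$-divisor. Then $L-K_Y$ is $f^*M$ plus a fractional boundary with simple normal crossing support. By smooth Kawamata--Viehweg vanishing, together with its relative version over $X$, $R^jf_*\mathcal{O}_Y(L)=0$ and $H^j(Y,\mathcal{O}_Y(L))=0$ for $j>0$. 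The remaining check is $f_*\mathcal{O}_Y(L)=\mathcal{O}_X(K_X+N)$. The pushforward is reflexive of rank one and agrees with $\mathcal{O}_X(K_X+N)$ away from codimension two, since there $f$ is an isomorphism and $\Delta$ has coefficients $<1$. It is torsion-free, and a klt discrepancy computation shows that it is saturated. The Leray spectral sequence then gives the required vanishing.

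The main obstacle is the identification $f_*\mathcal{O}_Y(L)=\mathcal{O}_X(K_X+N)$ in Step 3. This requires carefully tracking the round-up of the non-integral pullback $f^*N$ against $\Delta_Y$, and is exactly where the klt hypothesis enters. Alternatively, I would use a local index-one cyclic cover to make $N$ Cartier and reduce to the Cartier case, which is the approach of \cite[Theorem 2.70]{KM}.
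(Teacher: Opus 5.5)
The paper gives no proof of this statement; it quotes it from \cite[Theorem 2.70]{KM}, with \cite[Remark 2.1]{Fujino2024GeneralKodairaVanishing} covering real coefficients. Your overall architecture is a workable route: rational perturbation, then Serre duality using that $\mathcal{O}_X(-N)$ is Cohen--Macaulay, then Kawamata--Viehweg on a log resolution plus Leray. However, two steps fail as written.

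In Step 1, ``nef and big'' is \emph{not} an open condition. The interior of the nef cone is the ample cone, so your description ``interior of the nef cone intersected with the big cone'' covers only ample $M$. If $M$ is nef and big but not ample, moving $\Delta$ to a nearby $\mathbb{Q}$-divisor $\Delta'$ with the same support changes $N-\Delta'$ by the small but otherwise arbitrary class $\Delta-\Delta'$. That class may be negative on curves $C$ with $M\cdot C=0$. The missing ingredient is Kodaira's lemma. Write $M\sim_{\mathbb{R}}A+E$ with $A$ ample and $E\geq 0$, and replace $(M,\Delta)$ by $\bigl((1-\epsilon)M+\epsilon A,\ \Delta+\epsilon E\bigr)$. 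For $0<\epsilon\ll 1$ the pair stays klt and the new $M$ is ample. Only then does openness give the rational perturbation, namely openness of ampleness and of klt in the rational subspace of $\mathbb{R}$-Cartier divisors supported on $\mathrm{Supp}(\Delta+E)$.

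In Step 3, your divisor $L=K_Y+\lceil f^*N-\Delta_Y\rceil$ is the wrong one. Since $f^*\Delta-\Delta_Y=K_Y-f^*K_X$, you get $L-K_Y\equiv f^*M+K_{Y/X}+(\text{fractional part})$. In general $K_{Y/X}$ is neither fractional nor effective, so your claim that $L-K_Y$ is $f^*M$ plus a fractional snc boundary is false. Concretely, take $X=\mathbb{P}^2$, $\Delta=0$, $N$ a line, and $f$ the blow-up of a point with exceptional curve $E$. Then $\Delta_Y=-E$ and $L=f^*(K_X+N)+2E$. Here $R^1f_*\mathcal{O}_Y(L)$ is a skyscraper with stalk $H^1(\mathbb{P}^1,\mathcal{O}_{\mathbb{P}^1}(-2))\neq 0$, and $H^1(Y,\mathcal{O}_Y(L))\neq 0$, so neither vanishing you invoke holds.

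The correct choice is $L=K_Y+\lceil f^*(N-\Delta)\rceil=\lceil f^*(K_X+N)-\Delta_Y\rceil$. Then $L-K_Y\equiv f^*M+F$ with $F$ snc and fractional, so absolute and relative Kawamata--Viehweg both apply. The identification you call the main obstacle then takes one line:
\begin{itemize}
\item $f_*L=K_X+N$ gives $f_*\mathcal{O}_Y(L)\subseteq\mathcal{O}_X(K_X+N)$;
\item for a local section $s$ of $\mathcal{O}_X(K_X+N)$, one has $\mathrm{div}(f^*s)+L\geq\lceil-\Delta_Y\rceil\geq 0$, because every coefficient of $\Delta_Y$ is $<1$.
\end{itemize}
This is exactly where klt enters, and no reflexivity or saturation argument is needed. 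Your fallback also does not work: a \emph{local} index-one cover cannot by itself reduce a global vanishing to the Cartier case. With these two corrections, your Steps 2 and 3 give $H^j(X,\mathcal{O}_X(K_X+N))=0$ for $j>0$, and Serre duality yields the statement.
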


    We use the above theorem to prove the following proposition:

\begin{proposition}\label{prop: lifting sections}
    Let $(S,\Delta)$ be a proper klt weak Fano pair, and $C$ be a nonsingular rational curve on $S$ which does not contain singularities of $S$. If $C^2 \geq 0$, then the complete linear system $|C|$ is base-point free.
\end{proposition}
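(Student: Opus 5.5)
The plan is to lift sections from $C$ to $S$ using the restriction sequence and the vanishing of Theorem \ref{thm: generalized Kodaira vanishing}. Since $C$ avoids the singular locus of $S$, it lies in the smooth locus, so $C$ is Cartier and $\mathcal{O}_S(C)$ is an invertible sheaf near $C$. There is an exact sequence
$$
0 \to \mathcal{O}_S \to \mathcal{O}_S(C) \to \mathcal{O}_C(C) \to 0 .
$$
Here $\mathcal{O}_C(C)\cong \mathcal{O}_{\mathbb{P}^1}(C^2)$ with $C^2\ge 0$, so it is globally generated on $C$. If $H^1(S,\mathcal{O}_S)=0$, then $H^0(S,\mathcal{O}_S(C))\to H^0(C,\mathcal{O}_C(C))$ is surjective. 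In that case every point of $C$ is avoided by some section, because we can lift a section of $\mathcal{O}_{\mathbb{P}^1}(C^2)$ not vanishing there. Points off $C$ are avoided by the section defining $C$ itself. Together these show $|C|$ is base-point free.

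The key input is therefore $H^1(S,\mathcal{O}_S)=0$. I would get it from Theorem \ref{thm: generalized Kodaira vanishing} applied to the klt pair $(S,\Delta)$ with $N=K_S$, a $\mathbb{Q}$-Cartier Weil divisor since $S$ is a klt surface and hence $\mathbb{Q}$-factorial. Write $K_S \equiv M+\Delta$ with $M=K_S-\Delta$; this $M$ is not nef, so the theorem does not apply in this form. Instead I would use Serre duality: for $N=-K_S$ we need $-K_S\equiv M+\Delta$, i.e. $M=-(K_S+\Delta)$, which is nef and big because $(S,\Delta)$ is weak log Fano. The theorem then gives $H^i(S,\mathcal{O}_S(K_S))=0$ for $i<2$. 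Since a klt surface is Cohen--Macaulay with dualizing sheaf $\mathcal{O}_S(K_S)$, Serre duality yields $H^1(S,\mathcal{O}_S)\cong H^1(S,\mathcal{O}_S(K_S))^\vee=0$.

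The argument above uses only $H^1(S,\mathcal{O}_S)=0$; one could alternatively get $H^1(\mathcal{O}_S)=0$ via rationality of klt singularities and a resolution, but the route above avoids that. The main point to check carefully is the application of the vanishing theorem with the correct sign convention $\mathcal{O}_X(-N)$ and the duality on a singular surface. I expect this to be routine given that klt surface singularities are rational, hence Cohen--Macaulay.
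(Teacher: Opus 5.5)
Your proposal is correct and follows the paper's proof essentially step for step. Both arguments use the restriction sequence for the Cartier divisor $C$, then obtain $H^1(S,\mathcal{O}_S)=0$ from the general Kodaira vanishing with $N=-K_S=-(K_S+\Delta)+\Delta$ together with Serre duality on the Cohen--Macaulay surface, and then lift sections of $\mathcal{O}_C(C)\cong\mathcal{O}_{\mathbb{P}^1}(C^2)$, noting that base points can only lie on $C$. The abandoned attempt with $N=K_S$ is harmless and can simply be deleted.
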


\begin{proof}
    Since $C$ does not contain singularities of $S$, it is a Cartier divisor of $S$. Consider the short exact sequence
    $$
    0 \rightarrow \mathcal{O}_{S} \rightarrow \mathcal{O}_{S}(C) \rightarrow \mathcal{O}_{C}(C) \rightarrow 0
    $$
    and the associated long exact sequence
    $$
    0 \rightarrow H^{0}(S,\mathcal{O}_{S}) \rightarrow H^{0}(S,\mathcal{O}_{S}(C)) \rightarrow H^{0}(C,\mathcal{O}_{C}(C)) \rightarrow H^{1}(S,\mathcal{O}_{S}) \rightarrow \cdots.
    $$
    Since $(S,\Delta)$ is klt, $S$ has rational singularities. In particular, $S$ is Cohen-Macaulay. Applying \thref{thm: generalized Kodaira vanishing} with $N = -K_{S} = -(K_{S}+\Delta) + \Delta$ and the Serre duality for Cohen-Macaulay varieties, we have $h^{1}(S,\mathcal{O}_{S}) = h^{1}(S,\mathcal{O}_{S}(K_{S})) = 0$. Hence, from the above long exact sequence, we get a short exact sequence
    \begin{equation}\label{equation: short exact sequence}
    0 \rightarrow H^{0}(S,\mathcal{O}_{S}) \rightarrow H^{0}(S,\mathcal{O}_{S}(C)) \rightarrow H^{0}(C,\mathcal{O}_{C}(C)) \rightarrow 0.
    \end{equation}
    If $C^{2} \geq 0$, then the line bundle $\mathcal{O}_{C}(C)$ is base-point free on $C$. By the surjection $H^{0}(S,\mathcal{O}_{S}(C)) \twoheadrightarrow H^{0}(C,\mathcal{O}_{C}(C))$, we conclude that the complete linear system $|C|$ has no base points on $C$. It is obvious that the base points of $|C|$ must be on $C$. Hence $|C|$ is base-point free.
\end{proof}

\begin{corollary}\label{cor: complete linear system morphism}
    Under the assumptions of \thref{prop: lifting sections} and the assumption that $C^{2} \geq 0$, the complete linear system $|C|$ defines a morphism $\varphi_{|C|}: S \rightarrow \mathbb{P}^{C^{2}+1}$. In particular, we have:
    \begin{itemize}
        \item If $C^{2} = 0$, then we have the contraction $\varphi_{|C|}: S \rightarrow \mathbb{P}^{1}$.
        \item If $C^{2} \geq 1$, then we have the birational contraction $\varphi_{|C|}: S \rightarrow \mathbb{P}^{C^{2}+1}$, and the image of $\varphi_{|C|}$ is a nondegenerate (that is, it is not contained in any hyperplane of $\mathbb{P}^{C^{2}+1}$) irreducible surface of degree $C^{2}$.
    \end{itemize}
\end{corollary}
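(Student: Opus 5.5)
By \thref{prop: lifting sections}, the complete linear system $|C|$ is base-point free, so it defines a morphism $\varphi=\varphi_{|C|}\colon S\to\mathbb{P}^{N}$ with $N=h^{0}(S,\mathcal{O}_S(C))-1$. The plan is to compute $N$ from the short exact sequence \eqref{equation: short exact sequence}, then treat $C^2=0$ and $C^2\ge1$ separately, using the ample/nef properties of $C$ and its intersection numbers.

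\emph{Computing the dimension.} Since $C\cong\mathbb{P}^1$ and $C$ is Cartier, we have $\mathcal{O}_C(C)\cong\mathcal{O}_{\mathbb{P}^1}(C^2)$. Hence $h^0(C,\mathcal{O}_C(C))=C^2+1$. Combined with $h^0(S,\mathcal{O}_S)=1$ ($S$ proper and normal), \eqref{equation: short exact sequence} gives $h^0(S,\mathcal{O}_S(C))=C^2+2$, so $N=C^2+1$.

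\emph{Case $C^2=0$.} The image of $\varphi$ lies in $\mathbb{P}^1$ and is not a point, since $C\neq0$ moves. The Stein factorization $S\to T\to\mathbb{P}^1$ has fibres equal to the members of $|C|$. These are numerically equivalent to $C$, and $C$ itself is an irreducible reduced member, so the general fibre of $\varphi$ is connected. Hence $T\to\mathbb{P}^1$ is birational and finite, so it is an isomorphism by normality. Alternatively, if $\varphi$ factored as $S\to T\xrightarrow{g}\mathbb{P}^1$ with $\deg g=k>1$, then $C=\varphi^*\mathcal{O}(1)$ would split into $k$ disjoint fibres of $S\to T$, contradicting the irreducibility of $C$. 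So $\varphi$ is a contraction onto $\mathbb{P}^1$.

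\emph{Case $C^2\ge1$.} Here $C$ is nef (it is irreducible with $C^2>0$) and big, so the image of $\varphi$ is a surface $\Sigma$. Writing $\varphi^*H=C$, we get $\deg(\varphi)\cdot\deg\Sigma=C^2$. Nondegeneracy of $\Sigma$ is automatic, because $\varphi$ is given by the complete linear system: a hyperplane containing $\Sigma$ would pull back to a zero section. A nondegenerate irreducible surface in $\mathbb{P}^{N}$ has degree at least $N-1=C^2$. Combined with $\deg(\varphi)\deg\Sigma=C^2$, this forces $\deg\varphi=1$ and $\deg\Sigma=C^2$. Thus $\varphi$ is birational onto $\Sigma$, and, after passing to the normalization of the image (or Stein factorization) if one wants a contraction in the strict sense, it is birational with connected fibres.

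\emph{Main obstacle.} The delicate points are the use of the degree bound $\deg\Sigma\ge\operatorname{codim}\Sigma+1$ for nondegenerate surfaces, which I will quote as classical, and the interpretation of ``contraction'' when the image $\Sigma$ may fail to be normal. For the latter I would either note that $\varphi_*\mathcal{O}_S=\mathcal{O}_{\Sigma}$ holds when $\Sigma$ is normal, or else phrase the statement via the Stein factorization.
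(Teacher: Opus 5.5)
Your proposal is correct and is essentially the paper's argument. Both use base-point freeness from \thref{prop: lifting sections} and obtain $h^{0}(S,\mathcal{O}_{S}(C))=C^{2}+2$ from the short exact sequence. For $C^{2}=0$, connectedness comes from $C$ itself being a reduced irreducible fibre. For $C^{2}\geq 1$, the bound $\deg\Sigma\geq C^{2}$ for nondegenerate surfaces in $\mathbb{P}^{C^{2}+1}$ forces $\deg\varphi_{|C|}=1$. Your worry about normality of the image is unnecessary: a nondegenerate surface of degree $C^{2}$ in $\mathbb{P}^{C^{2}+1}$ is one of the normal surfaces in the minimal-degree classification (a scroll, the Veronese surface, $\mathbb{P}^{2}$, or the cone $\mathbb{P}(1,1,C^{2})$), so Zariski's main theorem gives $\varphi_{*}\mathcal{O}_{S}=\mathcal{O}_{\Sigma}$ directly.
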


\begin{proof}
    The existence of the morphism $\varphi_{|C|}$ directly follows from \thref{prop: lifting sections}. Since $C$ is rational, we have $h^{0}(C,\mathcal{O}_{C}(C)) = C^{2}+1$. Hence by the short exact sequence (\ref{equation: short exact sequence}) we have $h^{0}(S,\mathcal{O}_{S}(C)) = C^{2}+2$. Moreover, we have:
    \begin{itemize}
        \item If $C^{2} = 0$, then since $C$ is a fibre of $\varphi_{|C|}$, we conclude that $\varphi_{|C|}$ is a contraction.
        \item If $C^{2} \geq 1$, then since $\varphi_{|C|}$ is defined by a complete linear system, the image is nondegenerate. A nondegenerate surface in $\mathbb{P}^{C^{2}+1}$ has degree $\geq C^{2}$. Hence $\varphi_{|C|}$ is birational and the image of $\varphi_{|C|}$ has degree $C^{2}$.
    \end{itemize}
\end{proof}

\section{Gap numbers for coefficient sets}

  In this section, we discuss some general arithmetic properties of gap numbers and fix the coefficient set. First, we recall the definition of hyperstandard coefficient sets.
    
\begin{definition}[Hyperstandard sets]
    Let $\mathfrak{R} \subseteq [0,1]$. We define
    $$
    \Phi(\mathfrak{R}) = \{ 1 - \frac{r}{m} \mid r \in \mathfrak{R}, m \in \mathbb{Z}_{> 0} \}
    $$
    to be the \emph{hyperstandard multiplicities} associated to $\mathfrak{R}$. We usually assume $0,1 \in \mathfrak{R}$ without mention, so $\Phi(\mathfrak{R})$ includes $\Phi(\{ 0,1 \})$, the set of \emph{standard multiplicities}.
\end{definition}

\begin{definition}[Adjunction sets]
    Let $\mathcal{N}$ be a finite set of positive integers. We define
    \begin{align*}
    \Gamma(\mathcal{N}) &= \{1 - \frac{1}{m} + \frac{\sum \frac{l_{i}}{n_{i}+1}}{m} \mid m \in \mathbb{Z}_{>0}, n_{i} \in \mathcal{N}, 0 < l_{i} \leq n_{i} \} \cap [0,1] \\
    & = \Phi(\{1 - \sum \frac{l_{i}}{n_{i}+1} \mid n_{i} \in \mathcal{N}, 0 < l_{i} \leq n_{i} \} \cap [0,1] \}).
    \end{align*}
    We also define
    \begin{align*}
    \widetilde{\Gamma(\mathcal{N})} & = \{1 - \frac{1}{m} + \frac{\sum m_{i}b_{i}}{m} \mid m \in \mathbb{Z}_{>0}, m_{i} \in \mathbb{Z}_{\geq 0} ,b_{i} \in \Gamma(\mathcal{N}) \} \cap [0,1] \\
    & = \Phi(\{1 - \sum m_{i}b_{i} \mid m_{i} \in \mathbb{Z}_{\geq 0},b_{i} \in \Gamma(\mathcal{N}) \} \cap [0,1] \}).
    \end{align*}
    The set $\widetilde{\Gamma(\mathcal{N})}$ is exactly the coefficient set obtained from the difference in surface adjunction when the boundary coefficients lie in $\Gamma(\mathcal{N})$. Notice that $0,1 \in \Gamma(\mathcal{N})$ and $0,1 \in \widetilde{\Gamma(\mathcal{N})}$.
\end{definition}

\begin{definition}[gap numbers]\label{def: gap numbers}
    Let $\Gamma \subseteq [0,1]$ be a DCC set containing a positive real number. For any real number $u \geq 0$, by the DCC property, we can define
    $$
    \alpha_{u}(\Gamma) =\min \{ \sum\limits_{i=1}^{m} n_{i}b_{i} \mid m \in \mathbb{Z}_{>0}, n_{i} \in \mathbb{Z}_{\geq 0} , b_{i} \in \Gamma, \sum\limits_{i=1}^{m} n_{i}b_{i} > u \}.
    $$
\end{definition}

 Now we provide an algorithm to compute the gap numbers for a coefficient set $\Gamma \subset [0,1]$ with a single accumulation point 1. Notice that since $\Gamma \subset [0,1]$ has a single accumulation point 1, the set $\Gamma$ is a DCC set.

\begin{algorithm}[Computing gap numbers]\label{algorithm:gap number}
    Let $\Gamma \subset [0,1]$ be a finite set containing a positive number, or a DCC set with a single accumulation point 1, and let $u$ be a nonnegative real number.
    If $\Gamma$ is a finite set, then $\alpha_{u}(\Gamma)$ can be easily obtained from an upper bound on the multiplicities of the $\mathbb{Z}$-linear combination in \thref{def: gap numbers}. The following algorithm computes $\alpha_{u}(\Gamma)$ when $\Gamma$ is a DCC set with a single accumulation point 1.

    First, we can compute $\alpha_{u}(\Gamma)$ for any $0 \leq u < 1$. For this purpose, we find the smallest number $b \in \Gamma$ such that $b > u$. This is possible since the only accumulation point of $\Gamma$ is 1. Notice that $b < 1$. Then
    $$
    \alpha_{u}(\Gamma) = \min \{ \sum\limits_{i=1}^{m} n_{i}b_{i} \mid n_{i} \in \mathbb{Z}_{>0},\, b_{i} \in \Gamma,\, b_{i} \leq b < 1,\, \sum\limits_{i=1}^{m} n_{i}b_{i} > u \}.
    $$
    Thus the required minimum exists and is the minimum in a finite set of real numbers. By construction, in this case $\alpha_{u}(\Gamma) < 1$.

    Now suppose that $u \geq 1$. Then we can compute $\alpha_{u-1}(\Gamma)$. \thref{def: gap numbers} implies that $0 \leq u-\alpha_{u-1}(\Gamma)<1$. Thus we can compute $\alpha_{u-\alpha_{u-1}(\Gamma)}(\Gamma)$. The set
    $$
    \{ b_{i} \in \Gamma \mid b_{i} \leq \alpha_{u-\alpha_{u-1}(\Gamma)}(\Gamma)\}
    $$
    is finite because $\alpha_{u-\alpha_{u-1}(\Gamma)}(\Gamma) < 1$ and 1 is the only accumulation point of $\Gamma$. We contend that 
    $$
    \alpha_{u}(\Gamma) = \min \{ \sum\limits_{i=1}^{m} n_{i}b_{i} \mid n_{i} \in \mathbb{Z}_{>0},\, b_{i} \in \Gamma,\, b_{i} \leq \alpha_{u-\alpha_{u-1}(\Gamma)}(\Gamma),\, \sum\limits_{i=1}^{m} n_{i}b_{i} > u\}.
    $$
    Thus, to compute $\alpha_{u}(\Gamma)$, it is sufficient to find the minimum in a finite set of real numbers.

    To verify that the minimum gives $\alpha_{u}(\Gamma)$ we verify that for every presentation
    $$
    \alpha_{u}(\Gamma) = \sum\limits_{i=1}^{l}n_{i}b_{i},
    $$
    where $n_{i} \in \mathbb{Z}_{>0}$ and $b_{i} \in \Gamma$, we have
    $$
    b_{i} \leq \alpha_{u-\alpha_{u-1}(\Gamma)}(\Gamma)
    $$
    for every $i$. Indeed, suppose to the contrary and without loss of generality that $b_{1}>\alpha_{u-\alpha_{u-1}(\Gamma)}(\Gamma)$. Then
    $$
    \alpha_{u}(\Gamma) = b_{1} + (n_{1}-1)b_{1} + \sum\limits_{i=2}^{l}n_{i}b_{i} > u.
    $$
    Since $b_{1} \leq 1$, we have $(n_{1}-1)b_{1} + \sum\limits_{i=2}^{l}n_{i}b_{i} > u - 1$. By \thref{def: gap numbers} we have $(n_{1}-1)b_{1} + \sum\limits_{i=2}^{l}n_{i}b_{i} \geq \alpha_{u-1}(\Gamma)$. We have
    \begin{align*}
        \alpha_{u}(\Gamma) &= b_{1} + (n_{1}-1)b_{1} + \sum\limits_{i=2}^{l}n_{i}b_{i} \\
        & > \alpha_{u-\alpha_{u-1}(\Gamma)}(\Gamma) + \alpha_{u-1}(\Gamma) \\
        & > u-\alpha_{u-1}(\Gamma) + \alpha_{u-1}(\Gamma) = u,
    \end{align*}
    where $\alpha_{u-\alpha_{u-1}(\Gamma)}(\Gamma) + \alpha_{u-1}(\Gamma)$ is a smaller nonnegative $\mathbb{Z}$-linear combination $> u$, a contradiction to the definition of $\alpha_{u}(\Gamma)$.
\end{algorithm}

\begin{statement}[Computations with irrational numbers]
  Since the above calculation is usually assisted by computers, we have to pay attention to how to deal with irrational numbers. We can consider only computable irrational numbers in $\Gamma$ and $u$, more precisely, numbers with computable approximations by rational numbers. Then by induction, we can find an upper rational approximation $<1$ of $u-\alpha_{u-1}(\Gamma)$. This allows to find finitely many \emph{possible} presentations $\alpha_u(\Gamma)=\sum n_i b_i$, where $b_i\in \Gamma$ and $n_i$ are positive integers. Thus $\alpha_u(\Gamma)$ has computable approximations by rational numbers. However, in general, we cannot find \emph{exact} presentations in finitely many steps of computation.
 
\end{statement}

    For the proof of the main results, we need the following property of $\alpha_n(\Gamma)$: adding sufficiently large coefficients into the coefficient set $\Gamma$ does not change certain gap numbers.

\begin{proposition}\label{prop: gap number after adding large coefficients}
    Let $\Gamma \subseteq [0,1]$ be a DCC set containing $\{0,1\}$ and $n \geq 1$ be an integer. Let $\epsilon$ be a positive real number such that there exists an element $\gamma \in \Gamma$ such that $n - \alpha_{n-1}(\Gamma) < \gamma < 1 - \epsilon$. Then we have
    $$
    \alpha_{m}(\Gamma \cup \Phi(\{ \epsilon \})) = \alpha_{m}(\Gamma)
    $$
    for all nonnegative integers $m \leq n$.
\end{proposition}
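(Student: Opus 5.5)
The plan is to prove the two inequalities separately, with the nontrivial one $\alpha_{m}(\Gamma \cup \Phi(\{\epsilon\})) \geq \alpha_{m}(\Gamma)$ done by induction on $m$. Write $\Gamma' = \Gamma \cup \Phi(\{\epsilon\})$. Since $\Gamma \subseteq \Gamma'$, every $\mathbb{Z}_{\geq 0}$-combination of elements of $\Gamma$ exceeding $m$ is also one of $\Gamma'$, so $\alpha_m(\Gamma') \leq \alpha_m(\Gamma)$. The elements of $\Phi(\{\epsilon\})$ are $1-\frac{\epsilon}{k}$ with $k \in \mathbb{Z}_{>0}$, so every element $c \in \Phi(\{\epsilon\})$ satisfies $\gamma < 1-\epsilon \leq c < 1$. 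We may assume $\epsilon<1$, since otherwise there are no new elements in $[0,1]$ other than possibly $0$, and nothing changes.

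First we record a monotonicity fact. Since $1 \in \Gamma$, adding $n-m$ copies of $1$ to a presentation of $\alpha_{m-1}(\Gamma)$ gives a $\Gamma$-combination exceeding $n-1$. Hence
$$\alpha_{n-1}(\Gamma) \leq \alpha_{m-1}(\Gamma) + n - m \quad \text{for } 1 \leq m \leq n,$$
that is, $m - \alpha_{m-1}(\Gamma) \leq n - \alpha_{n-1}(\Gamma) < \gamma$. Similarly, $\alpha_{n-1}(\Gamma) \leq n$ gives $\gamma > 0$, so $\alpha_0(\Gamma) \leq \gamma$.

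Now take any combination $S = \sum n_i b_i > m$ with $b_i \in \Gamma'$; we show $S \geq \alpha_m(\Gamma)$ by induction on $m \in \{0,\dots,n\}$. If all $b_i$ with $n_i>0$ lie in $\Gamma$, this is immediate from the definition. Otherwise some summand is a new element $c \geq 1-\epsilon > \gamma$.
\begin{itemize}
\item For $m=0$ we get $S \geq c > \gamma \geq \alpha_0(\Gamma)$.
\item For $m \geq 1$, put $S' = S - c$, which is again a $\Gamma'$-combination. Since $c<1$ we have $S' > m - 1$. By the induction hypothesis, $S' \geq \alpha_{m-1}(\Gamma') = \alpha_{m-1}(\Gamma)$. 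Thus
$$S \geq \alpha_{m-1}(\Gamma) + c > \alpha_{m-1}(\Gamma) + \gamma.$$
\end{itemize}
The right-hand side $\alpha_{m-1}(\Gamma)+\gamma$ is a $\Gamma$-combination, because $\gamma \in \Gamma$ and $\alpha_{m-1}(\Gamma)$ is attained by one. By the monotonicity fact it exceeds $\alpha_{m-1}(\Gamma) + m - \alpha_{m-1}(\Gamma) = m$. Hence it is at least $\alpha_m(\Gamma)$, and so $S > \alpha_m(\Gamma)$. Taking the minimum over all such $S$ gives $\alpha_m(\Gamma') \geq \alpha_m(\Gamma)$.

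The only delicate point is the comparison $m - \alpha_{m-1}(\Gamma) \leq n - \alpha_{n-1}(\Gamma)$. It is what lets the single hypothesis at level $n$ control every level $m \leq n$, and it follows from $1\in\Gamma$ as shown above.
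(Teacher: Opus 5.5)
Your proof is correct and follows essentially the same route as the paper. Both argue by induction on $m$: peel off one new element $c\in\Phi(\{\epsilon\})$, apply the induction hypothesis to the remainder, and use the monotonicity $m-\alpha_{m-1}(\Gamma)\leq n-\alpha_{n-1}(\Gamma)<\gamma$ (the paper's lemma on gap numbers, which relies on $1\in\Gamma$). The paper phrases this as a contradiction with the minimality of a presentation of $\alpha_{m+1}(\Gamma\cup\Phi(\{\epsilon\}))$, swapping the new element for $\gamma$; you instead bound every such combination directly from below by the $\Gamma$-combination $\alpha_{m-1}(\Gamma)+\gamma$, and you also make explicit $\gamma>0$ and the $\alpha_0$ case of the monotonicity, which the paper leaves implicit.
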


    The proof uses the following lemma:

\begin{lemma}\label{lemma: gap number decrease}
    If $1 \in \Gamma$, then for any positive integers $m\leq n$, we have $\alpha_n(\Gamma)-n \leq \alpha_m(\Gamma)-m$.
\end{lemma}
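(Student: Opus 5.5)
The plan is to reduce the inequality to a telescoping statement. It suffices to prove
$$
\alpha_{k+1}(\Gamma)-(k+1) \leq \alpha_k(\Gamma)-k
$$
for every integer $k \geq 1$, equivalently $\alpha_{k+1}(\Gamma) \leq \alpha_k(\Gamma)+1$. Chaining this inequality for $k=m,m+1,\dots,n-1$ then gives $\alpha_n(\Gamma)-n \leq \alpha_m(\Gamma)-m$ for all positive integers $m\leq n$. When $m=n$ there is nothing to prove.

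For the one-step inequality, I will exploit the hypothesis $1\in\Gamma$. By the DCC property (as in \thref{def: gap numbers}) the minimum defining $\alpha_k(\Gamma)$ is attained, so there is a presentation $\alpha_k(\Gamma)=\sum_{i=1}^{l} n_i b_i$ with $n_i\in\mathbb{Z}_{\geq 0}$, $b_i\in\Gamma$ and $\sum n_i b_i>k$. Adding one copy of the element $1\in\Gamma$ to this presentation produces a nonnegative $\mathbb{Z}$-linear combination of elements of $\Gamma$ with value $\alpha_k(\Gamma)+1>k+1$. Such a combination is admissible in the definition of $\alpha_{k+1}(\Gamma)$, and since $\alpha_{k+1}(\Gamma)$ is the minimum over all admissible combinations, we obtain $\alpha_{k+1}(\Gamma)\leq \alpha_k(\Gamma)+1$.

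The argument never uses that $k$ is an integer, so it also shows $\alpha_{u+1}(\Gamma)\leq\alpha_u(\Gamma)+1$ for every real $u\geq0$; the integrality of $m,n$ is only needed for the telescoping. There is no real obstacle here. The only point requiring care is that the minima actually exist, which is guaranteed by the DCC assumption built into \thref{def: gap numbers} together with the fact that $\Gamma$ contains a positive number (namely $1$). Alternatively, one can avoid telescoping and add $n-m$ copies of $1$ to a minimal presentation of $\alpha_m(\Gamma)$ directly.
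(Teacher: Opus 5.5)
Your proof is correct and is essentially the paper's argument: the paper adds $n-m$ copies of $1\in\Gamma$ to a minimal presentation of $\alpha_m(\Gamma)$ in a single step to get an admissible combination $\alpha_m(\Gamma)+n-m>n$, which is exactly the alternative you mention at the end. Your telescoping version is just the same idea applied one copy of $1$ at a time.
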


\begin{proof}
    We have $\alpha_m(\Gamma)+n-m > n$. Since $1 \in \Gamma$, the number $\alpha_m(\Gamma)+n-m$ is also a nonnegative $\mathbb{Z}$-linear combination of the numbers in $\Gamma$. Hence by definition of $\alpha_n(\Gamma)$ we have $\alpha_n(\Gamma) \leq \alpha_m(\Gamma)+n-m$.
\end{proof}

\begin{proof}[Proof of \thref{prop: gap number after adding large coefficients}]
    We can use \thref{algorithm:gap number}. Indeed, if $m=0$, then
    $$
    \min\{ b \mid 0 < b \in \Gamma \} \leq \gamma < 1 - \epsilon.
    $$
    Hence
    $$
    \alpha_{0}(\Gamma) = \alpha_{0}(\Gamma \cup \Phi(\{ \epsilon \})) = \min\{ b \mid 0 < b \in \Gamma \}.
    $$ 
    Suppose by induction that the equality holds for some $m \leq n-1$. We claim that the equation holds for $m+1$. Indeed, write
    $$
    \alpha_{m+1}(\Gamma \cup \Phi(\{ \epsilon \})) = b + \sum\limits_{i=1}^{l} b_{i}
    $$
    where $b, b_{i}>0$ and $b,b_{i} \in \Gamma \cup \Phi(\{ \epsilon \})$ for all $i$. We claim that $b, b_{i} \in \Gamma$. Indeed, suppose to the contrary and without loss of generality that $0 < b \in \Phi(\{ \epsilon \})$. Then we have $b = 1 - \frac{\epsilon}{l} \geq 1 - \epsilon$ for some positive integer $l$. By our assumption, there exists an element $\gamma \in \Gamma$ such that $n - \alpha_{n-1}(\Gamma) < \gamma < 1 - \epsilon \leq b$. Notice that since $1 \in \Gamma$, we have $\alpha_{n}(\Gamma) - n \leq \alpha_{m}(\Gamma) - m$ for any positive integer $m \leq n$. Hence $\gamma > n - \alpha_{n-1}(\Gamma) \geq m+1 - \alpha_{m}(\Gamma)$ for any $m \leq n-1$. Hence we have
    $$
    m+1 < \gamma + \alpha_{m}(\Gamma) = \gamma + \alpha_{m}(\Gamma \cup \Phi(\{ \epsilon \})) \leq \gamma + \sum\limits_{i=1}^{l} b_{i} < b + \sum\limits_{i=1}^{l} b_{i}.
    $$
    This contradicts the fact that $\alpha_{m+1}(\Gamma \cup \Phi(\{ \epsilon \}))$ is the minimal nonnegative $\mathbb{Z}$-linear combination that is $>m+1$. Hence we must have $b, b_{i} \in \Gamma$ for all $i$, and
    $$
    \alpha_{m+1}(\Gamma \cup \Phi(\{ \epsilon \})) = \alpha_{m+1}(\Gamma).
    $$
\end{proof}

    Now we fix the coefficient set to study.

\begin{statement}[Set-up]\label{statement: setup}
    Let $\mathcal{N}_{1} = \{1,2,3,4,6\}$. In this article, we mainly consider the following coefficient set:
    \begin{align*}
    \Gamma(\mathcal{N}_{1}) = &\Phi(\{ 0, 1, \frac{1}{2},\frac{1}{3},\frac{2}{3},\frac{1}{4},\frac{3}{4},\frac{1}{5},\frac{2}{5},\frac{3}{5},\frac{4}{5},\frac{1}{6},\frac{1}{7},\frac{2}{7},
    \frac{3}{7},\frac{4}{7},\frac{5}{7},\frac{6}{7},\frac{1}{10},\frac{3}{10},\frac{1}{12},\frac{5}{12}, \frac{1}{14},\frac{3}{14},\frac{5}{14},\frac{1}{15},\frac{2}{15},\frac{4}{15},\frac{7}{15}, \\
    & \frac{1}{20},\frac{3}{20},\frac{7}{20},\frac{11}{20},\frac{1}{21},\frac{2}{21},\frac{4}{21},\frac{5}{21},\frac{8}{21},\frac{11}{21}, \frac{1}{28}, \frac{3}{28}, \frac{5}{28},\frac{9}{28},\frac{13}{28},\frac{17}{28},\frac{1}{35},\frac{2}{35},\frac{3}{35},\frac{4}{35},\frac{6}{35},\frac{8}{35},\frac{9}{35},\frac{11}{35},\frac{13}{35},\\
    &\frac{16}{35},\frac{18}{35},\frac{23}{35},\frac{1}{42},\frac{1}{60},\frac{13}{60},\frac{1}{70},\frac{11}{70},\frac{11}{84},\frac{23}{84},\frac{4}{105},\frac{13}{105},\frac{19}{105},
    \frac{34}{105},\frac{1}{140},\frac{9}{140},\frac{17}{140},\frac{29}{140},\frac{37}{140},\frac{57}{140}, \frac{31}{420}\}).
    \end{align*}
\end{statement}

    This coefficient set is useful in the effective boundedness of $\mathbb{R}$-complementary thresholds for surface pairs.

\begin{example}\label{exam: gap numbers}
    By \thref{algorithm:gap number}, we have $\alpha_{0}(\Gamma(\mathcal{N}_{1})) = \alpha_{0}(\widetilde{\Gamma(\mathcal{N}_{1})}) = \frac{1}{7}$, $\alpha_{1}(\Gamma(\mathcal{N}_{1})) = \alpha_{1}(\widetilde{\Gamma(\mathcal{N}_{1})}) = 1 + \frac{1}{252}$ and $\alpha_{2}(\Gamma(\mathcal{N}_{1})) = \alpha_{2}(\widetilde{\Gamma(\mathcal{N}_{1})}) = 2 + \frac{1}{127260}$.
\end{example}

\section{Gap numbers for thresholds}

The goal of this section is to find the gap number for $\mathbb{R}$-complementary threshold, log canonical threshold, and anticanonical threshold for the given coefficient set $\Gamma(\mathcal{N}_{1})$.

\begin{convention}
    Throughout this article, we always assume that $X$ is a normal projective surface.
\end{convention}

\subsection{The gap number of log canonical thresholds}

\begin{proof}[Proof of \thref{thm:lct gap}]
    Write $\delta = \min\{ \frac{1}{3}, \frac{\alpha_{0}(\widetilde{\Gamma})}{2} , \alpha_{1}(\widetilde{\Gamma}) - 1\}$. We have $0 \leq t \leq 1$. Without loss of generality, we can assume that $0 < t < 1$. Then there exists a prime divisor $E$ over $X$ such that $a(E;X,B+tF)=0$ and $\mathrm{Cent}_{X}(E) \subseteq \mathrm{Supp}F$. Assume first that $E$ is a prime divisor on $X$. Then we have $\mathrm{mult}_{E}B \in \Gamma$. If $\mathrm{mult}_{E}B > 0$, then $\mathrm{mult}_{E}B \geq \alpha_{0}(\Gamma) > \delta$ and $t \leq 1 - \delta$. If $\mathrm{mult}_{E}B = 0$, then $t = \frac{1}{\mathrm{mult}_{E}F}$, so either $t = 1$ or $t \leq \frac{1}{2} \leq 1 - \delta$.

    Now we can assume that $E$ is exceptional over $X$. Let $f: Y \rightarrow X$ be an extraction of the divisor $E$. Then
    $$
    K_{Y} + B_{Y} + tF_{Y} + E = f^{*}(K_{X} + B + tF)
    $$
    where $B_{Y}$, $F_{Y}$ are the strict birational transformations of $B$, $F$ on $Y$. We claim that $E$ is a nonsingular rational curve. Indeed, since $X$ is log canonical, according to the classification of log canonical singularities (cf. \cite{Alexeev92_lc_surface}), $E$ is either a rational curve or a nonsingular elliptic curve. Write $B = \sum\limits_{i=1}^{n}b_{i}D_{i}$. Applying \thref{prop: surface adjunction} to the pair $(Y,E + tF_{Y} + B_{Y})$ and the curve $E$, we have
    \begin{equation}\label{equation: lct adjunction}
    2g(\widetilde{E}) - 2 + l + \sum\limits_{i=1}^{n}w_{i}b_{i} + st +  \sum\limits_{i=1}^{u} \frac{m_{i}-1 + s_{i}t + \sum\limits_{j=1}^{n} r_{ij}b_{j}}{m_{i}} = 0
    \end{equation}
    for some nonnegative integers $l,w_{i},s,s_{i},r_{ij}$. Since the center of $E$ lies in the support of $F$, we have either $s>0$ or $s_{i}>0$ for some $i$. If $E$ is elliptic or $l \geq 2$, then $2g(\widetilde{E}) - 2 + l \geq 0$, and the left-hand side of Equation (\ref{equation: lct adjunction}) is greater than $0$, a contradiction. Hence $E$ is rational and $l \leq 1$.

    Now suppose to the contrary that $1 > t > 1 - \delta$. By the log canonical condition, we have $s_{i}t + \sum\limits_{j=1}^{n} r_{ij}b_{j} \leq 1$ and $s_{i} = 0 \text{ or } 1$. Moreover, if $s_{i} = 1$, then $\sum\limits_{j=1}^{n} r_{ij}b_{j}=0$. If $l = 1$, then the left-hand side of Equation (\ref{equation: lct adjunction}) becomes
    $$
    -1 + \left( \sum\limits_{i=1}^{n}w_{i}b_{i} + \sum\limits_{\substack{1 \leq i \leq u \\ s_{i}=0}} \frac{m_{i}-1 + \sum\limits_{j=1}^{n} r_{ij}b_{j}}{m_{i}} \right) + \left( \sum\limits_{\substack{1 \leq i \leq u \\ s_{i}=1}} \frac{m_{i} - 1 + t}{m_{i}} + st \right).
    $$
    Any contribution in the first bracket belongs to $\widetilde{\Gamma}$. Any contribution in the second bracket is $\geq t > 1 - \delta$. Moreover, there is at least one nonzero term from the second bracket. Hence we have 
    $$
    -1 + \left( \sum\limits_{i=1}^{n}w_{i}b_{i} + \sum\limits_{\substack{1 \leq i \leq u \\ s_{i}=0}} \frac{m_{i}-1 + \sum\limits_{j=1}^{n} r_{ij}b_{j}}{m_{i}} \right) + \left( \sum\limits_{\substack{1 \leq i \leq u \\ s_{i}=1}} \frac{m_{i} - 1 + t}{m_{i}} + st \right) > -1 + \alpha_{0}(\widetilde{\Gamma}) + 1 - \delta \geq 0,
    $$
    a contradiction. 
    
    If $l=0$, then $E$ is a nonsingular rational curve. Hence the sum can be split into two parts, and the equation becomes
    $$
    \left( \sum\limits_{i=1}^{n}w_{i}b_{i} + \sum\limits_{\substack{1 \leq i \leq u \\ s_{i}=0}} \frac{m_{i}-1 + \sum\limits_{j=1}^{n} r_{ij}b_{j}}{m_{i}} \right) + \left( \sum\limits_{\substack{1 \leq i \leq u \\ s_{i}=1}} \frac{m_{i} - 1 + t}{m_{i}} + st \right) = 2.
    $$
    Any contribution in the first bracket belongs to $\widetilde{\Gamma}$. Any contribution in the second bracket is $\geq t > 1 - \delta$. Moreover, there is at least one nonzero term in the second bracket. Since $t > 1 - \delta \geq \frac{2}{3}$, there are at most two nonzero terms in the second bracket.
    
    If there is only one nonzero term in the second bracket, then we have $\sum\limits_{i=1}^{n}w_{i}b_{i} + \sum\limits_{\substack{1 \leq i \leq u \\ s_{i}=0}} \frac{m_{i}-1 + \sum\limits_{j=1}^{n} r_{ij}b_{j}}{m_{i}} > 1$. Hence $ \sum\limits_{i=1}^{n}w_{i}b_{i} + \sum\limits_{\substack{1 \leq i \leq u \\ s_{i}=0}} \frac{m_{i}-1 + \sum\limits_{j=1}^{n} r_{ij}b_{j}}{m_{i}} \geq \alpha_{1}(\widetilde{\Gamma})$ and we have
    $$
    \left( \sum\limits_{i=1}^{n}w_{i}b_{i} + \sum\limits_{\substack{1 \leq i \leq u \\ s_{i}=0}} \frac{m_{i}-1 + \sum\limits_{j=1}^{n} r_{ij}b_{j}}{m_{i}} \right) + \left( \sum\limits_{\substack{1 \leq i \leq u \\ s_{i}=1}} \frac{m_{i} - 1 + t}{m_{i}} + st \right) > \alpha_{1}(\widetilde{\Gamma}) + 1 - \delta \geq \alpha_{1}(\widetilde{\Gamma}) + 1 - (\alpha_{1}(\widetilde{\Gamma}) - 1) = 2 
    $$
    a contradiction.

    If there are two nonzero terms in the second bracket, then we have
    $$
    \left( \sum\limits_{i=1}^{n}w_{i}b_{i} + \sum\limits_{\substack{1 \leq i \leq u \\ s_{i}=0}} \frac{m_{i}-1 + \sum\limits_{j=1}^{n} r_{ij}b_{j}}{m_{i}} \right) + \left( \sum\limits_{\substack{1 \leq i \leq u \\ s_{i}=1}} \frac{m_{i} - 1 + t}{m_{i}} + st \right) > \alpha_{0}(\widetilde{\Gamma}) + 2(1 - \delta) \geq \alpha_{0}(\widetilde{\Gamma}) + 2(1 - \frac{\alpha_{0}(\widetilde{\Gamma})}{2}) = 2 
    $$
    also a contradiction. Hence we conclude that either $t=1$ or $t \leq 1 - \delta$.
\end{proof}

\subsection{Gap number of anticanonical thresholds}

    In order to prove \thref{thm: act gap}, we first reduce the situation to several special cases, and compute the gap number for each case. Without loss of generality, we assume that $B \neq 0$. In fact, we can assume that $b_{1} > \frac{127259}{127260}$ from the very beginning and prove the theorem by contradiction. However, in order to make the conditions in each statement more precise, we minimize the assumption on $b_{1}$ in each statement.

\begin{lemma}[General reduction]\label{statement: bound genus of B1}
    Suppose that \thref{thm: act gap} holds under the following additional assumptions:
    \begin{enumerate}
        \item $X$ is of Fano type;
        \item $F = D_{1}$ is a prime divisor;
        \item $g(\widetilde{D}_{1}) = 0$ or $g(\widetilde{D}_{1}) = 1$.
    \end{enumerate}
    Then \thref{thm: act gap} holds in general.
\end{lemma}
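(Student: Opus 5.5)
Starting from a general counterexample to \thref{thm: act gap} (with $b_1>1-\frac{1}{127260}$, since otherwise there is nothing to prove), I will reduce it to a counterexample satisfying (1)--(3). Each reduction must preserve the hypotheses: $X$ is $\mathbb{Q}$-factorial with $\rho(X)=1$, $(X,B)$ is a klt $0$-pair, and the coefficients $b_i$ for $i\ge 2$ lie in $\Gamma(\mathcal{N}_1)$. It must also keep $b_1$ unchanged, so that the bound is unaffected.

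For assumption (1), I will use only the numerics. Since $(X,B)$ is klt and $K_X+B\equiv 0$, we have $-K_X\equiv B$. Because $\rho(X)=1$ and $B\neq 0$ is effective, $B$ is ample. Choose $0<\epsilon\ll 1$. Then $(X,(1-\epsilon)B)$ is klt and $-(K_X+(1-\epsilon)B)\equiv \epsilon B$ is ample. So $X$ is of Fano type automatically, and (1) costs nothing.

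For assumption (2), write $F=\sum_j f_jF_j$ with $f_j\in\mathbb{Z}_{>0}$. Some $F_j$ may coincide with one of the $D_i$; since the $D_i$ are distinct, I first merge. The key observation is that all prime divisors on $X$ are numerically proportional, because $\rho(X)=1$. Pick one component, say $F_1$, and replace $b_1F$ by $b_1F_1$ plus an effective divisor numerically equivalent to $b_1(F-F_1)$. The simplest choice is to replace $B$ by $B'=c\,F_1+\sum_{i\ge 2}b_iD_i$, where the number $c\ge b_1$ is chosen so that $K_X+B'\equiv 0$. That is, I move the whole mass of $b_1F$ onto $F_1$, using $F_j\equiv\lambda_jF_1$, which gives $c=b_1\sum_j f_j\lambda_j\ge b_1 f_1\ge b_1$.

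The danger is that $(X,B')$ may fail to be klt. Instead I will argue as follows. If $F$ is not prime, then $b_1\cdot\sum_j f_j$ contributes at least $2b_1>1$ to the degree of $B$ against something. More concretely, klt forces every coefficient of $B$ to be $<1$. So if some $f_j\ge 2$, or if $F_j=D_i$ for some $i$, then the coefficient of $F_j$ in $B$ is at least $2b_1>1$ or at least $b_1+\alpha_0(\Gamma(\mathcal{N}_1))>1$, a contradiction. Hence $F=\sum F_j$ is reduced, with components distinct from the $D_i$. Now drop all components but one, keeping $b_1F_1$, so the new boundary satisfies $K_X+B''\equiv -b_1\sum_{j\ge2}F_j$, which is anti-ample. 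Then take the act of $(X,\sum_{i\ge 2}b_iD_i)$ with respect to $F_1$. The resulting threshold $b_1'$ exceeds $b_1$, and $(X,\sum_{i\ge2}b_iD_i+b_1'F_1)$ has $K\equiv 0$. If it is klt, we obtain a counterexample with $F$ prime. If it is not klt, I will pass to an lc place and use the lct gap, \thref{thm:lct gap} together with \thref{corollary: lct gap special coefficients}. Since $b_1'>b_1>1-\frac{1}{252}$, that gap argues that the lc threshold $t$ of $F_1$ equals $1$. But $t\ge b_1'$ holds by convexity, whereas the klt condition at $b_1$ says the lc threshold is larger than $b_1$, and the interpolation between these needs care. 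This klt-preservation step is the main obstacle. As a fallback, I will perturb using $\Gamma(\mathcal{N}_1)$-rigidity: lower $b_1'$ slightly while increasing nothing.

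For assumption (3), I will apply adjunction \thref{prop: surface adjunction} to $(X,D_1+\sum_{i\ge2}b_iD_i)$. Since $(X,B)$ is klt with $b_1$ near $1$, the pair $(X,D_1+\sum_{i\ge 2}b_iD_i)$ is lc, by ACC/gap for the lct (\thref{corollary: lct gap special coefficients}). We also have $(K_X+D_1+\sum_{i\ge 2}b_iD_i)\cdot D_1=(1-b_1)D_1^2>0$. The adjunction formula gives $2g(\widetilde D_1)-2+(\text{nonnegative terms})=(1-b_1)D_1^2$. If $g\ge 2$, the left side is at least $2$. On the other hand, $(1-b_1)D_1^2\le (1-b_1)\cdot(\text{bounded})$, and I expect $D_1^2$ to be bounded via $-K_X\cdot D_1\ge b_1D_1^2$ and boundedness of $-K_X$ for these Fano type surfaces. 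This is a second possible obstacle, which I would handle by intersecting with $K_X+B\equiv 0$. The bound forces $g\le 1$.
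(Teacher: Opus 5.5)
Your step (1) is correct and is essentially the paper's argument. Steps (2) and (3), however, each have a genuine gap, and you flag both as obstacles without resolving them.

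In (2), after discarding $F_2,\dots,F_r$ you raise the coefficient of $F_1$ to $b_1'$ with $b_1'F_1\equiv b_1F$. When $b_1'<1$ this works: the lct gap gives $\mathrm{lct}(X,\sum_{i\ge2}b_iD_i;F_1)=1$, and convexity then gives klt. But nothing keeps $b_1'<1$. If $F$ is two lines on $\mathbb{P}^2$, then $b_1'=2b_1>1$ and the new pair is not even lc. Lowering $b_1'$ to repair this destroys $K_X+B\equiv0$. The missing idea is that you should not delete the other components of $F$; you may keep them, provided their coefficients lie in $\Gamma(\mathcal{N}_1)$.

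Put $c=1-\frac{1}{127260}$ and choose $\gamma=1-\frac1N\in\Gamma(\mathcal{N}_1)$ with $b_1<\gamma<1$. The lct gap makes $(X,F+\sum_{i\ge2}b_iD_i)$ lc, hence $(X,\gamma F+\sum_{i\ge2}b_iD_i)$ is klt. As $\rho(X)=1$, $K_X+cF+\sum_{i\ge2}b_iD_i$ is anti-ample while $K_X+\gamma F+\sum_{i\ge2}b_iD_i$ is ample. Raising the coefficients of $F_1,F_2,\dots$ one at a time from $c$ to $\gamma$ therefore produces, by continuity, a klt $0$-pair $\bigl(X,\gamma\sum_{j<r_0}F_j+b_1'F_{r_0}+c\sum_{j>r_0}F_j+\sum_{i\ge2}b_iD_i\bigr)$ with $c<b_1'\le\gamma$. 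All other coefficients lie in $\Gamma(\mathcal{N}_1)$, so the prime case gives $b_1'\le c$, a contradiction.

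In (3), the inequality $(1-b_1)D_1^2<2$ is not justified. Fano type surfaces with $\rho=1$ are unbounded: for example, $\mathbb{P}(1,1,n)$ has $K^2=(n+2)^2/n$. Intersecting with $K_X+B\equiv0$ only returns the identity you started from. Writing $\sum_{i\ge2}b_iD_i\equiv\mu D_1$, one has $(K_X+D_1)\cdot D_1=(1-b_1-\mu)D_1^2$. So $g(\widetilde{D}_1)\ge2$ forces $\mu<1-b_1$: $K_X+D_1$ is ample while $K_X+b_1D_1\equiv-\mu D_1$ is anti-nef. Excluding this is a gap statement for thresholds of $(X,0)$ with respect to $D_1$, and it needs outside input.

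The paper supplies it with \thref{thm: gap number for trivial coefficients}. Since $(X,B)$ is an $\mathbb{R}$-complement of $(X,b_1D_1)$, we get $\mathbb{R}\mathrm{-ct}(X,0;D_1)\ge b_1>\frac{12}{13}$, so this threshold equals $1$. An $\mathbb{R}$-complement $D_1+E$ of $(X,D_1)$ then gives $(K_X+D_1)\cdot D_1=-E\cdot D_1\le0$, and adjunction yields $2g(\widetilde{D}_1)-2\le0$.

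Finally, your reduction treats only the inequality. The equality clause of \thref{thm: act gap} for non-prime $F$ also needs an argument. The paper notes that in the equality case every component of $F$ has coefficient $c\in\Gamma(\mathcal{N}_1)$, so the prime-case characterization applies to the same pair. Each extremal example has only one component with coefficient $c$, so $F$ must be prime.
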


\begin{proof}
    Since $\rho(X)=1$, either $K_{X} \sim_{\mathbb{Q}} 0$ or $X$ is of Fano type. If $K_{X} \sim_{\mathbb{Q}} 0$ then $B=0$, and there is nothing to prove. Hence we can assume that $X$ is of Fano type. In particular, we have $K_{X} + B \sim_{\mathbb{R}} 0$. 

    Assume temporarily that \thref{thm: act gap} has been proved when $F$ is prime. Consider the general case with 
    $$
    F = \sum\limits_{i=1}^{r} m_{i}F_{i}
    $$ 
    for some positive integer $r$ and $m_{i} \in \mathbb{Z}_{>0}$ for $i=1,\cdots,r$. If $m_{i} \geq 2$, then by the klt condition $b_{1} < \frac{1}{2}$ and there is nothing to prove. Hence we can assume that $F$ is reduced. Suppose to the contrary that $b_{1} > \frac{127259}{127260}$, then we can choose a positive rational number $\gamma \in \Gamma(\mathcal{N}_{1})$ such that $1 > \gamma > b_{1}$. By \thref{thm:lct gap} and \thref{corollary: lct gap special coefficients}, the pair $(X,F + \sum\limits_{i=2}^{n}b_{i}D_{i})$ is log canonical. Hence the pair $(X,\gamma F + \sum\limits_{i=2}^{n}b_{i}D_{i})$ is klt. The divisors $-(K_{X} + \frac{127259}{127260}F + \sum\limits_{i=2}^{n}b_{i}D_{i})$ and $K_{X} + \gamma F + \sum\limits_{i=2}^{n}b_{i}D_{i}$ are ample. Hence there is a positive integer $1 \leq r_{0} \leq r$ and a rational number $\frac{127259}{127260} < b'_{1} \leq \gamma$ such that
    $$
    (X, \gamma \sum\limits_{i=1}^{r_{0}-1}F_{i} + b'_{1}F_{r_{0}} + \frac{127259}{127260}\sum\limits_{i=r_{0}+1}^{r}F_{i} + \sum\limits_{i=2}^{n}b_{i}D_{i})
    $$
    is a klt 0-pair. However, applying \thref{thm: act gap} for the single prime divisor $F_{r_{0}}$ gives $b'_{1} \leq \frac{127259}{127260}$ a contradiction. Hence the inequality in \thref{thm: act gap} holds for a general effective Weil divisor $F$. 
    
    To prove the equality assertion in \thref{thm: act gap}, suppose that $b_{1}=\frac{127259}{127260}$. The klt condition implies that $F$ is reduced and does not contain $D_{i}$, $i=2,\cdots,n$. Choose one prime component $F_{1}$ of $F$ as the distinguished divisor. Every other component of $F$ has coefficient $\frac{127259}{127260} \in \Gamma(\mathcal{N}_{1})$, so the equality assertion for prime divisors applies to the same pair $(X,B)$. Each pair in \thref{example: optimal bound for act} and \thref{example: optimal bound for act 2} has exactly one component with coefficient $\frac{127259}{127260}$. Hence $F$ is prime and is that component.
    
    Now we can assume that $F=D_{1}$ is a prime divisor and write $B=\sum\limits_{i=1}^{n}b_{i}D_{i}$. Suppose, without loss of generality, that $b_{1} > \frac{12}{13}$. The pair $(X,b_{1}D_{1})$ has an $\mathbb{R}$-complement $(X,B)$. Hence
    $$
    \mathbb{R}\mathrm{-ct}(X,0;D_{1}) \geq b_{1} > \frac{12}{13}.
    $$
    By \thref{thm: gap number for trivial coefficients},
    $$
    \mathbb{R}\mathrm{-ct}(X,0; D_{1}) = 1.
    $$
    In particular, the pair $(X,D_{1})$ has an $\mathbb{R}$-complement. Since $\rho(X)=1$, we have
    $$
    (K_{X}+D_{1})D_{1} \leq 0.
    $$
    On the other side, applying \thref{prop: surface adjunction} to the pair $(X,D_{1})$, we have
    $$
    (K_{X}+D_{1}) D_{1} = 2g(\widetilde{D_{1}}) - 2 + \mathrm{deg} \mathrm{Diff}_{\widetilde{D_{1}}}(0) \geq 2g(\widetilde{D_{1}}) - 2
    $$
    where $\widetilde{D_{1}} \rightarrow D_{1}$ is the normalization of $D_{1}$. Hence we have $g(\widetilde{D_{1}})=0$ or $g(\widetilde{D_{1}})=1$.
    \end{proof}
    
    For the remainder of \S 4.2, we assume that $F=D_{1}$ is prime and write $B = \sum\limits_{i=1}^{n}b_{i}D_{i}$. We will discuss the 2 cases $g(\widetilde{D_{1}})=0$ and $g(\widetilde{D_{1}})=1$ separately in subsections \ref{subsection: D1 elliptic} and \ref{subsection: D1 rational}.

    \begin{statement}[A useful equation]\label{statement: equation from adjunction}
    Assume additionally that $b_{1}>\frac{251}{252}$. Applying \thref{thm:lct gap} to the pair $(X, \sum\limits_{i=2}^{n}b_{i}D_{i})$ and the divisor $D_{1}$, we get
    $$
    \mathrm{lct}(X, \sum\limits_{i=2}^{n}b_{i}D_{i}; D_{1}) = 1.
    $$
    
    In particular, the pair $(X,D_{1}+\sum\limits_{i=2}^{n}b_{i}D_{i})$ is log canonical. Applying \thref{prop: surface adjunction} to the pair $(X,D_{1}+\sum\limits_{i=2}^{n}b_{i}D_{i})$, we get
    $$
    (1-b_{1})D_{1}^{2} = 2g(\widetilde{D_{1}}) - 2 + \mathrm{deg} \mathrm{Diff}_{\widetilde{D_{1}}}(\sum\limits_{i=2}^{n}b_{i}D_{i}).
    $$
    We will frequently use this equation to estimate $b_{1}$.
    \end{statement}

    \subsubsection{\texorpdfstring{$D_{1}$}{D1} is an elliptic curve}\label{subsection: D1 elliptic}

    In this subsection, we prove the following:
    \begin{proposition}\label{prop: D1 elliptic}
        Under the assumptions of \thref{thm: act gap}, suppose additionally that $D_{1}$ is an elliptic curve. Then we have $b_{1} \leq 1 - \frac{1}{252}$.
    \end{proposition}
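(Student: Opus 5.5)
We argue by contradiction and assume $b_{1} > \frac{251}{252}$. By \thref{statement: bound genus of B1} we may assume that $X$ is of Fano type, that $F=D_{1}$ is prime, and that $g(\widetilde{D_{1}})=1$. Since $D_{1}$ is an elliptic curve, the normalization $\widetilde{D_{1}}\to D_{1}$ is an isomorphism, i.e.\ $D_{1}$ is a nonsingular curve of arithmetic genus one. By \thref{statement: equation from adjunction}, the pair $(X, D_{1}+\sum_{i\ge2} b_{i}D_{i})$ is log canonical, and
$$
(1-b_{1})D_{1}^{2} = \deg \mathrm{Diff}_{D_{1}}\Bigl(\sum_{i=2}^{n}b_{i}D_{i}\Bigr).
$$
The plan is to show that both sides vanish, so that no elliptic component can carry a coefficient close to $1$. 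We then derive a contradiction with the klt hypothesis or with the Fano type hypothesis.

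The key step is the adjunction computation. First, $K_{X}+B\equiv 0$ and $\rho(X)=1$, and $D_{1}$ is ample since $X$ is projective with $\rho(X)=1$. Hence
$$
(K_{X}+D_{1})\cdot D_{1} = (1-b_{1})D_{1}^{2} - \sum_{i\ge 2} b_{i}D_{i}\cdot D_{1} \le (1-b_{1})D_{1}^{2}.
$$
On the other hand, \thref{prop: surface adjunction} applied to $(X,D_{1})$ gives $(K_{X}+D_{1})\cdot D_{1} = \deg \mathrm{Diff}_{D_{1}}(0) \ge 0$. The useful route is instead the $\mathbb{R}$-complement argument of \thref{statement: bound genus of B1}. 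The pair $(X,D_{1})$ has an $\mathbb{R}$-complement, so $(K_{X}+D_{1})\cdot D_{1}\le 0$, which forces $\deg\mathrm{Diff}_{D_{1}}(0)=0$. Thus $X$ is smooth along $D_{1}$ (a singular point of $X$ on $D_{1}$ would contribute $\frac{m-1}{m}>0$), and $K_{X}+D_{1}$ is numerically trivial on $D_{1}$. Since $\rho(X)=1$, this gives $K_{X}+D_{1}\equiv 0$. Comparing with $K_{X}+B\equiv 0$ yields $(1-b_{1})D_{1}\equiv \sum_{i\ge 2}b_{i}D_{i}$. Intersecting with $D_{1}$ now makes both sides of the displayed equation equal to $(1-b_{1})D_{1}^{2}>0$ when $b_{1}<1$, so some $D_{i}$ with $i\ge2$ meets $D_{1}$.

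Next we bound the coefficients. Every point $P\in D_{1}\cap D_{i}$ lies in the smooth locus of $X$, so it contributes $w_{i}b_{i}\ge b_{i}\ge \alpha_{0}(\Gamma(\mathcal{N}_{1}))=\frac17$ to the degree of the different. Writing $D_{i}\equiv c_{i}D_{1}$ with $c_{i}>0$ (again using $\rho(X)=1$), we get
$$
\sum_{i\ge2} b_{i}c_{i} = 1-b_{1} < \frac{1}{252},
$$
while $b_{i}D_{i}\cdot D_{1} = b_{i}c_{i}D_{1}^{2}$. The obstacle is that $c_{i}$ could be very small. I would exclude this via the canonical class: from $K_{X}\equiv -D_{1}$, the curve $D_{1}$ is a smooth elliptic anticanonical curve in the smooth locus, and $D_{i}\cdot D_{1} = -K_{X}\cdot D_{i}$. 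For a prime divisor $D_{i}$ on a Fano type surface with $\rho=1$, adjunction applied to $D_{i}$ gives $(K_{X}+D_{i})\cdot D_{i} \ge -2 + \deg\mathrm{Diff}$. Combined with $K_{X}\equiv -D_{1}$, this leads to $c_{i}D_{1}^{2}(c_{i}-1)\ge -2$. I expect a cleaner alternative, which I would try first, to be the following. The total boundary $D_{1}+\sum_{i\ge2}\frac{b_{i}}{1-b_{1}}D_{i}$ would make $K_{X}+D_{1}+\epsilon(\dots)$ positive, contradicting $K_{X}+D_{1}\equiv0$ unless $\sum_{i\ge 2}b_{i}D_{i}=0$. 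With $\sum_{i\ge2} b_{i}D_{i}=0$, intersecting $(1-b_{1})D_{1}\equiv 0$ with the ample $D_{1}$ gives $b_{1}=1$, which contradicts the klt assumption. Pinning down which of these two routes actually closes the argument without a hidden case (small $c_{i}$, i.e.\ curves $D_{i}$ of tiny degree against $-K_{X}$) is the main obstacle I expect.
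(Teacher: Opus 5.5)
Your argument is sound up to the identity $(1-b_{1})D_{1}^{2}=\deg\mathrm{Diff}_{D_{1}}(\sum_{i\ge 2}b_{i}D_{i})=\sum_{i\ge2}w_{i}b_{i}\ge\frac{1}{7}$. In particular, smoothness of $X$ along $D_{1}$ (from $\deg\mathrm{Diff}_{D_{1}}(0)=0$) and $K_{X}+D_{1}\equiv 0$ (from $\rho(X)=1$) are exactly what the paper uses. However, this only gives $b_{1}\le 1-\frac{1}{7D_{1}^{2}}$. To contradict $b_{1}>\frac{251}{252}$ you need an upper bound $D_{1}^{2}<36$. Your worry about small $c_{i}$ is the same problem in disguise, because $D_{i}\cdot D_{1}=c_{i}D_{1}^{2}$ is a positive integer.

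Neither of your routes supplies such a bound. The adjunction inequality $c_{i}(1-c_{i})D_{1}^{2}\le 2$ bounds $D_{i}\cdot D_{1}$ from above. That is the wrong direction, and it is compatible with $D_{1}^{2}\to\infty$. The ``cleaner alternative'' is false: nothing forces $\sum_{i\ge2}b_{i}D_{i}=0$. For example, $(\mathbb{P}^{2},\frac{20}{21}C+\frac{1}{7}L)$ with $C$ a smooth cubic and $L$ a general line is a klt $0$-pair of exactly this kind. Likewise, your announced plan to show that both sides of the adjunction equation vanish cannot work, since $(1-b_{1})D_{1}^{2}>0$. Some genuine input is needed here, because $K_{X}^{2}$ is unbounded among klt surfaces with $\rho=1$ (e.g. $K^{2}=(n+2)^{2}/n$ on $\mathbb{P}(1,1,n)$).

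The missing idea is to upgrade $K_{X}+D_{1}\equiv0$ to $K_{X}+D_{1}\sim0$, which makes $X$ Gorenstein. Since $X$ is smooth along $D_{1}$, the curve $D_{1}$ is Cartier and there is an exact sequence $0\to\mathcal{O}_{X}(K_{X})\to\mathcal{O}_{X}(K_{X}+D_{1})\to\mathcal{O}_{D_{1}}\to0$. Apply general Kodaira vanishing with $N=-K_{X}$, which is ample because $\rho(X)=1$ and $-K_{X}\equiv B>0$. This gives $H^{1}(X,\mathcal{O}_{X}(K_{X}))=0$, hence $H^{0}(X,\mathcal{O}_{X}(K_{X}+D_{1}))\neq0$. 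An effective divisor numerically equivalent to $0$ is $0$, so $K_{X}+D_{1}\sim0$.

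It follows that $K_{X}$ is Cartier and the klt surface $X$ has canonical singularities. Its minimal resolution $\widetilde{X}$ is therefore crepant and is a smooth rational surface, so $D_{1}^{2}=K_{X}^{2}=K_{\widetilde{X}}^{2}=10-\rho(\widetilde{X})\le 9$. Combined with your inequality, this yields $b_{1}\le1-\frac{1}{63}$, which is well below $1-\frac{1}{252}$.
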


    \begin{proof}
        By \thref{statement: bound genus of B1} we have $\mathrm{Diff}_{\widetilde{D_{1}}}(0) = 0$, and $(X,D_{1})$ is an lc 0-pair. Hence $\widetilde{D_{1}} \simeq D_{1}$ is a nonsingular elliptic curve. By \thref{prop: surface lc singularity}, $X$ is nonsingular near $D_{1}$, and $D_{1}$ is a Cartier divisor.

        We claim that we have the short exact sequence
        $$
        0 \rightarrow \mathcal{O}_{X}(K_{X}) \rightarrow \mathcal{O}_{X}(K_{X} + D_{1}) \rightarrow \mathcal{O}_{D_{1}} \rightarrow 0.
        $$

        Indeed, since $X$ is nonsingular near $D_{1}$, the sequence is exact near $D_{1}$. On the other side, on $U = X \backslash D_{1}$, we simply have the trivial exact sequence $0 \rightarrow \mathcal{O}_{U}(K_{U}) \rightarrow \mathcal{O}_{U}(K_{U}) \rightarrow 0 \rightarrow 0$. By \thref{thm: generalized Kodaira vanishing} with $N \equiv -K_{X} + 0$, we have $h^{1}(X,\mathcal{O}_{X}(K_{X})) = 0$. Hence $h^{0}(X,\mathcal{O}_{X}(K_{X} + D_{1})) \neq 0$. Then there exists an effective divisor $E$ such that $E \sim K_{X} + D_{1} \sim_{\mathbb{R}}0$. Hence we conclude that $E = 0$ and $K_{X} + D_{1} \sim 0$. In particular, $K_{X} + D_{1}$ is a Cartier divisor, and the pair $(X,D_{1})$ is canonical. In particular, $X$ is also canonical. Let $\widetilde{X}$ be the crepant terminalization of $X$. Since $X$ is of Fano type, $\widetilde{X}$ is a smooth rational surface. Hence we have 
        $$
        D_{1}^2 = K_{X}^2 = K_{\widetilde{X}}^2 = 10 - \rho(\widetilde{X}) \leq 9.
        $$

        Moreover, by \thref{statement: equation from adjunction} we have
        $$
        (1-b_{1})D_{1}^{2} = \sum\limits_{i=2}^{n}n_{i}b_{i} \geq \frac{1}{7} > 0.
        $$

        Hence we have $b_{1} \leq 1 - \frac{1}{7 D_{1}^2} \leq 1 - \frac{1}{63}$. This contradicts our assumption in \thref{statement: equation from adjunction} that $b_{1} > 1 - \frac{1}{252}$. Hence $b_{1} \leq 1 - \frac{1}{252}$.
    \end{proof}

    \subsubsection{\texorpdfstring{$D_{1}$}{D1} is a rational curve} \label{subsection: D1 rational}

    In this subsection, we prove the following:
    \begin{proposition}\label{prop: D1 rational}
        Under the assumptions of \thref{thm: act gap}, suppose additionally that $D_{1}$ is a rational curve. Then we have $b_{1} \leq 1 - \frac{1}{127260}$. Moreover, the equality holds if and only if $(X,B)$ is one of \thref{example: optimal bound for act} and \thref{example: optimal bound for act 2}.
    \end{proposition}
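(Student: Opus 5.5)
We argue by contradiction and assume $b_{1} > \frac{251}{252}$ throughout; otherwise there is nothing to prove. By \thref{statement: equation from adjunction} the pair $(X, D_{1} + \sum_{i\geq 2} b_{i}D_{i})$ is log canonical, and we have the key identity
$$
(1-b_{1})D_{1}^{2} = -2 + \deg \mathrm{Diff}_{\widetilde{D_{1}}}\Big(\sum_{i=2}^{n}b_{i}D_{i}\Big).
$$
The plan is to rewrite the right-hand side, via \thref{prop: surface adjunction}, as $-2 + l + \sigma$. Here $l \le 1$ counts non-plt points of $(X, D_{1})$ along $D_{1}$, and $\sigma$ is a sum of elements of $\widetilde{\Gamma(\mathcal{N}_{1})}$. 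Since $D_{1}^{2} > 0$ (because $\rho(X)=1$ and $D_{1}$ is effective), we get $\sigma > 2 - l$. The definition of gap numbers then gives $\sigma \ge \alpha_{2-l}(\widetilde{\Gamma(\mathcal{N}_{1})})$. Using \thref{exam: gap numbers}, this yields
$$
1 - b_{1} \ \ge\ \frac{\alpha_{2-l}(\widetilde{\Gamma(\mathcal{N}_{1})}) - (2-l)}{D_{1}^{2}} \ \ge\ \frac{1}{127260\, D_{1}^{2}}.
$$
Two remarks on this estimate. If $D_{1}$ has a node, or if $l=1$, only $\alpha_{1}$ enters, so we get the much better bound $\frac{1}{252 D_{1}^2}$. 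The whole difficulty therefore reduces to bounding $D_{1}^{2}$ from above in terms of the local data.

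The second step bounds $D_{1}^{2}$. Because $K_X+D_1$ is anti-ample (or trivial) and $(X,D_{1})$ is lc, $D_1$ passes through few singular points. I would compute $D_{1}^{2}$ from the local indices $m_{j}$ of $D_{1}$ at the singular points $P_{j}\in D_{1}$. The relation $-(K_X+D_1)\cdot D_1 = 2 - l - \sum (1-1/m_j) > 0$ forces the collection $(m_j)$ to be of platonic type: $l=1$ with at most one singular point, or $l=0$ with at most three singular points, of type $(2,2,m)$, $(2,3,3)$, $(2,3,4)$, $(2,3,5)$, or at most two points $(m_1,m_2)$. In the chain cases I would instead use \thref{prop: lifting sections} and \thref{cor: complete linear system morphism} on a suitable partial resolution. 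Concretely, extract the chains so that the strict transform $C$ of $D_{1}$ is a smooth rational curve away from singularities with $C^{2}\ge 0$, and let $|C|$ map the surface to a ruled surface or to a surface of minimal degree (\thref{thm: surface of minimal degree}). This realizes $X$ as a contraction of a configuration over $\mathbb{F}_{k}$, $\mathbb{P}^{2}$, or $\mathbb{P}(1,1,d)$. Then $D_{1}^{2} = C^{2} + (\text{local correction } \sum q_j/m_j)$ is explicitly computable by \thref{prop:formula for ld} in terms of the dual graphs.

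The third step, which I expect to be the main obstacle, is the joint optimization. Large $D_{1}^{2}$ tends to force large indices $m_{j}$. Large indices in turn make the contributions $\frac{m_j-1+\sum r_{jk}b_k}{m_j}$ to $\sigma$ land closer to $1$, which pushes $\sigma - 2$ up. So the naive bound $1/(127260 D_1^2)$ must be improved to $1/127260$ by correlating $\sigma$ with $D_{1}^{2}$. I would first prove that $D_{1}^{2}\cdot(\sigma-2)$ is bounded below by $\frac{1}{127260}$ case by case, using the explicit form of $\alpha_2$-realizing presentations from \thref{algorithm:gap number}. The presentations achieving $\alpha_{2}=2+\frac{1}{127260}$ involve only finitely many coefficients of bounded denominator. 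For these, $D_{1}^{2}$ is forced to be $\le 1$ by the intersection-number constraints $D_{1}\cdot D_i = \frac{r}{m}$, while any other presentation gives a margin large enough to absorb $D_{1}^{2}$. To make this finite, \thref{prop: gap number after adding large coefficients} lets us discard coefficients close to $1$. A computer search over the finitely many remaining local configurations then identifies the equality cases, which should be precisely \thref{example: optimal bound for act} and \thref{example: optimal bound for act 2}.
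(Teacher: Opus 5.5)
Your first step is fine, but it is an identity, $(1-b_1)D_1^2=-2+l+\sigma$. The whole statement therefore hinges on an upper bound for $D_1^2$ (equivalently for $d'=(D_1')^2$ on the partial resolution where $D_1'$ becomes Cartier), and your proposal never supplies one.

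\textbf{Missing bound on $D_1^2$.} The mechanism you suggest, that large $D_1^2$ forces large indices $m_j$, is false. A section $D_1\subset\mathbb{P}(1,1,d)$ missing the vertex has $D_1^2=d$ and no singular point of $X$ on it. After extracting the chains, the image of $|D_1'|$ can likewise be $\mathbb{P}(1,1,d')$ or a scroll of any degree. The same objection applies to your nodal and non-plt cases, where $\frac{1}{252D_1^2}$ gives nothing without a bound on $D_1^2$.

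The missing idea is \thref{lemma: complements on Hirzebruch surfaces}. On the minimal resolution $\mathbb{F}_{d'}$, the crepant coefficient of the negative section satisfies $c\ge 1-\frac{2}{d'}$. Adjunction to a general fibre gives $c+b_1+\sum_{i\ge 2}e_ib_i^{h}=2$, with $\sum_{i\ge 2}e_ib_i^{h}\ge\alpha_0=\frac{1}{7}$. Together these yield $b_1\le\frac{6}{7}+\frac{2}{d'}$, hence $d'\le 14$ and $D_1^2\le d'+2\le 16$. Scrolls need a separate argument: split $D_1'\sim D_1''+D_1'''$ to force $d>504$, then apply the same estimate on $\mathbb{F}_{d'-2}$. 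The Veronese case is excluded when $D_1$ passes through singular points, since $D_1'\cdot E_i=1$.

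Some smaller errors also need fixing. The inequality $(K_X+D_1)\cdot D_1\le 0$ is not strict, so your list of index patterns must include $(2,3,6)$, $(2,4,4)$, $(3,3,3)$ and $(2,2,2,2)$. Moreover, $l=1$ allows two further points of index $2$, and a node gives $l=2$, not $l\le 1$.

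\textbf{A numerical search cannot finish the proof.} Even with $d'$ and the $m_j$ bounded, your heuristic about $\alpha_2$-realizing presentations does not hold. Take the presentation $\frac{1}{7}+\frac{31}{36}+\frac{503}{505}=2+\frac{1}{127260}$. It can be realized numerically with $d'=1$, $w_2=w_3=1$, and the last term $\frac{503}{505}=\frac{1}{2}\bigl(1+\frac{501}{505}\bigr)$ coming from a single point of index $m=2$ (so $q=1$). All adjunction constraints hold, yet $D_1^2=\frac{3}{2}$ and $b_1=1-\frac{1}{190890}$, which would violate the statement. The paper excludes this by a global argument, not by intersection numbers. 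Here $D_2$ and $D_3$ miss the singular point, and $X'\to\mathbb{P}^2$ contracts a single curve. So the images of $D_2$, $D_3$, $E_1$ are concurrent lines with total coefficient $3-b_1>2$, contradicting log canonicity.

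In the other direction, the equality case \thref{example: optimal bound for act 2} has $D_1^2=\frac{3}{2}$ and $\sigma-2=\frac{1}{84840}$. It therefore does not come from an $\alpha_2$-presentation at all, and there is no slack to exploit.

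Most of the paper's proof consists of such dual-graph and fibration arguments. These include forcing $q\le r_2$ (hence $q=1$), the case analysis of the fibration when $d'=0$, and the blow-up computation that pins down $\mathbb{P}(1,2,3)$. They are what actually control the $q_j$, and hence $D_1^2$. Without them your computer search would return configurations exceeding the bound and could not certify the equality cases.
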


    \begin{statement}\label{statement: lct assumption}
    Below we assume that at least $b_{1} > \frac{251}{252}$ (we will strengthen the assumption on $b_{1}$ in the following subsections). Then by \thref{statement: bound genus of B1} we have $\mathrm{deg}\mathrm{Diff}_{\widetilde{D_{1}}}(0) \leq 2$, and $(X,D_{1})$ is an lc pair. By \thref{prop: surface lc singularity}, $D_{1}$ is either a nonsingular rational curve, or a rational curve with 1 node.
    \end{statement}

\paragraph{$D_{1}$ nonsingular}
\hspace{2em}

    In this subsection, we assume that $D_{1}$ is a nonsingular rational curve.
    
    \begin{statement}[An equation to estimate $b_{1}$]\label{statement: bound by degree}
        Applying \thref{prop: surface adjunction} to the pair $(X,D_{1})$, we have
        $$
        \mathrm{deg}\mathrm{Diff}_{\widetilde{D_{1}}}(0) = l + \sum\limits_{i=1}^{u} \frac{m_{i}-1}{m_{i}} \leq 2,
        $$
        where $l$ and $m_{i}$ are described in \thref{prop: surface adjunction}. In particular, $D_{1}$ contains at most 4 singularities of $X$. Furthermore, by \thref{statement: equation from adjunction},
        $$
        (1-b_{1})D_{1}^{2} = - 2 + l + \sum\limits_{i=2}^{n}w_{i}b_{i} + \sum\limits_{i=1}^{u}\frac{m_{i}-1+\sum\limits_{j=2}^{n}r_{ij}b_{j}}{m_{i}} > 0.
        $$
        Hence
        $$
        b_{1} = 1 - \frac{- 2 + l + \sum\limits_{i=2}^{n}w_{i}b_{i} +  \sum\limits_{i=1}^{u}\frac{m_{i}-1+\sum\limits_{j=2}^{n}r_{ij}b_{j}}{m_{i}}}{D_{1}^2}.
        $$
        In the following subsections, we will use this equation to make a precise estimation on $b_1$, based on the singularities of $X$ lying on $D_{1}$.
    \end{statement}

    We need the following lemma about the complements on the Hirzebruch surfaces.

\begin{lemma}\label{lemma: complements on Hirzebruch surfaces}
    Fix a positive integer $m$. Let $E$ be the negative section of $\mathbb{F}_{m}$ and $(\mathbb{F}_{m}, cE + \sum c_{i}D_{i})$ be a klt 0-pair, where $D_{i}$ are distinct prime divisors on $\mathbb{F}_{m}$ which are not the negative section $E$. Then we have $1 > c \geq 1 - \frac{2}{m}$.
\end{lemma}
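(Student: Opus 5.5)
The plan is to intersect the numerically trivial divisor $K_{\mathbb{F}_{m}} + cE + \sum c_{i}D_{i}$ with the negative section $E$ itself, and to use that the other boundary components meet $E$ non-negatively. The upper bound $c<1$ needs no computation: since $(\mathbb{F}_{m}, cE + \sum c_{i}D_{i})$ is klt and $E$ is a prime divisor appearing in the boundary with coefficient $c$, we have $a(E;\mathbb{F}_{m},cE+\sum c_iD_i) = 1-c > 0$.

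For the lower bound, recall the standard intersection theory on $\mathbb{F}_{m}$. Let $f$ denote the class of a fibre. Then
$$
E^{2}=-m,\qquad E\cdot f = 1,\qquad f^{2}=0,\qquad K_{\mathbb{F}_{m}} \sim -2E-(m+2)f.
$$
Hence $K_{\mathbb{F}_{m}}\cdot E = 2m-(m+2) = m-2$; this also follows from adjunction, since $E\simeq\mathbb{P}^1$ and $(K+E)\cdot E=-2$.

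Since the pair is a $0$-pair, $K_{\mathbb{F}_{m}} + cE + \sum c_{i}D_{i} \equiv 0$. Intersecting with $E$ gives
$$
(m-2) - cm + \sum c_{i}\,(D_{i}\cdot E) = 0 .
$$
Each $D_{i}$ is a prime divisor distinct from $E$, so $D_{i}\cdot E \geq 0$. Also $c_{i}\geq 0$, since the $c_{i}$ are boundary coefficients. Therefore $cm = m-2+\sum c_i(D_i\cdot E) \geq m-2$, that is, $c \geq 1-\frac{2}{m}$.

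There is no real obstacle. The only points to check are the formula $K_{\mathbb{F}_{m}}\cdot E=m-2$ and the non-negativity of $D_{i}\cdot E$, which is exactly where the hypothesis that the $D_{i}$ differ from $E$ is used. For $m\le 2$ the bound holds trivially, since then $1-\frac{2}{m}\le 0\le c$.
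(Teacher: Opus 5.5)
Your proof is correct and is essentially the paper's argument. The paper writes $D_{i}\sim u_{i}E+v_{i}F$, compares with $K_{\mathbb{F}_{m}}\sim -2E-(m+2)F$ to get $c+\sum c_{i}u_{i}=2$ and $\sum c_{i}v_{i}=m+2$, and then uses $v_{i}\geq mu_{i}$; that condition is exactly your inequality $D_{i}\cdot E=v_{i}-mu_{i}\geq 0$, so intersecting directly with $E$ merges the two equations into one and avoids the classification of classes of prime divisors on $\mathbb{F}_{m}$.
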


\begin{proof}
    Every prime divisor on $\mathbb{F}_{m}$ is linearly equivalent to an effective divisor of the form $uE + vF$, where 
    \begin{itemize}
        \item $E$ is the negative section of $\mathbb{F}_{m}$;
        \item $F$ is a fibre of $\mathbb{F}_{m}$;
        \item $u,v$ are integers such that one of the following happens:
        \begin{enumerate}
            \item $u=1, v=0$;
            \item $u=0, v=1$;
            \item $u,v \geq 1, v \geq mu$.
        \end{enumerate}
    \end{itemize}
    Write $D_{i} \sim u_{i}E + v_{i} F$. On Hirzebruch surfaces, we have $K_{\mathbb{F}_{m}} \sim -2E - (m+2)F$. Hence we have 
    \begin{gather*}
        c + \sum c_{i}u_{i} = 2, \\
        \sum c_{i}v_{i} = m+2.
    \end{gather*}
    Since $D_{i}$ are not the negative section $E$, we always have $v_{i} \geq mu_{i}$. Hence $m+2 \geq m (2-c)$ and $c \geq 1 - \frac{2}{m}$.
\end{proof}

\paragraph{$D_{1}$ contains no singularities of $X$}
\hfill

    In this subsection, we prove the following:
    \begin{lemma}\label{lemma: no singular point}
        Under the assumptions of \thref{thm: act gap}, suppose additionally that $D_{1}$ is a nonsingular rational curve and $D_{1}$ doesn't contain singularities of $X$. Then we have $b_{1} \leq 1 - \frac{1}{127260}$, and the equality holds only if $X \cong \mathbb{P}^2$ and every $D_{i}$ is a line on $\mathbb{P}^2$. Moreover, if $b_{1} < 1 - \frac{1}{127260}$, then $b_{1} \leq 1 - \frac{1}{104720}$.
    \end{lemma}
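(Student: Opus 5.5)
The plan is to exploit that $D_1$ is a smooth rational Cartier curve contained in the smooth locus of $X$. Under the running assumption $b_1>\frac{251}{252}$, \thref{statement: bound by degree} gives $l=0$ and $u=0$, so
$$
(1-b_1)D_1^2=-2+\sum_{i\ge2} w_ib_i,\qquad w_i=D_1\cdot D_i\in\mathbb{Z}_{\ge0}.
$$
Since the left-hand side is positive, $\sum w_ib_i>2$, and hence $\sum w_ib_i\ge\alpha_2(\Gamma(\mathcal{N}_1))=2+\frac{1}{127260}$ by \thref{exam: gap numbers}. Therefore
$$
1-b_1\ge\frac{1}{127260\,D_1^2}.
$$
The entire problem reduces to bounding $D_1^2$ and, in the extremal case, identifying the geometry.

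Since $\rho(X)=1$ and $D_1$ is effective, $D_1^2>0$. By \thref{statement: bound genus of B1}, $X$ is of Fano type, so there is a klt $\Delta$ with $(X,\Delta)$ weak Fano. We then apply \thref{prop: lifting sections} and \thref{cor: complete linear system morphism}. The linear system $|D_1|$ gives a birational morphism $\varphi:X\to S\subset\mathbb{P}^{d+1}$, where $d=D_1^2$, onto a nondegenerate surface of degree $d$. Because $\rho(X)=1$ and $D_1$ is ample, $\varphi$ is finite and birational onto a normal variety, so it is an isomorphism onto its image. By \thref{thm: surface of minimal degree} and the condition $\rho(X)=1$, $X$ is either $\mathbb{P}^2$ with $D_1$ a line ($d=1$), the Veronese $\mathbb{P}^2$ with $D_1$ a conic ($d=4$), or the cone $\mathbb{P}(1,1,d)$ with $D_1$ a hyperplane section missing the vertex. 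The scrolls $\mathbb{F}_{d-2l}$ are excluded because $\rho=2$.

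We then treat each case, starting with the case $X=\mathbb{P}^2$ and $D_1$ a line. Here $w_i=\deg D_i$ and $\sum \deg D_i\, b_i=3-b_1$, so
$$
1-b_1=\sum w_ib_i-2.
$$
This gives $b_1\le 1-\frac1{127260}$, with equality only if $\sum w_ib_i=\alpha_2$. We must check that equality forces every $D_i$ to be a line. The approach is to examine the finitely many presentations of $\alpha_2$ produced by \thref{algorithm:gap number}. A component of degree $w\ge2$ would contribute $wb$, and we verify that no minimal presentation contains a repeated coefficient realized by a single curve. This step is a finite computation, and we expect it to be the fiddly part. For the second-best value, we compute the next element of $\{\sum n_ib_i>2\}$. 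We expect this to be $2+\frac1{104720}$, which would need to come from the same computer search. In the other cases, $d\ge 4$ for the conic and $d\ge1$ for the cone, and we use the explicit intersection theory.

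On $\mathbb{P}^2$ with $D_1$ a conic, we have
$$
2(3-b_1)=\sum 2\deg D_i\, b_i+\ldots,
$$
and the same computation gives
$$
1-b_1=\frac{\sum w_ib_i-2}{4}.
$$
Writing $w_i=2\deg D_i$, we obtain $\sum w_ib_i=2(3-b_1)$, and a parity argument shows that $\sum \deg D_i\,b_i>1$. This yields $1-b_1\ge\alpha_1-1=\frac1{252}$, which is far better than needed.

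On $\mathbb{P}(1,1,d)$, the components $D_i$ are rulings or other curves, and $D_i\cdot D_1$ equals $\deg$ in the appropriate units. For $d\ge2$, \thref{lemma: complements on Hirzebruch surfaces}, applied on the minimal resolution $\mathbb{F}_d$, controls the coefficients. Alternatively, one rewrites $-K_X\equiv\frac{d+2}{d}D_1$, which gives
$$
(1-b_1)D_1^2=(d+2)-dD_1\cdot\ldots,
$$
and converts the resulting relation into a relation of type $\alpha_1$ or $\alpha_0$ among the coefficients, giving bounds of order $\frac1{252d}$ or better. The main obstacle is exactly this case: the degree $d$ of the cone is a priori unbounded, so a uniform argument is required. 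The plan is to note that the rulings through the vertex pass through a singular point of index $d$, which forces coefficients into large-denominator regimes, and then to use the Hirzebruch lemma to bound $d\le$ small. The last step is to check that the resulting bounds are strictly below $1-\frac1{104720}$, so that equality and the second-best bound arise only on $\mathbb{P}^2$ with lines.
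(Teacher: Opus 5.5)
Your overall route is the paper's. You classify $X$ through $|D_1|$ as $\mathbb{P}^2$ with $D_1$ a line, $\mathbb{P}^2$ with $D_1$ a conic, or the cone $\mathbb{P}(1,1,d)$, and you handle $d=1$ by $\alpha_2(\Gamma(\mathcal{N}_1))=2+\frac{1}{127260}$ and the next attainable value $2+\frac{1}{104720}$. The conic case has harmless slips: in fact $\sum w_ib_i=2(3-2b_1)$, and $3-2b_1\ge\alpha_1$ gives $1-b_1\ge\frac{1}{504}$, not $\frac{1}{252}$.

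The genuine gap is the cone case, which you flag but do not resolve. Your inequality $1-b_1\ge\frac{1}{127260\,d}$ does not even give $b_1\le 1-\frac{1}{127260}$ once $d\ge 2$. Your proposed remedy, bounds of order $\frac{1}{252d}$, is false. On $\mathbb{P}(1,1,2)$, take two general hyperplane sections $D_1,D_2$ (conics missing the vertex) and two general rulings. Give them coefficients $b_1$, $b_2=\frac17$, $\frac{29}{40}$ and $\frac{1018}{1029}$; the last three all lie in $\Gamma(\mathcal{N}_1)$. Since $2\cdot\frac17+\frac{29}{40}+\frac{1018}{1029}=2+\frac{1}{41160}$, setting $b_1=1-\frac{1}{82320}$ gives a klt $0$-pair; the coefficient of the $(-2)$-curve over the vertex is $\frac{70561}{82320}<1$. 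So $1-b_1$ can equal $\frac{1}{82320}$, far below $\frac{1}{504}$. Your remark that rulings through the vertex force coefficients into large-denominator regimes also does no work, because the coefficients are fixed elements of $\Gamma(\mathcal{N}_1)$.

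What is missing is the following.
\begin{itemize}
\item Pass to the crepant pullback $(\mathbb{F}_d,cE+B^{min})$. Adjunction on a general fibre gives $c+b_1+\sum_{i\ge2}e_ib_i^h=2$. This forces a horizontal component, so $\sum_{i\ge2}e_ib_i^h\ge\frac17$.
\item Combine this with $c\ge 1-\frac{2}{d}$ from \thref{lemma: complements on Hirzebruch surfaces}. You get $b_1\le\frac67+\frac2d$, hence $d\le 14$.
\item For $2\le d\le 14$, you must use that some horizontal component carries weight at least $d$. After splitting classes into $E+dF$ and $F$, the relation becomes $d\sum_{i\ge2}b_i^h+\sum_{i\ge2}b_i^v=2+d(1-b_1)$. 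Here there is at least one horizontal term and the vertical part is less than $2$.
\item One then needs, separately for each $d$, the least value above $2$ of such a weighted combination. This is a finite enumeration, done by computer in the paper. For example, it yields $1-b_1\ge\frac{1}{82320}$ when $d=2$.
\end{itemize}
This weighted information cannot be read off from $\alpha_0$, $\alpha_1$, $\alpha_2$ alone. It is exactly what gives the strict inequality for $d\ge 2$ and the second bound $1-\frac{1}{104720}$.
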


    \begin{proof}
    We put $d = D_{1}^{2}$. Applying \thref{cor: complete linear system morphism} to the surface pair $(X,0)$ and the curve $D_{1}$, we obtain that the complete linear system $|D_{1}|$ defines a birational contraction of $X$ to a degree $d$ surface in $P^{d+1}$. Since $\rho(X)=1$, this birational contraction must be an embedding. Hence $X$ is classified by \thref{thm: surface of minimal degree}. Recall that $\rho(X)=1$. Hence we have one of the following situations:
    \begin{enumerate}
        \item $X \cong \mathbb{P}^2$ and $d=1$;
        \item $X \cong \mathbb{P}^2$ and $d=4$;
        \item $X \cong \mathbb{P}(1,1,d)$ and $d \geq 2$.
    \end{enumerate}

    If $X \cong \mathbb{P}^2$ and $d=1$, then by \thref{statement: bound by degree} we have
    $$
    b_{1} = 1 - (- 2 + \sum\limits_{i=2}^{n}w_{i}b_{i}) \leq 1 - (-2 + \alpha_{2}(\widetilde{\Gamma(\mathcal{N}_{1})})) = 1 - \frac{1}{127260},
    $$
    and the equality holds if and only if \thref{example: optimal bound for act} holds. If $X \cong \mathbb{P}^2$ and $d=4$, then $D_{1}$ is a quadric and we can write
    $$
    \sum\limits_{i=2}^{n}d_{i}b_{i} = 3 - 2b_{1} > 1
    $$
    where $d_{i} = \frac{w_{i}}{2}$ is the degree of $D_{i}$ in $\mathbb{P}^2$. Hence $3 - 2b_{1} \geq \alpha_{1}(\Gamma(\mathcal{N}_{1})) = 1 + \frac{1}{252}$ and $b_{1} \leq 1 - \frac{1}{504}$.
    
    Hence we can assume that $X \cong \mathbb{P}(1,1,d)$ and $d \geq 2$. The minimal resolution $X^{min}$ of $X$ is $\mathbb{F}_{d}$, i.e., the Hirzebruch surface of degree $d$. Every prime divisor on $\mathbb{F}_{d}$ is linearly equivalent to an effective divisor of the form $eE + fF$, where 
    \begin{itemize}
        \item $E$ is the negative section of $\mathbb{F}_{d}$;
        \item $F$ is a fibre of $\mathbb{F}_{d}$;
        \item One of the following happens:
        \begin{enumerate}
            \item $e=1, f=0$;
            \item $e=0, f=1$;
            \item $e,f \geq 1, f \geq de$.
        \end{enumerate}
    \end{itemize}
    This allows us to list the prime divisors of $\mathbb{P}(1,1,d)$ of low degree as follows. Here the degree of a divisor is the image of its linear equivalence class under the isomorphism $\mathrm{Cl}(\mathbb{P}(1,1,d)) \cong \mathbb{Z}$ that sends ample classes to positive integers.

    \begin{enumerate}
        \item degree 1: the strict birational transformations of these prime divisors on $\mathbb{F}_{d}$ are fibres of the ruled surface $\mathbb{F}_{d} \rightarrow \mathbb{P}^1$. These prime divisors will be called \emph{vertical}. The other prime divisors will be called \emph{horizontal}.
        \item degree $\geq 2$ and $\leq d-1$: none.
        \item degree $d$: the strict birational transformations of these divisors on $\mathbb{F}_{d}$ are sections linearly equivalent to $E + dF$ in $\mathbb{F}_{d}$.
    \end{enumerate}

    Let $(X^{min} \cong \mathbb{F}_{d},cE + B^{min})$ be the crepant pullback of $(X,B)$, where the negative section $E$ is the exceptional divisor over $X$, $B = \sum\limits_{i=1}^{n} b_{i}D_{i}$, $B^{min} = \sum\limits_{i=1}^{n} b_{i}D_{i}^{min}$, and $D_{i}^{min}$ is the strict birational transformation of $D_{i}$ on $X^{min}$. Then 
    \begin{equation}\label{equation 1''}
    c= 1- \frac{2}{d} + \frac{1}{d}(\sum\limits_{\substack{1 \leq i \leq n \\ D_{i}^{min} \text{ horizontal}}} (f_{i}- e_{i}d ) b_{i} + \sum\limits_{\substack{1 \leq i \leq n \\ D_{i}^{min} \text{ vertical}}} b_{i}) < 1,
    \end{equation}
    where $e_{i}$ and $f_{i}$ are positive integers given by the linear equivalence $D_{i}^{min} \sim e_{i} E + f_{i}F$. Notice that $e_{1}=1$, $f_{1}=d$ and $f_{i} \geq e_{i}d$ for every $i$. To simplify the notation, we define
    $$
    b_{i}^{v} = \begin{cases}
        b_{i} & \text{if }D_{i}^{min} \text{ is vertical}, \\
        0 & \text{if }D_{i}^{min} \text{ is horizontal},
    \end{cases}
    $$
    and
    $$
    b_{i}^{h} = \begin{cases}
    0 & \text{if }D_{i}^{min} \text{ is vertical}, \\
    b_{i} & \text{if }D_{i}^{min} \text{ is horizontal}.
    \end{cases}
    $$
    
    The pair $(\mathbb{F}_{d},cE + B^{min})$ is still a klt 0-pair. By adjunction on general fibres, we have:

    \begin{equation}\label{equation 1}
        c + b_{1} + \sum\limits_{i = 2}^{n} e_{i}b_{i}^{h} = 2.
    \end{equation}

    In particular, we have $\sum\limits_{i = 2}^{n} e_{i}b_{i}^{h} > 0$. Hence $\sum\limits_{i = 2}^{n} e_{i}b_{i}^{h} \geq \alpha_{0}(\Gamma(\mathcal{N}_{1})) = \frac{1}{7}$. By \thref{lemma: complements on Hirzebruch surfaces},
    \begin{align*}
    b_{1} &= 2 - c - \sum\limits_{i=2}^{n} e_{i}b_{i}^{h} \\
    & \leq 1 + \frac{2}{d} - \sum\limits_{i=2}^{n} e_{i}b_{i}^{h} \\
    &\leq \frac{6}{7} + \frac{2}{d}.
    \end{align*}
    
    If $d \geq 15$, then we have $b_{1} \leq \frac{104}{105}$, contradicting the assumption in \thref{statement: lct assumption} that $b_{1} > \frac{251}{252}$. Hence we can assume that $d \leq 14$. To solve Equation (\ref{equation 1}), we can assume without loss of generality that $e_{i}=1$ and $f_{i}=d$. Indeed, a linear equivalence class of the form $e_{i} E + f_{i}F$ where $f_{i} \geq e_{i}d$ can be written as a nonnegative $\mathbb{Z}$-linear combination of the linear equivalence class of $E + dF$ and the linear equivalence class of $F$. In addition, all of these classes are linearly effective and free. Thus the computed numerical bound for $b_{1}$ still holds. Notice that the number of horizontal and vertical components can be changed, and the number $n$ of components of $B$ can be changed as well. Equation (\ref{equation 1}) can be rewritten as
    \begin{equation}\label{equation 1'}
        d \sum\limits_{i=1}^{n} b_{i}^{h} + \sum\limits_{i=1}^{n} b_{i}^{v} = d+2.
    \end{equation}
    Since $c < 1$, we have $\sum\limits_{i=1}^{n} b_{i}^{v} < 2$. We use a computer to compute all possible solutions. We briefly explain the algorithm. Indeed, Equation (\ref{equation 1}) can be rewritten as
    $$
    d \sum\limits_{i=2}^{n} b_{i}^{h} + \sum\limits_{i=2}^{n} b_{i}^{v} = 2 + d(1 - b_{1}).
    $$
    
    We can compute a ``weighted'' gap number on the left-hand side, i.e., the least linear combination $>2$ of coefficients in $\Gamma(\mathcal{N}_{1})$, such that at least one coefficient has weight $d$. Then $d(1 - b_{1})$ is bounded from below, and we obtain an upper bound of $b_{1}$. For instance, if $d = 2$, then the weighted gap number is given by $2 \times \frac{1}{7} + \frac{29}{40} + \frac{1018}{1029} = 2 + \frac{1}{41160}$, and we have $b_{1} \leq 1 -\frac{1}{41160 \times 2} = \frac{82319}{82320}$. We give a table for $b_{1}$ for all $d \leq 14$ as follows:

\begin{longtable}{|C|C|C|C|C|C|C|C|C|C|C|C|C|C|C|}
\hline
    d & 1 & 2 & 3 & 4 & 5 & 6 & 7 & 8 & 9 & 10 & 11 & 12 & 13 & 14 \\
    \hline
    b_{1} & \frac{127259}{127260} & \frac{82319}{82320} & \frac{21419}{21420} & \frac{26319}{26320} & \frac{8924}{8925} & \frac{3779}{3780} & \frac{1763}{1764} & \frac{2015}{2016} & \frac{839}{840} & \frac{839}{840} & \frac{1537}{1540} & \frac{251}{252} & \frac{453}{455} & \frac{97}{98} \\
    \hline
\end{longtable}

Hence in all cases we have $b_{1} \leq \frac{127259}{127260}$, and the equality holds only if $d = 1$. Moreover, if $b_{1} < \frac{127259}{127260}$, then $b_{1} \leq 1 - (\alpha_{2+\frac{1}{127260}}(\Gamma(\mathcal{N}_{1}))-2) = 1 - \frac{1}{104720}$, and again the equality holds only if $d = 1$.
\end{proof}

\paragraph{$D_{1}$ contains singularities of $X$}
\hfill

    Now we will prove \thref{thm: act gap} in the case where $D_{1}$ contains some singularities of $X$.
    
    \begin{lemma}\label{lemma: D1 contains singularities}
        Under the assumptions of \thref{thm: act gap}, suppose additionally that $D_{1}$ is a nonsingular rational curve and $D_{1}$ contains some singularities of $X$. Then we have $b_{1} \leq 1 - \frac{1}{127260}$. Moreover, equality holds if and only if $(X,B)$ is as in \thref{example: optimal bound for act 2}.
    \end{lemma}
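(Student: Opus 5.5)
We argue by contradiction and assume $b_{1} > 1 - \frac{1}{127260}$. The starting point is the equation of \thref{statement: bound by degree}. By \thref{statement: lct assumption} we have $l + \sum_{i=1}^{u}\frac{m_{i}-1}{m_{i}} \leq 2$, and $u \geq 1$ by hypothesis. The local types of the points $P_{i} \in D_{1}$ are governed by \thref{prop: surface lc singularity}. At a plt point the singularity is cyclic, with dual graph~\ref{figure: dual graph 3} and Cartier index $m_{i}$. At a non-plt point $l$ increases by one, and the singularity is either of type \textbf{D} as in graph~\ref{figure: dual graph 2}, or $D_{1}$ meets the end of a chain. This splits the problem into finitely many \emph{combinatorial types} of the singularities on $D_{1}$: $l\in\{0,1,2\}$ together with a multiset $\{m_{i}\}$ satisfying $\sum \frac{m_i-1}{m_i}\le 2-l$. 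In the basic case $l=0$ the multisets are the platonic triples $(2,2,k)$, $(2,3,3)$, $(2,3,4)$, $(2,3,5)$, the quadruple $(2,2,2,2)$, and all pairs and singletons.

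For each type, the numerator
$$N = -2 + l + \sum_{i=2}^{n}w_{i}b_{i} + \sum_{i=1}^{u}\frac{m_{i}-1+\sum_{j}r_{ij}b_{j}}{m_{i}}$$
is a positive $\mathbb{Z}_{\geq 0}$-combination of elements of $\widetilde{\Gamma(\mathcal{N}_{1})}$ minus $2-l$. Hence $N \geq \alpha_{2-l}(\widetilde{\Gamma(\mathcal{N}_{1})}) - (2-l)$, with finer weighted versions once we fix which adjunction coefficients $\frac{m_i-1+\cdots}{m_i}$ occur. The key step is then to bound $D_{1}^{2}$ from above in each type, since $b_{1} = 1 - N/D_{1}^{2}$.

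To bound $D_{1}^{2}$ I would pass to the minimal resolution $Y \to X$ and use $\rho(X)=1$ together with the 0-pair structure $(X,B)$. Write $K_{Y} + D_{1}^{Y} + \sum a_{j}E_{j} + B_Y^{\prime} = $ pullback, where the log discrepancies $1-a_j$ come from \thref{prop:formula for ld} applied to the chains and $D$-type graphs. Then $D_{1}^{2} = (D_{1}^{Y})^{2} + \sum(\text{local corrections})$, and the corrections are explicit continued-fraction quantities $\frac{q_i}{m_i}$ attached to each chain. Since $\rho(X)=1$ and $X$ has Fano type, the strategy is to contract $(-1)$-curves or run an MMP on $Y$ down to $\mathbb{P}^2$ or a Hirzebruch surface, as in the proof of \thref{lemma: no singular point}. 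When $D_{1}^{Y}$ has self-intersection $\geq 0$ and avoids the singular locus after partial resolution, I would instead apply \thref{cor: complete linear system morphism}. In the heavily negative cases I would use \thref{lemma: complements on Hirzebruch surfaces} to get a uniform bound of the form $d \le C(m_1,\dots,m_u)$. As in the table for $d\le 14$, this reduces every type to a finite list of $(X, D_1^2,$ local data$)$. For each item, a computer search computes the weighted gap number $\min N$ with the given local weights, via \thref{algorithm:gap number} and \thref{prop: gap number after adding large coefficients}.

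The main obstacle is the uniform upper bound on $D_{1}^{2}$ as the indices $m_i$ grow. A cyclic quotient point of index $m$ allows $D_1^2$ of size roughly $\frac{1}{m}$ times something unbounded, for example along a chain with a long tail. I would handle this in two ways. First, large $m_i$ force the adjunction term $\frac{m_i-1+\sum r_{ij}b_j}{m_i}$ close to $1$, so the number of remaining terms is small. Second, an inequality like \thref{lemma: complements on Hirzebruch surfaces} applied to the exceptional curves, which must have coefficient $<1$ in the klt 0-pair, bounds the chain length in terms of $1-b_1$. Together these show $N/D_1^2$ stays at least $\frac{1}{127260}$ outside finitely many configurations. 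The finitely many remaining configurations are checked by computer, and the equality case is read off from the unique configuration attaining $\frac{1}{127260}$, namely \thref{example: optimal bound for act 2}.
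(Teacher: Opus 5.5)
Your plan has a genuine gap: it reduces the lemma to a purely numerical problem. You bound $D_{1}^{2}$ from above in each combinatorial type, bound $N$ from below by weighted gap numbers of $\widetilde{\Gamma(\mathcal{N}_{1})}$, and let a computer finish. This cannot work, because the numerical data alone admit configurations that violate the bound.

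Take one singular point of index $m=2$ (so $q=1$) and $d'=1$, so that $D_{1}^{2}=\frac{3}{2}$. Let $D_{2},D_{3}$ meet $D_{1}$ transversally in the smooth locus with coefficients $\frac{1}{7}$ and $\frac{31}{36}$, and let $\sum_{j}r_{j}b_{j}=\frac{501}{505}$ at the singular point. This is row 3 of Table~\ref{table: potential numerical cases for 1 singularity}. Then $N=\frac{1}{7}+\frac{31}{36}+\frac{503}{505}-2=\frac{1}{127260}$, so $1-b_{1}=\frac{1}{190890}$. Every constraint your search uses is satisfied: adjunction, $\sum\frac{m_{i}-1}{m_{i}}\le 2$, $D_{1}^{2}\le d'+2$, and the gap numbers. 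Indeed $N$ equals $\alpha_{2}(\widetilde{\Gamma(\mathcal{N}_{1})})-2$ exactly, and any $D_{1}^{2}>1$ then beats $\frac{1}{127260}$. Many other table rows behave the same way for the admissible $q_{i}$. So your final computer check would return candidate counterexamples rather than a contradiction. For the same reason, uniqueness of the extremal configuration cannot be read off from numerics.

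The missing ingredient is a geometric realizability analysis. It must couple $q_{i}$, which enters $D_{1}^{2}=d'+\sum q_{i}/m_{i}$, with the incidence data $w_{i},r_{ij}$, which enter $N$. The paper does this in three steps.
\begin{enumerate}
\item At each $P_{i}\in D_{1}$ it extracts only the curve $E_{i}$ meeting $D_{1}^{min}$, so $D_{1}'$ is Cartier and $d'\ge d-2$.
\item It uses $|D_{1}'|$ to reduce to $X_{d'}\cong\mathbb{P}^{2}$, $X_{d'}\cong\mathbb{P}(1,1,d')$ with $d'\le 14$, or a $\mathbb{P}^{1}$-fibration. The scroll, Veronese and large-cone cases are disposed of separately.
\item It then decides which dual graphs on $X^{min}$ can actually occur.
\end{enumerate}
For example, when $d'=1$ with one singular point, suppose $D_{2}^{min}$ did not meet $E_{1}^{min}$. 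Then the images of $D_{2},D_{3},E_{1}$ in $\mathbb{P}^{2}$ would be concurrent lines carrying multiplicity $3-b_{1}>2$, contradicting lc. Hence $r_{2}\ge 1$ and $q\le r_{2}$. This kills row 3 above and, combined with the computed values of $a$, every row except Case 2. The blow-up analysis at $L\cap L_{3}$ then pins down $X\cong\mathbb{P}(1,2,3)$. In the fibration case $d'=0$ the paper needs arguments such as $r'_{4}\mid\gcd(m_{2},q_{2})=1$ and an lc violation at a triple point $D'_{2}\cap D'_{3}\cap D'_{4}$. Nothing in your outline plays this role.

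Some smaller points:
\begin{itemize}
\item For $l=0$ the constraint is $\sum\frac{m_{i}-1}{m_{i}}\le 2$, so you also need the triples $(2,3,6),(2,4,4),(3,3,3)$.
\item At a non-plt type $\mathbf{A}$ point, $D_{1}$ meets the middle of a $(2,w,2)$ chain, not an end; meeting an end is the plt case.
\item The unboundedness you identify as the main obstacle does not come from the $m_{i}$, since each local correction satisfies $q_{i}/m_{i}<1$. It lives in $d'$, and it is controlled by the minimal-degree classification together with the Hirzebruch-surface lemma.
\end{itemize}
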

    
    To establish this, first, we give a description of the singularities that $D_{1}$ passes through.

\begin{lemma}[Classification of singularities]\label{statement: local classification}
    Under the assumptions of \thref{thm: act gap}, suppose additionally that $D_{1}$ is a nonsingular rational curve and $D_{1}$ contains some singularities of $X$. Let $\pi: X^{min} \rightarrow X$ be the minimal resolution of $X$. Let $P \in D_{1}$ be a singularity of $X$, $E_{1},E_{2},\cdots,E_{s_{P}}$ be the exceptional divisors on $X^{min}$ centered at $P$, and $\Gamma$ be the dual graph of $P$. Let $E$ be an exceptional divisor on $X^{min}$ centered at $P$ that intersects $D_{1}^{min}$ and $v$ the corresponding vertex of $E$ in $\Gamma$. Then one of the following holds:
    \begin{enumerate}
    \item The dual graph $\Gamma$ is of type $\mathbf{A}$, and the exceptional divisor $E$ is at the end of $\Gamma$ intersecting $D_{1}^{min}$. This is Case (3) in \thref{prop: surface adjunction}. The dual graph is given by Figure \thref{figure: dual graph 3}, where $C = D_{1}^{min}$. Moreover, we have
    $$
    a(E;X,B) = \frac{q(1-b_{1}) + \alpha}{m}
    $$
    where $m = \Delta(\Gamma)$ is the Cartier index of $D_{1}$ at $P$, $q = \Delta(\Gamma-v_{E})$, and $1 \geq \alpha = 1 - \sum\limits_{i=2}^{n} m_{i}b_{i} \geq 0$ for some $m_{i} \in \mathbb{Z}_{\geq 0}$. The coefficient of $B^{min}$ at $E$ is
    $$
    b_{E} = 1 - a(E;X,B) = \frac{m-q + q b_{1} - \alpha}{m} > b_{1} - \frac{1}{m}.
    $$

    Notice that for fixed positive integers $m$ and $q<m$, the set
    $$
    \{ \frac{m-q + q b_{1} - (1 - \sum\limits_{i=2}^{n} r_{i}b_{i})}{m} \in (0,1) \mid r_{i} \in \mathbb{Z}_{\geq 0}, b_{i} \in \Gamma(\mathcal{N}_{1}), 0 \leq \sum\limits_{i=2}^{n} r_{i}b_{i} \leq 1 \}
    $$
    satisfies DCC, and the only accumulation point is $1 - \frac{q}{m}(1-b_{1}) > b_{1}$. Additionally, extracting $E$ reduces the self intersection of $D_{1}$ by $\frac{q}{m} < 1$.

    \item The dual graph $\Gamma$ is of type $\mathbf{D}$, and the exceptional divisor $E$ is the ending point of the chain part of $\Gamma$. This is Case (2) in \thref{prop: surface adjunction}. The dual graph is given by Figure \thref{figure: dual graph 2}, where $C = D_{1}^{min}$. Moreover, we have
    $$
    a(E;X,B) = \frac{q(1-b_{1})}{m}
    $$
    where $m = \Delta(\Gamma)$ and $q = \Delta(\Gamma-v_{E})$. Then extracting $E$ reduces the self intersection of $D_{1}$ by $\frac{q}{m} \leq 1$. This case can be treated formally as the previous case with $\alpha = 0$.

    \item The dual graph $\Gamma$ is of type $\mathbf{A}_3$ where the vertices have weights $(2,w,2)$ for some positive integer $w \geq 2$, and the exceptional divisor $E$ is in the middle of the chain. This is Case (2) in \thref{prop: surface adjunction}. The dual graph is given by Figure \thref{figure: dual graph 2}, where $C = D_{1}^{min}$. By Equation (\ref{equation: log discrepancy}), we have
    $$
    a(E;X,B) = \frac{1-b_{1}}{w-1} \leq 1 - b_{1}.
    $$
    Moreover, extracting $E$ reduces the self intersection of $D_{1}$ by $\frac{1}{w - 1} \leq 1$, and the equality holds if and only if $w=2$. This case can be treated formally the same as the previous case, since type $\mathbf{D}_{3}$ is the same as type $\mathbf{A}_{3}$.
    \end{enumerate}
\end{lemma}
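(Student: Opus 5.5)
The plan is to work locally at $P$ on the minimal resolution $\pi: X^{min} \rightarrow X$. The first step is to pin down the possible dual graphs. By \thref{statement: lct assumption} the pair $(X,D_{1}+\sum_{i\geq 2}b_{i}D_{i})$ is lc, and by assumption $D_{1}$ is nonsingular. Hence \thref{prop: surface lc singularity}, applied with $C=D_{1}$ (so the nodal cases are excluded), leaves only two configurations at $P$:
\begin{itemize}
\item a chain of type $\mathbf{A}$ with $D_{1}^{min}$ meeting an end of the chain (Figure \ref{figure: dual graph 3});
\item a graph of type $\mathbf{D}$ with $D_{1}^{min}$ attached to the end of the long arm (Figure \ref{figure: dual graph 2}).
\end{itemize}
In the $\mathbf{D}$ case the shortest arm gives the $\mathbf{D}_3=\mathbf{A}_3$ configuration $(2,w,2)$ with $D_{1}^{min}$ meeting the middle vertex. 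This is listed separately as item 3, since there $E$ sits in the middle of a chain rather than at its end. The plt versus non-plt dichotomy of \thref{prop: surface adjunction} matches these cases: the $\mathbf{A}$ case is plt (Case (3)), and the $\mathbf{D}$ and $\mathbf{A}_3$ cases are non-plt (Case (2)). In Case (3), the identification $m=\Delta(\Gamma)$ with the Cartier index of $D_{1}$ at $P$ is the standard statement that the local class group of a cyclic quotient singularity has order $\Delta(\Gamma)$.

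Next I compute $a(E;X,B)$ with \thref{prop:formula for ld}, which applies because all $E_{j}$ are rational and $\Gamma$ is a tree. For each vertex $v_j$,
$$c_{j}=2-(B^{min}_{str}+\sum_{k\neq j}E_k)\cdot E_j=2-\#\{\text{neighbours of }v_j\}-(\text{contribution of }B_{str}^{min}).$$
At $v_E$ the term $D_{1}^{min}$ contributes $b_{1}$.

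In the $\mathbf{A}$ case, interior vertices have $c_j=-(\text{strict transforms of }D_i\text{, }i\ge2)\cdot E_j$, while the far end has $c_j=1-(\ldots)$ and $v_E$ has $c_E=1-b_1-(\ldots)$. I then use the tridiagonal identity $\Delta(\Gamma-\mathrm{Path}(v_E,v_j))$, which equals $\Delta$ of the part of the chain beyond $v_j$. This gives $q(1-b_1)$ from $v_E$, and from the far end it gives $\Delta(\varnothing)=1$, hence a contribution of $1$. The remaining terms, collected together, are $-\sum_{i\ge2}m_ib_i$ with $m_i\in\mathbb{Z}_{\geq0}$, and this defines $\alpha$. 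The bounds $\alpha\le1$ and $\alpha\ge0$ should follow from comparison with the lc pair $(X,D_1+\sum b_iD_i)$. Indeed, the same formula with $b_1$ replaced by $1$ gives $a(E;X,D_1+\ldots)=\alpha/m\ge0$. Equivalently, $\alpha/m$ equals $1-\mathrm{mult}_P\mathrm{Diff}$ via \thref{prop: surface adjunction}(3), where $\sum r_ib_i\le1$.

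In the $\mathbf{D}$ case, the two $(-2)$ tips force $c_j=0$ contributions to cancel in the same way. Also, by lc-ness no $D_i$ with $i\ge2$ may pass through $P$, since $\mathrm{mult}\,\mathrm{Diff}=1$ already. So only the $v_E$ term survives, giving $q(1-b_1)/m$. For $(2,w,2)$ a direct $3\times3$ computation gives $\Delta=4(w-1)$ and $\Delta(\Gamma-v_E)=4$, which yields $(1-b_1)/(w-1)$.

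The remaining assertions are formal. Since $b_E=1-a(E;X,B)$, the inequality $b_E>b_1-1/m$ follows from $q<m$, using the positivity of $\Delta$ of proper subgraphs. The DCC statement comes from the DCC property of finite $\mathbb{Z}_{\ge0}$-combinations of $\Gamma(\mathcal{N}_1)$ bounded by $1$, and the only accumulation point arises as $\sum r_ib_i\to1$. For the self-intersection drop, extracting $E$ alone gives $D_1^2-D_{1,Y}^2=(\text{coefficient of }E\text{ in }\pi^*D_1)\cdot(D_1^{min}\cdot E)$. That coefficient is the $(v_E,v_E)$ entry of the inverse intersection matrix, namely $q/m$, again by the cofactor formula.

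I expect the main obstacle to be the bookkeeping in the $\mathbf{A}$ case. Specifically, I have to verify that every contribution of $D_i$ with $i\geq2$ enters with a nonnegative integer coefficient, and that $\alpha\in[0,1]$ holds exactly, rather than only up to the Diff identity.
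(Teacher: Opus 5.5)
Your proposal is correct and follows essentially the same route as the paper. You reduce to Cases (2) and (3) of \thref{prop: surface adjunction}: the $\mathbf{A}$ chain, the $\mathbf{D}$ graph, and its $(2,w,2)$ degeneration. You then read off $a(E;X,B)$, $\alpha$ and $q/m$ from the tree formula of \thref{prop:formula for ld}, with $\alpha\ge0$ coming from the lc pair $(X,D_1+\sum_{i\ge2}b_iD_i)$.

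One step should be tightened. The bounds $q<m$, and $q/m\le 1$ in the $\mathbf{D}$ case, do not follow from positivity of subgraph determinants alone. They follow cleanly from $q/m=\mathrm{mult}_E\pi^{*}D_1=a(E;X,0)-a(E;X,D_1)$. Combine this with $a(E;X,0)\le 1$ on the minimal resolution, and with $a(E;X,D_1)=1/m$ in the $\mathbf{A}$ case (resp.\ $0$ in the $\mathbf{D}$ case).
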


\begin{proof}
    Since $(X,B)$ is klt and $D_{1}$ is nonsingular, only Case (2) and Case (3) in \thref{prop: surface adjunction} can happen. In particular, the strict birational transformation $D_{1}^{min}$ of $D_{1}$ on $X^{min}$ intersects each connected component of the exceptional locus at exactly one point with multiplicity 1. In particular, the exceptional divisor $E$ is unique. Applying \thref{prop:formula for ld} to the exceptional divisor $E$, we have
    \begin{equation}\label{equation: log discrepancy}
    a(E;X,B) = \frac{\sum\limits_{i=1}^{s_{P}} \Delta(\Gamma - \mathrm{Path}(v,v_{i}))c_{i}}{\Delta(\Gamma)},
    \end{equation}
    where
    $$
    c_{i} = 2 - (f^{-1}_{*}B + \sum\limits_{\substack{1 \leq j \leq s_{P} \\ j \neq i}}E_{j})\cdot E_{i}.
    $$
    Notice that $B$ is effective, and $c_{i}>0$ only if $v_{i}$ is an ending point of the graph $\Gamma$, that is, it has only one adjacent vertex. Since $(X,B)$ is klt, the log discrepancy $a(E_{i};X,B) > 0$ for every $i$. This is possible only in the following situations:
    
    \begin{enumerate}
    \item The dual graph $\Gamma$ is of type $\mathbf{A}$, and the exceptional divisor $E$ is at the end of $\Gamma$ intersecting $D_{1}^{min}$. This is Case (3) in \thref{prop: surface adjunction}. The dual graph is given by Figure \thref{figure: dual graph 3}, where $C = D_{1}^{min}$. By Equation (\ref{equation: log discrepancy}), we have
    $$
    a(E;X,B) = \frac{\Delta(\Gamma - v)(1-b_{1}) + \alpha}{\Delta(\Gamma)}
    $$

    where $1 \geq \alpha = 1 - \sum\limits_{i=2}^{n} m_{i}b_{i} \geq 0$ for some $m_{i} \in \mathbb{Z}_{\geq 0}$. Put $m=\Delta(\Gamma)$ and $q=\Delta(\Gamma - v)$. By \thref{prop: surface adjunction}, $m$ is the Cartier index of $D_{1}$ at $P$. Moreover, the number $\frac{q}{m}$ is the multiplicity of $D_{1}$ in $E$. The coefficient of $B^{min}$ at $E$ is
    $$
    b_{E} = 1 - a(E;X,B) = \frac{m-q + q b_{1} - \alpha}{m} > b_{1} - \frac{1}{m}.
    $$
    Let $\pi_{E}: Y \rightarrow X$ be the extraction of the divisor $E$. Then we have $\pi_{E}^{*}D_{1} = D_{1,Y} + \frac{q}{m}E_{Y}$ and $E_{Y} \cdot D_{1,Y} = 1$. Hence we have 
    $$
    D_{1}^{2} = \pi_{E}^{*}D_{1} \cdot D_{1,Y} = D_{1,Y}^{2} + \frac{q}{m}E_{Y} \cdot D_{1,Y} = D_{1,Y}^{2} + \frac{q}{m}.
    $$

    \item The dual graph $\Gamma$ is of type $\mathbf{D}$, and the exceptional divisor $E$ is the ending point of the chain part of $\Gamma$. This is Case (2) in \thref{prop: surface adjunction}. The dual graph is given by Figure \thref{figure: dual graph 2}, where $C = D_{1}^{min}$. By Equation (\ref{equation: log discrepancy}), we have
    $$
    a(E;X,B) = \frac{\Delta(\Gamma - v)(1-b_{1})}{\Delta(\Gamma)}.
    $$
    Let $m=\Delta(\Gamma)$ and $q=\Delta(\Gamma - v)$. Similar to extracting $E$ reduces the self intersection of $D_{1}$ by $\frac{q}{m} \leq 1$.

    \item The dual graph $\Gamma$ is of type $\mathbf{A}_3$, and the exceptional divisor $E$ is in the middle of the chain. Since $(X,D_{1})$ is log canonical, the vertices have weights $(2,w,2)$, where $w$ is the weight at the middle, and $D_{1}^{min}$ intersects the exceptional divisor in the middle. This is Case (2) in \thref{prop: surface adjunction}. The dual graph is given by Figure \thref{figure: dual graph 2}, where $C = D_{1}^{min}$. By Equation (\ref{equation: log discrepancy}), we have
    $$
    a(E;X,B) = \frac{1-b_{1}}{w-1} \leq 1 - b_{1}.
    $$
    
    Moreover, extracting $E$ reduces the self intersection of $D_{1}$ by $\frac{1}{w - 1} \leq 1$, and the equality holds if and only if $w=2$.
    \end{enumerate}
\end{proof}

    Next, we need to control the number of singularities of $X$ in $D_{1}$.

\begin{lemma}\label{lemma: at most 4 singularities on D1}
    Under the assumptions of \thref{thm: act gap}, suppose additionally that $D_{1}$ is a nonsingular rational curve. Then $D_{1}$ contains at most 4 singularities of $X$. Moreover, we have 
    $$
    l + \sum\limits_{i=1}^{u}\frac{m_{i}-1}{m_{i}} \leq 2
    $$
    where $l,u,m_{i}$ are defined as in \thref{prop: surface adjunction}.
\end{lemma}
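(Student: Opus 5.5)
The plan is to get both assertions from the adjunction formula for the pair $(X,D_{1})$, combined with the bound on $\mathrm{deg}\,\mathrm{Diff}_{\widetilde{D_{1}}}(0)$ that comes out of the reduction in \thref{statement: bound genus of B1}.

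\emph{Step 1: the inequality.} As in the proof of \thref{statement: bound genus of B1}, either $b_{1}\le \frac{12}{13}$ (and there is nothing to prove), or $(X,D_{1})$ has an $\mathbb{R}$-complement. Since $\rho(X)=1$ and $-(K_{X}+B)\equiv 0$, this complement has the form $D_{1}+B^{+}$ with $B^{+}\ge 0$, so
\[
(K_{X}+D_{1})\cdot D_{1}=-B^{+}\cdot D_{1}\le 0 .
\]
Now apply \thref{prop: surface adjunction} to $(X,D_{1})$ with zero boundary. Since $D_{1}$ is a nonsingular rational curve, $g(\widetilde{D_{1}})=0$ and no nodes occur, so
\[
(K_{X}+D_{1})\cdot D_{1}=-2+l+\sum_{i=1}^{u}\frac{m_{i}-1}{m_{i}} .
\]
Combining the two displays gives $l+\sum_{i=1}^{u}\frac{m_{i}-1}{m_{i}}\le 2$, which is the displayed inequality. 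In this count, $l$ is the number of non-plt points (Case (2)) and $P_{1},\dots,P_{u}$ are the plt singular points (Case (3)).

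\emph{Step 2: the count of singular points.} Every singular point of $X$ on $D_{1}$ is of one of two kinds:
\begin{itemize}
\item a Case (2) point, which contributes $1$ to the left-hand side;
\item a Case (3) point with Cartier index $m_{i}\ge 2$, which contributes $\frac{m_{i}-1}{m_{i}}\ge \frac12$. Here I use \thref{prop: surface adjunction}(3)(c), which says that $m_{i}>1$ exactly when $X$ is singular at the point.
\end{itemize}
So each singular point contributes at least $\frac12$, and since the total is at most $2$, there are at most $4$ singular points of $X$ on $D_{1}$.

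\emph{Main obstacle.} The delicate point is justifying that $(X,D_{1})$ really has an $\mathbb{R}$-complement, so that the sign $(K_{X}+D_{1})\cdot D_{1}\le 0$ holds. This relies on \thref{thm: gap number for trivial coefficients}, together with the reduction to the case $b_{1}>\frac{12}{13}$ (or $b_{1}>\frac{251}{252}$ as in \thref{statement: lct assumption}). If $b_{1}$ is smaller, the lemma is only needed under that standing assumption.

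An alternative route avoids the complement: use \thref{statement: equation from adjunction} directly. Its right-hand side is $(1-b_{1})D_{1}^{2}$, which is small, and this bounds $-2+l+\sum_{i}\frac{m_{i}-1}{m_{i}}$ from above by a number less than $1-b_{1}$ times $D_{1}^{2}$. That alone is weaker, however, so I would rely on the complement argument.
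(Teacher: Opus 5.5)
Your proposal is correct and follows the paper's proof essentially verbatim. The $\mathbb{R}$-complement of $(X,D_{1})$ obtained in \thref{statement: bound genus of B1} (under the standing reduction $b_{1}>\frac{12}{13}$, as you note) gives $(K_{X}+D_{1})\cdot D_{1}\le 0$; adjunction with zero boundary then yields $l+\sum_{i}\frac{m_{i}-1}{m_{i}}\le 2$, and every singular point of $X$ on $D_{1}$ contributes at least $\frac{1}{2}$ (you also make explicit that Case (2) points contribute $1$ through $l$, which the paper leaves implicit).
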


\begin{proof}
    By \thref{statement: bound genus of B1}, we have
    $$
    0 \geq (K_{X}+D_{1})\cdot D_{1} \geq -2 + l + \sum\limits_{i=1}^{u} \frac{m_{i}-1}{m_{i}}
    $$
    If $X$ is singular at $\nu(P_{i})$, then we have $m_{i} \geq 2$ and $\frac{m_{i}-1}{m_{i}} \geq \frac{1}{2}$. Hence $D_{1}$ contains at most 4 singularities of $X$.
\end{proof}

\begin{remark}
    It is known that a del Pezzo surface with log terminal singularities and Picard number 1 has at most 4 singularities. For reference, see \cite[Theorem 1.2]{Belousov08}, \cite[Theorem 1.1]{Belousov09}, \cite[Theorem 1.2]{LX25}.
\end{remark}

    By \thref{lemma: at most 4 singularities on D1}, the proof of \thref{lemma: D1 contains singularities} can be divided into 4 parts, where $D_{1}$ contains 1,2,3,4 singularities of $X$, respectively. In the remainder of this subsection, we will make some common constructions for all the 4 cases. The proof of each case will appear in the subsequent subsections (see \thref{lemma: D1 contains 1 singularity}, \thref{lemma: D1 contains 2 singularities}, \thref{lemma: D1 contains 3 singularities} and \thref{lemma: D1 contains 4 singularities}).

\begin{construction}[A special partial resolution]\label{construction: extract to make Cartier}
    Notations as above. Consider the contraction $\pi': X^{min} \rightarrow X'/X$ which contracts all the exceptional divisors on $X^{min}$ over $X$ that do not intersect $D_{1}^{min}$. Let $f':X' \rightarrow X$ be the natural birational contraction. For every divisor $D_{i}$, let $D'_{i}$ be the strict birational transformation of $D_{i}$ on $X'$. By construction, $X'$ is nonsingular near $D'_{1}$. Notice that for every singularity $P$ of $X$, we have
    \begin{enumerate}[align=left]
        \item there is exactly 1 prime divisor on $X'$ centered at $P$ if $P \in D_{1}$;
        \item the morphism $f'$ is an isomorphism near $P$ if $P \notin D_{1}$.
    \end{enumerate}
    Hence $X'$ is the minimal extraction of $X$ such that it is nonsingular near the strict birational transformation of $D_{1}$. By \thref{lemma: at most 4 singularities on D1}, $D_{1}$ contains at most 4 singularities of $X$. Hence the morphism $f'$ extracts at most 4 prime divisors. Let $P_{1},\cdots, P_{u}$ be the singularities of Case (3) in \thref{prop: surface adjunction} of $X$ contained in $D_{1}$, and $P_{u+1},\cdots,P_{u+l}$ be the singularities of Case (2) in \thref{prop: surface adjunction} of $X$ contained in $D_{1}$. Let $E_{i}$ be the divisor extracted from $P_{i}$. By \thref{statement: local classification} and \thref{lemma: at most 4 singularities on D1}, we have
    \begin{equation}\label{equation: d <= d'+2}
    d' = (D_{1}')^2 \geq d - l - \sum\limits_{i=1}^{u}\frac{q_{i}}{m_{i}} \geq d - l -\sum\limits_{i=1}^{u}\frac{m_{i}-1}{m_{i}} \geq d-2.
    \end{equation}
    Since $d > 0$, we have $d' \geq -1$.
\end{construction}

\begin{statement}[Contraction defined by $|D'_{1}|$]\label{statement: contraction defined by D'1}
    Notation as above. Write
    $$
    f^{\prime *}K_{X} = K_{X'} + \sum\limits_{i=1}^{u+l} (1 - a(E_{i};X,0))E_{i}.
    $$
    Then the surface pair $(X',\sum\limits_{i=1}^{u+l} (1 - a(E_{i};X,0))E_{i})$ is klt and weak Fano. If $d' \geq 0$, then by \thref{cor: complete linear system morphism} on the surface pair $(X',\sum\limits_{i=1}^{u+l} (1 - a(E_{i};X,0))E_{i})$ and the curve $D'_{1}$, the complete linear system $|D'_{1}|$ defines a morphism. We divide the problem into several different cases:
    \begin{enumerate}[align=left, label=Case \arabic*:]
        \item $d' = -1$.
        \item $d' = 0$. In this case, $|D'_{1}|$ has dimension 1 and $|D'_{1}|$ defines a contraction $X' \rightarrow \mathbb{P}^1$.
        \item $d' \geq 1$. In this case, $|D'_{1}|$ defines a birational contraction from $X'$ to a nondegenerate surface $X_{d'}$ of degree $d'$ in $P^{d'+1}$. By \thref{thm: surface of minimal degree}, the surface $X_{d'}$ is one of the following:
        \begin{enumerate}
             \item The Hirzebruch surface $\mathbb{F}_{d'-2s}$ embedded in $P^{d'+1}$ as a rational normal scroll for some integers $d' \geq 2$ and $1 \leq s \leq \lfloor \frac{d'}{2} \rfloor$.
            \item The projective plane $\mathbb{P}^2$ embedded in $\mathbb{P}^5$ by the degree 2 Veronese embedding. In this case $d'=4$.
            \item The projective plane $\mathbb{P}^2$ in $\mathbb{P}^2$. In this case $d' = 1$.
            \item The weighted projective space $\mathbb{P}(1,1,d')$ where $d' \geq 2$, embedded in $P^{d'+1}$ as a projective cone over a nondegenerate curve of degree $d'$ in $P^{d'}$. 
        \end{enumerate}
    \end{enumerate}
\end{statement}

    Before we discuss the above cases one-by-one, we use the following lemma to simplify our discussion.
    
\begin{lemma}[Reduction]\label{lemma: reduction}
    Notation as in \thref{statement: contraction defined by D'1}. Then to prove \thref{lemma: D1 contains singularities}, it suffices to consider the following cases:
    \begin{enumerate}
        \item Case 1;
        \item Case 2;
        \item Case 3(c);
        \item Case 3(d) where
        \begin{enumerate}
            \item $2 \leq d' \leq 14$;
            \item The strict birational transformation of $D_{i}$ on $X^{min}_{d'} \cong \mathbb{F}_{d'}$ is a prime divisor linearly equivalent to $E + d'F$, or $F$, or the zero divisor. Here $E$ is the negative section of $\mathbb{F}_{d'}$ and $F$ is a general fibre of the ruled surface.
        \end{enumerate}
    \end{enumerate}
\end{lemma}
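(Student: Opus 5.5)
The reduction lemma discards Cases 3(a), 3(b), and the parts of 3(d) with $d'\ge 15$, and normalizes the divisor classes in 3(d). I will show that each discarded case forces $b_1 \le 1-\frac{1}{252}$ (or a similar bound well below $1-\frac{1}{127260}$), so it contradicts the standing assumption $b_1>\frac{251}{252}$ of \thref{statement: lct assumption}. For the normalization in 3(d)(b), I will use the semigroup argument already used in the proof of \thref{lemma: no singular point}.

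\emph{Case 3(b).} Here $X'\cong\mathbb{P}^2$ via the Veronese map, and $D'_1$ is a conic. The morphism $X'\to X_{d'}$ is birational onto $\mathbb{P}^2$, which has Picard number 1, while $X'$ carries the extracted divisors $E_i$. Since $D_1$ contains singularities, at least one $E_i$ exists, so $\rho(X')\ge 2$. Hence some $E_i$ must be contracted by $|D'_1|$, i.e. $E_i\cdot D'_1=0$. This is impossible, because each $E_i$ meets $D'_1$ with $E_i\cdot D'_1=1$. So Case 3(b) cannot occur. The same argument applies verbatim to Case 3(a) whenever $\rho(\mathbb{F}_{d'-2s})=2<\rho(X')$. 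When $\rho(X')=2$, we get $X'\cong\mathbb{F}_{d'-2s}$, with the single extracted curve $E_1$ being a fibre or a section. In that situation I intend to apply \thref{lemma: complements on Hirzebruch surfaces}, or adjunction on a general fibre as in Equation (\ref{equation 1}), to the crepant pullback $(X',b_{E_1}E_1+\sum b_iD'_i)$. The fibre-degree equation $b_1\cdot(D'_1\cdot F)+\cdots=2$ with $D'_1\cdot F\ge 1$, together with $b_{E_1}>b_1-\frac{1}{m}$ from \thref{statement: local classification}, should force a nontrivial extra coefficient from $\Gamma(\mathcal{N}_1)$ into a small slot. Then $\alpha_0$ or $\alpha_1$ of $\Gamma(\mathcal{N}_1)$ gives $b_1\le 1-\frac{1}{252}$ or better.

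\emph{Case 3(d) with $d'\ge 15$.} I will repeat the estimate from \thref{lemma: no singular point}. The minimal resolution of $X_{d'}\cong\mathbb{P}(1,1,d')$ is $\mathbb{F}_{d'}$, and $D'_1$ maps to a section in $|E+d'F|$. Adjunction on a general fibre gives $c+b_1+\sum e_ib_i^h=2$, where the horizontal sum contains at least one nonzero term. That term is either some $D_i$ or an extracted $E_j$ with $b_{E_j}>0$; in the latter case $b_{E_j}\in$ a DCC set bounded below in terms of $b_1$. Combining this with $c\ge 1-\frac{2}{d'}$ from \thref{lemma: complements on Hirzebruch surfaces} yields $b_1\le \frac{6}{7}+\frac{2}{d'}<\frac{251}{252}$. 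For the normalization (b), any class $e_iE+f_iF$ with $f_i\ge e_id'$ is a nonnegative integer combination of the free classes $E+d'F$ and $F$. Replacing $D_i$ by general members of these classes keeps the pair klt and preserves the numerical equation. It therefore does not decrease the computed bound.

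The main obstacle is Case 3(a) with $\rho(X')=2$, where the extracted curves contribute coefficients $b_{E_i}$ close to $b_1$ rather than lying in $\Gamma(\mathcal{N}_1)$. My first attempt there is to check that, by \thref{statement: local classification}, these coefficients only enter through $\frac{q}{m}(1-b_1)$ terms. That would make the fibre equation linear in $1-b_1$ with bounded positive weight, after which a gap-number estimate with $\alpha_1(\Gamma(\mathcal{N}_1))=1+\frac{1}{252}$ should close the case.
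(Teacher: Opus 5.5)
Your Case 3(d) argument is essentially the paper's: fibre adjunction plus \thref{lemma: complements on Hirzebruch surfaces} gives $b_1\le\frac67+\frac{2}{d'}$, and you split $e_iE+f_iF$ into the free classes $E+d'F$ and $F$. A horizontal $E_j$ never occurs there, because $(E+d'F)\cdot(aE+bF)=b$ and $D'_1\cdot E_j=1$ force every $E_j$ to become a fibre.

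The first genuine gap is the Picard-number step you use for Cases 3(b) and 3(a). From $\rho(X')>\rho(X_{d'})$ you infer that some $E_i$ is contracted by $|D'_1|$. But no $E_i$ can be contracted, precisely because $D'_1\cdot E_i=1\neq 0$. The curves contracted by $X'\to X_{d'}$ are other curves: strict transforms of curves on $X$ that meet the $E_i$, which is forced since $D_1$ is ample on $X$. Their existence gives no contradiction. The same inference would exclude Case 3(c) entirely, including the extremal pair of \thref{example: optimal bound for act 2}. There $\rho(X')=2>\rho(\mathbb{P}^2)$, and the contracted curve is $D'_4$ (recall $\varphi_{d'}^*L=E_1+3D'_4$), not $E_1$. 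Case 3(b) is still impossible, but for a parity reason: $D'_1$ is the pullback of a conic, so $D'_1\cdot C$ is $0$ or even for every curve $C\subset X'$, contradicting $D'_1\cdot E_i=1$. In Case 3(a), however, the subcase $\rho(X')\ge 3$ is left completely untreated.

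The second gap is that Case 3(a) is only sketched, and two ingredients are missing.

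First, you must show that some $E_{i_0}$ becomes a section in the class $E$. Otherwise every $E_i$ is a fibre, and a general fibre of $\mathbb{F}_{d'-2s}$ gives a curve on $X$ of self-intersection $0$, contradicting $\rho(X)=1$. Then $E\cdot D'_1=s$ forces $s=1$.

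Second, the numerical closing is not automatic. The inequality $b_{E_{i_0}}>b_1-\frac1m$ is too weak. With the exact formula of \thref{statement: local classification}, the fibre equation $b_1+b_{E_{i_0}}+S=2$, where $S=\sum_{i\ge 2}e_ib_i$, becomes $mS+\sum r_jb_j=1+(m+q)(1-b_1)$. Then $\alpha_1(\Gamma(\mathcal{N}_1))$ only gives $1-b_1\ge\frac{1}{252(m+q)}$, and the weight $m+q$ is not bounded a priori, contrary to what your last sentence assumes. Your route can be completed as follows:
\begin{enumerate}
\item Since $S>0$, we have $S\ge\frac17$. For $b_1>\frac{251}{252}$ this forces $m\le 7$, hence $1-b_1\ge\frac{1}{3276}$.
\item Points with $\alpha=0$ give $\frac17\le S\le 2(1-b_1)$ directly.
\item A separate check is needed when $d'=2$ and several $E_j$ lie in the class $E$.
\end{enumerate}
This must be done for all values of $\rho(X')$.

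The paper argues differently. It splits $D'_1\sim D''_1+D'''_1$ into general members of the free systems $|E+(d'-2)F|$ and $|F|$. It then applies \thref{thm:lct gap} and adjunction on $D''_{1,X}$ to get $d>504$ whenever $b_1\ge 1-\frac{1}{127260}$. Finally, \thref{lemma: complements on Hirzebruch surfaces} on $\mathbb{F}_{d'-2}$ gives $b_1\le\frac67+\frac{2}{d'-2}<\frac{1507}{1750}$, a contradiction.
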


\begin{proof}
    We need to solve Case 3(a), Case 3(b) and Case 3(d) where $d' \geq 15$ or the strict birational transformation of $D_{i}$ on $X^{min}_{d'} \cong \mathbb{F}_{d'}$ is not of the given form.

    \textbf{Case 3(a):} We show that this case can only happen when $b_{1} < 1 - \frac{1}{127260}$. Indeed, in this case, there is a unique ruling on the rational normal scroll in $\mathbb{P}^{d'+1}$ such that the general fibre is a line in $\mathbb{P}^{d'+1}$, unless $d'=2$ and $s=1$. We set up the following notations:
    \begin{itemize}
        \item If $d'>2s$, let $E$ be the negative section of $\mathbb{F}_{d'-2s}$ and $F$ be a general fibre.
        \item If $d'=2s$, we choose the ruling of the scroll such that the general fibre is a line in $P^{d'+1}$. The choice of the ruling is unique unless $d'=2$ and $s=1$. Let $E$ be a zero section of this ruling, and $F$ be a general fibre of this ruling.
    \end{itemize}
    The divisor $D'_{1}$ is the pullback of a divisor in $\mathbb{F}_{d'-2s}$ which is linearly equivalent to $E + (d'-s)F$. For any divisor $E_{i}$ on $X'$ exceptional over $X$, we have $D'_{1} \cdot E_{i} = 1$. Hence the strict birational transformation of $E_{i}$ on $\mathbb{F}_{d'-2s}$ is linearly equivalent to either $E$ or $F$. We claim that there exists a divisor $E_{i_{0}}$ on $X'$ exceptional over $X$ such that the strict birational transformation of $E_{i_{0}}$ on $\mathbb{F}_{d'-2s}$ is linearly equivalent to $E$. Indeed, suppose to the contrary that the strict birational transformation of every $E_{i}$ on $\mathbb{F}_{d'-2s}$ is a fibre. Let $F$ be a general fibre of $\mathbb{F}_{d'-2s}$ and $F_{X}$ be its strict birational transformation on $X$. Then $F_{X}^{2} = 0$, a contradiction. This completes the proof of the claim. In particular, we have $s=1$, and the strict birational transformation of $E_{i_{0}}$ on $\mathbb{F}_{d'-2}$ is $E$ (or linearly equivalent to $E$ if $d'=2$). We can write $D'_{1} \sim D''_{1} + D'''_{1}$, where $D''_{1}$ is the pullback of a general divisor linearly equivalent to $E + (d'-2)F$ and $D'''_{1}$ is the pullback of a general divisor linearly equivalent to $F$. Consider the pair 
    $$
    (X', b_{1}D''_{1} + b_{1}D'''_{1} + \sum\limits_{i=2}^{n}b_{i}D'_{i} + \sum\limits_{i=1}^{u+l} (1 - a(E_{i};X,B))E_{i}).
    $$
    Notice that the linear systems $|E + (d'-2)F|$ and $|F|$ are base-point free. Hence the new pair is still a klt 0-pair. Let $D''_{1,X}$ and $D'''_{1,X}$ be the strict birational transformations of $D''_{1}$ and $D'''_{1}$ on $X$. The pair $(X,b_{1}D''_{1,X} + b_{1}D'''_{1,X} + \sum\limits_{i=2}^{n}b_{i}D_{i})$ is also a klt 0-pair. By \thref{thm:lct gap}, the pair $(X,D''_{1,X} + D'''_{1,X} + \sum\limits_{i=2}^{n}b_{i}D_{i})$ is log canonical. Applying \thref{statement: bound by degree} to the pair $(X,D''_{1,X} + b_{1}D'''_{1,X} + \sum\limits_{i=2}^{n}b_{i}D_{i})$, we have
    $$
    (1-b_{1})(D''_{1,X})^{2} = - 2 + b_{1} + \sum\limits_{i=2}^{n}w'_{i}b_{i} + \sum\limits_{i=1}^{u}\frac{m_{i}-1+\sum\limits_{j=2}^{n}r'_{ij}b_{j}}{m_{i}} > 0.
    $$
    We have $\sum\limits_{i=2}^{n}w'_{i}b_{i} + \sum\limits_{i=1}^{u}\frac{m_{i}-1+\sum\limits_{j=2}^{n}r'_{ij}b_{j}}{m_{i}} > 1$. Hence, by \thref{exam: gap numbers}, we have $\sum\limits_{i=2}^{n}w'_{i}b_{i} + \sum\limits_{i=1}^{u}\frac{m_{i}-1+\sum\limits_{j=2}^{n}r'_{ij}b_{j}}{m_{i}} \geq 1 + \frac{1}{252}$ and 
    $$
    (1-b_{1})(D''_{1,X})^{2} \geq b_{1} + \frac{1}{252} - 1.
    $$
    Now suppose to the contrary that $b_{1} \geq 1 - \frac{1}{127260}$. Then we have 
    $$
    d = D_{1}^{2} > (D''_{1,X})^{2} \geq -1 + \frac{1}{252(1-b_{1})} \geq 504.
    $$
    Hence $d' \geq d-2 > 502$. Our arguments are similar to the proof of \thref{lemma: no singular point}. For simplicity, put
    $$
    b'_{i} = \begin{cases}
        b_{i} & \text{if the strict birational transformation of }D_{i} \text{ on } X_{d'} \cong \mathbb{F}_{d'-2} \text{ is a prime divisor}, \\
        0 & \text{otherwise},
    \end{cases}
    $$
    and
    $$
    b_{i}^{\prime h} = \begin{cases}
    b_{i} & \text{if the strict birational transformation of }D_{i} \text{ on } X_{d'} \cong \mathbb{F}_{d'-2} \text{ is a horizontal prime divisor}, \\
    0 & \text{otherwise}.
    \end{cases}
    $$
    Denote by $D_{i,X_{d'}}$ the strict birational transformation of $D_{i}$ on $X_{d'} \cong \mathbb{F}_{d'-2}$, which is either a prime divisor (when $b'_{i}>0$) or the zero divisor (when $b'_{i}=0$). Write $D_{i,X_{d'}} \sim e_{i}E + f_{i}F$. By \thref{statement: local classification}, the crepant transformation of $(X,B)$ on $X_{d'} \cong \mathbb{F}_{d'-2}$ is a klt 0-pair. Similar to Equation (\ref{equation 1}) in the proof of \thref{lemma: no singular point}, we have
    \begin{equation}
    e + b_{1} + \sum\limits_{i = 2}^{n} e_{i}b_{i}^{\prime h} = 2.
    \end{equation}
    
    In particular, we have $\sum\limits_{i = 2}^{n} e_{i}b_{i}^{\prime h} > 0$. Hence $\sum\limits_{i = 2}^{n} e_{i}b_{i}^{\prime h} \geq \alpha_{0}(\Gamma(\mathcal{N}_{1})) = \frac{1}{7}$. By \thref{lemma: complements on Hirzebruch surfaces},
    \begin{align*}
    b_{1} &= 2 - e - \sum\limits_{i=2}^{n} e_{i}b_{i}^{\prime h} \\
    &\leq 1 + \frac{2}{d'-2} - \sum\limits_{i=2}^{n} e_{i}b_{i}^{\prime h} \\
    &\leq \frac{6}{7} + \frac{2}{d'-2} \\
    & < \frac{1507}{1750},
    \end{align*}
    a contradiction.
    
    \textbf{Case 3(b):} We show that this case is impossible. Indeed, in this case, $D'_{1}$ is the pullback of a conic in $\mathbb{P}^2$. However, for any divisor $E_{i}$ on $X'$ exceptional over $X$, we have $D'_{1} \cdot E_{i} = 1$, a contradiction.

    \textbf{Case 3(d):} We prove \thref{lemma: D1 contains singularities} for $d' \geq 15$. Indeed, the minimal resolution of $\mathbb{P}(1,1,d')$ is the Hirzebruch surface $\mathbb{F}_{d'}$. Let $E$ be the negative section of $\mathbb{F}_{d'}$ and $F$ be a general fibre of $\mathbb{F}_{d'}$. The strict birational transformation of $D_{1}$ on $\mathbb{F}_{d'}$ is linearly equivalent to $E + d' F$. For every divisor $E_{i}$ on $X'$ exceptional over $X$, we have $D'_{1} \cdot E_{i} = 1$. Hence the strict birational transformation of every $E_{i}$ on $\mathbb{F}_{d'}$ is a fibre. By the universal property of minimal resolution, $X^{min}$ is a sequence of blow-ups from $\mathbb{F}_{d'}$. The following diagram shows the relation between these surfaces:
    $$
    \begin{tikzcd}
        X^{min} \arrow[d] \arrow[dr] & \\
        X' \arrow[d,"f'"] \arrow[rd] & X_{d'}^{min} \cong \mathbb{F}_{d'} \arrow[d]\\
        X & X_{d'} \cong \mathbb{P}(1,1,d')
    \end{tikzcd}
    $$
    
    We claim that the strict birational transformation of $E$ on $X^{min}$ is exceptional over $X'$. Indeed, suppose to the contrary that the strict birational transformation $E_{X'}$ of $E$ on $X'$ is a prime divisor. Then we have a birational morphism $X' \rightarrow \mathbb{F}_{d'}$. Hence we have $E_{X^{min}}^{2} \leq E_{X'}^2 \leq E^{2} = -d'$. Since $X$ is a Del Pezzo surface, we have $K_{X^{min}} \cdot E_{X^{min}} \leq \pi^{*}K_{X} \cdot E_{X^{min}} = K_{X} \cdot E_{X} < 0$. Hence we have
    $$
    -2= (K_{X^{min}} + E_{X^{min}}) \cdot E_{X^{min}} = K_{X^{min}} \cdot E_{X^{min}} + E_{X^{min}}^{2} < -d',
    $$
    a contradiction. This completes the proof of the claim.
    
    Our arguments are similar to the proof of \thref{lemma: no singular point}. Every prime divisor on $\mathbb{F}_{d'}$ is linearly equivalent to an effective divisor of the form $eE + fF$, where 
    \begin{itemize}
        \item $E$ is the negative section of $\mathbb{F}_{d'}$;
        \item $F$ is a fibre of $\mathbb{F}_{d'}$;
        \item One of the following happens:
        \begin{enumerate}
            \item $e=1, f=0$;
            \item $e=0, f=1$;
            \item $e,f \geq 1, f \geq d'e$.
        \end{enumerate}
    \end{itemize} 
    For simplicity, put
    \begin{equation}\label{equation: b_{i}'}
    b'_{i} = \begin{cases}
        b_{i} & \text{if the strict birational transformation of }D_{i} \text{ on } X_{d'}^{min} \cong \mathbb{F}_{d'} \text{ is a prime divisor}, \\
        0 & \text{otherwise}.
    \end{cases}
    \end{equation}
    Denote by $D_{i,X_{d'}^{min}}$ the strict birational transformation of $D_{i}$ on $X_{d'}^{min} \cong \mathbb{F}_{d'}$, which is either a prime divisor (when $b'_{i}>0$) or the zero divisor (when $b'_{i}=0$). By \thref{statement: local classification}, the crepant transformation of $(X,B)$ on $X_{d'}^{min} \cong \mathbb{F}_{d'}$ is the pair 
    $$
    (X_{d'}^{min} \cong \mathbb{F}_{d'},eE + B_{X_{d'}^{min}} + \sum\limits_{i=1}^{u+l} \frac{m_{i}-1+\sum\limits_{j=2}^{n} r_{ij}b_{j}-q_{i}(1-b_{1})}{m_{i}}E_{i})
    $$ 
    where $B = \sum\limits_{i=1}^{n} b_{i}D_{i}$ and $B_{X_{d'}^{min}} = \sum\limits_{i=1}^{n} b'_{i}D_{i,X_{d'}^{min}}$. Notice that Case (2) of \thref{prop: surface adjunction} can be treated by setting $\sum\limits_{j=2}^{n}r_{ij}b_{j}=1$ for $i=u+1, \cdots, u+l$.
    
    Then we can adjust Equation (\ref{equation 1''}) in the proof of \thref{lemma: no singular point} to the following:
    \begin{equation}
    e= 1- \frac{2}{d'} + \frac{1}{d'}(\sum\limits_{D_{i,X_{d'}^{min}} \text{ is horizontal}} (f_{i}- e_{i}d') b'_{i} + \sum\limits_{D_{i,X_{d'}^{min}} \text{ is vertical}} b'_{i} + \sum\limits_{i=1}^{u+l} \frac{m_{i}-1+\sum\limits_{j=2}^{n} r_{ij}b_{j}-q_{i}(1-b_{1})}{m_{i}}) < 1
    \end{equation}
    where $e_{i}$ and $f_{i}$ are positive integers given by the linear equivalence $D_{i,X_{d'}^{min}} \sim e_{i} E + f_{i}F$.
    
    For simplicity, we define
    $$
    b_{i}^{\prime v} = \begin{cases}
        b_{i} & \text{if the strict birational transformation of }D_{i} \text{ on } X_{d'}^{min} \cong \mathbb{F}_{d'} \text{ is a vertical prime divisor}, \\
        0 & \text{otherwise},
    \end{cases}
    $$
    and
    $$
    b_{i}^{\prime h} = \begin{cases}
    b_{i} & \text{if the strict birational transformation of }D_{i} \text{ on } X_{d'}^{min} \cong \mathbb{F}_{d'} \text{ is a horizontal prime divisor}, \\
    0 & \text{otherwise}.
    \end{cases}
    $$
    
    Similar to Equation (\ref{equation 1}) in the proof of \thref{lemma: no singular point}, we have
    \begin{equation}\label{equation 2'}
    e + b_{1} + \sum\limits_{i = 2}^{n} e_{i}b_{i}^{\prime h} = 2.
    \end{equation}
    
    In particular, we have $\sum\limits_{i = 2}^{n} e_{i}b_{i}^{\prime h} > 0$. Hence $\sum\limits_{i = 2}^{n} e_{i}b_{i}^{\prime h} \geq \alpha_{0}(\Gamma(\mathcal{N}_{1})) = \frac{1}{7}$. Recall that for $i=1,2,\cdots,u+l$ we have 
    $$
    0 \leq \frac{m_{i}-1+\sum\limits_{j=2}^{n} r_{ij}b_{j}-q_{i}(1-b_{1})}{m_{i}} \leq 1.
    $$ 
    Hence
    \begin{align*}
    b_{1} &= 2 - e - \sum\limits_{i=2}^{n} e_{i}b_{i}^{\prime h} \\
    & = 1 + \frac{2}{d'} - \sum\limits_{i=2}^{n} e_{i}b_{i}^{\prime h} - \frac{1}{d'}(\sum\limits_{i=1}^{n}(f_{i}- e_{i}d' ) b_{i}^{\prime h} + \sum\limits_{i=1}^{n} b_{i}^{\prime v} + \sum\limits_{i=1}^{u+l} \frac{m_{i}-1+\sum\limits_{j=2}^{n} r_{ij}b_{j}-q_{i}(1-b_{1})}{m_{i}})\\
    &\leq 1 + \frac{2}{d'} - \sum\limits_{i=2}^{n} e_{i}b_{i}^{\prime h} \\
    &\leq \frac{6}{7} + \frac{2}{d'}.
    \end{align*}
    
    If $d' \geq 15$, then $b_{1} \leq \frac{104}{105}$, a contradiction. Hence we can assume that $d' \leq 14$.
    
    Without loss of possible solutions of Equation (\ref{equation 2'}), we can assume that $e_{i}=1$ and $f_{i}=d'$. Indeed, a linear equivalence class of the form $e_{i} E + f_{i}F$ where $f_{i} \geq e_{i}d'$ can be written as a nonnegative $\mathbb{Z}$-linear combination of the linear equivalence class of $E + d'F$ and the linear equivalence class of $F$. In addition, all of these classes are linearly effective and free. Thus the computed numerical bound for $b_{1}$ still holds. Notice that the number of horizontal and vertical components can be changed, and the number $n$ of components of $B$ can be changed as well. This completes the proof.
\end{proof}

    Next, we exclude the case where $(X,D_{1})$ is not plt.

\begin{lemma}[The non-plt case]\label{lemma: non-plt case}
    Notation as in \thref{statement: contraction defined by D'1}. Suppose additionally that $(X,D_{1})$ is not plt. Then either
    $$
    b_{1}\leq\frac{251}{252},
    $$
    or
    $$
    (1-b_{1})D_{1}^{2} \geq \frac{1}{252}.
    $$
    Moreover, in every case remaining after the reduction in \thref{lemma: reduction}, we have
    $$
    b_{1}\leq 1-\frac{1}{4032} < 1-\frac{1}{127260}.
    $$
    In particular, to prove \thref{lemma: D1 contains singularities}, it suffices to assume that every singular point $P$ on $D_{1}$ belongs to the first case in \thref{statement: local classification}, that is, the dual graph $\Gamma_{P}$ is of type $\mathbf{A}$, and the exceptional divisor $E$ is at the end of $\Gamma$ intersecting $D_{1}^{min}$.
\end{lemma}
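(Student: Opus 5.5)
The plan is to use the adjunction equation of \thref{statement: bound by degree}. Assume $b_{1} > \frac{251}{252}$, since otherwise there is nothing to prove. Then $(X,D_{1}+\sum_{i\geq 2}b_{i}D_{i})$ is lc by \thref{statement: equation from adjunction}, and
$$
(1-b_{1})D_{1}^{2} = -2 + l + \sum_{i=2}^{n}w_{i}b_{i} + \sum_{i=1}^{u}\frac{m_{i}-1+\sum_{j=2}^{n}r_{ij}b_{j}}{m_{i}} > 0 .
$$
Since $(X,D_{1})$ is not plt and $D_{1}$ is nonsingular, some point of $D_{1}$ falls in Case (2) of \thref{prop: surface adjunction}, so $l \geq 1$. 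The right-hand side is then $-1 + (l-1) + S$, where $S$ is a nonnegative $\mathbb{Z}$-combination of elements of $\widetilde{\Gamma(\mathcal{N}_{1})}$. Each term $\frac{m_{i}-1+\sum r_{ij}b_{j}}{m_{i}}$ lies in $\widetilde{\Gamma(\mathcal{N}_{1})}$ because $\sum_j r_{ij}b_j\le 1$ by the lc condition.

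If $l \geq 2$, the right-hand side is $\geq S$; if it is $0$ there is a contradiction with positivity, and otherwise it is $\geq \alpha_{0} = \frac17$. If $l=1$, positivity gives $S>1$, so $S \geq \alpha_{1}(\widetilde{\Gamma(\mathcal{N}_{1})}) = 1+\frac{1}{252}$ by \thref{exam: gap numbers}. In either case $(1-b_{1})D_{1}^{2}\geq \frac{1}{252}$, which is the first assertion.

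For the second assertion I would bound $d=D_{1}^{2}$ from above in each case left by \thref{lemma: reduction}, using \eqref{equation: d <= d'+2}. Since $l\ge1$ and $\frac{m_i-1}{m_i}\geq\frac12$, \thref{lemma: at most 4 singularities on D1} bounds the configuration, and \thref{statement: local classification} shows each extraction lowers $d$ by at most $1$. So $d \leq d' + l + \sum q_i/m_i \le d'+2$. In Case 3(d) we have $d'\le 14$, so $d\le 16$. In Case 3(c), $d'=1$, and in Cases 1 and 2, $d'\le 0$. This gives $d\le 16$ uniformly, and then $1-b_{1}\geq \frac{1}{252\cdot 16}=\frac{1}{4032}$, which is strictly more than $\frac{1}{127260}$.

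The main obstacle is sharpening $d\le d'+2$: when $l\geq1$ the Case (2) points contribute $1$ each (or $\frac{q}{m}\le 1$), so one has to check that the total drop really is at most $2$. This follows from $l+\sum\frac{m_i-1}{m_i}\le 2$ together with $q_i\le m_i-1$, which is exactly the chain of inequalities in \eqref{equation: d <= d'+2}, so it should go through. The final "in particular" is then immediate: the non-plt case never attains or exceeds $1-\frac{1}{127260}$, so only points of the first type in \thref{statement: local classification} remain.
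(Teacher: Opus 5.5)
Your proposal is correct and is essentially the paper's proof. It uses the adjunction identity from the bound-by-degree statement with $l\geq 1$, then the gap $\alpha_{1}(\widetilde{\Gamma(\mathcal{N}_{1})})=1+\frac{1}{252}$ to get $(1-b_{1})D_{1}^{2}\geq\frac{1}{252}$, and finally $D_{1}^{2}\leq d'+2\leq 16$ in every case left by the reduction lemma to get $1-b_{1}\geq\frac{1}{4032}$. The only differences are cosmetic: the paper avoids your split into $l\geq 2$ and $l=1$ by treating $l-1$ as copies of $1\in\widetilde{\Gamma(\mathcal{N}_{1})}$ and applying $\alpha_{1}$ once, and your remark that $\sum_{j}r_{ij}b_{j}\leq 1$ by the lc condition makes explicit a justification the paper leaves implicit.
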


\begin{proof}
    Suppose that $b_{1}>\frac{251}{252}$. By \thref{statement: equation from adjunction}, the pair $(X,D_{1}+\sum\limits_{i=2}^{n}b_{i}D_{i})$ is log canonical. Since $D_{1}$ is nonsingular and $(X,D_{1})$ is not plt, the integer $l$ in \thref{prop: surface adjunction} satisfies $l\geq 1$. By \thref{statement: bound by degree}, we have
    $$
    (1-b_{1})d = -2+l+\sum\limits_{i=2}^{n}w_{i}b_{i} +\sum\limits_{i=1}^{u} \frac{m_{i}-1+\sum\limits_{j=2}^{n}r_{ij}b_{j}}{m_{i}},
    $$
    where $d=D_{1}^{2}>0$. Consequently,
    $$
    1+(1-b_{1})d = (l-1)+\sum\limits_{i=2}^{n}w_{i}b_{i} +\sum\limits_{i=1}^{u} \frac{m_{i}-1+\sum\limits_{j=2}^{n}r_{ij}b_{j}}{m_{i}} >1.
    $$
    The right-hand side is a nonnegative $\mathbb{Z}$-linear combination of elements of $\widetilde{\Gamma(\mathcal{N}_{1})}$, since $l-1\geq 0$ and $b_{i}\in\widetilde{\Gamma(\mathcal{N}_{1})}$ for $i\geq 2$. Therefore, by \thref{exam: gap numbers},
    $$
    1+(1-b_{1})d
    \geq
    \alpha_{1}\left(\widetilde{\Gamma(\mathcal{N}_{1})}\right)
    =
    1+\frac{1}{252}.
    $$
    Hence
    $$
    (1-b_{1})d\geq\frac{1}{252},
    $$
    which proves the first assertion.

    Now suppose that we are in one of the cases remaining after \thref{lemma: reduction}. By (\ref{equation: d <= d'+2}), we have
    $$
    0<d\leq d'+2\leq 16.
    $$
    If $b_{1}\leq\frac{251}{252}$, the desired bound is immediate.
    Otherwise, the preceding inequality gives
    $$
    1-b_{1}
    \geq\frac{1}{252d}
    \geq\frac{1}{252\cdot 16}
    =\frac{1}{4032}.
    $$
    This completes the proof.
\end{proof}

\begin{statement}
    In the last case of \thref{lemma: reduction}, suppose additionally that $(X,D_{1})$ is plt, so that $l=0$. We can adjust Equation (\ref{equation 1'}) in the proof of \thref{lemma: no singular point} to obtain the equation
    $$
    d' \sum\limits_{i=1}^{n} b_{i}^{\prime h} + \sum\limits_{i=1}^{n} b_{i}^{\prime v} + \sum\limits_{i=1}^{u} \frac{m_{i}-1+\sum\limits_{j=2}^{n} r_{ij}b_{j}-q_{i}(1-b_{1})}{m_{i}} = d'+2
    $$
    where $0 \leq \frac{m_{i}-1+\sum\limits_{j=2}^{n} r_{ij}b_{j}-q_{i}(1-b_{1})}{m_{i}} \leq 1$ for every $i$, and $0 \leq \sum\limits_{i=1}^{n} b_{i}^{\prime v} + \sum\limits_{i=1}^{u} \frac{m_{i}-1+\sum\limits_{j=2}^{n} r_{ij}b_{j}-q_{i}(1-b_{1})}{m_{i}} \leq 2$. The above equation can be rewritten as
    
    \begin{equation}\label{equation 2}
    (1-b_{1})(d' + \sum\limits_{i=1}^{u} \frac{q_{i}}{m_{i}}) = d' \sum\limits_{i=2}^{n} b_{i}^{\prime h}+\sum\limits_{i=2}^{n} b_{i}^{\prime v} + \sum\limits_{i=1}^{u} \frac{m_{i}-1+\sum\limits_{j=2}^{n} r_{ij}b_{j}}{m_{i}} - 2 > 0.    
    \end{equation}
    
    Notice that $d = d' + \sum\limits_{i=1}^{u} \frac{q_{i}}{m_{i}}$. Since $\frac{m_{i}-1+\sum\limits_{j=2}^{n} r_{ij}b_{j}}{m_{i}} \in \widetilde{\Gamma(\mathcal{N}_{1})}$, gap numbers can be computed for the linear combination on the right of (\ref{equation 2}). 
\end{statement}

    We set up the convention of the dual graph on $X^{min}$, which will be used in the subsequent proof.

\begin{convention}[The dual graph, cf. \cite{Alexeev1993TwoTwoDimensionalTerminations}]\label{convention: dual graph}
    Notation as in \thref{statement: contraction defined by D'1}. We consider the weighted graph of curves on $X^{min}$ which contains two classes: 
    \begin{itemize}
        \item ``internal'', for the curves that are exceptional over $X$, and
        \item ``external'', for the curves that are either the strict birational transformations of $D_{i}$, or the strict birational transformations of some divisors on $X$ which are exceptional over $\mathbb{P}^2$. Notice that the two external cases overlap: the strict birational transformations of some $D_{i}$ may also be exceptional over $\mathbb{P}^2$.
    \end{itemize}
    A vertex in the dual graph is either labeled by its corresponding divisor, or represented by:
    \begin{itemize}
        \item an empty circle $\bigcirc$, if the corresponding divisor is exceptional over $X$ and centered at the singularity of $X$ on $D_{1}$;
        \item a crossed circle $\otimes$, if the corresponding divisor is the strict birational transformation of a divisor on $X$ that is exceptional over $\mathbb{P}^2$.
        \item a dotted circle $\odot$, if the corresponding divisor is an exceptional divisor centered at a singularity of $X$ \emph{not} on $D_{1}$.
    \end{itemize}

    In the dual graphs, we use $\cdots$ to represent a chain of vertices (which might be empty). The subscript on $\cdots$ specifies the allowed types of the omitted vertices in the corresponding chain. For example, $\cdots_{\bigcirc}$ indicates that every omitted vertex has type $\bigcirc$, while $\cdots_{\odot,\otimes}$ indicates that each omitted vertex has either type $\odot$ or type $\otimes$.
\end{convention}

\paragraph{$D_{1}$ contains 1 singularity of $X$}
\hfill

    In this subsection, we prove the following:
    
\begin{lemma}\label{lemma: D1 contains 1 singularity}
    Under the assumptions of \thref{thm: act gap}, suppose additionally that $D_{1}$ is a nonsingular rational curve and $D_{1}$ contains exactly 1 singularity of $X$. Then we have $b_{1} \leq 1 - \frac{1}{127260}$. Moreover, the equality holds only in \thref{example: optimal bound for act 2}.
\end{lemma}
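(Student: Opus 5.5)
We argue by contradiction, assuming $b_{1}\geq 1-\frac{1}{127260}$ (in particular $b_{1}>\frac{251}{252}$). By \thref{lemma: non-plt case} we may assume $(X,D_{1})$ is plt, so the unique singular point $P\in D_{1}$ is of the first type in \thref{statement: local classification}. Thus $\Gamma_{P}$ is a chain of type $\mathbf{A}$ with invariants $m=\Delta(\Gamma_{P})\geq 2$ and $q=\Delta(\Gamma_{P}-v_{E})<m$. In \thref{construction: extract to make Cartier} exactly one divisor $E_{1}$ is extracted, and $d'=d-\frac{q}{m}$, so $d'\geq d-1+\frac1m>-1$ and $d'\geq 0$. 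By \thref{lemma: reduction} only three cases remain: Case 2 ($d'=0$), Case 3(c) ($X_{d'}\cong\mathbb{P}^{2}$, $d'=1$), and Case 3(d) with $2\leq d'\leq 14$ and all boundary components of class $E+d'F$ or $F$. In every case the basic tool is Equation~(\ref{equation 2}),
$$(1-b_{1})\Bigl(d'+\frac{q}{m}\Bigr)= d'\sum_{i\geq2} b_{i}^{\prime h}+\sum_{i\geq2} b_{i}^{\prime v}+\frac{m-1+\sum_j r_{j}b_{j}}{m}-2>0,$$
whose right-hand side is a weighted $\mathbb{Z}_{\ge0}$-combination of elements of $\widetilde{\Gamma(\mathcal{N}_{1})}$ exceeding $2$.

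\textbf{Case 3(d).} Here I would imitate the computer search of \thref{lemma: no singular point}. The new feature is one extra summand $\frac{m-1+\sum r_jb_j}{m}$ of weight $1$. The geometry constrains it: $E_{1}$ maps to a fibre of $\mathbb{F}_{d'}$, and the negative section $E$ must be contracted on $X'$ (claim in the proof of \thref{lemma: reduction}). Since $E$ lies over the unique singular point $P$, the chain $\Gamma_{P}$ contains $E$ and the fibre through $E_{1}$ after blow-ups, and this ties $m,q$ to $d'$. I would then compute, for each $2\leq d'\leq 14$ and each admissible $(m,q)$, the weighted gap number of the right-hand side via \thref{algorithm:gap number}. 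That gives $(1-b_{1})\geq \frac{\text{gap}}{d'+q/m}$, and I expect every value to be strictly below $1-\frac1{127260}$.

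\textbf{Cases 2 and 3(c).} In Case 3(c), $X'\to\mathbb{P}^{2}$ is birational and $D_{1}'$ is the pullback of a line. Then $E_{1}$ maps to a line $L$ (as $D_1'\cdot E_1=1$), and $X$ is obtained by blowing up points of $\mathbb{P}^{2}$ and contracting the chain $\Gamma_{P}$ together with the other exceptional curves. Equation~(\ref{equation 2}) becomes $(1-b_{1})(1+\frac{q}{m})=\sum w_ib_i+\frac{m-1+\sum r_jb_j}{m}-2$, where the coefficients are read off on the line $L$ or on $D_{1}$. This should be the source of \thref{example: optimal bound for act 2}. The gap $\alpha_{2}(\widetilde{\Gamma(\mathcal{N}_{1})})-2=\frac{1}{127260}$ alone only gives $1-b_{1}\geq \frac{1}{127260(1+q/m)}$, which is too weak. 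So the key step is to use the full chain structure: the boundary components meeting $D_{1}$ and the adjunction term at $P$ are not independent, since the chain meets the transforms of the $D_{i}$ through blown-up points of $L$.

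My plan for this step is as follows.
\begin{itemize}
\item Enumerate the chains $\Gamma_{P}$ compatible with $\rho(X)=1$, using the convention of \thref{convention: dual graph}.
\item Compute $a(E_{1};X,B)$ with \thref{prop:formula for ld}.
\item Derive a second linear relation by intersecting the crepant pullback with $E_{1}$, or with the fibre class through it in Case 2.
\item Combining the two relations, solve for $1-b_{1}$ as a weighted combination exceeding an integer, and apply \thref{algorithm:gap number} to it.
\end{itemize}
In Case 2, $X'\to\mathbb{P}^{1}$ has fibre $D_{1}'$, and $E_{1}$ is a section. Adjunction to a general fibre $\cong D_1'$ gives an equation like Equation~(\ref{equation 1}), with $E_1$ contributing coefficient $b_{E_1}>b_1-\frac1m$. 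This bounds $1-b_{1}$ below by a gap number of an equation summing to $2$ with an extra term near $1$, presumably giving a bound of order $\frac1{252}$ or so.

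\textbf{Main obstacle and equality.} The hardest step is Case 3(c): bounding the possible dual graphs of $P$ and the configuration of the $D_{i}$ sharply enough to reach exactly $\frac1{127260}$ with the correct equality characterization. Finally, I would check that equality forces the configuration of \thref{example: optimal bound for act 2}.
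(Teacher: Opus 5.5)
There is a genuine gap in the decisive case $d'=1$ ($X_{d'}\cong\mathbb{P}^{2}$), which is exactly where equality occurs. What you give there is a plan, and its concrete steps carry no information about $b_{1}$. Intersecting $K_{X'}+b_{1}D_{1}'+\sum_{i\ge2}b_{i}D_{i}'+b_{E_{1}}E_{1}=f^{\prime *}(K_{X}+B)$ with the $f'$-exceptional curve $E_{1}$ gives $0$ identically. Combined with adjunction on $E_{1}$ (where $E_{1}^{2}=-\frac{m}{q}$ and $\deg\mathrm{Diff}_{E_{1}}(0)=1-\frac{1}{q}$), it only re-derives the value of $b_{E_{1}}$ already built into your basic equation. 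In the same way, fibre adjunction in Case 2 just reproduces that equation. Enumerating ``the chains $\Gamma_{P}$ compatible with $\rho(X)=1$'' is an infinite search unless $m$ is bounded first. More fundamentally, no refinement of gap numbers can close this case, because purely numerical data violating the bound exist. Take $m=2$, $b_{2}=\frac{1}{7}$, $b_{3}=\frac{31}{36}$ (both with $w=1$) and $\sum r_{i}b_{i}=\frac{501}{505}$. Then $\frac{1}{7}+\frac{31}{36}+\frac{503}{505}=2+\frac{1}{127260}$, and with $q=1$ this gives $1-b_{1}=\frac{1}{190890}<\frac{1}{127260}$. Such data must be excluded by geometry, and the quantity the geometry has to control is $q$. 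In the extremal case the sum is $\frac{1}{7}+\frac{502}{505}+\frac{145}{168}=2+\frac{1}{84840}$, and the bound $\frac{1}{127260}$ comes only from $1+\frac{q}{m}=\frac{3}{2}$.

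The paper supplies this missing step as follows.
\begin{itemize}
\item Since the term at $P$ is at most $1$, one has $\sum w_{i}b_{i}\ge\alpha_{1}(\Gamma(\mathcal{N}_{1}))=1+\frac{1}{252}$, which forces $m\le253$.
\item A computer then lists the 41 numerical solutions of $2<\sum w_{i}b_{i}+\frac{m-1+\sum r_{i}b_{i}}{m}\le2+\frac{1+(m-1)/m}{127260}$.
\item On $X^{min}\to X'\to\mathbb{P}^{2}$, no $D_{i}'$ passes through $E_{1}\cap D_{1}'$; otherwise $r_{i}\mapsto r_{i}-m$, $w_{i}\mapsto w_{i}+1$ would give a solution not on the list.
\item Exactly $D_{2}$ and $D_{3}$ meet $D_{1}$ away from $P$, and $r_{3}=0$ by an arithmetic check.
\item The key step: $D_{2}^{min}$ meets $E_{1}^{min}$. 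Otherwise the images of $D_{2},D_{3},E_{1}$ are concurrent lines carrying total coefficient $3-b_{1}>2$ at one point, contradicting lc.
\item Since $E_{1}$ has multiplicity $\frac{q}{m}$ in $\pi^{*}D_{1}$, this gives $1\le q\le r_{2}\le2$. That eliminates every solution with $r_{2}=0$, including the one above.
\item It leaves $1-b_{1}>\frac{1}{127260}$ in all remaining solutions except $m=2$, $q=1$ with sum $2+\frac{1}{84840}$.
\end{itemize}
Identifying that case with $\mathbb{P}(1,2,3)$ needs a further blow-up computation, which your ``finally I would check'' does not provide. With $L,L_{3}$ the images of $E_{1},D_{3}$, one gets $\varphi^{*}L=E_{1}+3D_{4}'$ and $D_{4}^{\prime 2}=-\frac{1}{3}$, hence exactly three blow-ups over $L\cap L_{3}$.

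Two smaller points. In Case 2 nothing geometric is needed: $d=\frac{q}{m}<1$ and $(1-b_{1})d\ge\alpha_{2}(\widetilde{\Gamma(\mathcal{N}_{1})})-2=\frac{1}{127260}$ already give strict inequality, and your expected order $\frac{1}{252}$ is wrong. In Case 3(d), the negative section need not lie over $P$, since $X$ may have singular points off $D_{1}$. That constraint is also unnecessary, because the search using only $\frac{q}{m}<1$ finds no solutions.
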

    
\begin{proof}
    Let $m=m_{1}$ and $q = q_{1}$ be as in \thref{construction: extract to make Cartier}, then we have $d = d' + \frac{q}{m} < d' + 1$.

    \begin{enumerate}[align=left,label = \textbf{Case \arabic*}:]
        \item $d' \leq -1 $. In this case $d < 0$, which is impossible.
        \item $d' = 0$. In this case $d < 1$. Hence by \thref{statement: equation from adjunction} and \thref{exam: gap numbers}, $b_{1} < 1 - \frac{1}{127260}$.
        \item $d' = 1$. In this case $d < 2$ and $X_{d'} \cong \mathbb{P}^2$. We prove that this case is possible only in Case 2 of Table \ref{table: potential numerical cases for 1 singularity}, that is, when $m=2$ and $q=r_{2}=1$. Moreover, this case is given as in \thref{example: optimal bound for act 2}. Suppose that $b_{1} \geq 1-\frac{1}{127260}$. Then by \thref{statement: bound by degree} and \thref{lemma: non-plt case}, we have
        \begin{equation}\label{inequality 1}
        2 < \sum\limits_{i=2}^{n} w_{i}b_{i} + \frac{m-1+\sum\limits_{i=2}^{n} r_{i}b_{i}}{m} = 2 + (1-b_{1})(1 + \frac{q}{m}) \leq 2 + \frac{1 + \frac{q}{m}}{127260} \leq 2 + \frac{1 + \frac{m-1}{m}}{127260}.
        \end{equation}
        If $m \geq 254$, then we have
        $$
        2 + \frac{1}{63630} > \sum\limits_{i=2}^{n} w_{i}b_{i} + \frac{m-1+\sum\limits_{i=2}^{n} r_{i}b_{i}}{m} \geq \alpha_{1}(\Gamma(\mathcal{N}_{1})) + \frac{m-1}{m} \geq 1 + \frac{1}{252} + \frac{253}{254} = 2 + \frac{1}{32004},
        $$
        a contradiction. Hence we can assume that $2 \leq m \leq 253$. We use a computer to enumerate all the linear combinations satisfying inequalities (\ref{inequality 1}). The results are listed in Table \ref{table: potential numerical cases for 1 singularity} below. We give some explanations of Table \ref{table: potential numerical cases for 1 singularity}:
        \begin{itemize}
        \item Each row is distinguished by the following numerical data: $m$, $w_{i}$ (implicitly), $b_{i}$ and $\sum\limits_{i=2}^{n}r_{i}b_{i}$. Notice that two cases with different $r_{i}$ but the same $\sum\limits_{i=2}^{n}r_{i}b_{i}$ will be considered as the same case.
        \item If two cases are related by a permutation of the indices of $b_{i}, i=2,3,\cdots, n$, we will recognize them as the same case.
        \item To simplify the expression in the table, we replace the $b_{1}$ column by the column of $a = \frac{1}{(1-b_{1})(1 + \frac{q}{m})}$. Then in inequalities (\ref{inequality 1}) we can write $2 + (1-b_{1})(1 + \frac{q}{m}) = 2 + \frac{1}{a}$. Notice that the condition $b_{1} \geq 1 - \frac{1}{127260}$ is equivalent to $a(1 + \frac{q}{m}) \geq 127260$.
        \item To remove numerically redundant cases, we assume that for all $2 \leq i \leq n$, we have $w_{i}=0$ or $w_{i}=1$. Indeed, we can formally split the term $w_{i}b_{i}$ in the sum $\sum\limits_{i=2}^{n} w_{i}b_{i} + \frac{m-1+\sum\limits_{i=2}^{n} r_{i}b_{i}}{m}$ into $w_{i}$ copies of $b_{i}$. Conversely, there exists a sum $\sum\limits_{i=2}^{n} w_{i}b_{i} + \frac{m-1+\sum\limits_{i=2}^{n} r_{i}b_{i}}{m}$ with $w_{i}>1$ for some $i$ if and only if there exists a sum in Table \ref{table: potential numerical cases for 1 singularity} where $w_{i}=w_{j}=1$ and $b_{i}=b_{j}$ for some $i \neq j$ (later we will see that this doesn't happen).
        \item A number $b_{i}$ is displayed in Table \ref{table: potential numerical cases for 1 singularity} if and only if $w_{i}=1$. For the numbers $b_{i}$ where $w_{i} = 0$, we must have $r_{i} > 0$. In this case, $b_{i}$ is not displayed in the table but still contributes to the sum $\sum\limits_{i=2}^{n} r_{i}b_{i}$.
        \item In the last column, we give an upper bound of the number $r_{2}$ for each case. Notice that if $r_{i}=0$, then $D_{i}$ doesn't pass through the singular point $P_{1}$ of $X$ on $D_{1}$.
    \end{itemize}
    
    \begin{scriptsize}
    \begin{center}
    \begin{longtable}{|@{\makebox[3em][r]{\rownumber\space}}|C|C|C|C|C|C|}
    \caption{Potential numerical cases when $D_{1}$ contains $1$ singularity}
    \label{table: potential numerical cases for 1 singularity} \\
    \hline
    \multicolumn{1}{@{\makebox[3em][r]{Case No.~}}|C|}{m}
    & a
    & b_{2}
    & b_{3}
    & \sum\limits_{i=2}^{n} r_{i}b_{i}
    & r_{2}\leq
    \\
    \hline

    2 & 104720 & \frac{1}{7} & \frac{186}{187} & \frac{29}{40} & 0 \\
    \hline
    2 & 84840 & \frac{1}{7} & \frac{502}{505} & \frac{61}{84} & 1 \\
    \hline
    2 & 127260 & \frac{1}{7} & \frac{31}{36} & \frac{501}{505} & 0 \\
    \hline
    2 & 104580 & \frac{1}{7} & \frac{31}{36} & \frac{2882}{2905} & 0 \\
    \hline
    3 & 127260 & \frac{1}{7} & \frac{503}{505} & \frac{7}{12} & 0 \\
    \hline
    3 & 104580 & \frac{1}{7} & \frac{5787}{5810} & \frac{7}{12} & 0 \\
    \hline
    3 & 87570 & \frac{1}{7} & \frac{2769}{2780} & \frac{7}{12} & 0 \\
    \hline
    3 & 81900 & \frac{1}{7} & \frac{2266}{2275} & \frac{7}{12} & 0 \\
    \hline
    4 & 104720 & \frac{1}{7} & \frac{186}{187} & \frac{9}{20} & 0 \\
    \hline
    4 & 82320 & \frac{1}{7} & \frac{2047}{2058} & \frac{9}{20} & 0 \\
    \hline
    5 & 81900 & \frac{1}{7} & \frac{31}{36} & \frac{446}{455} & 0 \\
    \hline
    7 & 82320 & \frac{1}{7} & \frac{69}{80} & \frac{283}{294} & 0 \\
    \hline
    10 & 81900 & \frac{1}{7} & \frac{31}{36} & \frac{437}{455} & 0 \\
    \hline
    11 & 104720 & \frac{1}{7} & \frac{69}{80} & \frac{16}{17} & 0 \\
    \hline
    13 & 81900 & \frac{1}{7} & \frac{31}{36} & \frac{166}{175} & 0 \\
    \hline
    14 & 82320 & \frac{1}{7} & \frac{69}{80} & \frac{136}{147} & 0 \\
    \hline
    17 & 104720 & \frac{1}{7} & \frac{69}{80} & \frac{10}{11} & 0 \\
    \hline
    26 & 65520 & \frac{1}{7} & \frac{69}{80} & \frac{31}{36} & 0 \\
    \hline
    26 & 81900 & \frac{1}{7} & \frac{31}{36} & \frac{157}{175} & 0 \\
    \hline
    39 & 65520 & \frac{1}{7} & \frac{69}{80} & \frac{19}{24} & 0 \\
    \hline
    41 & 68880 & \frac{1}{7} & \frac{69}{80} & \frac{82}{105} & 0 \\
    \hline
    49 & 82320 & \frac{1}{7} & \frac{69}{80} & \frac{31}{42} & 0 \\
    \hline
    59 & 74340 & \frac{1}{7} & \frac{31}{36} & \frac{23}{30} & 0 \\
    \hline
    65 & 81900 & \frac{1}{7} & \frac{31}{36} & \frac{26}{35} & 1 \\
    \hline
    77 & 64680 & \frac{1}{7} & \frac{145}{168} & \frac{19}{35} & 1 \\
    \hline
    78 & 65520 & \frac{1}{7} & \frac{69}{80} & \frac{7}{12} & 0 \\
    \hline
    79 & 66360 & \frac{1}{3} & \frac{47}{70} & \frac{5}{8} & 0 \\
    \hline
    83 & 104580 & \frac{1}{7} & \frac{31}{36} & \frac{47}{70} & 0 \\
    \hline
    98 & 82320 & \frac{1}{7} & \frac{69}{80} & \frac{10}{21} & 1 \\
    \hline
    101 & 84840 & \frac{1}{7} & \frac{145}{168} & \frac{2}{5} & 0 \\
    \hline
    101 & 127260 & \frac{1}{7} & \frac{31}{36} & \frac{3}{5} & 0 \\
    \hline
    103 & 64890 & \frac{1}{7} & \frac{31}{36} & \frac{83}{140} & 1 \\
    \hline
    118 & 74340 & \frac{1}{7} & \frac{31}{36} & \frac{8}{15} & 0 \\
    \hline
    123 & 68880 & \frac{1}{7} & \frac{69}{80} & \frac{12}{35} & 1 \\
    \hline
    130 & 81900 & \frac{1}{7} & \frac{31}{36} & \frac{17}{35} & 2 \\
    \hline
    139 & 87570 & \frac{1}{7} & \frac{31}{36} & \frac{9}{20} & 0 \\
    \hline
    158 & 66360 & \frac{1}{3} & \frac{47}{70} & \frac{1}{4} & 0 \\
    \hline
    166 & 104580 & \frac{1}{7} & \frac{31}{36} & \frac{12}{35} & 1 \\
    \hline
    187 & 104720 & \frac{1}{7} & \frac{69}{80} & 0 & 0 \\
    \hline
    202 & 127260 & \frac{1}{7} & \frac{31}{36} & \frac{1}{5} & 0 \\
    \hline
    253 & 63756 & \frac{1}{7} & \frac{31}{36} & 0 & 0 \\
    \hline
    
    \end{longtable}
    \end{center}
    \end{scriptsize}

    Now we examine the possible values of $q$ by discussing the possible dual graphs on the minimal resolution $X^{min}$ of $X$ that are possibly geometrically realizable. The dual graph satisfies the following properties:
    
    \begin{enumerate}[align=left, label=(\arabic*)]
        \item For every $2 \leq i \leq n$, the divisor $D'_{i}$ does not pass through $E_{1} \cap D'_{1}$. In particular, we have $D'_{i} \cdot D'_{1} = w_{i}$. Indeed, if $D'_{i}$ passes through $E_{1} \cap D'_{1}$, then we have $r_{i} \geq m$. We can then replace $r_{i}$ by $r_{i} - m$ and replace $w_{i}$ by $w_{i}+1$ to create a new case satisfying inequalities (\ref{inequality 1}). However, there is no such case in Table \ref{table: potential numerical cases for 1 singularity}.
        \item For $2 \leq i \leq n$, there is an edge between the vertex of the divisor $D_{i}$ and the vertex of the divisor $D_{1}$ if and only if $i=2,3$. Indeed, since the morphism $X^{min} \rightarrow X'$ is an isomorphism near $D'_{1}$, the edge can be computed by $D'_{i} \cdot D'_{1} = w_{i}$. As explained before Table \ref{table: potential numerical cases for 1 singularity}, there exists a case where $w_{i}>1$ if and only if there exists a case in Table \ref{table: potential numerical cases for 1 singularity} where two $b_{i}$ have the same value. In all cases of Table \ref{table: potential numerical cases for 1 singularity}, we have $b_{2} \neq b_{3}$. Hence we have $w_{i} \in \{ 0,1 \}$. Moreover, as explained before Table \ref{table: potential numerical cases for 1 singularity}, we have $w_{i}=1$ if and only if the corresponding $b_{i}$ is displayed in Table \ref{table: potential numerical cases for 1 singularity}. In all cases of Table \ref{table: potential numerical cases for 1 singularity}, we have $w_{2}=w_{3}=1$ and $w_{i}=0$ for $4 \leq i \leq n$.
        \item There is an edge between the vertex of the divisor $D_{2}$ and the vertex of the divisor $E_{1}$. In particular, $r_{2} \geq 1$. Indeed, in all cases of Table \ref{table: potential numerical cases for 1 singularity}, one can check that 
        $$
        (\sum\limits_{i=2}^{n}r_{i}b_{i}) - b_{3} \notin \{\sum\limits_{i=1}^{j}l_{i}\beta_{i} \mid j \in \mathbb{Z}_{>0}, l_{i} \in \mathbb{Z}_{\geq 0}, \beta_{i} \in \Gamma(\mathcal{N}_{1}) \}.
        $$
        Hence we have $r_{3}=0$. Suppose to the contrary that there is no edge between the vertex of the divisor $D_{2}$ and the vertex of the divisor $E_{1}$, then the strict birational transformations of $D_{2},D_{3},E_{1}$ on $X_{d'} \cong \mathbb{P}^2$ are lines, and they intersect at one point $Q$. Consider the crepant transformation 
        $$
        (X_{d'} \cong \mathbb{P}^2, B_{X_{d'}} = b_{1}D_{1,X_{d'}} + b_{2}D_{2,X_{d'}} + b_{3}D_{3,X_{d'}} + b_{E_{1}}E_{1,X_{d'}})
        $$ 
        of the klt 0-pair $(X,B)$ on $X_{d'} \cong \mathbb{P}^2$, where $D_{i,X_{d'}}$ (resp. $E_{1,X_{d'}}$) is the strict birational transformation of $D_{i}$ (resp. $E_{1}$) on $X_{d'}$. It is still a klt 0-pair, so we have $b_{1} + b_{2} + b_{3} + b_{E_{1}} = 3$. However, the multiplicity of $B_{X_{d'}}$ at $Q$ is $b_{2} + b_{3} + b_{E_{1}} = 3 - b_{1} > 2$. Hence the pair $(X_{d'},B_{X_{d'}})$ is not log canonical at $Q$, a contradiction.
        \item There is an edge between the vertex of the divisor $D_{3}$ and the vertex of type $\otimes$ or $\odot$. Indeed, since the strict birational transformations of $D_{3}$ and $E_{1}$ on $\mathbb{P}^2$ intersect, there is an edge between the vertex of the divisor $D_{3}$ and the vertex of type $\bigcirc$, $\otimes$, or $\odot$. Moreover, by (3) we have $r_{3}=0$, so there is no edge between the vertex of the divisor $D_{3}$ and the vertex of type $\bigcirc$.
        \item There is exactly one vertex of type $\otimes$, and there is an edge between this vertex and a vertex of type $\bigcirc$. Indeed, a vertex of type $\otimes$ corresponds to an exceptional divisor of $X' \rightarrow X_{d'} \cong \mathbb{P}^2$. In this case, $X'$ extracts $E_{1}$ from $X$ and contracts exactly one prime divisor to $X_{d'} \cong \mathbb{P}^2$. Let $Z$ be the prime divisor on $X'$ exceptional over $\mathbb{P}^2$, and $Z_{X}$ be its strict birational transformation on $X$. Then $Z \cdot D'_{1} = 0$ and $Z_{X} \cdot D_{1} >0$. Hence the strict birational transformation of $Z$ on $X^{min}$ intersects the chain of exceptional divisors of $X^{min} \rightarrow X$. In particular, since the vertex of the prime divisor $D_{i}$ is of type $\otimes$ if and only if $D'_{i} \cdot D'_{1} =0$, we have $n \leq 4$ by (1) and (2). In fact, in all cases of Table \ref{table: potential numerical cases for 1 singularity}, we have $r_{4} \geq 1$, so $n=4$.
    \end{enumerate}

    Now we use the above properties to complete the discussion for the $d'=1$ case. We discuss the problem on the basis of the possible value of $r_{2}$.

\begin{enumerate}
    \item $r_{2} = 1$. This happens in Cases 2, 24, 25, 29, 32, 34, 35, 38 of Table \ref{table: potential numerical cases for 1 singularity}. By (3), we have $1 = r_{2} \geq q = \Delta(\Gamma - v_{E_{1}}) > 0$. Hence we conclude that $q = 1$ and $\Gamma$ contains only 1 internal vertex $v_{E_{1}}$ corresponding to $E_{1}$. Hence we have the following dual graph:
    \begin{figure}[H]
    \centering
    \begin{tikzpicture}
        \node[inner sep=0, draw, circle] (0) {$D_{1}$};
        \node[inner sep=0, draw, circle] (1) [right of=0] {$E_{1}$};
        \draw (0) -- (1);
        \node[draw, circle, label=center:$\times$] (5) [below of=1] {};
        \draw (1) -- (5);
        \node[] (6) [right of=5] {$\cdots_{\odot}$};
        \draw (5) -- (6);
        \node[inner sep=0, draw, circle] (a) [above left of=0]{$D_{2}$};
        \draw (0) -- (a);
        \node[inner sep=0, draw, circle] (b) [below left of=0]{$D_{3}$};
        \draw (0) -- (b);
        \draw (a) to[bend left] (1);
        \node[] (6') [below of=6] {};
        \draw (b) to[bend right] (6');
    \end{tikzpicture}
    \end{figure}
    Here:
    \begin{itemize}
        \item For every divisor $D_{i}^{min}$ and $E_{1}^{min}$ in the dual graph, the superscript $^{min}$ is ignored.
        \item For $i,j \neq 1$, the possible edge between $D_{i}$ and $D_{j}$ is ignored.
        \item The vertex corresponding to $D_{3}$ connects to one of the vertices of type $\otimes$ or $\odot$.
    \end{itemize}
    Now we can verify all the above cases to show that $b_{1} < 1 - \frac{1}{127260}$ except for Case 2, where $b_{1} = 1 - \frac{1}{127260}$. We claim that this is \thref{example: optimal bound for act 2}. Indeed, in this case, the crepant transformation of $(X,B)$ on $X_{d'} \cong \mathbb{P}^2$ is the pair $(\mathbb{P}^2, \frac{127259}{127260}L_{1} + \frac{1}{7}L_{2} + \frac{502}{505}L_{3} + \frac{15691}{18180}L)$, where the lines $L_{1},L_{2},L_{3}$ are the strict birational transformation of $D_{1},D_{2},D_{3}$ respectively, and the line $L$ is the strict birational transformation of $E_{1}$, and $D_{4}$ is centered at $L_{3} \cap L$. Moreover, all the exceptional divisors of $\varphi_{min}: X^{min} \rightarrow \mathbb{P}^2$ are centered at $L \cap L_{3}$. Since $X^{min}$ and $\mathbb{P}^2$ are smooth, $\varphi_{min}$ is a sequence of blow-ups from $L_{3} \cap L$. Moreover, the divisor $D_{4}^{min}$ is the last blown-up divisor, which is a $(-1)$-curve. Denote by $\varphi_{d'}: X' \rightarrow \mathbb{P}^2$ the birational morphism defined by $|D'_{1}|$. Then by the projection formula, we have
    $$
    1 = L^{2} = (\varphi_{d'}^{*}(L)) \cdot E_{1}.
    $$
    A direct calculation shows that $\varphi_{d'}^{*}L = E_{1} + 3D'_{4}$, $\varphi_{d'}^{*}L_{3} = D'_{3} + D'_{4}$, and $D_{4}^{\prime 2} = - \frac{1}{3}$. In particular, we have $\mathrm{mult}_{D_{4}^{min}}\varphi_{min}^{*}L = 3$ and $\mathrm{mult}_{D_{4}^{min}}\varphi_{min}^{*}L_{3} = 1$. The only way to extract $D_{4}$ by a sequence of blow-ups is as follows:
    \begin{itemize}
        \item First, blow up the intersection $L\cap L_{3}$. Write $\pi_{1}: Y_{1} \rightarrow \mathbb{P}^2$ and let $F_{1}$ be the exceptional divisor. Then we have $\pi_{1}^{*}L = L_{Y_{1}} + F_{1}$ and $\pi_{1}^{*}L_{3} = L_{3,Y_{1}} + F_{1}$, where $L_{Y_{1}}$ is the strict birational transformation of $L$ on $Y_{1}$ and $L_{3,Y_{1}}$ is the strict birational transformation of $L_{3}$ on $Y_{1}$. We will use similar notations below without further explanation.
        \item Second, blow up the intersection $L_{Y_{1}} \cap F_{1}$. Write $\pi_{2}: Y_{2} \rightarrow Y_{1}$ and let $F_{2}$ be the exceptional divisor. Then we have $\pi_{2}^{*}\pi_{1}^{*}L = L_{Y_{2}} + F_{1,Y_{2}} + 2F_{2}$ and $\pi_{2}^{*}\pi_{1}^{*}L_{3} = L_{3,Y_{2}} + F_{1,Y_{2}} + F_{2}$.
        \item Third, blow up the intersection $L_{Y_{2}} \cap F_{2}$. Write $\pi_{3}: Y_{3} \rightarrow Y_{2}$ and let $F_{3}$ be the exceptional divisor. Then we have $\pi_{3}^{*}\pi_{2}^{*}\pi_{1}^{*}L = L_{Y_{3}} + F_{1,Y_{3}} + 2F_{2,Y_{3}} + 3F_{3}$ and $\pi_{3}^{*}\pi_{2}^{*}\pi_{1}^{*}L_{3} = L_{3,Y_{3}} + F_{1,Y_{3}} + F_{2,Y_{3}} + F_{3}$. The divisor $F_{3}$ now has the correct multiplicity.
        \item Finally we show that $Y_{3} \cong X^{min}$ and $D_{4}^{min} = F_{3}$. Indeed, since $\mathrm{mult}_{D_{4}^{min}}(\varphi_{min}^{*}L) = 3$ and $\mathrm{mult}_{D_{4}^{min}}(\varphi_{min}^{*}L_{3}) = 1$, we can conclude that either $D_{4}^{min}=F_{3}$ or $D_{4}^{min}$ can be extracted by a sequence of general blow-ups on $F_{3}$. In particular, the dual graph of the exceptional divisors of $\varphi_{min}$ has the following form:
        \begin{figure}[H]
        \centering
        \begin{tikzpicture}
            \node[inner sep=1.5, draw, circle] (5) {$-1$};
            \node[inner sep=1.5, draw, circle] (6) [right of=5] {$-2$};
            \draw (5) --  (6);
            \node[] (7) [right of=6] {$\cdots$};
            \draw (6) --  (7);
            \node[inner sep=1.5, draw, circle] (8) [right of=7] {$-2$};
            \draw (7) -- (8);
            \draw[decoration = {brace, amplitude=.3em, raise=.1em}, decorate] (8.south) -- node[midway,below]{$M$} (6.south);
        \end{tikzpicture}
        \end{figure}
        We have $\frac{M}{M+1} = (D'_{4})^{2} - (D^{min}_{4})^{2} = -\frac{1}{3} + 1 = \frac{2}{3}$. Hence $M=2$ and there is no extra blow-up.
    \end{itemize}

    Hence the dual graph is
    \begin{figure}[H]
    \centering
    \begin{tikzpicture}
        \node[inner sep=0, draw, circle] (0) {$D_{1}$};
        \node[inner sep=0, draw, circle] (1) [right of=0] {$E_{1}$};
        \draw (0) -- (1);
        \node[inner sep=0, draw, circle] (5) [below of=1] {$D_{4}$};
        \draw (1) -- (5);
        \node[inner sep=1.5, draw, circle] (6) [right of=5] {$\cdot$};
        \draw (5) -- (6);
        \node[inner sep=1.5, draw, circle] (7) [right of=6] {$\cdot$};
        \draw (6) -- (7);
        \node[inner sep=0, draw, circle] (a) [above left of=0]{$D_{2}$};
        \draw (0) -- (a);
        \node[inner sep=0, draw, circle] (b) [below left of=0]{$D_{3}$};
        \draw (0) -- (b);
        \draw (a) to[bend left] (1);
        \draw (b) to[bend right] (7);
    \end{tikzpicture}
    \end{figure}
    We conclude from the above dual graph that $X \cong \mathbb{P}(1,2,3)$.
    
\item $r_{2} = 2$. This only happens in Case 35 of Table \ref{table: potential numerical cases for 1 singularity}. We show that this case is impossible. Indeed, in this case, we have $r_{4}=1$ and $2 = r_{2} \geq q = \Delta(\Gamma - v_{E_{1}}) > 0$. Hence $a(1+\frac{q}{m}) = 81900(1 + \frac{q}{130}) \leq 81900(1 + \frac{1}{65}) < 127260$.
\end{enumerate}

\item $2 \leq d' \leq 14$. In this case, we have $d < d' + 1$. We prove by contradiction. Suppose to the contrary that $b_{1} \geq 1 - \frac{1}{127260}$. Then by Equation (\ref{equation 2}),
    \begin{equation}\label{inequality 1'}
    d' \sum\limits_{i=2}^{n}b_{i}^{\prime h}+\sum\limits_{i=2}^{n} b_{i}^{\prime v} + \frac{m-1+\sum\limits_{i=2}^{n} r_{i}b_{i}}{m} = 2 + (1-b_{1})(d'+\frac{q}{m}) < 2 + \frac{d'+1}{127260}.
    \end{equation}

    We use a computer to enumerate all the linear combinations satisfying inequality (\ref{inequality 1'}). There are no such cases.

\end{enumerate}

    In conclusion, we have $b_{1} \leq 1 - \frac{1}{127260}$, and the equality holds in \thref{example: optimal bound for act 2}. \NoEndMark\hfill\proofSymbol

\end{proof}

\paragraph{$D_{1}$ contains 2 singularities of $X$}
\hfill

    In this subsection, we prove the following:
    \begin{lemma}\label{lemma: D1 contains 2 singularities}
        Under the assumptions of \thref{thm: act gap}, suppose additionally that $D_{1}$ is a nonsingular rational curve and $D_{1}$ contains exactly 2 singularities of $X$. Then we have $b_{1} < 1 - \frac{1}{127260}$.
    \end{lemma}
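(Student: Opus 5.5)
The plan is to mirror the proof of \thref{lemma: D1 contains 1 singularity}, now with two singular points $P_{1},P_{2}$ of $X$ on $D_{1}$. By \thref{lemma: non-plt case} both points may be assumed to be of the first type in \thref{statement: local classification}: type $\mathbf{A}$ chains with data $(m_{i},q_{i})$ and $q_{i}<m_{i}$. Then $d = d' + \frac{q_{1}}{m_{1}} + \frac{q_{2}}{m_{2}} < d'+2$. We argue by contradiction, assuming $b_{1} \geq 1 - \frac{1}{127260}$, and split according to the cases of \thref{lemma: reduction}.

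\textbf{Case $d' \leq 0$.} If $d'=-1$, then $d<1$. If $d'=0$, then $d<2$. In both cases \thref{statement: bound by degree} gives
$$
2 < \sum_{i\geq 2} w_{i}b_{i} + \sum_{i=1}^{2}\frac{m_{i}-1+\sum_{j} r_{ij}b_{j}}{m_{i}} = 2 + (1-b_{1})d < 2 + \frac{2}{127260}.
$$
The middle expression is a nonnegative integral combination of elements of $\widetilde{\Gamma(\mathcal{N}_{1})}$ exceeding $2$. It is therefore at least $\alpha_{2} = 2 + \frac{1}{127260}$, which forces $d>1$ and hence $d'=0$.

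\textbf{Case $d'=0$.} Here we must still rule out $1<d<2$ with near-equality. We would use the fibration $X' \to \mathbb{P}^{1}$ given by $|D'_{1}|$. The extracted divisors $E_{1},E_{2}$ are sections, since $D'_{1}\cdot E_{i}=1$. Adjunction on a general fibre $D'_{1}$ then gives $b_{E_{1}} + b_{E_{2}} + \sum_{\text{horizontal}} b_{i}\deg = 2$. Combined with $b_{E_{i}} > b_{1} - \frac{1}{m_{i}}$, this yields a finite enumeration. I would run it by computer, as before, bounding $m_{i}$ in the same way the bound $m\leq 253$ was obtained.

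\textbf{Case $d'=1$ (Case 3(c), $X_{d'}\cong\mathbb{P}^{2}$).} First bound $m_{1},m_{2}$. If one of them is large, then $\frac{m_{i}-1}{m_{i}}$ is close to $1$, and the argument of inequality (\ref{inequality 1}) with $\alpha_{1}(\Gamma(\mathcal{N}_{1})) = 1 + \frac{1}{252}$ gives a contradiction. Next, enumerate by computer all combinations satisfying
$$
2 < \sum w_{i}b_{i} + \sum_{i=1}^{2}\frac{m_{i}-1+\sum_{j} r_{ij}b_{j}}{m_{i}} \leq 2 + \frac{1+\frac{q_{1}}{m_{1}}+\frac{q_{2}}{m_{2}}}{127260}.
$$
For each surviving row, repeat the geometric filtering (1)--(5) from the one-singularity proof, using the dual graph conventions of \thref{convention: dual graph}:
\begin{itemize}
\item Images of $E_{1},E_{2}$ and of the $D_{i}$ with $w_{i}=1$ are lines in $\mathbb{P}^{2}$.
\item Log canonicity of the crepant pullback on $\mathbb{P}^{2}$ at triple points excludes configurations, exactly as in (3).
\item The constraint $q_{i}\leq r_{i}$-type bounds from edges to $E_{i}$ limit $q_{i}$.
\end{itemize}
We expect that for each row the resulting $a(1+\frac{q_{1}}{m_{1}}+\frac{q_{2}}{m_{2}}) < 127260$. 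This is where strict inequality should come from: unlike the single-singularity case, no row realizes equality, and the extremal example has only one singular point on $D_{1}$.

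\textbf{Case $2\leq d'\leq 14$ (Case 3(d)).} Use Equation (\ref{equation 2}) with two singular terms. The right-hand side is a nonnegative integral combination of elements of $\widetilde{\Gamma(\mathcal{N}_{1})}$, with weight $d'$ on horizontal terms, and it must lie in $\bigl(2,\, 2+\frac{d'+2}{127260}\bigr)$. A computer enumeration of weighted gap numbers, as in the table of \thref{lemma: no singular point}, should show that this set is empty.

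The main obstacle is the $d'=1$ (and possibly $d'=0$) analysis. There the numerics alone allow near-extremal rows, and each must be killed by a realizability argument about the configuration of lines and blow-ups on $\mathbb{P}^{2}$. That requires careful case-by-case handling of how $D_{i}$ meet the two chains.
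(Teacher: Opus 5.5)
Your case split and numerical identities match the paper's, but this lemma is proved mainly by geometric exclusion of rows that the numerics allow. That part is missing or wrong in every nontrivial case.

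\textbf{The case $2\le d'\le 14$.} Your claim that the enumeration in the window $(2,\,2+\frac{d'+2}{127260})$ is empty is false. Take $d'=2$, $(m_1,m_2)=(2,7)$, one horizontal component with $b_2=\frac17$, and residual sums $\sum_i r_ib_i=\frac{9}{20}$, $\sum_i r'_ib_i=\frac{136}{147}$. The left-hand side is $\frac27+\frac{29}{40}+\frac{1018}{1029}=2+\frac{1}{41160}<2+\frac{4}{127260}$. With $q_1=1$, $q_2=6$ this gives $1-b_1=\frac{1}{138180}$, which violates the lemma. The paper's enumeration produces five such rows, all with $d'=2$, $m_1=2$, $\sum_i r_ib_i=\frac{9}{20}$ and $r_2=0$, and it removes them geometrically:
\begin{itemize}
\item On $\mathbb{F}_2$ the image of $D_2$ has class $E+2F$ and meets the image of $E_1$ transversally at a smooth point $Q$ off $D_1$.
\item If the transforms of $D_2$ and $E_1$ on $X^{min}$ were disjoint, the first blow-up at $Q$ would get coefficient $b_2+b_{E_1}-1=-\frac{37}{280}-\frac{1-b_1}{2}<0$ in the crepant pullback to $X^{min}$. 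That is impossible.
\item Hence $r_2>0$, contradicting every row.
\end{itemize}

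\textbf{The case $d'=0$.} After substituting $b_{E_i}=\frac{m_i-1+\sum_j r_{ij}b_j-q_i(1-b_1)}{m_i}$, your fibre adjunction $b_{E_1}+b_{E_2}+\sum_i w_ib_i=2$ is literally the degree equation. So the computer run only reproduces the numerical table, and that table contains violating rows. For $(m_1,m_2)=(2,3)$, $b_2=\frac17$, residual sums $\frac{501}{505}$ and $\frac{7}{12}$, one gets exactly $\frac17+\frac{503}{505}+\frac{31}{36}=2+\frac{1}{127260}$, and $q_2=2$ gives $1-b_1=\frac{1}{148470}$. To exclude such rows you need the structure of $X'\to\mathbb{P}^1$, which you do not supply:
\begin{itemize}
\item $D'_2$ is the only horizontal component, and it is a section.
\item The other components form a single reducible fibre $D'_3+D'_4$, meeting in one point.
\item $r'_4=1$, by a linear-equivalence argument on a Hirzebruch model together with $\gcd(m_2,q_2)=1$.
\item If $r'_2=0$, the curves $D'_2,D'_3,D'_4$ pass through a common point. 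The divisor extracted there has negative log discrepancy for $(X,D_1+\sum_{i\ge2}b_iD_i)$.
\item If $r'_2\ge1$, integrality of $r'_2/q_2$ bounds $q_2$.
\end{itemize}

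\textbf{The case $d'=1$.} ``Repeat (1)--(5)'' does not transfer either. With two singular points you need:
\begin{itemize}
\item the three pairwise intersections of the lines $D_2,E_1,E_2$ in $\mathbb{P}^2$, and the resulting disjoint chains with exactly two $\otimes$-vertices;
\item in the one-sided case, the deduction $b_3<b_2=\frac17$, hence $b_3=0$;
\item a check that the two residual sums have no common positive divisor $\ge\frac17$.
\end{itemize}
``We expect $a(\ldots)<127260$'' is precisely the statement that has to be proved.
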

    
\begin{proof}
    Let $m_{1},m_{2},q_{1},q_{2}$ be as in \thref{construction: extract to make Cartier}. Then we have $d = d' + \frac{q_{1}}{m_{1}} + \frac{q_{2}}{m_{2}} < d' + 2$.
    \begin{enumerate}[align=left,label = \textbf{Case \arabic*}:]
    
    \item $d' \leq -1 $. In this case, we have $d < 1$. By \thref{statement: equation from adjunction} and \thref{exam: gap numbers} we have $b_{1} < 1 - \frac{1}{127260}$.
    
    \item $d' = 0$. In this case we have $d < 2$. The linear system $|D'_{1}|$ defines a morphism from $X'$ to $\mathbb{P}^1$, where the divisor $D'_{1}$ is a fibre and the divisors $E_{i}$ are sections. We claim that this case is impossible. Indeed, by \thref{statement: bound by degree}, we have
    $$
    \frac{m_{1}-1 + \sum\limits_{i=2}^{n} r_{i}b_{i}}{m_{1}} + \frac{m_{2} - 1 + \sum\limits_{i=2}^{n} r'_{i}b_{i}}{m_{2}} + \sum\limits_{i=2}^{n} w_{i}b_{i} = 2 + (\frac{q_{1}}{m_{1}} + \frac{q_{2}}{m_{2}})(1-b_{1}) > 2.
    $$
    If $m_{1},m_{2} \geq 15$, then we have 
    $$
    2 + (\frac{q_{1}}{m_{1}} + \frac{q_{2}}{m_{2}})(1-b_{1}) = \frac{m_{1}-1 + \sum\limits_{i=2}^{n} r_{i}b_{i}}{m_{1}} + \frac{m_{2} - 1 + \sum\limits_{i=2}^{n} r'_{i}b_{i}}{m_{2}} + \sum\limits_{i=2}^{n} w_{i}b_{i} \geq \frac{14}{15}+\frac{14}{15} + \frac{1}{7} \geq 2 + \frac{1}{105}.
    $$ 
    Hence $b_{1} \leq 1 - \frac{1}{105 \times (\frac{q_{1}}{m_{1}} + \frac{q_{2}}{m_{2}})} < 1 - \frac{1}{210}$. If $m_{2} \geq 254$, then we have
    $$
    \frac{m_{1}-1 + \sum\limits_{i=2}^{n} r_{i}b_{i}}{m_{1}} + \frac{m_{2} - 1 + \sum\limits_{i=2}^{n} r'_{i}b_{i}}{m_{2}} + \sum\limits_{i=2}^{n} w_{i}b_{i} \geq \alpha_{1}(\widetilde{\Gamma}(\mathcal{N}_{1})) + \frac{253}{254} = 2 + \frac{1}{32004}.
    $$
    Hence $b_{1} \leq 1 - \frac{1}{32004 \times (\frac{q_{1}}{m_{1}} + \frac{q_{2}}{m_{2}})} < 1 - \frac{1}{64008}$.
    
    Now we assume that $m_{1} \leq 14$, $m_{1} \leq m_{2}$ and $m_{2} \leq 253$. We prove by contradiction that in this case, $b_{1}< 1 - \frac{1}{127260}$. Indeed, assume to the contrary that $b_{1} \geq 1 - \frac{1}{127260}$. Then we have 
    \begin{equation}\label{inequality 2~}
    \frac{m_{1}-1 + \sum\limits_{i=2}^{n} r_{i}b_{i}}{m_{1}} + \frac{m_{2} - 1 + \sum\limits_{i=2}^{n} r'_{i}b_{i}}{m_{2}} + \sum\limits_{i=2}^{n} w_{i}b_{i} = 2 + (\frac{q_{1}}{m_{1}} + \frac{q_{2}}{m_{2}})(1-b_{1}) \leq 2 + \frac{\frac{q_{1}}{m_{1}} + \frac{q_{2}}{m_{2}}}{127260} \leq 2 + \frac{\frac{m_{1}-1}{m_{1}} + \frac{m_{2}-1}{m_{2}}}{127260}.
    \end{equation}

    We use a computer to enumerate all the linear combinations satisfying inequalities (\ref{inequality 2~}). The results are listed in Table \ref{table: potential numerical cases for 2 singularities, X} below. We give some explanations of Table \ref{table: potential numerical cases for 2 singularities, X}:
    \begin{itemize}
        \item Each row is distinguished by the following numerical data: $m_{1}$, $m_{2}$, $w_{i}$ (implicitly), $b_{i}$, $\sum\limits_{i=2}^{n}r_{i}b_{i}$ and $\sum\limits_{i=2}^{n}r'_{i}b_{i}$. Notice that two cases with different $r_{i}$ and $r'_{i}$ but the same $\sum\limits_{i=2}^{n}r_{i}b_{i}$ and $\sum\limits_{i=2}^{n}r'_{i}b_{i}$ will be considered as the same case.
        \item If two cases are related by a permutation of the indices of $b_{i}, i=2,3,\cdots, n$, we will recognize them as the same case.
        \item To simplify the expression in the table, we replace the $b_{1}$ column by the column of $a = \frac{1}{(1-b_{1})(\frac{q_{1}}{m_{1}}+\frac{q_{2}}{m_{2}})}$. Then in inequalities (\ref{inequality 2~}) we can write $2 + (1-b_{1})(\frac{q_{1}}{m_{1}}+\frac{q_{2}}{m_{2}}) = 2 + \frac{1}{a}$. Notice that the condition $b_{1} \geq 1 - \frac{1}{127260}$ is equivalent to $a(\frac{q_{1}}{m_{1}}+\frac{q_{2}}{m_{2}}) \geq 127260$.
        \item To remove numerically redundant cases, we can assume that for all $2 \leq i \leq n$, we have $w_{i}=0$ or $w_{i}=1$. Indeed, we can formally split the term $w_{i}b_{i}$ in the sum $\frac{m_{1}-1 + \sum\limits_{i=2}^{n} r_{i}b_{i}}{m_{1}} + \frac{m_{2} - 1 + \sum\limits_{i=2}^{n} r'_{i}b_{i}}{m_{2}} + \sum\limits_{i=2}^{n} w_{i}b_{i}$ into $w_{i}$ copies of $b_{i}$. Conversely, there exists a sum $\frac{m_{1}-1 + \sum\limits_{i=2}^{n} r_{i}b_{i}}{m_{1}} + \frac{m_{2} - 1 + \sum\limits_{i=2}^{n} r'_{i}b_{i}}{m_{2}} + \sum\limits_{i=2}^{n} w_{i}b_{i}$ with $w_{i}>1$ for some $i$ if and only if there exists a sum in Table \ref{table: potential numerical cases for 2 singularities, X} where $w_{i}=w_{j}=1$ and $b_{i}=b_{j}$ for some $i \neq j$ (later we will see that this doesn't happen).
        \item A number $b_{i}$ is displayed in Table \ref{table: potential numerical cases for 2 singularities, X} if and only if $w_{i}=1$. For the numbers $b_{i}$ where $w_{i} = 0$, we must have $r_{i} > 0$ or $r'_{i}>0$. In this case, $b_{i}$ is not displayed in the table but still contributes to the sum $\sum\limits_{i=2}^{n} r_{i}b_{i}$ or $\sum\limits_{i=2}^{n} r'_{i}b_{i}$.
        \item In the last column, we give the upper bounds of the number $r_{2}$ and $r'_{2}$ for each case. Notice that if $r_{i}=0$ (resp. $r'_{i}=0$), then $D_{i}$ doesn't pass through the singular point $P_{1}$ (resp. $P_{2}$) of $X$ on $D_{1}$.
    \end{itemize}
    
    \begin{scriptsize}
    \begin{center}
    \begin{longtable}{|@{\makebox[3em][r]{\rownumber\space}}|C|C|C|C|C|C|C|C|}
    \caption{Potential numerical cases when $D_{1}$ contains $2$ singularities
    and $d'=0$}
    \label{table: potential numerical cases for 2 singularities, X}
    \\
    \hline
    \multicolumn{1}{@{\makebox[3em][r]{Case No.~}}|C|}{m_{1}}
    & m_{2} & a & b_{2} & \sum\limits_{i=2}^{n} r_{i}b_{i} & \sum\limits_{i=2}^{n} r'_{i}b_{i} & r_{2}\leq & r'_{2}\leq \\\hline
    
    2 & 3 & 127260 & \frac{1}{7} & \frac{501}{505} & \frac{7}{12} & 0 & 0 \\ \hline
    
    2 & 11 & 104720 & \frac{1}{7} & \frac{29}{40} & \frac{16}{17} & 0 & 0 \\ \hline 

    2 & 17 & 104720 & \frac{1}{7} & \frac{29}{40} & \frac{10}{11} & 0 & 0 \\ \hline
    
    2 & 187 & 104720 & \frac{1}{7} & \frac{29}{40} & 0 & 0 & 0 \\ \hline
    
    3 & 10 & 81900 & \frac{1}{7} & \frac{7}{12} & \frac{437}{455} & 0 & 0 \\ \hline
    
    3 & 13 & 81900 & \frac{1}{7} & \frac{7}{12} & \frac{166}{175} & 0 & 0 \\ \hline
    
    3 & 26 & 81900 & \frac{1}{7} & \frac{7}{12} & \frac{157}{175} & 0 & 0 \\ \hline
    
    3 & 65 & 81900 & \frac{1}{7} & \frac{7}{12} & \frac{26}{35} & 0 & 1 \\ \hline
    
    3 & 83 & 104580 & \frac{1}{7} & \frac{7}{12} & \frac{47}{70} & 0 & 0 \\ \hline
    
    3 & 101 & 127260 & \frac{1}{7} & \frac{7}{12} & \frac{3}{5} & 0 & 0 \\ \hline
    
    3 & 130 & 81900 & \frac{1}{7} & \frac{7}{12} & \frac{17}{35} & 0 & 2 \\ \hline
    
    3 & 139 & 87570 & \frac{1}{7} & \frac{7}{12} & \frac{9}{20} & 0 & 0 \\ \hline
    
    3 & 166 & 104580 & \frac{1}{7} & \frac{7}{12} & \frac{12}{35} & 0 & 1 \\ \hline
    
    3 & 202 & 127260 & \frac{1}{7} & \frac{7}{12} & \frac{1}{5} & 0 & 0 \\ \hline
    
    4 & 7 & 82320 & \frac{1}{7} & \frac{9}{20} & \frac{283}{294} & 0 & 0 \\ \hline 

    4 & 11 & 104720 & \frac{1}{7} & \frac{9}{20} & \frac{16}{17} & 0 & 0 \\ \hline
    
    4 & 14 & 82320 & \frac{1}{7} & \frac{9}{20} & \frac{136}{147} & 0 & 0 \\ \hline
    
    4 & 17 & 104720 & \frac{1}{7} & \frac{9}{20} & \frac{10}{11} & 0 & 0 \\ \hline
    
    4 & 49 & 82320 & \frac{1}{7} & \frac{9}{20} & \frac{31}{42} & 0 & 0 \\ \hline
    
    4 & 98 & 82320 & \frac{1}{7} & \frac{9}{20} & \frac{10}{21} & 0 & 1 \\ \hline
    
    4 & 187 & 104720 & \frac{1}{7} & \frac{9}{20} & 0 & 0 & 0 \\ \hline
    
    \end{longtable}
    \end{center}
    \end{scriptsize}

    Now we discuss the possible dual graphs on the minimal resolution $X^{min}$ of $X$ that are possibly geometrically realizable. The dual graph satisfies the following properties:
    \begin{enumerate}[align=left, label=(\arabic*)]
        \item For every $2 \leq i \leq n$, the divisor $D'_{i}$ does not pass through $E_{1} \cap D'_{1}$ and $E_{2} \cap D'_{1}$. In particular, we have $D'_{i} \cdot D'_{1} = w_{i}$. Indeed, if $D'_{i}$ passes through $E_{1} \cap D'_{1}$, then we have $r_{i} \geq m_{1}$. We can then replace $r_{i}$ by $r_{i} - m_{1}$ and replace $w_{i}$ by $w_{i}+1$ to create a new case satisfying inequalities (\ref{inequality 2~}). However, there is no such case in Table \ref{table: potential numerical cases for 2 singularities, X}. Similarly, $D'_{i}$ does not pass through $E_{2} \cap D'_{1}$.
        \item The divisor $D'_{2}$ is a horizontal section. Indeed, in all cases of Table \ref{table: potential numerical cases for 2 singularities, X}, we have $w_{2}=1$ and $w_{i}=0$ for all $i > 2$. By (1), we have $D'_{2} \cdot D'_{1} = w_{2} = 1$. Hence $D'_{2}$ is a horizontal section.
        \item The divisor $D'_{i}$ lies in a reducible fibre for $2 < i \leq n$. Indeed, by (1), $D'_{i} \cdot D'_{1} = w_{i} = 0$ for $i > 2$. Hence $D'_{i}$ is vertical for $i > 2$. Suppose that $D'_{i}$ does not lie on a reducible fibre for some $i > 2$. Then $D'_{i} \cdot E_{1} > 0$ and $D'_{i} \cdot E_{2} > 0$. We have $r_{i}>0$ and $r'_{i}>0$. In particular, we have
        $$
        \sum\limits_{j=2}^{n} r_{j}b_{j} - r_{2}b_{2} - b_{i}, \, \sum\limits_{j=2}^{n} r'_{j}b_{j} - r'_{2}b_{2} - b_{i} \in \{\sum\limits_{i=1}^{j}l_{i}\beta_{i} \mid j \in \mathbb{Z}_{>0}, l_{i} \in \mathbb{Z}_{\geq 0}, \beta_{i} \in \Gamma(\mathcal{N}_{1}) \}.
        $$
        We can verify in Table \ref{table: potential numerical cases for 2 singularities, X} that this case only happens in Case 12, where $r_{i}=r'_{i}=1$. Write $D'_{1} \sim hD'_{i}$ for some positive integer $h$. By the projection formula, we have
        \begin{equation}
            \begin{cases}
                (D'_{1}+\frac{q_{1}}{m_{1}}E_{1} + \frac{q_{2}}{m_{2}}E_{2}) \cdot E_{1} = 0 \\
                (D'_{1}+\frac{q_{1}}{m_{1}}E_{1} + \frac{q_{2}}{m_{2}}E_{2}) \cdot E_{2} = 0
            \end{cases}
        \end{equation}
        Hence we have 
        \begin{equation}\label{equation 17}
        E_{1}^{2} = -\frac{m_{1}}{q_{1}}, \qquad E_{2}^{2} = -\frac{m_{2}}{q_{2}}.
        \end{equation}
        Moreover, by \thref{prop: surface adjunction} (3) (b) and the projection formula, we have
        $$
        (D'_{i}+\frac{r_{i}}{m_{1}}E_{1}+\frac{r'_{i}}{m_{2}}E_{2})\cdot E_{1} = 0
        $$
        $$
        (D'_{i}+\frac{r_{i}}{m_{1}}E_{1}+\frac{r'_{i}}{m_{2}}E_{2})\cdot E_{2} = 0.
        $$
        Using (\thref{equation 17}), we have
        $$
        D'_{i} \cdot E_{1} = \frac{r_{i}}{q_{1}}, \qquad D'_{i} \cdot E_{2} = \frac{r'_{i}}{q_{2}}.
        $$
        Now we have
        $$
        1 = D'_{1} \cdot E_{1} = hD'_{i} \cdot E_{1} = \frac{hr_{i}}{q_{1}}
        $$
        $$
        1 = D'_{1} \cdot E_{2} = hD'_{i} \cdot E_{2} = \frac{hr'_{i}}{q_{2}}.
        $$
        Recall that in Case 12 of Table \ref{table: potential numerical cases for 2 singularities, X}, we have $r_{i}=r'_{i}=1$. Hence $q_{2} = q_{1} \leq m_{1}-1 = 2$ and
        $$
        a(\frac{q_{1}}{m_{1}}+\frac{q_{2}}{m_{2}}) \leq 87570 \times (\frac{2}{3}+\frac{2}{139})<127260,
        $$
        a contradiction. Hence we conclude that $D'_{i}$ lies in a reducible fibre for $i > 2$. In particular, since $\rho(X'/\mathbb{P}^1) = 2$, there is only one reducible fibre with two irreducible components. Hence $n \leq 4$. By abusing notation, we let $D'_{3}$ and $D'_{4}$ be the two irreducible components of the reducible fibre (the case where $n=3$ is formally written as $n=4$ with $b_{4}=0$).
        \item The divisors $D'_{3}$ and $D'_{4}$ intersect at exactly one point. Indeed, we have $D'_{3} \cap D'_{4} \neq \emptyset$ since they lie in the same reducible fibre. If they intersect at $\geq 2$ points, then there is a loop of curves in the minimal resolution $X^{min} \rightarrow \mathbb{P}^1$. However, by running MMP on $X^{min}$ over $\mathbb{P}^1$, we conclude that $X^{min}$ can be obtained by a sequence of blow-ups from a Hirzebruch surface. Hence the dual graph of every fibre must be a tree, a contradiction.
        \item For $i \geq 3$, either $r_{i}=0,r'_{i}=1$ or $r_{i}=1,r'_{i}=0$, including the formal case $b_{4}=0$ introduced in (3). Indeed, since $D'_{i}\cdot D'_{1} = w_{i} = 0$ by (1) and $D_{i} \cdot D_{1}>0$, at least one of $r_{i}$ and $r'_{i}$ is nonzero. 
        
        Assume first that $b_{3},b_{4}>0$. We can verify in Table \ref{table: potential numerical cases for 2 singularities, X} that we always have
        $$
        \sum\limits_{j=2}^{n} r_{j}b_{j} - r_{2}b_{2} - 2b_{i} \notin \{\sum\limits_{i=1}^{j}l_{i}\beta_{i} \mid j \in \mathbb{Z}_{>0}, l_{i} \in \mathbb{Z}_{\geq 0}, \beta_{i} \in \Gamma(\mathcal{N}_{1}) \}.
        $$
        Hence $r_{i} \leq 1$ for $i=3,4$. We claim that $r_{i},r'_{i}$ cannot be nonzero at the same time for $i=3,4$. Indeed, by the arguments in (3), $r_{i},r'_{i}>0$ only happens in Case 12 of Table \ref{table: potential numerical cases for 2 singularities, X}. In this case, we have 
        $$
        \frac{1}{3}, \frac{1}{4}, \frac{1}{5} \in \{b_{i} \mid i=3, 4, \cdots,n\}.
        $$
        Hence $n \geq 5$, contradicting (3). Now it suffices to show that $r'_{i} \leq 1$. Suppose to the contrary that $r'_{i} \geq 2$, then 
        $$
        \sum\limits_{j=2}^{n} r'_{j}b_{j} - r'_{2}b_{2} - 2b_{i} \in \{\sum\limits_{i=1}^{j}l_{i}\beta_{i} \mid j \in \mathbb{Z}_{>0}, l_{i} \in \mathbb{Z}_{\geq 0}, \beta_{i} \in \Gamma(\mathcal{N}_{1}) \}.
        $$
        One can verify in Table \ref{table: potential numerical cases for 2 singularities, X} that only Cases 8 and 10 satisfy the above condition. Without loss of generality, we can assume that $r'_{4} \geq 2$. Then we have $r_{3}=1$, $r'_{3}=0$. By \thref{prop: surface adjunction} (3) (b) and the projection formula, we have
        \begin{equation}
            \begin{cases}
                (D'_{1}+\frac{q_{1}}{m_{1}}E_{1} + \frac{q_{2}}{m_{2}}E_{2}) \cdot D'_{3} = D_{1} \cdot D_{3} = \frac{1}{m_{1}} \\
                (D'_{1}+\frac{q_{1}}{m_{1}}E_{1} + \frac{q_{2}}{m_{2}}E_{2}) \cdot D'_{4} = D_{1} \cdot D_{4} = \frac{r'_{4}}{m_{2}}
            \end{cases}
        \end{equation}
        Hence we have 
        \begin{equation}\label{equation 15}
        E_{1} \cdot D'_{3}=\frac{1}{q_{1}}, \qquad E_{1} \cdot D'_{4}=0, \qquad E_{2} \cdot D'_{3}=0,\qquad E_{2} \cdot D'_{4} = \frac{r'_{4}}{q_{2}}.
        \end{equation}
        Since $D'_{1}$ is a smooth fibre, we can write $D'_{1} \sim l_{3}D'_{3}+l_{4}D'_{4}$ for some positive integers $l_{3},l_{4}$. Taking the intersection with $E_{1}$ and $E_{2}$, we have 
        \begin{equation}\label{equation 18}
            \begin{cases}
                (l_{3}D'_{3}+l_{4}D'_{4}) \cdot E_{1} = 1 \\
                (l_{3}D'_{3}+l_{4}D'_{4}) \cdot E_{2} = 1
            \end{cases}
        \end{equation}
        Combining (\thref{equation 15}) and (\thref{equation 18}), we have
        \begin{equation}\label{equation 19}
        l_{3} = q_{1}, \qquad l_{4} = \frac{q_{2}}{r'_{4}}.
        \end{equation}
        In particular, we have $r'_{4}|q_{2}$.

        Take the minimal resolution $\pi': X^{min}\rightarrow X'$ as in \thref{construction: extract to make Cartier} and run the relative MMP for the morphism $X^{min} \rightarrow \mathbb{P}^1$. The outcome is a Hirzebruch surface $S$. The diagram is as follows:    
        $$
        \begin{tikzcd}
        & &  X^{min} \arrow[ld,swap,"\pi'"] \arrow[rd,"\psi"] & \\
        X & X' \arrow[l] \arrow[d] & & S \arrow[d] \\
        & \mathbb{P}^1 \arrow[equal,rr] & & \mathbb{P}^1
        \end{tikzcd}
        $$
        Let $\psi:X^{min}\rightarrow S$ be the morphism in the above diagram. Choose a fibre $f_{S}$ of $S\rightarrow\mathbb{P}^{1}$. Since $E_{1,S}$ and $E_{2,S}$ are sections, there exists $a\in\mathbb{Z}$ such that
        $$
        E_{1,S}-E_{2,S}+af_{S} \sim 0.
        $$
        Thus there is a nonzero rational function $\varphi\in k(S)^{*}$ with
        $$
        E_{1,S}-E_{2,S}+af_{S}=(\varphi)_{S}.
        $$
        Identify the function fields of $S$, $X^{min}$, and $X'$. Pulling back this Cartier divisor to $X^{min}$ and then pushing forward to $X'$, we obtain
        $$
        \pi'_{*}\psi^{*}(E_{1,S}-E_{2,S}+af_{S}) = (\varphi)_{X'}\sim 0.
        $$
        This divisor has integral coefficients and horizontal part $E_{1}-E_{2}$. Hence we can write
        $$
        (\varphi)_{X'} = E_{1}-E_{2}+c_{3}D'_{3}+c_{4}D'_{4} +\sum\limits_{i}a_{i}f_{i} \sim 0
        $$
        where $c_{3},c_{4},a_{i} \in \mathbb{Z}$ and the $f_{i}$ are smooth fibres. Intersecting with $E_{2}$, we have
        $$
        \frac{m_{2}}{q_{2}} + \frac{r'_{4}c_{4}}{q_{2}} + \sum a_{i} = 0
        $$
        which can be rewritten as
        $$
        m_{2} + r'_{4}c_{4} + q_{2}\sum a_{i} = 0.
        $$
        Since $r'_{4}| q_{2}$, we have $r'_{4}| m_{2}$. Hence $r'_{4}|\gcd(m_{2},q_{2}) = 1$ (cf. \cite[(3.1.11)]{Alexeev92_lc_surface}) and $r'_{4}=1$, a contradiction.

        It remains to consider the formal case introduced in (3) where $b_{4}=0$. This only happens in Cases 4 and 21 in Table \ref{table: potential numerical cases for 2 singularities, X}. In both cases, we have $r_{3}=1$ and $r'_{3}=0$. Similar to the argument in the $b_{4}>0$ case, \thref{prop: surface adjunction} (3) (b) and the projection formula give
        $$
        (D'_{1}+\frac{q_{1}}{m_{1}}E_{1} + \frac{q_{2}}{m_{2}}E_{2}) \cdot D'_{3} = D_{1} \cdot D_{3} = \frac{1}{m_{1}}
        $$
        and
        $$
        (D'_{1}+\frac{q_{1}}{m_{1}}E_{1} + \frac{q_{2}}{m_{2}}E_{2}) \cdot D'_{4} = D_{1} \cdot D_{4} = (D_{1} \cdot D_{4})_{P_{1}} + (D_{1} \cdot D_{4})_{P_{2}} = \frac{r_{4}}{m_{1}} + \frac{r'_{4}}{m_{2}}.
        $$
        Hence we have 
        $$
        E_{1} \cdot D'_{3}=\frac{1}{q_{1}}, \qquad E_{1} \cdot D'_{4}= \frac{r_{4}}{q_{1}} , \qquad E_{2} \cdot D'_{3}=0,\qquad E_{2} \cdot D'_{4} = \frac{r'_{4}}{q_{2}}.
        $$
        Hence, similar to the argument in the $b_{4}>0$ case, we have $r'_{4}=1$ and $l_{4} = \frac{q_{2}}{r'_{4}} = q_{2}$. Now we have
        $$
        1 = D'_{1} \cdot E_{1} = (l_{3}D'_{3} + l_{4}D'_{4}) \cdot E_{1} = \frac{l_{3}+r_{4}q_{2}}{q_{1}}.
        $$
        Suppose to the contrary that $r_{4}>0$, then we have $q_{2}<q_{1}$. 
        
        In Case 4 of Table \ref{table: potential numerical cases for 2 singularities, X}, we have $q_{2}<q_{1}<m_{1}=2$, a contradiction. 
        
        In Case 21 of Table \ref{table: potential numerical cases for 2 singularities, X}, we have $m_{1}=4$. Hence $q_{1} \leq 3$ and $q_{2} \leq 2$. We have
        $$
        a(\frac{q_{1}}{m_{1}}+\frac{q_{2}}{m_{2}}) \leq 104720 \times (\frac{3}{4}+\frac{2}{187})<127260,
        $$
        also a contradiction.
    \end{enumerate}

    Now we use the above properties to complete the discussion for the $d'=0$ case. In all cases of Table \ref{table: potential numerical cases for 2 singularities, X}, we have $r_{2}=0$. We discuss the problem on the basis of the possible value of $r'_{2}$.
    \begin{enumerate}
        \item $r'_{2}=0$. In this case, $D'_{2}$ is disjoint with $E_{1}$ and $E_{2}$. By the projection formula, we have 
        $$
        D'_{2} \cdot D'_{3} = D_{2}\cdot D_{3}>0, \qquad D'_{2} \cdot D'_{4} = D_{2} \cdot D_{4} > 0.
        $$
        Since $D'_{2}$ is a section, it meets the reducible fibre $D'_{3} \cup D'_{4}$ exactly at one point. Hence we conclude that the three divisors $D'_{2},D'_{3}$ and $D'_{4}$ intersect at one point $Q$. We claim that the crepant pullback of the pair $(X,D_{1} + \sum\limits_{i=2}^{n}b_{i}D_{i})$ is not lc at $Q$. Indeed, we have
        \begin{align*}
        \frac{D'_{2}\cdot D'_{3}}{D'_{2}\cdot D'_{4}} &= \frac{D_{2}\cdot D_{3}}{D_{2} \cdot D_{4}} & (\text{the projection formula})\\ 
        &= \frac{D_{1}\cdot D_{3}}{D_{1} \cdot D_{4}} & (\rho(X) = 1)\\ 
        &= \frac{\frac{1}{m_{1}}}{\frac{1}{m_{2}}} & (\thref{prop: surface adjunction} (3) (b)) \\
        &= \frac{m_{2}}{m_{1}}.
        \end{align*}
        By (\thref{equation 19}), we have $D'_{1} \sim q_{1}D'_{3} + q_{2}D'_{4}$. Hence
        $$
        D'_{2} \cdot (q_{1}D'_{3}+q_{2}D'_{4}) = D'_{2} \cdot D'_{1} = 1.
        $$
        Combining the equations
        $$
        \begin{cases}
            m_{1}D'_{2} \cdot D'_{3} - m_{2} D'_{2} \cdot D'_{4} = 0 \\
            q_{1}D'_{2} \cdot D'_{3} + q_{2} D'_{2} \cdot D'_{4} = 1
        \end{cases}
        $$
        we have
        \begin{equation}\label{equation 20}
        D'_{2} \cdot D'_{3} = \frac{m_{2}}{m_{1}q_{2} + m_{2}q_{1}}, \qquad D'_{2} \cdot D'_{4} = \frac{m_{1}}{m_{1}q_{2} + m_{2}q_{1}}.
        \end{equation}
        We also have
        \begin{equation}\label{equation 21}
        \begin{aligned}
        D'_{3} \cdot D'_{4} & = (D'_{3} + \frac{1}{m_{1}}E_{1})(D'_{4} + \frac{1}{m_{2}}E_{2}) & (E_{1} \cdot E_{2} = E_{1}\cdot D'_{4} = E_{2} \cdot D'_{3}=0) \\
        &= D_{3} \cdot D_{4} & (\text{the projection formula}) \\
        &= \frac{(D_{1}\cdot D_{3})(D_{1}\cdot D_{4})}{D_{1}^{2}} & (\rho(X) = 1) \\
        &= \frac{\frac{1}{m_{1}m_{2}}}{\frac{q_{1}}{m_{1}} + \frac{q_{2}}{m_{2}}} & (\thref{prop: surface adjunction} (3) (b)) \\
        &= \frac{1}{m_{1}q_{2} + m_{2}q_{1}}.
        \end{aligned}
        \end{equation}
        Take the minimal resolution $\widetilde{f'}: \widetilde{X'} \rightarrow X'$ of $X'$ at $Q$. Let $\widetilde{D'_{i}}$ be the strict birational transformation of $D'_{i}$ on $\widetilde{X'}$, and $\widetilde{E_{i}}$ be the strict birational transformation of $E_{i}$ on $\widetilde{X'}$. Since $X'$ is klt at $Q$, the dual graph of the exceptional divisor $\mathrm{Exc}(\widetilde{f'})$ is a tree. Moreover, $\widetilde{D'_{2}}$ is still a section of the composite fibration $\widetilde{X'} \rightarrow X' \rightarrow \mathbb{P}^1$. Hence $\widetilde{D'_{2}}$ meets exactly one prime component $\widetilde{F}$ in the reducible fibre $\mathrm{Exc}(\widetilde{f'}) \cup \widetilde{D'_{3}} \cup \widetilde{D'_{4}}$, and $\widetilde{D'_{2}} \cdot \widetilde{F} = 1$. In particular, $\widetilde{F}$ is exceptional over $X'$. We claim that the reducible fibre is a chain of curves such that $\widetilde{D'_{3}}$ and $\widetilde{D'_{4}}$ are at the end of the chain. Indeed, let $L$ be the sum of prime divisors in the path from $\widetilde{D'_{3}}$ to $\widetilde{D'_{4}}$. We have 
        $$
        (K_{\widetilde{X'}} + \widetilde{E_{1}} + \widetilde{E_{2}} + L) \cdot \widetilde{D'_{1}} = 0.
        $$
        For any prime components $L_{i}$ in $L$, adjunction gives
        $$
        (K_{\widetilde{X'}} + \widetilde{E_{1}} + \widetilde{E_{2}} + L) \cdot L_{i} \geq 0.
        $$
        For any prime components $L'$ of the reducible fibre not contained in $L$, adjunction gives
        $$
        (K_{\widetilde{X'}} + \widetilde{E_{1}} + \widetilde{E_{2}} + L) \cdot L' = -2 - L^{\prime 2} + L \cdot L' \geq L \cdot L' \geq 0.
        $$
        If the reducible fibre contains any prime component $L'$ not in $L$, then, by connectedness, there exists a prime component $L'$ such that $L \cdot L' > 0$. Since $\widetilde{D'_{1}}$ is linearly equivalent to a positive linear combination of the prime components of the reducible fibre, we have 
        $$
        (K_{\widetilde{X'}} + \widetilde{E_{1}} + \widetilde{E_{2}} + L) \cdot \widetilde{D'_{1}} > 0,
        $$
        a contradiction. This proves our claim.
        
        Let $g: Y \rightarrow X'$ be the extraction of the associated exceptional divisor $F$. Let $D_{i, Y}$ be the strict birational transformation of $D_{i}$ on $Y$. Then we have 
        $$
        D_{2,Y} \cdot F = 1, \qquad D_{2,Y} \cdot D_{3,Y} = D_{2,Y} \cdot D_{4,Y} = D_{3,Y} \cdot D_{4,Y} = 0.
        $$
        By (\thref{equation 19}), we can write $D_{1,Y} \sim q_{1}D_{3,Y}+q_{2}D_{4,Y}+F$. Now we have
        \begin{align*}
        \frac{D_{2,Y} \cdot F}{D_{3,Y} \cdot F} & = \frac{D_{2,Y} \cdot g^{*}D'_{4}}{D_{3,Y} \cdot g^{*}D'_{4}} & (D_{2,Y} \cdot D_{4,Y} = D_{3,Y} \cdot D_{4,Y} = 0) \\
        &= \frac{D'_{2} \cdot D'_{4}}{D'_{3}\cdot D'_{4}} & (\text{the projection formula}) \\
        &= m_{1} & ((\thref{equation 20}), (\thref{equation 21})). 
        \end{align*}
        Similarly, we have $\frac{D_{2,Y} \cdot F}{D_{4,Y} \cdot F} = m_{2}$. Hence
        $$
        D_{3,Y} \cdot F = \frac{1}{m_{1}}, \qquad D_{4,Y} \cdot F = \frac{1}{m_{2}}.
        $$
        Taking the adjunction of the pair $(Y,D_{1,Y}+\sum\limits_{i=2}^{n}b_{i}D_{i,Y} + \frac{m_{1}-1+\sum\limits_{i=2}^{n}r_{i}b_{i}}{m_{1}}E_{1} + \frac{m_{2}-1+\sum\limits_{i=2}^{n}r'_{i}b_{i}}{m_{2}}E_{2} + F)$ on $F$, we have
        \begin{gather*}
        (K_{Y}+D_{1,Y}+\sum\limits_{i=2}^{n}b_{i}D_{i,Y} + \frac{m_{1}-1+\sum\limits_{i=2}^{n}r_{i}b_{i}}{m_{1}}E_{1} + \frac{m_{2}-1+\sum\limits_{i=2}^{n}r'_{i}b_{i}}{m_{2}}E_{2} + F) \cdot F \\
        = -2 + \mathrm{deg}(\mathrm{Diff}_{F}(b_{2}D_{2,Y}+b_{3}D_{3,Y}+b_{4}D_{4,Y})) \\
        \geq -2 + b_{2} + (1 - \frac{(1-b_{3})}{m_{1}}) + (1 - \frac{(1-b_{4})}{m_{2}}) \\
        = -2 + b_{2} + \frac{m_{1}-1+b_{3}}{m_{1}} + \frac{m_{2}-1+b_{4}}{m_{2}} \\
        = (\frac{q_{1}}{m_{1}}+\frac{q_{2}}{m_{2}})(1-b_{1}) > 0. 
        \end{gather*}
        Here we use (\ref{inequality 2~}) and the inequality 
        $$
        (\mathrm{Diff}_{F}(bD))_{P} \geq \frac{m_{P}-1+r_{P}b}{m_{P}}  = 1 - \frac{1-r_{P}b}{m_{P}} \geq 1 - \frac{r_{P}(1-b)}{m_{P}} = 1 - (1-b)(D \cdot F)_{P}.
        $$
        Since $F^{2}<0$, we conclude that the log discrepancy $a(F,X,D_{1}+\sum\limits_{i=2}^{n}b_{i}D_{i}) < 0$. This contradicts our condition that $(X,D_{1}+\sum\limits_{i=2}^{n}b_{i}D_{i})$ is log canonical.
        \item $r'_{2} \geq 1$. This only happens in Cases 8, 11, 13, 20 in Table \ref{table: potential numerical cases for 2 singularities, X}. In this case, we have $D'_{2} \cdot E_{2} = \frac{r'_{2}}{q_{2}}$. We claim that $D'_{2}$ does not pass through $E_{2} \cap D'_{4}$. Indeed, since $D'_{3} \cap E_{2} = 0$ and $D'_{2} \cdot E_{1} = 0$, by the projection formula we have $D'_{2} \cdot D'_{3} = D_{2} \cdot D_{3} > 0$. Since $D'_{2}$ is a section, it can only intersect the reducible fibre $D'_{3} + D'_{4}$ at one point. Since this intersection point lies in $D'_{3}$, it is not $E_{2} \cap D'_{4}$. Hence $\frac{r'_{2}}{q_{2}} \in \mathbb{Z}_{>0}$. A direct computation of Cases 8, 11, 13, 20 shows that we always have $a(\frac{q_{1}}{m_{1}} + \frac{q_{2}}{m_{2}}) < 127260$.
    \end{enumerate}

    \item $d' = 1$. In this case $d < 3$ and $X_{d'} \cong \mathbb{P}^2$. Moreover, there exists a divisor $D_{i}$, $i\geq2$, intersecting $D_{1}$ at some nonsingular locus of $X$. By \thref{statement: bound by degree}, we have
    $$
    \frac{m_{1}-1 + \sum\limits_{i=2}^{n} r_{i}b_{i}}{m_{1}} + \frac{m_{2} - 1 + \sum\limits_{i=2}^{n} r'_{i}b_{i}}{m_{2}} + \sum\limits_{i=2}^{n} w_{i}b_{i} = 2 + (1 + \frac{q_{1}}{m_{1}} + \frac{q_{2}}{m_{2}})(1-b_{1}) > 2.
    $$
    Similar to the case where $d'=0$, we can assume that $m_{1} \leq 14$, $m_{1} \leq m_{2}$ and $m_{2} \leq 253$. 
    
    We prove that in this case, $b_{1} < 1 - \frac{1}{127260}$. Indeed, suppose to the contrary that $b_{1} \geq 1 - \frac{1}{127260}$. Then by \thref{statement: bound by degree} and \thref{lemma: non-plt case}, we have 
    \begin{multline}\label{inequality 2}
    \frac{m_{1}-1 + \sum\limits_{i=2}^{n} r_{i}b_{i}}{m_{1}} + \frac{m_{2} - 1 + \sum\limits_{i=2}^{n} r'_{i}b_{i}}{m_{2}} + \sum\limits_{i=2}^{n} w_{i}b_{i} = 2 + (1 + \frac{q_{1}}{m_{1}} + \frac{q_{2}}{m_{2}})(1-b_{1}) \\
    \leq 2 + \frac{1 + \frac{q_{1}}{m_{1}} + \frac{q_{2}}{m_{2}}}{127260} \leq 2 + \frac{1 + \frac{m_{1}-1}{m_{1}} + \frac{m_{2}-1}{m_{2}}}{127260}.
    \end{multline}

    We use a computer to enumerate all the linear combinations satisfying inequalities (\ref{inequality 2}). The results are listed in Table \ref{table: potential numerical cases for 2 singularities, I} below. We give some explanations of Table \ref{table: potential numerical cases for 2 singularities, I}:
    \begin{itemize}
        \item Each row is distinguished by the following numerical data: $m_{1}$, $m_{2}$, $w_{i}$ (implicitly), $b_{i}$, $\sum\limits_{i=2}^{n}r_{i}b_{i}$ and $\sum\limits_{i=2}^{n}r'_{i}b_{i}$. Notice that two cases with different $r_{i}$ and $r'_{i}$ but the same $\sum\limits_{i=2}^{n}r_{i}b_{i}$ and $\sum\limits_{i=2}^{n}r'_{i}b_{i}$ will be considered as the same case.
        \item If two cases are related by a permutation of the indices of $b_{i}, i=2,3,\cdots, n$, we will recognize them as the same case.
        \item To simplify the expression in the table, we replace the $b_{1}$ column by the column of $a = \frac{1}{(1-b_{1})(1 + \frac{q_{1}}{m_{1}}+\frac{q_{2}}{m_{2}})}$. Then in inequalities (\ref{inequality 2}) we can write $2 + (1-b_{1})(1 + \frac{q_{1}}{m_{1}}+\frac{q_{2}}{m_{2}}) = 2 + \frac{1}{a}$. Notice that the condition $b_{1} \geq 1 - \frac{1}{127260}$ is equivalent to $a(1 + \frac{q_{1}}{m_{1}}+\frac{q_{2}}{m_{2}}) \geq 127260$.
        \item To remove numerically redundant cases, we can assume that for all $2 \leq i \leq n$, we have $w_{i}=0$ or $w_{i}=1$. Indeed, we can formally split the term $w_{i}b_{i}$ in the sum $\frac{m_{1}-1 + \sum\limits_{i=2}^{n} r_{i}b_{i}}{m_{1}} + \frac{m_{2} - 1 + \sum\limits_{i=2}^{n} r'_{i}b_{i}}{m_{2}} + \sum\limits_{i=2}^{n} w_{i}b_{i}$ into $w_{i}$ copies of $b_{i}$. Conversely, there exists a sum $\frac{m_{1}-1 + \sum\limits_{i=2}^{n} r_{i}b_{i}}{m_{1}} + \frac{m_{2} - 1 + \sum\limits_{i=2}^{n} r'_{i}b_{i}}{m_{2}} + \sum\limits_{i=2}^{n} w_{i}b_{i}$ with $w_{i}>1$ for some $i$ if and only if there exists a sum in Table \ref{table: potential numerical cases for 2 singularities, I} where $w_{i}=w_{j}=1$ and $b_{i}=b_{j}$ for some $i \neq j$ (later we will see that this doesn't happen).
        \item A number $b_{i}$ is displayed in Table \ref{table: potential numerical cases for 2 singularities, I} if and only if $w_{i}=1$. For the numbers $b_{i}$ where $w_{i} = 0$, we must have $r_{i} > 0$ or $r'_{i}>0$. In this case, $b_{i}$ is not displayed in the table but still contributes to the sum $\sum\limits_{i=2}^{n} r_{i}b_{i}$ or $\sum\limits_{i=2}^{n} r'_{i}b_{i}$.
        \item In the last column, we give the upper bounds of the numbers $r_{2}$ and $r'_{2}$ for each case. Notice that if $r_{i}=0$ (resp. $r'_{i}=0$), then $D_{i}$ doesn't pass through the singular point $P_{1}$ (resp. $P_{2}$) of $X$ on $D_{1}$.
    \end{itemize}
    
    \begin{scriptsize}
    \begin{center}
    \begin{longtable}{|@{\makebox[3em][r]{\rownumber\space}}|C|C|C|C|C|C|C|C|}
    \caption{Potential numerical cases when $D_{1}$ contains $2$ singularities and $d'=1$}
    \label{table: potential numerical cases for 2 singularities, I} \\
    \hline
    \multicolumn{1}{@{\makebox[3em][r]{Case No.~}}|C|}{m_{1}} 
    & m_{2} & a & b_{2} & \sum\limits_{i=2}^{n} r_{i}b_{i} & \sum\limits_{i=2}^{n} r'_{i}b_{i} & r_{2}\leq & r'_{2}\leq \\ \hline
    
    2 & 2 & 66360 & \frac{1}{3} & \frac{12}{35} & \frac{313}{316} & 0 & 0 \\
    \hline
    2 & 2 & 82320 & \frac{1}{7} & \frac{29}{40} & \frac{1018}{1029} & 0 & 0 \\
    \hline
    2 & 2 & 65520 & \frac{1}{7} & \frac{29}{40} & \frac{463}{468} & 0 & 0 \\
    \hline
    2 & 2 & 82320 & \frac{1}{7} & \frac{1018}{1029} & \frac{29}{40} & 0 & 0 \\
    \hline
    2 & 2 & 65520 & \frac{1}{7} & \frac{463}{468} & \frac{29}{40} & 0 & 0 \\
    \hline
    2 & 2 & 66360 & \frac{1}{3} & \frac{313}{316} & \frac{12}{35} & 0 & 0 \\
    \hline
    2 & 3 & 68880 & \frac{1}{7} & \frac{29}{40} & \frac{1412}{1435} & 0 & 0 \\
    \hline
    2 & 3 & 65520 & \frac{1}{7} & \frac{29}{40} & \frac{307}{312} & 0 & 0 \\
    \hline
    2 & 3 & 127260 & \frac{1}{7} & \frac{501}{505} & \frac{7}{12} & 0 & 0 \\
    \hline
    2 & 3 & 104580 & \frac{1}{7} & \frac{2882}{2905} & \frac{7}{12} & 0 & 0 \\
    \hline
    2 & 3 & 81900 & \frac{1}{7} & \frac{2257}{2275} & \frac{7}{12} & 0 & 0 \\
    \hline
    2 & 3 & 74340 & \frac{1}{7} & \frac{878}{885} & \frac{7}{12} & 0 & 0 \\
    \hline
    2 & 3 & 59220 & \frac{1}{7} & \frac{1632}{1645} & \frac{7}{12} & 0 & 0 \\
    \hline
    2 & 4 & 82320 & \frac{1}{7} & \frac{1018}{1029} & \frac{9}{20} & 0 & 0 \\
    \hline
    2 & 4 & 65520 & \frac{1}{7} & \frac{463}{468} & \frac{9}{20} & 0 & 0 \\
    \hline
    2 & 6 & 65520 & \frac{1}{7} & \frac{29}{40} & \frac{151}{156} & 0 & 0 \\
    \hline
    2 & 7 & 82320 & \frac{1}{7} & \frac{29}{40} & \frac{283}{294} & 0 & 0 \\
    \hline
    2 & 7 & 64680 & \frac{1}{7} & \frac{61}{84} & \frac{369}{385} & 1 & 0 \\
    \hline
    2 & 8 & 53760 & \frac{1}{3} & \frac{12}{35} & \frac{431}{448} & 0 & 0 \\
    \hline
    2 & 11 & 104720 & \frac{1}{7} & \frac{29}{40} & \frac{16}{17} & 0 & 0 \\
    \hline
    2 & 11 & 64680 & \frac{1}{7} & \frac{61}{84} & \frac{229}{245} & 1 & 0 \\
    \hline
    2 & 13 & 65520 & \frac{1}{7} & \frac{29}{40} & \frac{67}{72} & 0 & 0 \\
    \hline
    2 & 14 & 82320 & \frac{1}{7} & \frac{29}{40} & \frac{136}{147} & 0 & 0 \\
    \hline
    2 & 16 & 53760 & \frac{1}{3} & \frac{12}{35} & \frac{207}{224} & 0 & 0 \\
    \hline
    2 & 17 & 104720 & \frac{1}{7} & \frac{29}{40} & \frac{10}{11} & 0 & 0 \\
    \hline
    2 & 26 & 65520 & \frac{1}{7} & \frac{29}{40} & \frac{31}{36} & 0 & 0 \\
    \hline
    2 & 32 & 53760 & \frac{1}{3} & \frac{12}{35} & \frac{95}{112} & 0 & 0 \\
    \hline
    2 & 39 & 65520 & \frac{1}{7} & \frac{29}{40} & \frac{19}{24} & 0 & 0 \\
    \hline
    2 & 41 & 68880 & \frac{1}{7} & \frac{29}{40} & \frac{82}{105} & 0 & 0 \\
    \hline
    2 & 49 & 82320 & \frac{1}{7} & \frac{29}{40} & \frac{31}{42} & 0 & 0 \\
    \hline
    2 & 64 & 53760 & \frac{1}{3} & \frac{12}{35} & \frac{39}{56} & 0 & 0 \\
    \hline
    2 & 77 & 64680 & \frac{1}{7} & \frac{61}{84} & \frac{19}{35} & 1 & 1 \\
    \hline
    2 & 78 & 65520 & \frac{1}{7} & \frac{29}{40} & \frac{7}{12} & 0 & 0 \\
    \hline
    2 & 79 & 66360 & \frac{1}{3} & \frac{12}{35} & \frac{5}{8} & 0 & 0 \\
    \hline
    2 & 98 & 82320 & \frac{1}{7} & \frac{29}{40} & \frac{10}{21} & 0 & 1 \\
    \hline
    2 & 101 & 84840 & \frac{1}{7} & \frac{61}{84} & \frac{2}{5} & 1 & 0 \\
    \hline
    2 & 103 & 57680 & \frac{1}{7} & \frac{29}{40} & \frac{9}{20} & 0 & 0 \\
    \hline
    2 & 107 & 59920 & \frac{1}{7} & \frac{29}{40} & \frac{3}{7} & 0 & 3 \\
    \hline
    2 & 123 & 68880 & \frac{1}{7} & \frac{29}{40} & \frac{12}{35} & 0 & 1 \\
    \hline
    2 & 128 & 53760 & \frac{1}{3} & \frac{12}{35} & \frac{11}{28} & 0 & 0 \\
    \hline
    2 & 158 & 66360 & \frac{1}{3} & \frac{12}{35} & \frac{1}{4} & 0 & 0 \\
    \hline
    2 & 187 & 104720 & \frac{1}{7} & \frac{29}{40} & 0 & 0 & 0 \\
    \hline
    3 & 4 & 68880 & \frac{1}{7} & \frac{1412}{1435} & \frac{9}{20} & 0 & 0 \\
    \hline
    3 & 4 & 65520 & \frac{1}{7} & \frac{307}{312} & \frac{9}{20} & 0 & 0 \\
    \hline
    3 & 5 & 81900 & \frac{1}{7} & \frac{7}{12} & \frac{446}{455} & 0 & 0 \\
    \hline
    3 & 7 & 54684 & \frac{1}{7} & \frac{7}{12} & \frac{211}{217} & 0 & 0 \\
    \hline
    3 & 10 & 81900 & \frac{1}{7} & \frac{7}{12} & \frac{437}{455} & 0 & 0 \\
    \hline
    3 & 11 & 63756 & \frac{1}{7} & \frac{7}{12} & \frac{22}{23} & 0 & 0 \\
    \hline
    3 & 13 & 81900 & \frac{1}{7} & \frac{7}{12} & \frac{166}{175} & 0 & 0 \\
    \hline
    3 & 23 & 63756 & \frac{1}{7} & \frac{7}{12} & \frac{10}{11} & 0 & 0 \\
    \hline
    3 & 26 & 81900 & \frac{1}{7} & \frac{7}{12} & \frac{157}{175} & 0 & 0 \\
    \hline
    3 & 31 & 54684 & \frac{1}{7} & \frac{7}{12} & \frac{43}{49} & 0 & 0 \\
    \hline
    3 & 41 & 51660 & \frac{1}{7} & \frac{7}{12} & \frac{88}{105} & 0 & 0 \\
    \hline
    3 & 47 & 59220 & \frac{1}{7} & \frac{7}{12} & \frac{57}{70} & 0 & 1 \\
    \hline
    3 & 59 & 74340 & \frac{1}{7} & \frac{7}{12} & \frac{23}{30} & 0 & 0 \\
    \hline
    3 & 65 & 81900 & \frac{1}{7} & \frac{7}{12} & \frac{26}{35} & 0 & 1 \\
    \hline
    3 & 82 & 51660 & \frac{1}{7} & \frac{7}{12} & \frac{71}{105} & 0 & 1 \\
    \hline
    3 & 83 & 104580 & \frac{1}{7} & \frac{7}{12} & \frac{47}{70} & 0 & 0 \\
    \hline
    3 & 94 & 59220 & \frac{1}{7} & \frac{7}{12} & \frac{22}{35} & 0 & 3 \\
    \hline
    3 & 101 & 127260 & \frac{1}{7} & \frac{7}{12} & \frac{3}{5} & 0 & 0 \\
    \hline
    3 & 103 & 64890 & \frac{1}{7} & \frac{7}{12} & \frac{83}{140} & 0 & 1 \\
    \hline
    3 & 118 & 74340 & \frac{1}{7} & \frac{7}{12} & \frac{8}{15} & 0 & 0 \\
    \hline
    3 & 130 & 81900 & \frac{1}{7} & \frac{7}{12} & \frac{17}{35} & 0 & 2 \\
    \hline
    3 & 139 & 87570 & \frac{1}{7} & \frac{7}{12} & \frac{9}{20} & 0 & 0 \\
    \hline
    3 & 152 & 47880 & \frac{1}{7} & \frac{7}{12} & \frac{2}{5} & 0 & 0 \\
    \hline
    3 & 166 & 104580 & \frac{1}{7} & \frac{7}{12} & \frac{12}{35} & 0 & 1 \\
    \hline
    3 & 190 & 47880 & \frac{1}{7} & \frac{7}{12} & \frac{1}{4} & 0 & 0 \\
    \hline
    3 & 202 & 127260 & \frac{1}{7} & \frac{7}{12} & \frac{1}{5} & 0 & 0 \\
    \hline
    3 & 217 & 54684 & \frac{1}{7} & \frac{7}{12} & \frac{1}{7} & 0 & 1 \\
    \hline
    3 & 253 & 63756 & \frac{1}{7} & \frac{7}{12} & 0 & 0 & 0 \\
    \hline
    4 & 5 & 50400 & \frac{1}{7} & \frac{481}{504} & \frac{12}{35} & 0 & 1 \\
    \hline
    4 & 6 & 65520 & \frac{1}{7} & \frac{9}{20} & \frac{151}{156} & 0 & 0 \\
    \hline
    4 & 7 & 82320 & \frac{1}{7} & \frac{9}{20} & \frac{283}{294} & 0 & 0 \\
    \hline
    4 & 11 & 104720 & \frac{1}{7} & \frac{9}{20} & \frac{16}{17} & 0 & 0 \\
    \hline
    4 & 13 & 65520 & \frac{1}{7} & \frac{9}{20} & \frac{67}{72} & 0 & 0 \\
    \hline
    4 & 14 & 82320 & \frac{1}{7} & \frac{9}{20} & \frac{136}{147} & 0 & 0 \\
    \hline
    4 & 17 & 104720 & \frac{1}{7} & \frac{9}{20} & \frac{10}{11} & 0 & 0 \\
    \hline
    4 & 26 & 65520 & \frac{1}{7} & \frac{9}{20} & \frac{31}{36} & 0 & 0 \\
    \hline
    4 & 29 & 48720 & \frac{1}{7} & \frac{9}{20} & \frac{71}{84} & 0 & 0 \\
    \hline
    4 & 39 & 65520 & \frac{1}{7} & \frac{9}{20} & \frac{19}{24} & 0 & 0 \\
    \hline
    4 & 41 & 68880 & \frac{1}{7} & \frac{9}{20} & \frac{82}{105} & 0 & 0 \\
    \hline
    4 & 49 & 82320 & \frac{1}{7} & \frac{9}{20} & \frac{31}{42} & 0 & 0 \\
    \hline
    4 & 78 & 65520 & \frac{1}{7} & \frac{9}{20} & \frac{7}{12} & 0 & 0 \\
    \hline
    4 & 87 & 48720 & \frac{1}{7} & \frac{9}{20} & \frac{15}{28} & 0 & 2 \\
    \hline
    4 & 98 & 82320 & \frac{1}{7} & \frac{9}{20} & \frac{10}{21} & 0 & 1 \\
    \hline
    4 & 103 & 57680 & \frac{1}{7} & \frac{9}{20} & \frac{9}{20} & 0 & 0 \\
    \hline
    4 & 107 & 59920 & \frac{1}{7} & \frac{9}{20} & \frac{3}{7} & 0 & 3 \\
    \hline
    4 & 123 & 68880 & \frac{1}{7} & \frac{9}{20} & \frac{12}{35} & 0 & 1 \\
    \hline
    4 & 187 & 104720 & \frac{1}{7} & \frac{9}{20} & 0 & 0 & 0 \\
    \hline
    5 & 6 & 50400 & \frac{1}{7} & \frac{12}{35} & \frac{313}{336} & 1 & 0 \\
    \hline
    5 & 8 & 50400 & \frac{1}{7} & \frac{12}{35} & \frac{229}{252} & 1 & 0 \\
    \hline
    5 & 12 & 50400 & \frac{1}{7} & \frac{12}{35} & \frac{145}{168} & 1 & 0 \\
    \hline
    5 & 24 & 50400 & \frac{1}{7} & \frac{12}{35} & \frac{61}{84} & 1 & 1 \\
    \hline
    
    \end{longtable}
    \end{center}
    \end{scriptsize}

    Now we examine the possible values of $q_{1}$ and $q_{2}$ by discussing the possible dual graphs on the minimal resolution $X^{min}$ of $X$ that are possibly geometrically realizable. The dual graph satisfies the following properties:
    \begin{enumerate}[align=left, label=(\arabic*)]
        \item For every $2 \leq i \leq n$, the divisor $D'_{i}$ does not pass through $E_{1} \cap D'_{1}$ and $E_{2} \cap D'_{1}$. In particular, we have $D'_{i} \cdot D'_{1} = w_{i}$. Indeed, if $D'_{i}$ passes through $E_{1} \cap D'_{1}$, then we have $r_{i} \geq m_{1}$. We can then replace $r_{i}$ by $r_{i} - m_{1}$ and replace $w_{i}$ by $w_{i}+1$ to create a new case satisfying inequalities (\ref{inequality 2}). However, there is no such case in Table \ref{table: potential numerical cases for 2 singularities, I}. Similarly, $D'_{i}$ does not pass through $E_{2} \cap D'_{1}$.
        \item For $2 \leq i \leq n$, there is an edge between the vertex of $D_{i}$ and the vertex of $D_{1}$ if and only if $i=2$. Indeed, since the morphism $X^{min} \rightarrow X'$ is an isomorphism near $D'_{1}$, the edge can be computed by $D'_{i} \cdot D'_{1} = w_{i}$ by (1). As explained before Table \ref{table: potential numerical cases for 2 singularities, I}, we have $w_{i} \in \{ 0,1\}$. Moreover, as explained before Table \ref{table: potential numerical cases for 2 singularities, I}, we have $w_{2}=1$ and $w_{i}=0$ for all $i > 2$.
        \item There exists three chains $\Gamma$, $\Gamma_{1}$, $\Gamma_{2}$ of divisors on $X^{min}$ of type $\bigcirc$, $\otimes$, or $\odot$, such that
        \begin{itemize}
            \item $\Gamma$ connects the vertices of the strict birational transformations of $E_{1}$ and $E_{2}$ on $X^{min}$. Such a chain $\Gamma$ is unique. Moreover, $\Gamma$ has one of the following forms:
            \begin{figure}[H]
            \centering
            \begin{tikzpicture}
            \node[inner sep=0, draw, circle] (0) {$E_{1}$};
            \node[] (1) [right of=0] {$\cdots_{\bigcirc}$};
            \node[draw, circle, label=center:$\times$] (2) [right of=1] {};
            \node[] (3) [right of=2] {$\cdots_{\bigcirc}$};
            \node[inner sep=0, draw, circle] (4) [right of=3] {$E_{2}$};
            \draw (0) -- (1);
            \draw (1) -- (2);
            \draw (2) -- (3);
            \draw (3) -- (4);
            \end{tikzpicture}
            \end{figure}
            \begin{figure}[H]
            \centering
            \begin{tikzpicture}
            \node[inner sep=0, draw, circle] (0) {$E_{1}$};
            \node[] (1) [right of=0] {$\cdots_{\bigcirc}$};
            \node[draw, circle, label=center:$\times$] (2) [right of=1] {};
            \node[] (3) [right of=2] {$\cdots_{\odot, \otimes}$};
            \node[draw, circle, label=center:$\times$] (4) [right of=3] {};
            \node[] (5) [right of=4] {$\cdots_{\bigcirc}$};
            \node[inner sep=0, draw, circle] (6) [right of=5] {$E_{2}$};
            \draw (0) -- (1);
            \draw (1) -- (2);
            \draw (2) -- (3);
            \draw (3) -- (4);
            \draw (4) -- (5);
            \draw (5) -- (6);
            \end{tikzpicture}
            \end{figure}
            In particular, $\Gamma$ contains a vertex of type $\otimes$.
            \item For $i=1,2$, $\Gamma_{i}$ connects the vertices of the strict birational transformations of $E_{i}$ and $D_{2}$ on $X^{min}$ (this also includes the case where the vertices of the strict birational transformations of $E_{i}$ and $D_{2}$ on $X^{min}$ are directly connected by an edge, which is formally treated as $\Gamma_{i} = \emptyset$). Such a chain $\Gamma_{i}$ is unique. Moreover, $\Gamma_{i}$ has one of the following forms:
            \begin{figure}[H]
            \centering
            \begin{tikzpicture}
            \node[inner sep=0, draw, circle] (0) {$D_{2}$};
            \node[inner sep=0, draw, circle] (1) [right of=0] {$E_{i}$};
            \draw (0) -- (1);
            \end{tikzpicture}
            \end{figure}
            \begin{figure}[H]
            \centering
            \begin{tikzpicture}
            \node[inner sep=0, draw, circle] (0) {$D_{2}$};
            \node[] (1) [right of=0] {$\cdots_{\odot}$};
            \node[draw, circle, label=center:$\times$] (2) [right of=1] {};
            \node[inner sep=0, draw, circle] (3) [right of=2] {$E_{i}$};
            \draw (0) -- (1);
            \draw (1) -- (2);
            \draw (2) -- (3);
            \end{tikzpicture}
            \end{figure}
            In particular, if $\Gamma_{i}$ is nonempty, then it contains a vertex of type $\otimes$.
            \item We have $\Gamma \cap \Gamma_{1} = \Gamma \cap \Gamma_{2} = \Gamma_{1} \cap \Gamma_{2} = \emptyset$.
            \item The union $\Gamma \cup \Gamma_{1} \cup \Gamma_{2}$ contains all vertices of type $\bigcirc$, $\otimes$ and $\odot$ except $E_{1}$ and $E_{2}$.
        \end{itemize}
        Indeed, the strict birational transformations of $D_{2}$, $E_{1}$ and $E_{2}$ on $\mathbb{P}^2$ are three lines intersecting each other. If they intersect at one point $Q$, then the coefficient at $Q$ is $3 - b_{1} > 2$, contradicting the condition that $(X,B)$ is a klt 0-pair. Hence they intersect in pairs. Other divisors on $X^{min}$ of type $\bigcirc$, $\otimes$ and $\odot$ are obtained by a sequence of blow-ups from $\mathbb{P}^2$. Moreover, the crepant pullback of $(X,B)$ on $X^{min}$ does not have negative coefficients. Hence, for each blown-up point $Q'$, the sum of the coefficients of the divisors passing through $Q'$ is $\geq 1$. Since the morphism $X^{min} \rightarrow \mathbb{P}^2$ is an isomorphism near the strict birational transformation of $D_{1}$, its exceptional divisors are centered at the pairwise intersections of the strict birational transformations of $D_{2}$, $E_{1}$ and $E_{2}$ on $\mathbb{P}^2$. Take $\Gamma$ (resp. $\Gamma_{i}$) to be the exceptional divisors centered at the intersection of the strict birational transformations of $E_{1}$ and $E_{2}$ (resp. $D_{2}$ and $E_{i}$) on $\mathbb{P}^2$. Then we have $\Gamma \cap \Gamma_{1} = \Gamma \cap \Gamma_{2} = \Gamma_{1} \cap \Gamma_{2} = \emptyset$, and $\Gamma \cup \Gamma_{1} \cup \Gamma_{2}$ contains all vertices of type $\bigcirc$, $\otimes$ and $\odot$ except $E_{1}$ and $E_{2}$. Moreover, by the above blow-up argument, $\Gamma$, $\Gamma_{1}$ and $\Gamma_{2}$ are chains. We can immediately see that $\Gamma$ has the given form by the definition of vertices of type $\bigcirc$, $\otimes$ and $\odot$. To see that $\Gamma_{i}$ has the given form, notice that in all cases of Table \ref{table: potential numerical cases for 2 singularities, I}, we have $b_{2} < \frac{1}{2}$. Hence, to have a nonnegative coefficient on $X^{min}$, one can only blow up at the intersection point on the strict birational transformation of $E_{i}$ for each intermediate model.
        \item There are exactly two vertices of type $\otimes$, and for every such vertex, there is an edge between this vertex and a vertex of type $\bigcirc$. Indeed, a vertex of type $\otimes$ corresponds to an exceptional divisor of $X' \rightarrow X_{d'} \cong \mathbb{P}^2$. In this case, $X'$ extracts $E_{1}$ and $E_{2}$ from $X$ and contracts two divisors to $X_{d'} \cong \mathbb{P}^2$. Let $Z$ be a prime divisor on $X'$ exceptional over $\mathbb{P}^{2}$, and $Z_{X}$ be its strict birational transformation on $X$. Then $Z \cdot D'_{1} = 0$ and $Z_{X} \cdot D_{1} >0$. Hence the strict birational transformation of $Z$ on $X^{min}$ intersects one of the two chains of exceptional divisors of $X^{min} \rightarrow X$ centered at $D_{1}$. In particular, since the vertex of a prime divisor $D_{i}$ with $2 \leq i \leq n$ is of type $\otimes$ if and only if $D'_{i} \cdot D'_{1}=0$, we have $n \leq 4$ by (1) and (2).
        \item At least one of $r_{2}$ and $r'_{2}$ is nonzero. Indeed, if $r_{2}=r'_{2}=0$, then the two chains $\Gamma_{1}$ and $\Gamma_{2}$ in (3) are not empty. By (3), each chain contains at least one vertex of type $\otimes$. Hence we have at least three vertices of type $\otimes$, contradicting (4). In particular, by Table \ref{table: potential numerical cases for 2 singularities, I}, we always have $b_{2} = \frac{1}{7}$.
    \end{enumerate}
    
    Now we use the above properties to complete the discussion for the $d'=1$ case. We discuss the problem on the basis of the possible values of $r_{2}$ and $r'_{2}$. By (5), they cannot both be zero.
    \begin{enumerate}
        \item $r_{2}=0$. In this case, we have $\Gamma_{1} \neq \emptyset$ and $\Gamma_{2} = \emptyset$. By (3) and (4), both chains $\Gamma$ and $\Gamma_{1}$ contain exactly one vertex of type $\otimes$. Suppose, without loss of generality, that the vertex of type $\otimes$ in $\Gamma_{1}$ is $D_{3}$ and the vertex of type $\otimes$ in $\Gamma$ is $D_{4}$. Then by (3), for $i=1,2$, the path from $D_{4}^{min}$ to $E_{i}^{min}$, apart from $D_{4}^{min}$ itself, consists of vertices of type $\bigcirc$, which correspond to curves contracted to $P_{i}$ by $X^{min}\rightarrow X$. Hence the dual graph has the following form:
        \begin{figure}[H]
        \centering
        \begin{tikzpicture}
            \node[inner sep=0, draw, circle] (0) {$D_{1}$};
            \node[inner sep=0, draw, circle] (1) [right of=0] {$E_{1}$};
            \draw (0) -- (1);
            \node[] (2) [right of=1] {$\cdots_{\bigcirc}$};
            \draw (1) -- (2);
            \node[inner sep=0, draw, circle] (a) [above of=0]{$D_{2}$};
            \draw (0) -- (a);
            \node[] (a+1) [right of=a]{$\cdots_{\odot}$};
            \draw (a) -- (a+1);
            \node[inner sep=0, draw, circle] (a+2) [right of=a+1]{$D_{3}$};
            \draw (a+1) -- (a+2);
            \draw (a+2) -- (1);
            \node[inner sep=0, draw, circle] (-1) [left of=0] {$E_{2}$};
            \draw (0) -- (-1);
            \node[] (-2) [left of=-1]{$\cdots_{\bigcirc}$};
            \draw (-1) -- (-2);
            \node[inner sep=0, draw, circle] (b) [below of=0]{$D_{4}$};
            \draw (b) to[out=25,in=-90] (2.east);
            \draw (b) to[out=155,in=-90] (-2.west);
            \draw (-1) -- (a);
        \end{tikzpicture}
        \end{figure}
        In particular, we have $r_{4},r'_{4} > 0$. Moreover, since the divisor $D_{3}$ can be obtained by a sequence of blow-ups from $\mathbb{P}^2$ at the intersection of the strict birational transformations of $D_{2}$ and $E_{1}$ on $\mathbb{P}^2$, we have $b_{3} < b_{2} = \frac{1}{7}$ by (5). Since every positive coefficient in $\Gamma(\mathcal{N}_{1})$ is at least $\frac{1}{7}$, we have $b_{3}=0$. Consequently,
        $$
        \sum\limits_{i=2}^{n}r_{i}b_{i}=r_{4}b_{4}, \qquad \sum\limits_{i=2}^{n}r'_{i}b_{i}-r'_{2}b_{2}=r'_{4}b_{4}.
        $$
        Thus both residual sums are positive integral multiples of $b_{4} \geq \frac{1}{7}$. A direct check of the relevant rows of Table \ref{table: potential numerical cases for 2 singularities, I}, for every allowed positive value of $r'_{2}$, shows that their greatest common positive rational divisor is at most $\frac{1}{20}$. This is a contradiction.
        
        \item $r'_{2}=0$. In this case, we have $\Gamma_{1} = \emptyset$ and $\Gamma_{2} \neq \emptyset$. Arguing as in (a), with $E_{1}$ and $E_{2}$ interchanged, we conclude that this case is also impossible.
        
        \item $r_{2},r'_{2}>0$. In this case, we have $\Gamma_{1} = \Gamma_{2} = \emptyset$. From Table \ref{table: potential numerical cases for 2 singularities, I}, only Cases 32 and 93 are possible, where we have $b_{2}=1/7$, $r_{2}=r'_{2}=1$, and $n=4$. After subtracting the contribution of $D_{2}$, we obtain
        $$
        \begin{array}{|c|c|c|}
        \hline
        &r_{3}b_{3}+r_{4}b_{4}
        &r'_{3}b_{3}+r'_{4}b_{4}\\ \hline
        \text{Case 32}&\frac{7}{12}&\frac{2}{5}\\\hline
        \text{Case 93}&\frac{1}{5}&\frac{7}{12} \\\hline
        \end{array}
        $$
        After a permutation of the labeling, one of the following holds:
        $$
        \begin{array}{|c|c|c|c|c|c|c|}
        \hline
        &b_{3} & b_{4} & r_{3} & r'_{3} & r_{4} & r'_{4} \\\hline
        \text{Case 32}&\frac{7}{12}&\frac{1}{5} & 1 & 0 & 0 & 2\\\hline
        \text{Case 32'}&\frac{7}{12}&\frac{2}{5} & 1 & 0 & 0 & 1\\\hline
        \text{Case 93}&\frac{1}{5}&\frac{7}{12} & 1 & 0 & 0 & 1 \\\hline
        \end{array}
        $$
        By (3), there is an edge from the vertex of the strict birational transformation of $D_{2}$ on $X^{min}$ to the vertex of the strict birational transformation of $E_{i}$ on $X^{min}$, $i=1,2$. Hence we have $q_{1} \leq r_{2} = 1$ and $q_{2} \leq r'_{2} = 1$. We conclude that $q_{1}=q_{2}=1$.
        
        In Case 32, we have
        $$
        a(1+\frac{q_{1}}{m_{1}}+\frac{q_{2}}{m_{2}}) = 64680(1+\frac{1}{2}+\frac{1}{77}) = 97860 < 127260.
        $$
        In Case 93, we have
        $$
        a(1+\frac{q_{1}}{m_{1}}+\frac{q_{2}}{m_{2}}) = 50400(1+\frac{1}{5}+\frac{1}{24}) = 62580 < 127260.
        $$
        Hence $b_{1} < 1 - \frac{1}{127260}$.
    \end{enumerate}
    This concludes the proof of Case 3.
    
    \item $2 \leq d' \leq 14$. In this case, by \thref{lemma: reduction}, we have $X_{d'} \cong \mathbb{P}(1,1,d')$. If $m_{1},m_{2} \geq 8$, then we will have $\frac{7}{8} + \frac{7}{8} + \frac{2}{7} \geq 2 + \frac{1}{28}$. Then by (\ref{equation: d <= d'+2}) and (\ref{equation 2}), we have
    $$
    b_{1} \leq 1 - \frac{1}{28d} < 1 - \frac{1}{28(d'+2)} \leq 1 - \frac{1}{448}.
    $$
    
    Hence, without loss of generality, we can assume that $m_{1} \leq 7$ and $m_{1} \leq m_{2}$. By Equation (\ref{equation 2}),
    $$
    (d' + \frac{q_{1}}{m_{1}} + \frac{q_{2}}{m_{2}})(1-b_{1}) = d' \sum\limits_{i=2}^{n}b_{i}^{\prime h}+\sum\limits_{i=2}^{n} b_{i}^{\prime v} + \frac{m_{1}-1+\sum\limits_{i=2}^{n} r_{i}b_{i}}{m_{1}} + \frac{m_{2}-1+\sum\limits_{i=2}^{n} r'_{i}b_{i}}{m_{2}} - 2 > 0.
    $$

    We disprove this case by contradiction. Suppose to the contrary that $b_{1} \geq 1 - \frac{1}{127260}$. Since 
    $$ 
    d = d' + \frac{q_{1}}{m_{1}} + \frac{q_{2}}{m_{2}} < d'+2,
    $$
    by (\ref{equation 2}), we have 
    \begin{equation}\label{inequality 2'}
    d' \sum\limits_{i=2}^{n}b_{i}^{\prime h}+\sum\limits_{i=2}^{n} b_{i}^{\prime v} + \frac{m_{1}-1+\sum\limits_{i=2}^{n} r_{i}b_{i}}{m_{1}} + \frac{m_{2}-1+\sum\limits_{i=2}^{n} r'_{i}b_{i}}{m_{2}} = 2 + (d' + \frac{q_{1}}{m_{1}} + \frac{q_{2}}{m_{2}})(1-b_{1}) < 2 + \frac{d'+2}{127260}.
    \end{equation}

    We use a computer to enumerate all the linear combinations satisfying inequality (\ref{inequality 2'}). The results are listed in Table \ref{table: potential numerical cases for 2 singularities, II} below. We give some explanations of Table \ref{table: potential numerical cases for 2 singularities, II}:
    \begin{itemize}
        \item Each row is distinguished by the following numerical data: $d'$, $m_{1}$, $m_{2}$, $b_{i}$, $\sum\limits_{i=2}^{n}r_{i}b_{i}$ and $\sum\limits_{i=2}^{n}r'_{i}b_{i}$. Notice that two cases with different $r_{i}$ and $r'_{i}$ but the same $\sum\limits_{i=2}^{n}r_{i}b_{i}$ and $\sum\limits_{i=2}^{n}r'_{i}b_{i}$ will be considered as the same case.
        \item If two cases are related by a permutation of the indices of $b_{i}, i=2,3,\cdots, n$, we will recognize them as the same case.
        \item To simplify the expression in the table, we replace the $b_{1}$ column by the column of $a = \frac{1}{(1-b_{1})(d' + \frac{q_{1}}{m_{1}}+\frac{q_{2}}{m_{2}})}$. Then in inequalities (\ref{inequality 2'}) we can write $2 + (1-b_{1})(d' + \frac{q_{1}}{m_{1}}+\frac{q_{2}}{m_{2}}) = 2 + \frac{1}{a}$. Notice that the condition $b_{1} \geq 1 - \frac{1}{127260}$ is equivalent to $a(d' + \frac{q_{1}}{m_{1}}+\frac{q_{2}}{m_{2}}) \geq 127260$.
        \item A number $b_{i}$ is displayed in Table \ref{table: potential numerical cases for 2 singularities, II} if and only if $b'_{i}>0$ (see (\ref{equation: b_{i}'})), that is, the strict birational transformation of $D_{i}$ on $X_{d'}^{min} \cong \mathbb{F}_{d'}$ is a prime divisor. For the numbers $b_{i}$ where $b'_{i} = 0$, we must have $r_{i} > 0$ or $r'_{i}>0$. In this case, $b_{i}$ is not displayed in the table but still contributes to the sum $\sum\limits_{i=2}^{n} r_{i}b_{i}$ or $\sum\limits_{i=2}^{n} r'_{i}b_{i}$.
        \item In the last column, we give the upper bounds of the numbers $r_{2}$ and $r'_{2}$ for each case. Notice that if $r_{i}=0$ (resp. $r'_{i}=0$), then $D_{i}$ doesn't pass through the singular point $P_{1}$ (resp. $P_{2}$) of $X$ on $D_{1}$.
    \end{itemize}

    \begin{center}
    \begin{longtable}{|@{\makebox[3em][r]{\rownumber\space}}|C|C|C|C|C|C|C|C|C|}
    \caption{Potential numerical cases when $D_{1}$ contains 2 singularities, II}\label{table: potential numerical cases for 2 singularities, II} \\
    \hline
    \multicolumn{1}{@{\makebox[3em][r]{Case No.~}}|C|}{d'} & m_{1} & m_{2} & a & b_{2} & \sum\limits_{i=2}^{n} r_{i}b_{i} & \sum\limits_{i=2}^{n} r'_{i}b_{i} & r_{2} \leq & r'_{2} \leq \\
    \hline
    
    2 & 2 & 3 & {32760} &
    {1/7} &
    {9/20} &
    {151/156} &
    0 &
    0\\\hline
    2 & 2 & 7 & {41160} &
    {1/7} &
    {9/20} &
    {136/147} &
    0 &
    0\\\hline
    2 & 2 & 13 & {32760} &
    {1/7} &
    {9/20} &
    {31/36} &
    0 &
    0\\\hline
    2 & 2 & 39 & {32760} &
    {1/7} &
    {9/20} &
    {7/12} &
    0 &
    0\\\hline
    2 & 2 & 49 & {41160} &
    {1/7} &
    {9/20} &
    {10/21} &
    0 &
    1\\\hline
    \end{longtable}
    \end{center}

    In particular, we always have $d'=2$, $m_{1}=2$, and $\sum\limits_{i=2}^{n} r_{i}b_{i} = \frac{9}{20}$. Now we examine the possible dual graphs on the minimal resolution $X^{min}$ of $X$. The dual graph satisfies the following properties:
    \begin{enumerate}[align=left, label=(\arabic*)]
        \item The strict birational transformation of $D_{2}$ on $X_{d'}^{min} \cong \mathbb{F}_{d'}$ is linearly equivalent to $E + 2F$, where $E$ and $F$ are the divisor classes of the negative section and the fibre of $\mathbb{F}_{d'}$, respectively. Moreover, for $3 \leq i \leq n$, the strict birational transformation of $D_{i}$ on $X_{d'}^{min} \cong \mathbb{F}_{d'}$ is zero. Indeed, as explained before Table \ref{table: potential numerical cases for 2 singularities, II}, for $2 \leq i \leq n$, a number $b_{i}$ is displayed if and only if $b'_{i}>0$. By Table \ref{table: potential numerical cases for 2 singularities, II}, there is only one nonzero $b'_{i}$ for $2 \leq i \leq n$, namely $b'_{2}$. Hence, by \thref{lemma: reduction}, the strict birational transformation of $D_{2}$ on $X_{d'}^{min} \cong \mathbb{F}_{d'}$ is linearly equivalent to $E + 2F$, and the strict birational transformation of $D_{i}$ on $X_{d'}^{min} \cong \mathbb{F}_{d'}$ is zero for $3 \leq i \leq n$.
        \item There is an edge between the vertex of the divisor $D_{2}$ and the vertex of the divisor $E_{1}$. In particular, we have $r_{2}>0$. Indeed, the strict birational transformations of $D_{2}$ and $E_{1}$ on $\mathbb{P}(1,1,2)$ intersect transversally at a smooth point $Q$. Suppose, to the contrary, that their strict birational transformations on $X^{min}$ do not intersect. Then we need to blow up at $Q$ at least once to obtain $X^{min}$. Since $\varphi_{min}$ is an isomorphism over $D_{1}^{min}$,  $Q$ does not lie on the strict birational transformation of $D_{1}$ on $\mathbb{P}(1,1,2)$. Moreover, the strict birational transformations of $E_{1}$ and $E_{2}$ only meet at the cone vertex. By (1), the boundary near $Q$ therefore consists precisely of the strict birational transformations of $D_{2}$ and $E_{1}$ with their respective coefficients. However, by \thref{statement: local classification}, the coefficient of the first blown-up divisor at $Q$ in the crepant pullback of the pair $(X,B)$ on $X^{min}$ is
        $$
        b_{2} + \frac{m_{1}-1+\sum\limits_{i=2}^{n}r_{i}b_{i} + b_{1}-1}{m_{1}} - 1 = \frac{1}{7} + \frac{1 + \frac{9}{20} + b_{1}-1}{2} - 1 = -\frac{37}{280} - \frac{1-b_{1}}{2}<0.
        $$
        This contradicts the effectivity of the crepant boundary on the minimal resolution $X^{min}$, as explained in the $d'=1$ case.
    \end{enumerate}
    However, in all cases in Table \ref{table: potential numerical cases for 2 singularities, II}, we have $r_{2}=0$, contradicting (2). This completes the proof of \thref{lemma: D1 contains 2 singularities}.
    \end{enumerate}
\end{proof}

\paragraph{$D_{1}$ contains 3 singularities of $X$}
\hfill

    In this subsection, we prove the following:
\begin{lemma}\label{lemma: D1 contains 3 singularities}
    Under the assumptions of \thref{thm: act gap}, suppose additionally that $D_{1}$ is a nonsingular rational curve containing exactly three singularities of $X$, and that we are in one of the cases remaining after the reduction in \thref{lemma: reduction}. Then we have $b_{1}\leq 1-\frac{1}{2520}$.
\end{lemma}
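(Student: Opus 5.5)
The plan is to follow the template of \thref{lemma: D1 contains 2 singularities}, but to exploit the fact that three singular points on $D_{1}$ are very restrictive numerically. This lets a single gap-number estimate, rather than a full dual-graph analysis, give the weaker bound $1-\frac{1}{2520}$. Assume $b_{1}>1-\frac{1}{2520}$; in particular $b_{1}>\frac{251}{252}$, so \thref{statement: equation from adjunction} and \thref{statement: bound by degree} apply. By \thref{lemma: at most 4 singularities on D1} we have $l+\sum_{i=1}^{u}\frac{m_{i}-1}{m_{i}}\leq 2$ with $u+l=3$.

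\textbf{Non-plt case.} Since the three points are singular, every point of Case (2) or Case (3) contributes at least $\frac{1}{2}$. If $l\geq 1$, this forces $l=1$ and $m_{1}=m_{2}=2$, with equality $l+\sum\frac{m_{i}-1}{m_{i}}=2$. I will not rely on the bound $1-\frac{1}{4032}$ of \thref{lemma: non-plt case}, which is too weak here. Instead I use the adjunction equation directly:
$$
(1-b_{1})d=\sum_{i\ge2}w_{i}b_{i}+\frac{\sum_{j}r_{1j}b_{j}}{2}+\frac{\sum_{j}r_{2j}b_{j}}{2}>0.
$$
The right-hand side is a positive element of $\frac{1}{2}$ times the additive monoid generated by $\Gamma(\mathcal{N}_{1})$. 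Hence it is at least $\frac{1}{2}\alpha_{0}(\Gamma(\mathcal{N}_{1}))=\frac{1}{14}$. Combined with $d\leq d'+3\leq 17$, this gives $1-b_{1}\geq\frac{1}{238}$, a contradiction.

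\textbf{Plt case.} Now $l=0$, $u=3$, and $\sum_{i=1}^{3}\frac{m_{i}-1}{m_{i}}\leq 2$. So $(m_{1},m_{2},m_{3})$, ordered increasingly, is a platonic triple: $(2,2,k)$, $(2,3,3)$, $(2,3,4)$ or $(2,3,5)$. The equation from \thref{statement: bound by degree}, or Equation (\ref{equation 2}) in Case 3(d), reads
$$
(1-b_{1})d=\sum_{i\ge2}w_{i}b_{i}+\sum_{i=1}^{3}\frac{m_{i}-1+R_{i}}{m_{i}}-2>0,\qquad R_{i}=\sum_{j}r_{ij}b_{j}\in[0,1],
$$
with the obvious $d'$-weighted modification in Case 3(d). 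By \thref{lemma: reduction}, $d=d'+\sum\frac{q_{i}}{m_{i}}<d'+3\leq 17$. It therefore suffices to show that the right-hand side is at least $\frac{17}{2520}$, or at least $\frac{d}{2520}$ in each case.

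For the three exceptional triples, the $m_{i}$ are bounded by $5$. The right-hand side is then a positive number of the form $\frac{N}{\mathrm{lcm}}$ built from finitely many small coefficients of $\Gamma(\mathcal{N}_{1})$, since $R_{i}\leq 1$ and large coefficients cannot help to approach $2$ from above. A computer enumeration in the style of \thref{algorithm:gap number} gives the weighted gap number over each $d'\leq 14$, and I expect it to comfortably exceed $\frac{d}{2520}$.

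For $(2,2,k)$, the sum $\frac{1+R_{1}}{2}+\frac{1+R_{2}}{2}+\frac{k-1+R_{3}}{k}$ exceeds $2$ by
$$
\frac{R_{1}+R_{2}}{2}+\frac{R_{3}-1}{k}+\sum_{i\ge2}w_{i}b_{i}.
$$
If $R_{1}+R_{2}+2\sum_{i\ge2}w_{i}b_{i}>0$, it is at least $\frac{1}{14}$ minus $\frac{1}{k}$. So for $k\geq 15$ one gets a bound like $\frac{1}{14}-\frac{1}{15}=\frac{1}{210}$; dividing by $d\leq 17$ still gives better than $\frac{1}{2520}$. For $k\leq 14$ this is again a finite computer check. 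If instead $R_{1}=R_{2}=0$ and all $w_{i}=0$, the expression equals $\frac{R_{3}-1}{k}\leq 0$, contradicting positivity.

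\textbf{Main obstacle.} The hard step is making the Case 3(d) enumeration rigorous, since there horizontal components carry weight $d'$. I would reuse exactly the reduction of \thref{lemma: reduction} ($e_{i}=1$, $f_{i}=d'$) so that the search stays finite, and I expect no geometric dual-graph analysis to be necessary at the level $\frac{1}{2520}$.
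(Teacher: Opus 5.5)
Your skeleton matches the paper's: the non-plt case via the bound $\frac{1}{14}$, then in the plt case an enumeration of triples, the tail $(2,2,k)$ handled by $\frac{1}{14}-\frac{1}{k}$, and a finite check. Your non-plt argument is correct. The plt case, however, does not go through as written, for two smaller reasons.

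First, $\sum_i\frac{m_i-1}{m_i}\le 2$ means $\sum_i\frac{1}{m_i}\ge 1$, not $>1$. So you have dropped the triples $(2,3,6)$, $(2,4,4)$, $(3,3,3)$, where $(K_X+D_1)\cdot D_1=0$. They are easy, since the excess then equals $\sum w_ib_i+\sum R_i/m_i\ge\frac{1}{42}$, but they must be treated. Second, your tail estimate is wrong: $\frac{1}{210}$ divided by $17$ is $\frac{1}{3570}<\frac{1}{2520}$. Starting the tail at $k\ge 16$, where $\frac{1}{14}-\frac{1}{16}=\frac{1}{112}$, would work.

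The substantive gap is the uniform bound $d<d'+3\le 17$ together with the expectation of a comfortable margin. There is no comfortable margin. For $d'\le 1$ the triple $(2,2,3)$ admits excess $\frac{1}{840}$ (e.g.\ $R_1=0$, $R_2=\frac14$, $R_3=\frac{22}{35}$, all $w_i=0$), and $2520=3\cdot 840$ is exactly where the lemma's constant comes from.

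You therefore need the sharper bound $d-d'=\sum_i\frac{q_i}{m_i}\le\sum_i\frac{m_i-1}{m_i}\le 2$, which comes from $q_i\le m_i-1$; that is, $d\le 3$ when $d'\le 1$. With your $d<4$ this case only gives $b_1\le 1-\frac{1}{3360}$, and your alternative criterion ``excess $\ge\frac{17}{2520}$'' fails for most triples.

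For $2\le d'\le 14$, where $d$ can reach $16$, an enumeration constrained only by positivity of the $d'$-weighted equation would again return excess $\frac{1}{840}$ (no horizontal components) and fail badly. The input you never state or justify is adjunction to a general fibre of $\mathbb{F}_{d'}$: $e+b_1+\sum_{i\ge 2}e_ib_i^{\prime h}=2$ with $e,b_1<1$ forces a horizontal component, hence $d'\sum b_i^{\prime h}\ge\frac{2}{7}$. This alone gives excess $\ge\frac27+\frac12+\frac12+\frac34-2=\frac{1}{28}$ for every triple except $(2,2,2)$ and $(2,2,3)$. Those two need only a short direct check (the paper finds $\ge\frac{3}{280}$), so no large Case 3(d) enumeration is needed.

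This is how the paper proceeds. It splits into $d'\le 1$, using $d\le 3$ and a table of minimal excesses with minimum $\frac{1}{840}$, and $2\le d'\le 14$, handled by the horizontal component.
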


\begin{proof}
    Let $m_{1},m_{2},m_{3},q_{1},q_{2},q_{3}$ be as in \thref{construction: extract to make Cartier}. By \thref{lemma: at most 4 singularities on D1}, we have
    $$
    l + \sum\limits_{i=1}^{3-l}\frac{m_{i}-1}{m_{i}} \leq 2.
    $$
    If $(X,D_{1})$ is non-plt, then we must have $l=1$ and $m_{1}=m_{2}=2$. By \thref{statement: bound by degree}, we have
    $$
    (1-b_{1})D_{1}^{2} = \sum\limits_{i=2}^{n}w_{i}b_{i} + \frac{1}{2}\sum\limits_{i=1}^{2}\sum\limits_{j=2}^{n}r_{ij}b_{j} \geq \frac{1}{14}.
    $$
    Since we are in one of the cases retained by \thref{lemma: reduction}, we have $d' \leq 14$. By (\ref{equation: d <= d'+2}), we have
    $$
    D_{1}^{2}\leq d'+2\leq16.
    $$
    Hence
    $$
    b_{1} \leq 1 - \frac{1}{14D_{1}^{2}} \leq 1 - \frac{1}{224}.
    $$
    
    Next we assume that $(X,D_{1})$ is plt. Since $\sum\limits_{i=1}^{3} \frac{m_{i}-1}{m_{i}} \leq 2$, the possible values for the 3-tuple $(m_{1},m_{2},m_{3})$ are 
    $$
    (2,2,\lambda)
    $$
    for some positive integer $\lambda \geq 2$, and
    $$
    (2,3,3),\, (2,3,4), \, (2,3,5), \, (2,3,6), \, (2,4,4), \, (3,3,3).
    $$ 
    Moreover, we have $d-d'= \frac{q_{1}}{m_{1}} + \frac{q_{2}}{m_{2}} + \frac{q_{3}}{m_{3}} \leq 2$.
    \begin{enumerate}[align=left,label = \textbf{Case \arabic*}:]

    \item $d' \leq 1$. In this case $d \leq 3$. Suppose first that $(m_{1},m_{2},m_{3})=(2,2,\lambda)$. We claim that there exists a prime divisor $D_{i}, i \neq 1$ meeting $D_{1}$ either in the smooth locus of $X$ or at $P_{1}$ or $P_{2}$. Indeed, otherwise $w_{i}=r_{1i}=r_{2i}=0$ for all $i \neq 1$. Since $(X,D_{1}+\sum\limits_{i=2}^{n}b_{i}D_{i})$ is lc, we have $\sum\limits_{j=2}^{n}r_{3j}b_{j}\leq1$. By \thref{statement: bound by degree}, we have
    $$
    0 < d(1-b_{1}) = -2 + \frac{1}{2} + \frac{1}{2} + \frac{\lambda - 1 + \sum\limits_{j=2}^{n}r_{3j}b_{j}}{\lambda} \leq 0,
    $$
    a contradiction. Such a component $D_{i}$ contributes at least $\frac{1}{14}$ at a point of index $2$, and at least $\frac{1}{7}$ at a smooth point. If $\lambda \geq 15$, then by \thref{statement: bound by degree},
    $$
    d(1-b_{1}) = -2 + \sum\limits_{i=1}^{3} \frac{m_{i}-1+\sum\limits_{j=2}^{n} r_{ij}b_{j}}{m_{i}} + \sum\limits_{i=2}^{n} w_{i}b_{i} \geq -2 + \frac{1}{2} + \frac{1}{2} + \frac{\lambda-1}{\lambda} + \frac{1}{14} \geq \frac{1}{210}.
    $$
    In this case $b_{1} \leq 1 - \frac{1}{630}$. Hence we can assume that $\lambda \leq 14$. 
    
    Now we use a computer to enumerate all the possible cases of the triple $(m_{1},m_{2},m_{3})$. The results are listed in the following table:

\begin{footnotesize}
\begin{center}
\begin{longtable}{|@{\makebox[3em][r]{\rownumber\space}}|C|C|C|C|C|C|}
\caption{Potential numerical cases when $D_{1}$ contains 3 singularities}\label{table: potential numerical cases for 3 singularities} \\
\hline
\multicolumn{1}{@{\makebox[3em][r]{Case No.~}}|C|}{(m_{1},m_{2},m_{3})} & \sum\limits_{i=1}^{3} \frac{m_{i}-1+\sum\limits_{j=2}^{n} r_{ij}b_{j}}{m_{i}} + \sum\limits_{i=2}^{n} w_{i}b_{i} - 2 & b_{2} & \sum\limits_{j=2}^{n} r_{1j}b_{j} & \sum\limits_{j=2}^{n} r_{2j}b_{j} & \sum\limits_{j=2}^{n} r_{3j}b_{j}\\
\hline

(2,2,14)& {1/98} & 0 &
0& {1/7}& {1/7} \\ 
\hline

(2,2,13)& {1/182} & 0 &
0& {1/7}& {1/7} \\ 
\hline

(2,2,12)& {1/210} & 0 &
0& {1/7}& {1/5} \\ 
\hline

(2,2,11) & {1/308} & 0 & 0 & {1/7} & {1/4} \\
\hline

(2,2,10) & {1/210} & 0 & 0 & {1/7} & {1/3} \\
\hline

(2,2,9) & {1/252} & 0 & 0 & {1/7} & {11/28} \\
\hline

(2,2,8) & {3/1120} & 0 & 0 & {1/7} & {9/20} \\
\hline

(2,2,7) & {1/392} & 0 & 0 & {1/4} & {1/7} \\
\hline

(2,2,6) & {1/504} & 0 & 0 & {1/7} & {7/12} \\
\hline

(2,2,5) & {1/700} & 0 & 0 & {1/7} & {13/20} \\
\hline

(2,2,4) & {1/420} & 0 & 0 & {1/3} & {12/35} \\
\hline

(2,2,3) & {1/840} & 0 & 0 & {1/4} & {22/35} \\
\hline

(2,2,2) & {1/504} & 0 & 0 & {1/7} & {31/36} \\
\hline

(2,3,3) & {1/210} & 0 & {12/35} & 0 & 0 \\
\hline

(2,3,4) & {1/420} & 0 & 0 & 0 & {12/35} \\
\hline

(2,3,5) & {1/150} & 0 & 0 & 0 & {1/5} \\
\hline

(2,3,6) & {1/42} & 0 & 0 & 0 & {1/7} \\
\hline

(2,4,4) & {1/28} & 0 & 0 & 0 & {1/7} \\
\hline

(3,3,3) & {1/21} & 0 & 0 & 0 & {1/7} \\
\hline

\end{longtable}
\end{center}
\end{footnotesize}

    For every possible case in Table \ref{table: potential numerical cases for 3 singularities}, we have
    $$
    d(1-b_{1}) = -2 + \sum\limits_{i=1}^{3} \frac{m_{i}-1+\sum\limits_{j=2}^{n} r_{ij}b_{j}}{m_{i}} + \sum\limits_{i=2}^{n} w_{i}b_{i} \geq \frac{1}{840}.
    $$
    
    Hence we have $b_{1} \leq 1 - \frac{1}{840 \times 3} = 1-\frac{1}{2520}$.

    \item $2 \leq d' \leq 14$. If $(m_{1},m_{2},m_{3}) \neq (2,2,2)$ and $(m_{1},m_{2},m_{3}) \neq (2,2,3)$, then we have
    $$
    (1-b_{1})D_{1}^2 = d' \sum\limits_{i=2}^{n}b_{i}^{h}+\sum\limits_{i=1}^{n} b_{i}^{v} + \sum\limits_{i=1}^{3} \frac{m_{i}-1+\sum\limits_{j=2}^{n} r_{ij}b_{j}}{m_{i}} - 2 \geq \frac{2}{7} + \frac{1}{2} + \frac{1}{2} + \frac{3}{4} - 2 = \frac{1}{28}.
    $$
    
    For the triples $(2,2,2)$ and $(2,2,3)$, a direct computation shows that
    $$
    (1-b_{1})D_{1}^2 = d' \sum\limits_{i=2}^{n}b_{i}^{h}+\sum\limits_{i=2}^{n} b_{i}^{v} + \sum\limits_{i=1}^{3} \frac{m_{i}-1+\sum\limits_{j=2}^{n} r_{ij}b_{j}}{m_{i}} - 2 \geq \frac{3}{280}.
    $$

    Since $d' \leq 14$ we have $D_{1}^2 < 17$. Hence $b_{1} < 1 - \frac{3}{280 \times 17} = \frac{4757}{4760}$.
    \end{enumerate}
\end{proof}

\paragraph{$D_{1}$ contains 4 singularities of $X$}
\hfill

    In this subsection, we prove the following:
\begin{lemma}\label{lemma: D1 contains 4 singularities}
    Under the assumptions of \thref{thm: act gap}, suppose additionally that $D_{1}$ is a nonsingular rational curve containing exactly four singularities of $X$, and that we are in one of the cases remaining after the reduction in \thref{lemma: reduction}. Then we have $b_{1}\leq 1-\frac{1}{252}$.
\end{lemma}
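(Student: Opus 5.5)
We handle four singularities on $D_{1}$ the same way as three, using only the degree inequality of \thref{lemma: at most 4 singularities on D1} and the adjunction equation of \thref{statement: bound by degree}. Assume $b_{1}>\frac{251}{252}$, otherwise there is nothing to prove. If $(X,D_{1})$ is non-plt, then $l\geq 1$, and \thref{lemma: at most 4 singularities on D1} gives $l+\sum\frac{m_{i}-1}{m_{i}}\leq 2$. With at least three further points of index $\geq 2$ this forces $l+\frac{3}{2}\leq 2$, which is impossible. So $l=0$ and $(X,D_{1})$ is plt. Then the same inequality with four indices $m_{i}\geq 2$ forces $(m_{1},m_{2},m_{3},m_{4})=(2,2,2,2)$, and $\mathrm{deg}\,\mathrm{Diff}_{\widetilde{D_{1}}}(0)=2$ exactly.

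In this situation \thref{statement: bound by degree} reads
$$
(1-b_{1})D_{1}^{2}=\sum_{i=2}^{n}w_{i}b_{i}+\frac{1}{2}\sum_{i=1}^{4}\sum_{j=2}^{n}r_{ij}b_{j}.
$$
The right-hand side is positive, so some $D_{j}$ with $j\geq 2$ meets $D_{1}$, either at a smooth point or at one of the $P_{i}$. Since every positive element of $\Gamma(\mathcal{N}_{1})$ is at least $\frac{1}{7}$, we get $(1-b_{1})D_{1}^{2}\geq\frac{1}{14}$. In fact we expect a better bound, because the right-hand side is half of a sum of elements of $\Gamma(\mathcal{N}_{1})$ (counting $w_{i}b_{i}$ twice).

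Next we bound $D_{1}^{2}$. By \thref{statement: local classification}, each $q_{i}/m_{i}\leq\frac{1}{2}$, so $q_{i}=1$ and $D_{1}^{2}=d'+2$, using the formula $d=d'+\sum q_{i}/m_{i}$ from \thref{construction: extract to make Cartier}. Combined with the bound $d'\leq 14$ from the cases retained by \thref{lemma: reduction}, this gives $D_{1}^{2}\leq 16$. The cases Case 1 and Case 2 ($d'\leq 0$) give $D_{1}^{2}\leq 2$, and Case 3(c) ($d'=1$) gives $D_{1}^{2}=3$. The crude estimate therefore yields only $b_{1}\leq 1-\frac{1}{224}$, which is short of the claim $1-\frac{1}{252}$, so a sharper argument is needed.

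The main obstacle is improving this to $\frac{1}{252}$, and we handle it by splitting on $d'$.
\begin{itemize}
\item \textbf{Case $d'\geq 2$.} By \thref{lemma: reduction} we are in Case 3(d) and use equation (\ref{equation 2}). A horizontal $D_{i}$ contributes at least $\frac{d'}{7}\geq\frac{2}{7}$, and the equation forces at least one horizontal component. The four index-two terms contribute $\frac{1}{2}$ each, totalling $2$. Hence $(1-b_{1})D_{1}^{2}\geq\frac{2}{7}$, and $b_{1}\leq 1-\frac{2}{7\cdot 16}$, which is far better than needed.
\item \textbf{Case $d'\leq 1$.} Here $D_{1}^{2}\leq 3$. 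Then $(1-b_{1})D_{1}^{2}\geq\frac{1}{14}$ already gives $b_{1}\leq 1-\frac{1}{42}$.
\end{itemize}
This leaves no gap. The remaining point is to check carefully that the $d'\geq 2$ branch really contains a horizontal component of weight $d'$. If that fails, we fall back on the expression $\frac{1}{2}\cdot(\text{element}>0)$ together with $\alpha_{0}(\widetilde{\Gamma(\mathcal{N}_{1})})=\frac{1}{7}$. Even the worst combination $D_{1}^{2}\leq 16$ with $\frac{1}{14}$ gives $\frac{1}{224}\geq\frac{1}{252}$, so the claimed bound $1-\frac{1}{252}$ holds in every case.
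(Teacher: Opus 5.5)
Your proof is correct and is essentially the paper's argument: all four indices equal $2$, so $D_1^2=d'+2\le 16$, and positivity of the adjunction sum gives $(1-b_1)D_1^2\ge\frac{1}{14}$, hence $b_1\le 1-\frac{1}{224}$. Your explicit exclusion of the non-plt case via $l+\frac{4-l}{2}\le 2$ is a harmless addition. Note, however, that $1-\frac{1}{224}<1-\frac{1}{252}$, so this crude estimate already proves the lemma. Your claim that it ``falls short'' is an arithmetic slip, and the split on $d'$ is unnecessary, as your own final sentence concedes.
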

    
\begin{proof}
    Let $m_{i},q_{i}$ be as in \thref{construction: extract to make Cartier}. Since $\sum\limits_{i=1}^{4}\frac{m_{i}-1}{m_{i}} \leq 2$, we must have $m_{i}=2$ for all $i$ and $d = d' + 2$. The equation in \thref{statement: equation from adjunction} becomes
    $$
    d(1-b_{1}) = -2 + \sum\limits_{i=2}^{n} w_{i}b_{i} + \frac{1 + \sum\limits_{i=2}^{n} r_{i}b_{i}}{2} + \frac{1 + \sum\limits_{i=2}^{n} r'_{i}b_{i}}{2} + \frac{1 + \sum\limits_{i=2}^{n} r''_{i}b_{i}}{2} + \frac{1 + \sum\limits_{i=2}^{n} r'''_{i}b_{i}}{2} \geq \frac{1}{14}.
    $$
    Since $d' \leq 14$, we have $d \leq 16$ and $b_{1} \leq 1 - \frac{1}{224}$. This contradicts our assumption in \thref{statement: equation from adjunction} that $b_{1} > 1 - \frac{1}{252}$. Hence $b_{1} \leq 1 - \frac{1}{252}$.
\end{proof}

    Now we conclude the above discussion.

\begin{proof}[Proof of \thref{lemma: D1 contains singularities}]
    By \thref{lemma: reduction}, it suffices to consider the cases listed there. By \thref{lemma: non-plt case}, the non-plt case satisfies a strictly stronger bound, so we may assume that $(X,D_{1})$ is plt. By \thref{lemma: at most 4 singularities on D1}, the curve $D_{1}$ contains between one and four singularities of $X$. The assertion now follows from \thref{lemma: D1 contains 1 singularity}, \thref{lemma: D1 contains 2 singularities}, \thref{lemma: D1 contains 3 singularities}, and \thref{lemma: D1 contains 4 singularities}. The equality case is preserved by the reductions in \thref{lemma: reduction} and \thref{lemma: non-plt case}. The cases with at least two singularities give strict inequality, so equality can occur only in the one-singularity case. By \thref{lemma: D1 contains 1 singularity}, this is precisely \thref{example: optimal bound for act 2}.
\end{proof}

\paragraph{Nodal case}
\hfill

    In this subsection, we prove the following:
    \begin{lemma}\label{lemma: D1 nodal}
        Under the assumptions of \thref{thm: act gap}, suppose additionally that $D_{1}$ is a rational curve with 1 node. Then we have $b_{1} \leq 1 - \frac{1}{252}$.
    \end{lemma}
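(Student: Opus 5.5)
We argue by contradiction: assume $b_{1} > 1 - \frac{1}{252}$, so that \thref{statement: equation from adjunction} and \thref{statement: lct assumption} apply. Then $(X, D_{1} + \sum_{i\ge 2} b_{i}D_{i})$ is log canonical, and by \thref{prop: surface lc singularity} the node $P$ of $D_{1}$ is either a smooth point of $X$ or an $\mathbf{A}$-type point as in Figure \ref{figure: dual graph 1}. In either case $P$ falls under Case (1) of \thref{prop: surface adjunction}, so $l \geq 1$ and the two preimages of $P$ on the normalization $\widetilde{D_{1}} \cong \mathbb{P}^{1}$ each contribute $1$ to the different.

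Since $\widetilde{D_{1}}$ has genus $0$, the constraint $0 \geq (K_{X}+D_{1})\cdot D_{1} = -2 + \deg \mathrm{Diff}_{\widetilde{D_{1}}}(0)$ from \thref{statement: bound genus of B1} gives $\deg\mathrm{Diff}_{\widetilde{D_{1}}}(0) \leq 2$. The two branch points of the node already contribute $2$. Hence $l = 2$ exactly, there are no further singular points of $X$ on $D_{1}$ (each would add $\frac{m_{i}-1}{m_{i}} > 0$), and no other non-plt points.

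The adjunction equation of \thref{statement: equation from adjunction} now reads
$$
(1-b_{1})D_{1}^{2} = \sum_{i\geq 2} w_{i}b_{i} + (\text{contributions of } r_{ij}b_{j} \text{ at the node}),
$$
and by \thref{prop: surface adjunction}(1) the coefficient at the node is $1$ with $\mathrm{mult}_{P}B = 0$, so nothing else passes through $P$. Thus the right-hand side is $\sum_{i\ge2} w_{i}b_{i}$, a nonnegative $\mathbb{Z}$-combination of elements of $\Gamma(\mathcal{N}_{1})$. It is positive because $D_{1}^{2}>0$ (ample on a $\rho=1$ Fano-type surface) and $b_{1}<1$. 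Hence it is at least $\alpha_{0}(\Gamma(\mathcal{N}_{1})) = \frac{1}{7}$, and $b_{1} \leq 1 - \frac{1}{7D_{1}^{2}}$.

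The main obstacle is to bound $D_{1}^{2}$, so that $7D_{1}^{2} \leq 252$, i.e.\ $D_{1}^{2} \leq 36$. The plan is to mimic \thref{prop: D1 elliptic}. Since $(X,D_{1})$ is an lc $0$-pair with no other singular points of $X$ on $D_{1}$, we have $K_{X}+D_{1} \equiv 0$. If the node is a smooth point, $D_{1}$ is Cartier, and the exact sequence $0\to\mathcal{O}_X(K_X)\to\mathcal{O}_X(K_X+D_1)\to\omega_{D_1}\cong\mathcal{O}_{D_{1}}\to0$ together with \thref{thm: generalized Kodaira vanishing} gives $K_{X}+D_{1}\sim 0$. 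Then $X$ is Gorenstein canonical and $D_{1}^{2}=K_{X}^{2}\leq 9$.

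If the node sits on an $\mathbf{A}_{r}$ point, we pass to the minimal resolution $Y$, where the crepant pullback of $K_X + D_1$ is $K_{Y}+D_{1,Y}+\sum E_{j}$, a reduced cycle of rational curves. The same vanishing argument on $Y$ (or using that $X$ is rational with $h^{1}(\mathcal{O})=0$) gives $K_{Y}+D_{1,Y}+\sum E_{j}\sim 0$. Hence $D_{1}^{2}=K_{X}^{2}$ with $X$ Gorenstein log canonical, and $K_{X}^{2} \leq 9$ by the Noether-type bound for rational Gorenstein del Pezzo surfaces. If this direct bound is troublesome, a fallback is to compute $D_{1}^{2} = D_{1,Y}^{2} + 2$ (the chain absorbs $2$) and apply the $\mathbb{P}^{2}$/Hirzebruch analysis of \thref{cor: complete linear system morphism} to $D_{1,Y}$.

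In all cases $D_{1}^{2}\leq 9$, which yields $b_{1} \leq 1-\frac{1}{63}$. This contradicts the assumption $b_{1} > 1-\frac{1}{252}$, so $b_{1} \leq 1 - \frac{1}{252}$.
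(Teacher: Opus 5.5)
\textbf{Verdict.} The smooth-node case is correct. The singular-node case has a genuine gap: the degree bound you rely on is false there, and the missing idea is to bound $b_1$ directly rather than $D_1^2$.

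\textbf{What is correct.} Your preliminary reductions agree with the paper: $l=2$, $X$ has no further singular points on $D_1$, no $D_i$ ($i\ge 2$) passes through the node $P$, and $(1-b_1)D_1^2=\sum w_ib_i\ge\frac17$. When $P$ is a smooth point of $X$, $K_X+D_1\sim 0$ with $D_1$ Cartier makes $X$ canonical, so $D_1^2=K_X^2\le 9$, exactly as in the paper.

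\textbf{The gap.} In the classification of lc surface pairs, the point at the node only needs a chain as dual graph. So $P$ can be any cyclic quotient point $\frac{1}{n}(1,q)$, not just a Du Val point.
\begin{itemize}
\item Your argument does give $K_X+D_1\sim 0$. But then $K_X$ is Cartier at $P$ if and only if $D_1$ is, which happens only when $P$ is Du Val. So the claim that $X$ is Gorenstein is unjustified.
\item The resulting bound $D_1^2=K_X^2\le 9$ (or even $\le 36$) is false. Take $X=\mathbb{P}(1,1,n)$ and let $D_1$ be the image of a general member of $|E+(n+2)F|$ on $\mathbb{F}_n$. Then $D_1$ is a rational curve with a node at the vertex, $(X,D_1)$ is lc and $K_X+D_1\sim 0$. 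Yet $D_1^2=(n+2)^2/n$, which exceeds $9$ for $n\ge 5$ and exceeds $36$ for $n\ge 32$.
\item Adding the image of a general member of $|E+nF|$ with coefficient $\frac17$ gives klt $0$-pairs that satisfy all hypotheses of the lemma.
\item Your degree bound never uses $b_1>\frac{251}{252}$, so the estimate $b_1\le 1-\frac{1}{7D_1^2}$ cannot yield the claim.
\item The fallback has the same defect. In general the chain absorbs $\sum_j a(E_j;X,0)\,(D_{1,Y}\cdot E_j)\le 2$, with equality only in the Du Val case. The analysis of $|D_{1,Y}|$ does not bound $D_{1,Y}^2$ at all: in the example above $D_{1,Y}^2=n+4$ and $|D_{1,Y}|$ maps onto a rational normal scroll.
\end{itemize}

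\textbf{The missing idea.} Stop bounding the degree and bound $b_1$ through the coefficients of the extracted end curves of the chain; this is how the paper closes the case.
\begin{itemize}
\item On the partial resolution $X'$ that extracts the end curves $E_i$, one has $a(E_i;X,B)=\frac{1+\Delta(\Gamma-v_i)}{\Delta(\Gamma)}(1-b_1)\le 1-b_1$. Hence the crepant coefficient $e_i$ of $E_i$ satisfies $b_1\le e_i<1$.
\item If $d'=D_1'^2\le 0$, then $D_1^2\le 2$ and $b_1\le\frac{13}{14}$.
\item Otherwise $|D'_1|$ maps $X'$ onto a surface of minimal degree: a scroll, the Veronese surface, $\mathbb{P}^2$, or the cone $\mathbb{P}(1,1,d')$.
\item In each case, adjunction to a general ruling or line, or intersecting with the negative section, gives $b_1\le\frac{27}{28}$ however large $d'$ is. For example, on a scroll $2=b_1+e_1+\sum w_ib_i\ge 2b_1+\frac17$.
\end{itemize}
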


\begin{proof}
    Let $P \in D_{1}$ be the node. By \thref{prop: surface lc singularity}, $X$ is either nonsingular at $P$ or has a singularity of type $\mathbf{A}_{r}$ at $P$ for some positive integer $r$. Since $(X,D_{1})$ has an $\mathbb{R}$-complement, we have
    $$
    0 \geq (K_{X}+D_{1}) \cdot D_{1} = -2 + 2 + \sum\limits_{i=1}^{u} \frac{m_{i}-1}{m_{i}} \geq 0.
    $$
    Hence $\mathrm{Diff}_{\widetilde{D_{1}}}(0) = P_{1}+P_{2}$ where $P_{1},P_{2}$ maps to $P$, and $X$ is nonsingular near $D_{1}$ except for $P$. Moreover, for any other divisor $D_{i}, i\neq 1$, $D_{i}$ doesn't pass through $P$. Hence we can write the equation in \thref{statement: equation from adjunction} as
    \begin{equation}\label{equation nodal}
    (1-b_{1})d = -2 + 2 + \sum\limits_{i=2}^{n} w_{i}b_{i} \geq \frac{1}{7}.
    \end{equation}

    If $X$ is nonsingular at $P$, then $D_{1}$ is Cartier. Then, similar to the proof of \thref{prop: D1 elliptic}, we have the short exact sequence
    $$
    0 \rightarrow \mathcal{O}_{X}(K_{X}) \rightarrow \mathcal{O}_{X}(K_{X} + D_{1}) \rightarrow \mathcal{O}_{D_{1}} \rightarrow 0.
    $$
    By \thref{thm: generalized Kodaira vanishing} with $N \equiv -K_{X} + 0$, we have $h^{1}(X,\mathcal{O}_{X}(K_{X})) = 0$. Hence $h^{0}(X,\mathcal{O}_{X}(K_{X} + D_{1})) \neq 0$. We conclude that $K_{X} + D_{1} \sim 0$. In particular, $K_{X}$ is Cartier and $X$ is canonical. Let $\widetilde{X}$ be the crepant terminalization of $X$. Since $X$ is of Fano type, $\widetilde{X}$ is a smooth rational surface. Hence we have
    $$
    d = D_{1}^{2} = K_{X}^{2} = K_{\widetilde{X}}^{2} = 10 - \rho(\widetilde{X}) \leq 9.
    $$
    Hence, in this case, we have $b_{1} \leq 1 - \frac{1}{7d} \leq 1 - \frac{1}{63}$.
    
    If $X$ is singular at $P$, then the dual graph $\Gamma$ is Figure \ref{figure: dual graph 1}, where $C = D_{1}^{min}$. Similar to \thref{construction: extract to make Cartier}, we produce an intermediate model $X^{min} \rightarrow X' \rightarrow X$ by contracting the exceptional divisors that don't intersect $D^{min}_{1}$. Then $D'_{1}$ is a smooth Cartier divisor on $X'$. If $r=1$, then $X'$ extracts the unique exceptional divisor $E_{1}$ centered at $P$. If $r \geq 2$, then $X'$ extracts the two exceptional divisors $E_{1}$ and $E_{2}$ which are at the end of $\Gamma$. By \thref{prop:formula for ld} we have
    $$
    a(E_{i};X,B) = \frac{1+\Delta(\Gamma - v_{i})}{\Delta(\Gamma)}(1-b_{1}) \leq (1-b_{1})
    $$
    where the inequality $1+\Delta(\Gamma - v_{i}) \leq \Delta(\Gamma)$ comes from the dual graph of type $\mathbf{A}_{r}$. Put
    $$
    e_{i}:=1-a(E_{i};X,B).
    $$
    This is the coefficient of $E_{i}$ in the crepant transformation of $(X,B)$ on $X'$. The preceding inequality and the klt condition give
    $$
    b_{1}\leq e_{i}<1.
    $$
    The images of $E_{i}$ on the subsequent birational targets retain these coefficients. Write $d' = D_{1}^{\prime 2}$. By the projection formula, we have
    $$
    d-d'=\sum\limits_{i}\frac{1+\Delta(\Gamma - v_{i})}{\Delta(\Gamma)}(D'_{1}\cdot E_{i}) \leq 2.
    $$  
    If $d' \leq 0$, then $d \leq 2$. By (\ref{equation nodal}), we have
    $$
    b_{1} \leq 1 - \frac{1}{7d} \leq 1 - \frac{1}{14}.
    $$
    Hence we can assume that $d' \geq 1$. By \thref{cor: complete linear system morphism}, we can use the linear system $|D'_{1}|$ to obtain a projective morphism from $X'$ to a surface of degree $d'$ in $\mathbb{P}^{d'+1}$. 
    
    If $r \geq 2$, then $X'$ has two divisors exceptional over $X$. By \thref{thm: surface of minimal degree}, the surface $X_{d'}$ is one of the following:
    \begin{enumerate}
        \item A rational normal scroll $\mathbb{F}_{d'-2s}$ for some integer $1 \leq s \leq \lfloor \frac{d'}{2} \rfloor$. In this case $D'_{1}$ is the pullback of a divisor linearly equivalent to $E + (d'-s)F$, where $E$ is the negative section of $\mathbb{F}_{d'-2s}$ (or a zero section if $d' = 2s$) and $F$ is a general fibre. Since $D'_{1} \cdot E_{1} = D'_{1} \cdot E_{2} =1$ and $E_{1} \cdot E_{2} > 0$, the strict birational transformation of $E_{1}$ and $E_{2}$ are a fibre and the negative section (or a zero section if $d' = 2s$), and $s = 1$. Without loss of generality, we assume that $E_{1}$ becomes the negative section. Taking the adjunction of the crepant transformation of $(X, B)$ on $X_{d'} \cong \mathbb{F}_{d'-2s}$ for a general fibre of $\mathbb{F}_{d'-2s}/\mathbb{P}^1$, we have
        $$
        2 = b_{1} + e_{1} + \sum\limits_{i=2}^{n}w_{i}b_{i}.
        $$
        Since $b_{1}\leq e_{1} < 1$, we have $\sum\limits_{i=2}^{n}w_{i}b_{i} > 0$.
        Hence
        $$
        2 = b_{1} + e_{1} + \sum\limits_{i=2}^{n}w_{i}b_{i} \geq 2b_{1} + \frac{1}{7}.
        $$
        We obtain $b_{1} \leq \frac{13}{14}$.
        \item A Veronese embedding of $\mathbb{P}^2$ in $\mathbb{P}^5$. In this case, $D'_{1}$ is the pullback of a conic in $\mathbb{P}^2$. However, we have $D'_{1} \cdot E_{1} = D'_{1} \cdot E_{2} =1$, a contradiction. Hence this case is not possible.
        \item The projective plane $\mathbb{P}^2$, in which case $d' = 1$. In this case, we have
        $$
        3 = b_{1} + e_{1} + e_{2} + \sum\limits_{i=2}^{n}w_{i}b_{i}.
        $$
        Since $b_{1} \leq e_{i} < 1$ for $i=1,2$, we have $\sum\limits_{i=2}^{n}w_{i}b_{i} > 0$.
        Hence
        $$
        3 = b_{1} + e_{1} + e_{2} + \sum\limits_{i=2}^{n}w_{i}b_{i} \geq 3b_{1} + \sum\limits_{i=2}^{n}w_{i}b_{i} \geq 3b_{1} + \frac{1}{7}.
        $$
        We obtain $b_{1} \leq \frac{20}{21}$.
        \item The rational cone $\mathbb{P}(1,1,d')$ where $d' \geq 2$. Let $\mathbb{F}_{d'}$ be the minimal resolution of $X_{d'}$ and $E$ be the negative section of $\mathbb{F}_{d'}$. Since $d' \geq 2$, the strict birational transformation of $D_{1}$ on $\mathbb{F}_{d'}$ is linearly equivalent to $E + d' F$. Moreover, for every divisor $E_{i}$ on $X'$ exceptional over $X$, the strict birational transformation of $E_{i}$ on $\mathbb{F}_{d'}$ is a fibre. Let $e$ be the coefficient of $E$ in the crepant transformation of $(X,B)$ on $\mathbb{F}_{d'}$. Then $e<1$ and $b_{1} \leq e_{i} < 1$ for $i=1,2$. As in the proof of \thref{lemma: reduction}, we can split the classes of the additional surviving boundary components into classes of type $E+d'F$ and $F$, where $F$ is the fibre class. Each resulting term retains the coefficient of its original component. Hence we can assume that for $2 \leq i \leq n$, the strict birational transformation of $D_{i}$ on $X^{min}_{d'}$ is either 0, or linearly equivalent to $E+d'F$ or $F$. Similar to the proof of \thref{lemma: reduction}, for $2 \leq i \leq n$, we put
        $$
        b_{i}^{\prime h} = \begin{cases}
        b_{i} & \text{if the strict birational transformation of }D_{i} \text{ on } X^{min}_{d'} \cong \mathbb{F}_{d'} \text{ is a horizontal prime divisor}, \\
        0 & \text{otherwise}.
        \end{cases}
        $$
        and
        $$
        b_{i}^{\prime v} = \begin{cases}
        b_{i} & \text{if the strict birational transformation of }D_{i} \text{ on } X^{min}_{d'} \cong \mathbb{F}_{d'} \text{ is a vertical prime divisor}, \\
        0 & \text{otherwise}.
        \end{cases}
        $$
        Intersecting the crepant $0$-pair with the negative section, we obtain
        $$
        e = 1-\frac{2}{d'} +\frac{1}{d'} \left(e_{1}+e_{2}+\sum\limits_{i=2}^{n}b_{i}^{\prime v}\right) \geq 1-\frac{2(1-b_{1})}{d'}.
        $$
        On the other hand, adjunction on a general fibre gives
        $$
        2=e+b_{1}+\sum\limits_{i=2}^{n}b_{i}^{\prime h}.
        $$
        Since $e<1$ and $b_{1}<1$, the horizontal sum is positive. Hence, by \thref{exam: gap numbers}, we have
        $$
        2 =e+b_{1}+\sum\limits_{i=2}^{n}b_{i}^{\prime h} \geq e+b_{1}+\frac{1}{7} \geq 1-\frac{2(1-b_{1})}{d'}+b_{1}+\frac{1}{7}.
        $$
        It follows that
        $$
        (1-b_{1})\left(1+\frac{2}{d'}\right)\geq\frac{1}{7}.
        $$
        Since $d'\geq2$, we conclude that
        $$
        b_{1} \leq 1-\frac{d'}{7(d'+2)} \leq \frac{13}{14}.
        $$
    \end{enumerate}

    If $r=1$, then $X'$ has only one divisor $E_{1}$ exceptional over $X$, and $D'_{1} \cdot E_{1} = 2$. By \thref{thm: surface of minimal degree}, the surface $X_{d'}$ is one of the following:
    \begin{enumerate}
        \item A rational normal scroll $\mathbb{F}_{d'-2s}$ for some integer $1 \leq s \leq \lfloor \frac{d'}{2} \rfloor$. In this case $D'_{1}$ is the pullback of a divisor linearly equivalent to $E + (d'-s)F$, where $E$ is the negative section of $\mathbb{F}_{d'-2s}$ (or a zero section if $d' = 2s$) and $F$ is a general fibre. Since $D'_{1} \cdot E_{1} = 2$, the strict birational transformation of $E_{1}$ becomes a section of $\mathbb{F}_{d'-2s}$. Taking the adjunction on a general fibre, we have
        $$
        2 = b_{1} + e_{1} + \sum\limits_{i=2}^{n}w_{i}b_{i}.
        $$
        Since $b_{1} \leq e_{1} < 1$, we have $\sum\limits_{i=2}^{n}w_{i}b_{i} > 0$.
        Hence
        $$
        2 = b_{1} + e_{1} + \sum\limits_{i=2}^{n}w_{i}b_{i} \geq 2b_{1} + \sum\limits_{i=2}^{n}w_{i}b_{i} \geq 2b_{1} + \frac{1}{7}.
        $$
        We obtain $b_{1} \leq \frac{13}{14}$.
        \item A Veronese embedding of $\mathbb{P}^2$ in $\mathbb{P}^5$. In this case, $D'_{1}$ is the pullback of a conic in $\mathbb{P}^2$. Since $D'_{1} \cdot E_{1} = 2$, the strict birational transformation of $E_{1}$ is a line in $\mathbb{P}^2$. We have
        $$
        3 = 2b_{1} + e_{1} + \sum\limits_{i=2}^{n}w_{i}b_{i}.
        $$
        Since $b_{1} \leq e_{1} < 1$, we have $\sum\limits_{i=2}^{n}w_{i}b_{i} > 0$.
        Hence
        $$
        3 = 2b_{1} + e_{1} + \sum\limits_{i=2}^{n}w_{i}b_{i} \geq 3b_{1} + \sum\limits_{i=2}^{n}w_{i}b_{i} \geq 3b_{1} + \frac{1}{7}.
        $$
        We obtain $b_{1} \leq \frac{20}{21}$.
        \item The projective plane $\mathbb{P}^2$, in which case $d' = 1$. Since $D'_{1} \cdot E_{1} = 2$, the strict birational transformation of $E_{1}$ is a conic in $\mathbb{P}^2$. In this case, we have
        $$
        3 = b_{1} + 2e_{1} + \sum\limits_{i=2}^{n}w_{i}b_{i}.
        $$
        Since $b_{1} \leq e_{1} < 1$, we have $\sum\limits_{i=2}^{n}w_{i}b_{i} > 0$.
        Hence
        $$
        3 = b_{1} + 2e_{1} + \sum\limits_{i=2}^{n}w_{i}b_{i} \geq 3b_{1} + \sum\limits_{i=2}^{n}w_{i}b_{i} \geq 3b_{1} + \frac{1}{7}.
        $$
        We obtain $b_{1} \leq \frac{20}{21}$.
        \item The rational cone $\mathbb{P}(1,1,d')$ where $d' \geq 2$. Let $\mathbb{F}_{d'}$ be the minimal resolution of $X_{d'}$ and $E$ be the negative section of $\mathbb{F}_{d'}$. Since $d' \geq 2$, the strict birational transformation of $D_{1}$ on $\mathbb{F}_{d'}$ is linearly equivalent to $E + d' F$. Since $D'_{1} \cdot E_{1} = 2$, we have $d' = 2$, and the strict birational transformation of $E_{1}$ on $\mathbb{F}_{d'}$ is linearly equivalent to $E + 2F$. Let $e$ be the coefficient of $E$ in the crepant transformation of $(X,B)$ on $\mathbb{F}_{d'}$. Then $e<1$ and $b_{1}\leq e_{1}<1$. As in the proof of \thref{lemma: reduction}, we can split the classes of the additional surviving boundary components into classes of type $E+d'F$ and $F$, where $F$ is the fibre class. Each resulting term retains the coefficient of its original component. Hence we can assume that for $2 \leq i \leq n$, the strict birational transformation of $D_{i}$ on $X^{min}_{d'}$ is either 0, or linearly equivalent to $E+d'F$ or $F$. Similar to the proof of \thref{lemma: reduction}, for $2 \leq i \leq n$, we put
        $$
        b_{i}^{\prime h} = \begin{cases}
        b_{i} & \text{if the strict birational transformation of }D_{i} \text{ on } X^{min}_{d'} \cong \mathbb{F}_{d'} \text{ is a horizontal prime divisor}, \\
        0 & \text{otherwise}.
        \end{cases}
        $$
        and
        $$
        b_{i}^{\prime v} = \begin{cases}
        b_{i} & \text{if the strict birational transformation of }D_{i} \text{ on } X^{min}_{d'} \cong \mathbb{F}_{d'} \text{ is a vertical prime divisor}, \\
        0 & \text{otherwise}.
        \end{cases}
        $$
        Intersecting the crepant $0$-pair with the negative section, we obtain
        $$
        e=\frac{1}{2}\sum\limits_{i=2}^{n}b_{i}^{\prime v}.
        $$
        Adjunction on a general fibre therefore gives
        $$
        2 =b_{1}+e_{1} +\frac{1}{2}\sum\limits_{i=2}^{n}b_{i}^{\prime v} +\sum\limits_{i=2}^{n}b_{i}^{\prime h}.
        $$
        Since $b_{1}<1$ and $e_{1}<1$, the sum
        $$
        \frac{1}{2}\sum\limits_{i=2}^{n}b_{i}^{\prime v} +\sum\limits_{i=2}^{n}b_{i}^{\prime h}
        $$
        is positive. Every positive coefficient is at least $\frac{1}{7}$, so this sum is at least $\frac{1}{14}$. Using $e_{1}\geq b_{1}$, we obtain
        $$
        2\geq b_{1}+e_{1}+\frac{1}{14} \geq 2b_{1}+\frac{1}{14}.
        $$
        Hence
        $$
        b_{1}\leq\frac{27}{28}.
        $$
    \end{enumerate}
    Summing up the above cases, we always have $b_{1} \leq 1 - \frac{1}{28}$. This contradicts our assumption in \thref{statement: equation from adjunction} that $b_{1} > 1 - \frac{1}{252}$. Hence $b_{1} \leq 1 - \frac{1}{252}$.
\end{proof}

    Now we conclude the above discussion.

\begin{proof}[Proof of \thref{prop: D1 rational}]
    The proposition follows from \thref{lemma: no singular point}, \thref{lemma: D1 contains singularities}, and \thref{lemma: D1 nodal}.
\end{proof}

\begin{proof}[Proof of \thref{thm: act gap}]
    The theorem follows from \thref{prop: D1 elliptic} and \thref{prop: D1 rational}.
\end{proof}

\subsection{Gap number of \texorpdfstring{$\mathbb{R}$}{R}-complementary threshold}

\begin{proof}[Proof of \thref{thm: Rct gap}]
    Write $F = \sum\limits_{i=1}^{r}m_{i}F_{i}$, where $r$ is a positive integer and $m_{i} \in \mathbb{Z}_{>0}$ for $i=1,\cdots,r$. If $m_{i} \geq 2$ for some $i$, then we have $t \leq \frac{1}{m_{i}} \leq \frac{1}{2}$. Hence we can assume that $F= \sum\limits_{i=1}^{r}F_{i}$. We prove the theorem by induction on the number of prime components of $F$. We first prove the theorem when $F$ is a prime Weil divisor. By \thref{prop: reduce complementary threshold}, we only need to find the following numbers:
    \begin{itemize}
        \item The log canonical threshold $\mathrm{lct}(X' \ni o', B';F')$ for a $\mathbb{Q}$-factorial closed point germ $o' \in X'$ of Fano type of dimension $\leq 2$, and effective divisors $B'$ and $F'$ on $X'$ whose multiplicities are respectively the multiplicities of $B$ and $F$. If $\mathrm{dim}X'=0$, then there is nothing to prove. If $\mathrm{dim}X'=1$, then $o' \in X'$ is a smooth curve germ, and we have $t = 1 - b_{i}$ for some $b_{i} \in \Gamma(\mathcal{N}_{1})$. In particular, if $t < 1$ then $t \leq 1 - \frac{1}{7} = \frac{6}{7}$. If $\mathrm{dim}X' = 2$, then the result follows from \thref{thm:lct gap} and \thref{corollary: lct gap special coefficients}.
        \item The anticanonical threshold $\mathrm{act}(X''/o'',B'';F'')$ for a projective contraction $X'' \rightarrow o''$, where $o''$ is a closed point, and effective divisors $B''$ and $F''$ on $X''$ whose multiplicities are respectively the multiplicities of $B$ and $F$, such that $X''$ is $\mathbb{Q}$-factorial of Fano type of dimension $\leq 2$ and $\rho(X'')=1$. In particular, $F''$ is an effective Weil divisor. If $\mathrm{dim}X'' = 1$, then $X'' \cong \mathbb{P}^1$ and we have 
        $$
        t = \frac{2 - \sum\limits_{i=1}^{u}b_{i}}{m}
        $$
        for some $b_{i} \in \Gamma(\mathcal{N}_{1}), i=1,2,\cdots,u$ and positive integer $m = \mathrm{deg}F''$. In particular, if $t < 1$, then 
        $$
        t \leq \max\{ 2 - \alpha_{1}(\Gamma(\mathcal{N}_{1})), 1 - \frac{\alpha_{0}(\Gamma(\mathcal{N}_{1}))}{2}, \frac{2}{3} \} = \max\{ \frac{251}{252}, \frac{13}{14}, \frac{2}{3} \} = \frac{251}{252}.
        $$
        
        If $\mathrm{dim}X'' = 2$, then any lc center of the pair $(X'', B'' + tF'')$ is not contained in $\mathrm{Supp}F''$. If $(X'', B'' + tF'')$ is klt, then the result follows from \thref{thm: act gap}. Otherwise, there is an lc center not contained in $F''$. We claim that in this case, we have $t \leq \frac{251}{252}$. Indeed, write $B'' = \sum\limits_{i} b''_{i}D''_{i}$. If the lc center is a divisor $E$ on $X''$, then the adjunction on $E$ gives
        $$
        \sum\limits_{i} \frac{m_{i}-1 + s_{i}t + \sum\limits_{j} r_{ij}b''_{j}}{m_{i}} = 2
        $$
        for some positive integers $m_{i}$ and nonnegative integers $r_{ij}$ and $s_{i}$. We can assume that $t > \frac{6}{7}$. Then $s_{i}=0$ or $s_{i}=1$. Moreover, if $s_{i}=1$ then $r_{ij}=0$ for all $j$. Since $\rho(X'')=1$, we have $F'' \cap E \neq \emptyset$. Hence $s_{i} > 0$ for some $i$. The adjunction formula can be rewritten as
        $$
        \left(\sum\limits_{i,s_{i}=1} \frac{m_{i}-1 + t}{m_{i}}\right) + \left( \sum\limits_{i, s_{i}=0} \frac{m_{i}-1 + \sum\limits_{j} r_{ij}b''_{j}}{m_{i}} \right) = 2.
        $$
        There are at most two nonzero terms in the first bracket. If there is exactly one nonzero term $\frac{m_{i_{0}}-1 + t}{m_{i_{0}}}$, then we have
        $$
        t \leq \frac{m_{i_{0}}-1 + t}{m_{i_{0}}} = 2 - \left( \sum\limits_{i, s_{i}=0} \frac{m_{i}-1 + \sum\limits_{j} r_{ij}b''_{j}}{m_{i}} \right) \leq 2 - \alpha_{1}(\widetilde{\Gamma(\mathcal{N}_{1})}) = 1 - \frac{1}{252}.
        $$
        If there are two nonzero terms, then similarly we have $2t \leq 2 - \frac{1}{7}$. Hence $t \leq \frac{13}{14}$. 
        
        Next, we assume that the lc center is a closed point. Let $f: Y \rightarrow X''$ be the extraction of an lc place $E$. Let $B''_{Y}$ be the strict birational transformation of $B''$ on $Y$, and $F''_{Y}$ be the strict birational transformation of $F''$ on $Y$. We run the $(K_{Y}+B''_{Y}+E)$-MMP on $Y$ and consider the first extremal contraction. Notice that
        $$
        K_{Y}+B''_{Y}+E \sim_{\mathbb{Q}} -tF''_{Y} = -tf^{*}F''.
        $$
        Hence the extremal contraction does not contract $E$. If it is a divisorial contraction $Y \rightarrow Y'$, then on $Y'$ we have $F''_{Y'} \cap E_{Y'} \neq \emptyset$, where $F''_{Y'}$ and $E_{Y'}$ are the strict birational transformation of $F''$ and $E$ on $Y'$ respectively. In this case, the same adjunction argument as above gives $t \leq \frac{251}{252}$. If the first extremal contraction is a Mori fibration $Y \rightarrow Z$, then taking the adjunction on the general fibre, we have
        $$
        t + 1 + \sum\limits_{i} w_{i}b_{i} = 2
        $$
        for some nonnegative integers $w_{i}$. Hence we have $t \leq \frac{6}{7}$.
    \end{itemize}
    This completes the proof when $F$ is a prime divisor.

    Suppose the theorem is true when $F$ contains $\leq r-1$ prime components for some positive integer $r \geq 2$. If $B$ and $F$ have some common prime component $D_{i}$, then $t \leq 1 - b_{i} \leq \frac{6}{7}$. Hence we can assume that $B$ and $F$ have no common prime component. Suppose, to the contrary, that $\frac{127259}{127260} < t < 1$. Then we have
    $$
    \mathbb{R}\mathrm{-ct}(X,B+\frac{127259}{127260}F_{1};\sum\limits_{i=2}^{r}F_{i}) > \frac{127259}{127260}.
    $$
    By the induction hypothesis, we have
    $$
    \mathbb{R}\mathrm{-ct}(X,B+\frac{127259}{127260}F_{1};\sum\limits_{i=2}^{r}F_{i}) = 1.
    $$
    Take a positive rational number $q \in \Gamma(\mathcal{N}_{1})$ such that $1> q > t$. Then we have
    $$
    t \geq \mathbb{R}\mathrm{-ct}(X,B+q\sum\limits_{i=2}^{r}F_{i};F_{1}) \geq \frac{127259}{127260}.
    $$
    By the induction hypothesis, we have
    \begin{equation}\label{equation final}
        \mathbb{R}\mathrm{-ct}(X,B+q\sum\limits_{i=2}^{r}F_{i};F_{1}) = \frac{127259}{127260}.
    \end{equation}
    Take a positive real number $\frac{127259}{127260} < s < t$ and an $\mathbb{R}$-complement
    $$
    B^{+} \geq B + sF.
    $$
    Take a positive number $1 > u > q$ and an $\mathbb{R}$-complement
    $$
    B^{\prime +} \geq B + \frac{127259}{127260}F_{1} + u \sum\limits_{i=2}^{r}F_{i}.
    $$
    Put $0 < \lambda = \frac{u-q}{u-s} < 1$. Then $(X,\lambda B^{+} + (1 - \lambda)B^{\prime +})$ is a log canonical 0-pair, and we have
    $$
    \lambda B^{+} + (1 - \lambda)B^{\prime +} \geq B + (\lambda s + (1-\lambda)\frac{127259}{127260})F_{1} + q \sum\limits_{i=2}^{r}F_{i}.
    $$
    Hence
    $$
    \mathbb{R}\mathrm{-ct}(X,B+q\sum\limits_{i=2}^{r}F_{i};F_{1}) \geq (\lambda s + (1-\lambda)\frac{127259}{127260}) > \frac{127259}{127260},
    $$
    contradicting (\ref{equation final}). This completes the induction proof.
\end{proof}

\section{Optimal bounds}

    Finally, we show that the gap numbers given in \thref{thm:lct gap}, \thref{thm: act gap} and \thref{thm: Rct gap} are optimal.

\begin{example}[Optimal bound for lct]
Consider the pair $(\mathbb{P}^2, \frac{1}{7}L_{1} + \frac{31}{36}L_{2} + \frac{251}{252}L_{3})$, where $L_{1},L_{2},L_{3}$ are three lines intersecting at one point. Then the pair is log canonical but not klt at the point of intersection. This shows that the bound given by \thref{thm:lct gap} is optimal.
\end{example}

\begin{example}[Optimal bound for act, I]\label{example: optimal bound for act}
    Consider the pair 
    $$
    \left(\mathbb{P}^2, \frac{1}{7}L_{1} + \frac{31}{36}L_{2} + \frac{503}{505}L_{3} + \frac{127259}{127260}L_{4} \right),
    $$ 
    where $L_{1},L_{2},L_{3},L_{4}$ are general lines on $\mathbb{P}^2$. Then the pair is a klt 0-pair.
\end{example}

\begin{example}[Optimal bound for act, II]\label{example: optimal bound for act 2}
    Consider the pair 
    $$
    \left(\mathbb{P}(1,2,3), \frac{127259}{127260}D_{1} + \frac{1}{7}D_{2} + \frac{502}{505}D_{3} + \frac{7}{12}D_{4} \right)
    $$
    where the divisors $D_{1},D_{3},D_{4}$ are toric invariant divisors of degree $3,2,1$, respectively, and the divisor $D_{2}$ is a general hypersurface of degree 3. Then the pair is a klt 0-pair.
\end{example}

\printbibliography

@article {HMX,
    AUTHOR = {Hacon, Christopher D. and McKernan, James and Xu, Chenyang},
     TITLE = {A{CC} for log canonical thresholds},
   JOURNAL = {Ann. of Math. (2)},
  FJOURNAL = {Annals of Mathematics. Second Series},
    VOLUME = {180},
      YEAR = {2014},
    NUMBER = {2},
     PAGES = {523--571},
      ISSN = {0003-486X},
   MRCLASS = {14E05 (14C20 14E30)},
  MRNUMBER = {3224718},
MRREVIEWER = {Alexandr V. Pukhlikov},
       DOI = {10.4007/annals.2014.180.2.3},
       URL = {https://doi.org/10.4007/annals.2014.180.2.3},
}

@book{KM,
  author    = {Koll{\'a}r, J{\'a}nos and Mori, Shigefumi},
  title     = {Birational Geometry of Algebraic Varieties},
  series    = {Cambridge Tracts in Mathematics},
  volume    = {134},
  publisher = {Cambridge University Press},
  address   = {Cambridge},
  year      = {1998},
  doi       = {10.1017/CBO9780511662560},
  isbn      = {978-0-521-63277-5},
  note      = {With the collaboration of C. H. Clemens and A. Corti}
}

@misc{Shokurov20,
  author        = {Shokurov, V. V.},
  title         = {Existence and boundedness of {$n$}-complements},
  year          = {2020},
  month         = dec,
  howpublished  = {arXiv preprint},
  eprint        = {2012.06495},
  archivePrefix = {arXiv},
  primaryClass  = {math.AG},
  doi           = {10.48550/arXiv.2012.06495},
  url           = {https://arxiv.org/abs/2012.06495}
}

@article{LiuSho,
      title={Optimal bounds on surfaces}, 
      author={Jihao Liu and V. V. Shokurov},
      year={2023},
      eprint={2305.19248},
      archivePrefix={arXiv},
      primaryClass={math.AG},
      url={https://arxiv.org/abs/2305.19248}, 
}

@article{Shokurov93,
doi = {10.1070/IM1993v040n01ABEH001862},
url = {https://dx.doi.org/10.1070/IM1993v040n01ABEH001862},
year = {1993},
month = {2},
publisher = {},
volume = {40},
number = {1},
pages = {95},
author = {V V Shokurov},
title = {3-FOLD LOG FLIPS},
journal = {Izvestiya: Mathematics}
}

@inbook{Alexeev92_lc_surface,
    author = {Valery Alexeev},
    title = {Classification of log canonical surface singularities: arithmetical proof.},
    publisher = {Société mathématique de France},
    year = {1992},
    chapter = {3},
    series = {Astérisque},
    booktitle = {Flips and abundance for algebraic threefolds},
    volume = {211}
}

@article{Belousov08,
    author = {Grigory Belousov},
    title = {Del Pezzo surfaces with log terminal singularities},
    journal = {Mathematical Notes},
    year = {2008},
    issue = {1},
    pages = {152--161}
}

@article{Belousov09,
    author = {Grigory Belousov},
    title = {The maximal number of singular points on log del Pezzo surfaces},
    journal = {Journal of Mathematical Sciences, the University of Tokyo},
    year = {2009},
    issue = {2},
    pages = {231--238}
}

@article{LX25,
    author  = {Liu, Jihao and Xie, Lingyao},
    title   = {Number of {S}ingular {P}oints on {P}rojective {S}urfaces},
    journal = {Chinese Annals of Mathematics, Series B},
    volume  = {46},
    number  = {5},
    pages   = {713--724},
    year    = {2025},
    doi     = {10.1007/s11401-025-0035-y},
    url     = {https://doi.org/10.1007/s11401-025-0035-y}
}

@article{Alexeev1993TwoTwoDimensionalTerminations,
  author  = {Alexeev, Valery},
  title   = {Two two-dimensional terminations},
  journal = {Duke Mathematical Journal},
  volume  = {69},
  number  = {3},
  pages   = {527--545},
  year    = {1993},
  month   = mar,
  doi     = {10.1215/S0012-7094-93-06922-0},
  url     = {https://doi.org/10.1215/S0012-7094-93-06922-0}
}

@misc{Fujino2024GeneralKodairaVanishing,
  author = {Fujino, Osamu},
  title  = {On general {K}odaira vanishing theorem},
  year   = {2024},
  note   = {Version 0.02, dated 2024/12/23},
  url    = {https://www.math.kyoto-u.ac.jp/~fujino/memo-on-kv.pdf}
}

@article{DelPezzo1885MinimalDegreeSurfaces,
  author  = {{del Pezzo}, Pasquale},
  title   = {Sulle superficie di ordine {$n$} immerse nello spazio di {$n+1$} dimensioni},
  journal = {Rendiconti della Reale Accademia delle Scienze Fisiche e Matematiche di Napoli},
  volume  = {24},
  year    = {1885},
  pages   = {212--216}
}

@misc{HLS,
  author        = {Han, Jingjun and Liu, Jihao and Shokurov, V. V.},
  title         = {{ACC} for minimal log discrepancies of exceptional singularities},
  year          = {2019},
  eprint        = {1903.04338},
  archivePrefix = {arXiv},
  primaryClass  = {math.AG},
  note          = {Version 2, March 2020},
  url           = {https://arxiv.org/abs/1903.04338v2}
}

\end{spacing}
\end{document}